\documentclass{amsart}

\usepackage{gensymb,epic,eepic,float,fullpage}
\usepackage{longtable}
\usepackage[usenames,dvipsnames]{color}
\usepackage{siunitx}
\usepackage{empheq}
\usepackage[mathscr]{euscript}
\usepackage[T1]{fontenc}
\usepackage{comment}
\usepackage{caption}
\usepackage{enumerate,enumitem}
\usepackage{amsmath,amsfonts,amssymb,amsthm}
\usepackage{mathtools}
\usepackage[colorinlistoftodos]{todonotes}
\usepackage{cancel}
\usepackage{stmaryrd}
\usepackage{url}
\usepackage{tikz}
\usepackage[outline]{contour}
\usetikzlibrary{graphs,patterns,decorations.markings,arrows.meta,matrix}
\usetikzlibrary{calc,decorations.pathmorphing,decorations.pathreplacing,shapes}
\usepackage{graphicx}

\usepackage{ytableau}

\usepackage{hyperref}

\usepackage{bm}
\usepackage{bbm}
\usepackage{mathrsfs}

\colorlet{lgray}{white!85!black}
\colorlet{dgray}{white!45!black}
\colorlet{lred}{white!85!red}
\colorlet{dred}{white!35!red}
\colorlet{lgreen}{white!60!green}
\colorlet{dgreen}{black!30!green}
\colorlet{lpurple}{white!60!purple}
\colorlet{lblue}{white!60!blue}
\definecolor{green}{rgb}{0.1,0.8,0.1}
\definecolor{yellow}{rgb}{1.0,0.85,0.25}
\definecolor{purple}{rgb}{1.0, 0, 1.0}
\definecolor{blue}{rgb}{0, 0, 1.0}

 \renewcommand{\tikz}[2]{
\begin{tikzpicture}[scale=#1,baseline=(current bounding box.center),>=stealth]
#2
\end{tikzpicture}}

\newcommand\restr[2]{{
  \left.\kern-\nulldelimiterspace 
  #1 
  \vphantom{\big|} 
  \right|_{#2} 
 }}

\DeclarePairedDelimiter\floor{\lfloor}{\rfloor}

\tikzstyle{fused}=[lgray, line width=3pt, arrows={-Stealth[scale=1.1,length=10, width=10,dgray]}]
\tikzstyle{fused*}=[lgray, line width=3pt]

\newtheorem{prop}{Proposition}[section]
\newtheorem*{prop*}{Proposition}
\newtheorem{theo}[prop]{Theorem}
\newtheorem*{theo*}{Theorem}
\newtheorem{theoAlph}{Theorem}

\newtheorem{conj}[prop]{Conjecture}
\newtheorem{lem}[prop]{Lemma}
\newtheorem{cor}[prop]{Corollary}
\newtheorem{quest}[prop]{Question}
\theoremstyle{definition}
\newtheorem{defin}[prop]{Definition}

\newtheorem{rem}[prop]{Remark}

\numberwithin{equation}{section}

\renewcommand{\(}{\left (}
\renewcommand{\)}{\right )}
\newcommand{\1}{\mathbbm 1}
\renewcommand{\i}{\mathbf i}

\newcommand{\ba}{\mathbf a}
\newcommand{\bu}{\mathbf u}
\newcommand{\bx}{\mathbf x}
\newcommand{\by}{\mathbf y}
\newcommand{\bz}{\mathbf z}
\newcommand{\bt}{\mathbf t}

\newcommand{\fa}{\mathfrak a}
\newcommand{\fb}{\mathfrak b}

\newcommand{\calM}{\mathcal M}

\newcommand \calA{\mathcal A}
\newcommand \calB{\mathcal B}
\newcommand \calC{\mathcal C}
\newcommand \calP{\mathcal P}
\newcommand \calT{\mathcal T}

\newcommand \F{\mathbb F}
\newcommand \C{\mathbb C}

\newcommand \R{\mathbb R}
\renewcommand \P{\mathbb P}

\newcommand \Z{\mathbb Z}

\newcommand \G{\mathbb{G}}

\newcommand{\eps}{\varepsilon}

\let\Re\relax
\DeclareMathOperator{\Re}{Re}
\let\Im\relax
\DeclareMathOperator{\Im}{Im}

\title{On the Martin boundary for discrete TASEP}

\author{Vadim Gorin}

\address[V.G.]{Departments of Statistics and Mathematics, University of California, Berkeley. vadicgor@gmail.com}

\author{Sergei Korotkikh}

\address[S.K.]{Department of Mathematics, University of California, Berkeley. korotkikh@berkeley.edu}

\begin{document}

\maketitle

\begin{abstract}
 We study a problem with three equivalent formulations: describing Gibbs measures for five-vertex model in quadrant; classifying coherent systems on a $p$--deformation of the Gelfand-Tsetlin graph related to Grothendieck polynomials; finding the Martin boundary for discrete time TASEP with $p$-geometric jumps. We find a wide family of the Gibbs measures, parameterized by certain analytic functions. A subset of our measures have probabilistic interpretation as interacting particle systems with fixed particles speeds. In contrast to previous related boundary problems, we find that admissible speeds are not arbitrary, but must be larger than $\frac{p}{1-p}$. For this subset we further establish Law of Large Numbers and Central Limit Theorem, connecting the fluctuations to families of independent GUE eigenvalues. As a consequence, the measures from the subset are extreme points of the Martin boundary. It remains open whether our list of measures is exhaustive.
\end{abstract}

\setcounter{tocdepth}{2}
\tableofcontents

\section{Introduction}

\subsection{Overview}

This paper is about a classification problem: we want to describe all probability measures on configurations of paths in the quadrant which satisfy a certain Gibbs property depending on a parameter $0\le p \le 1$. Our motivations come from two directions: asymptotic representation theory and 2d statistical mechanics.

When $p=0$, our Gibbs probability measures are in bijection with coherent systems on (positive part of) the Gelfand--Tsetlin graph. In turn, the latter are in correspondence with characters of the infinite-dimensional unitary group $U(\infty)$, its spherical and finite-factor representations, as well as with infinite totally-positive Toeplitz matrices, see  \cite[Section 9]{olshanski2016markov}, \cite[Section 20.3]{Gorin_book}. Because of these connections, the $p=0$ classification problem is very well-studied, see \cite{edrei1953generating,voiculescu1976representations,vershik1982characters,boyer1983infinite,okounkov1998asymptotics,borodin2012boundary,
petrov2014boundary,gorin2015asymptotics} for various approaches. The $p=0$ problem can also be reformulated as a study of all possible limits of normalized Schur polynomials $s_\lambda$ as the number of variables grows to infinity:
\begin{equation}
\label{eq_Schur_limits}
 \lim_{N\to\infty} \frac{s_{\lambda(N)}(x_1,\dots,x_k,1^{N-k})}{s_{\lambda(N)}(1^N)}=?
\end{equation}
Two deformations of \eqref{eq_Schur_limits} were explored in the literature: the first one depends on a real parameter $\theta>0$ and replaces Schur polynomials with Jack polynomials, motivated by connections to spherical representations of Gelfand pairs at $\theta=1/2,1,2$ and to log-gases and $\beta$-ensembles of the random matrix theory. The answer in the Jack-deformed problem turns out to be very similar to the Schur $\theta=1$ case, see \cite{okounkov1998asymptotics}. Another $q$--deformation replaces $1$s in \eqref{eq_Schur_limits} with geometric series with denominator $q>0$ and relates to representation theory of quantum groups \cite{sato2019quantized,sato2021inductive} and to $q^{\text{volume}}$--weighted random plane partitions \cite{gorin2012q}. Here the role of $q$ turned out to be more significant and the answer for $q=1$ is very different from the general $q$ case of \cite{gorin2012q, gorin2016quantization}. The two deformations were subsequently lifted to a common generalization related to principal specializations of Macdonald polynomials in \cite{cuenca2018asymptotic,olshanski2021macdonald}.

From the asymptotic representation theory perspective, in this paper we initiate the study of another deformation of \eqref{eq_Schur_limits}, related to \emph{Grothendieck polynomials} $G_\lambda$, see e.g.\ \cite{fomin1994grothendieck,fomin1996yang,ikeda2013k,motegi2013vertex,Yel16,HJKSS21} among many papers on these interesting polynomials. In contrast to all the previously studied cases from the last paragraph, the denominator in our version of \eqref{eq_Schur_limits} no longer has an explicit fully factorized form, which suggests that less formulas are available for our $p$--deformation and leads us to develop alternative methods.

\medskip

Switching to the statistical mechanics point of view, we recall that configurations of the celebrated \emph{six-vertex model} (see \cite{Lieb_ferroelectric_models,baxter2007exactly,reshetikhin2010lectures,Bleher-Liechty14,Gorin_Nicoletti_lectures} for the reviews) assign to each vertex of a domain $\Omega \subset \mathbb{Z}^2$ one of the six types shown in Figure \ref{Fig_vertex_weights}, in a way that is globally consistent: the result must form non-intersecting, possibly touching paths that connect specified boundary points of $\Omega$. We assign a positive weight $w_i > 0$ to each vertex type $i = 1,\dots,6$, typically denoted by $(a_1, a_2, b_1, b_2, c_1, c_2)$, and say that a probability measure on configurations in $\Omega$ is Gibbs, if for any finite subdomain $\Omega'\subset \Omega$ and any boundary conditions --- configuration of boundary points $\partial \Omega'$ which the paths should connect, the conditional distribution of configurations $\sigma$ inside $\Omega'$ has the form
\begin{equation}
\label{eq_Gibbs}
 \mathrm{Prob}\bigl(\sigma\mid \text{boundary condition on }\partial \Omega'\bigr) = \frac{1}{Z} \prod_{i=1}^6 w_i^{N_i(\sigma)},
\end{equation}
where $N_i(\sigma)$ is the number of type $i$ vertices in configuration $\sigma$ and $Z$ is a normalizing constant. The formula \eqref{eq_Gibbs} silently assumes that the particular boundary condition on $\partial \Omega'$ has a non-zero probability. We remark that the Gibbs property \eqref{eq_Gibbs} depends on two, rather than six parameters due to four conservation laws, see \cite[Lemma 2.1]{Gorin_Nicoletti_lectures}.

\begin{figure}[t]
  \includegraphics[width=0.42\textwidth]{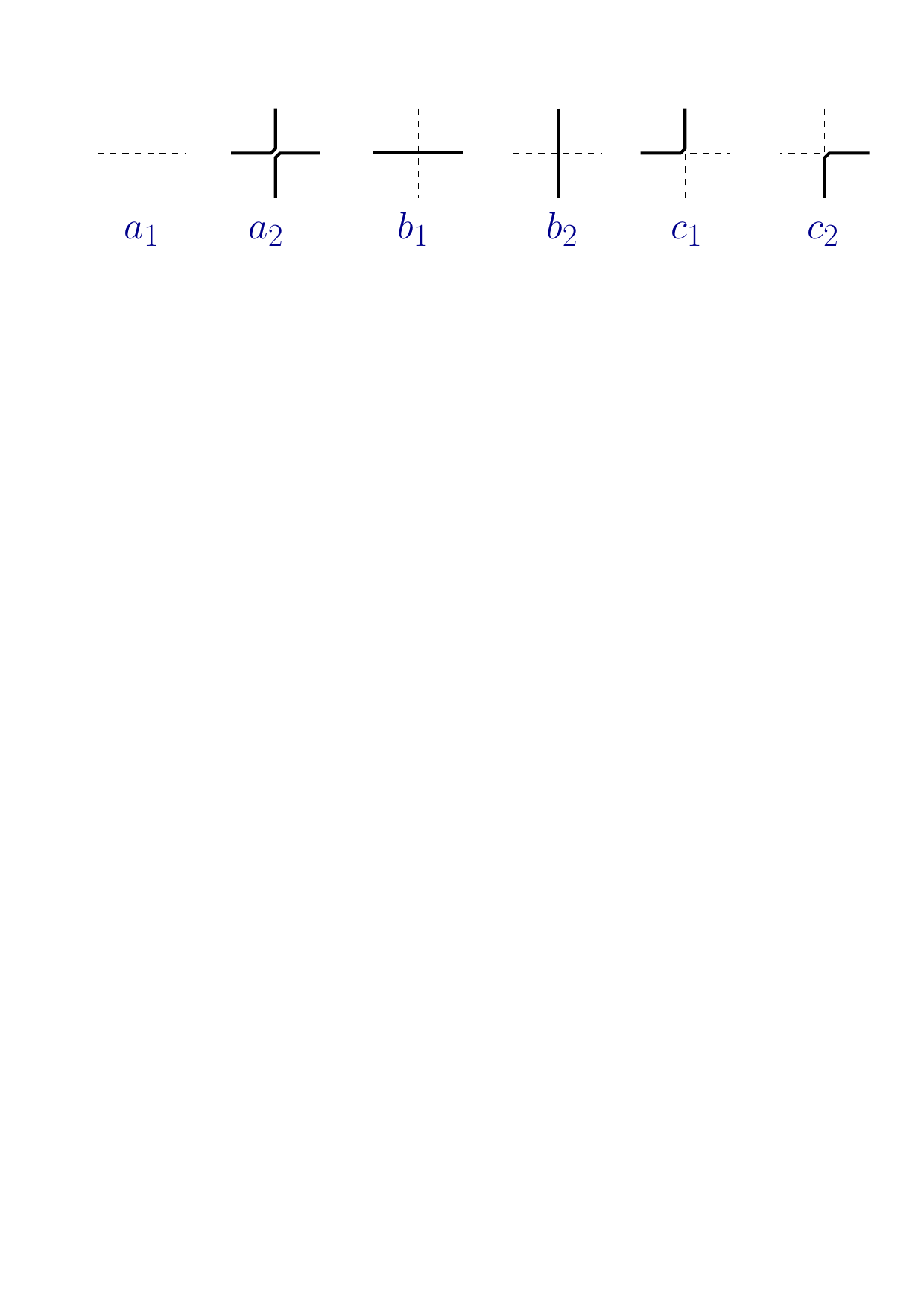}
   \hfill
  \includegraphics[width=0.42\textwidth]{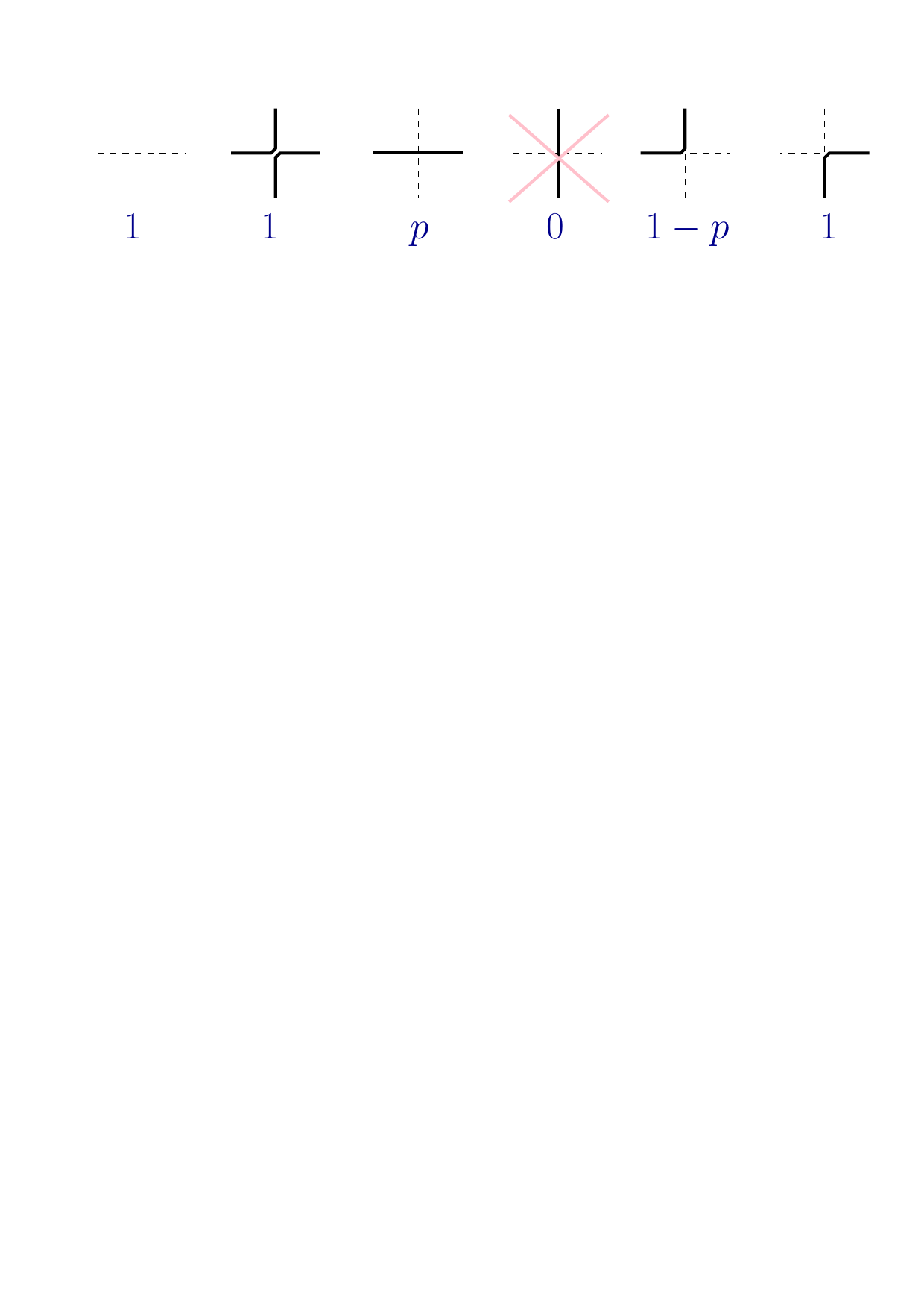}
    \caption{Left panel: six vertex weights. Right panel: stochastic five-vertex weights. \label{Fig_vertex_weights}}
\end{figure}

\begin{figure}[t]
  \includegraphics[width=0.45\textwidth]{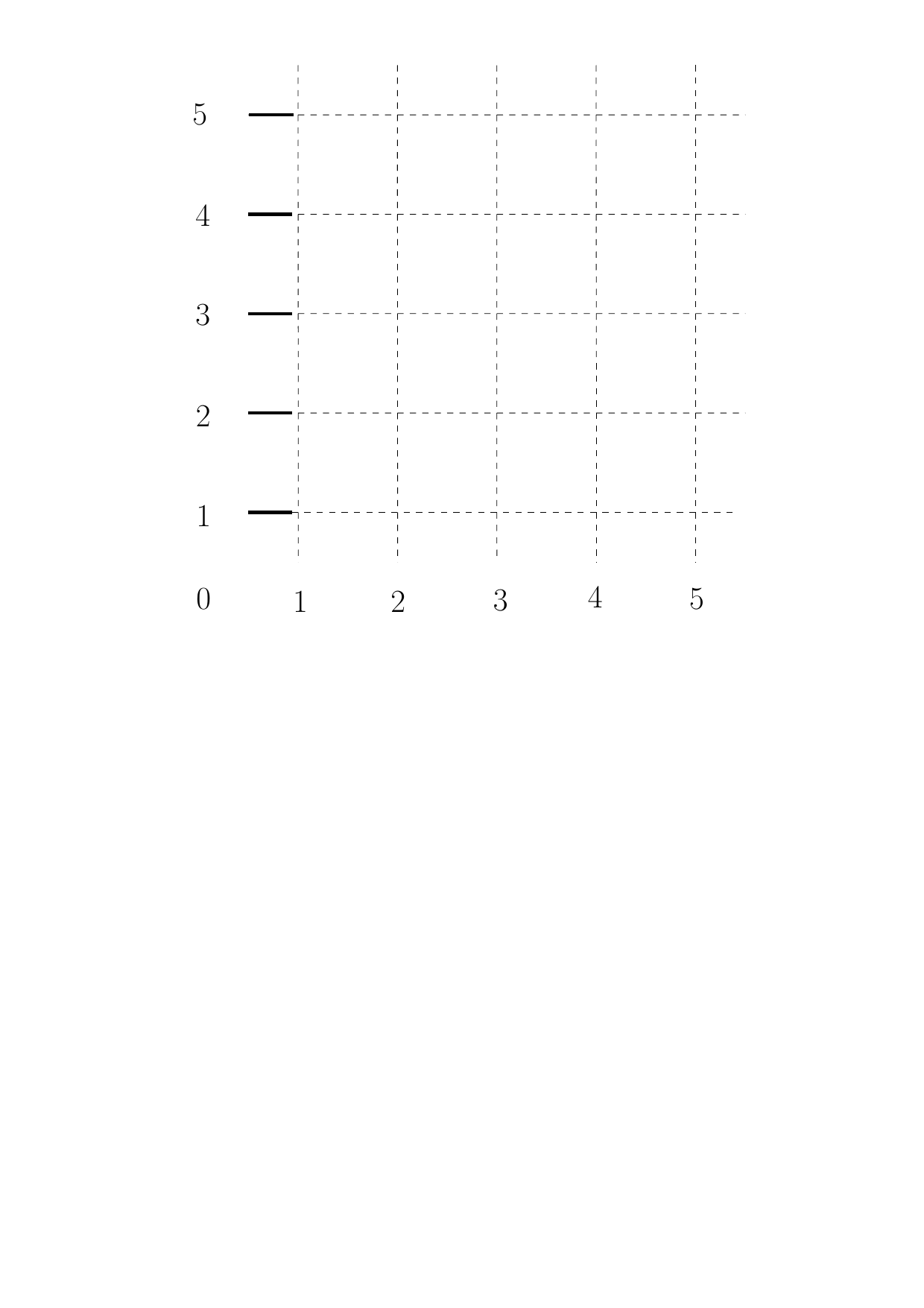 }
   \hfill
  \includegraphics[width=0.45\textwidth]{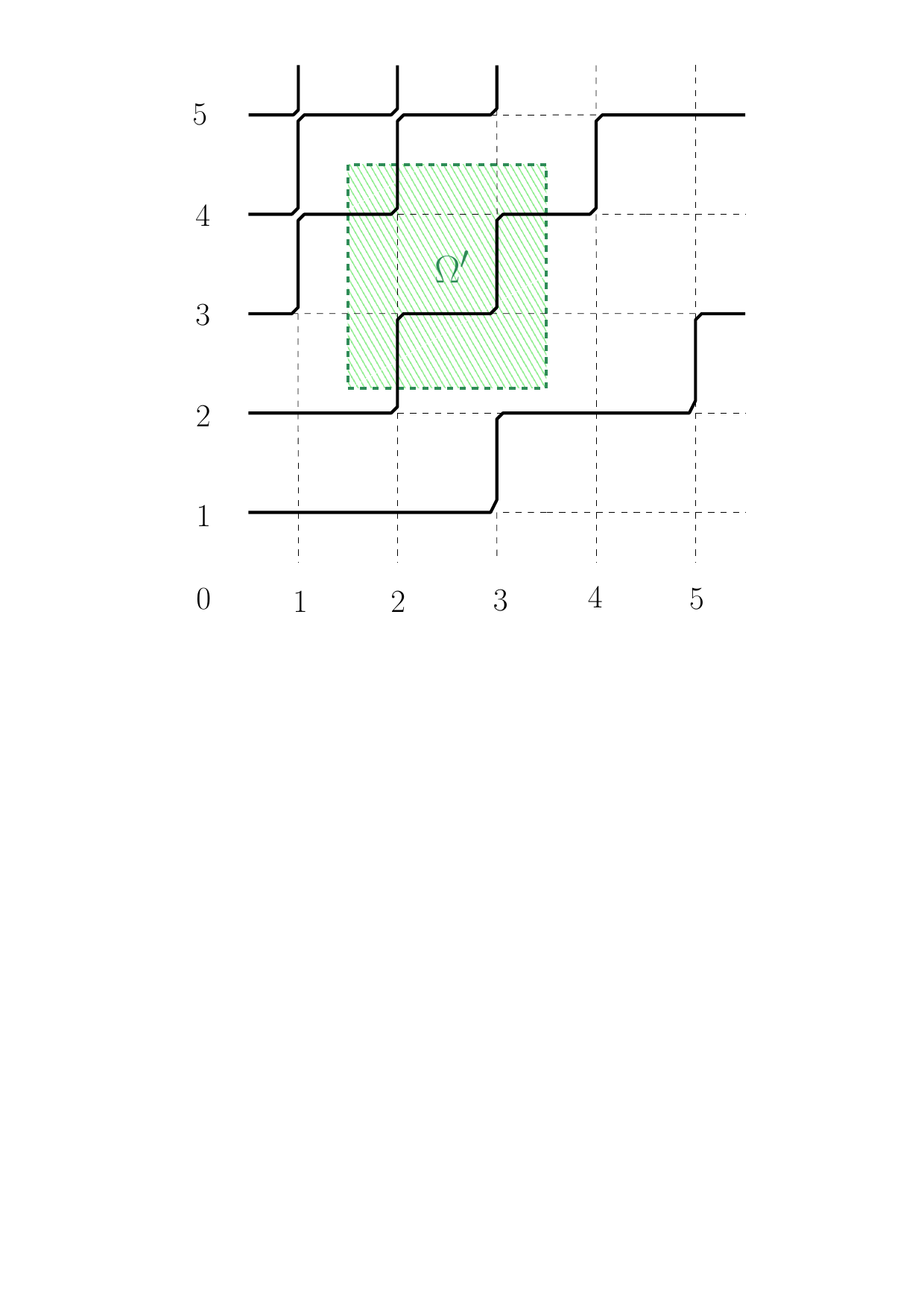 }
    \caption{Left: boundary conditions in quadrant. Right: a configuration and a subdomain $\Omega'$. With given boundary conditions $\Omega'$ has two configurations of conditional probabilities $\frac{c_1^2 c_2^2}{c_1^2 c_2^2+a_1 a_2 b_1 b_2}$ and $\frac{a_1 a_2 b_1 b_2}{c_1^2 c_2^2+a_1 a_2 b_1 b_2}$ --- the second one vanishes for the five-vertex weights. \label{Fig_5v_configurations}}
\end{figure}

We choose $\Omega$ to be the quadrant with step initial condition of Figure \ref{Fig_5v_configurations}: the paths enter on every site from the left and do not enter from below; this is an infinite version of the domain wall boundary conditions.

\smallskip

{\bf Question 6v.} What are possible Gibbs measures for the six-vertex model in the quadrant?

\smallskip

We conjecture that the answer should be very different depending on the value of $\Delta:=\frac{a_1 a_2+b_1 b_2 - c_1 c_2}{2\sqrt{a_1 a_2 b_1 b_2}}$. Namely, for $\Delta<1$, we expect the only measure to be the unit mass on the configurations of all paths going straight to the right. On the other hand, for $\Delta>1$, we expect a rich family of Gibbs measures to exist. The prediction is based on a phase transition at $\Delta=1$ discovered in \cite{Gorin_Liechty_2023}.

In this paper, we deal with a particular case of Question 6v corresponding to $\Delta=+\infty$. Namely, we prohibit the $b_2$ vertex degenerating into the five vertex model, and arrange the weights of the remaining vertices as in the right panel of Figure \ref{Fig_vertex_weights}. This is an $\eps\to 0+$ limit of the stochastic weights (cf.\ \cite{gwa1992six,borodin2016stochastic}) $(1,1,p,\eps,1-p,1-\eps)$ with $\Delta=\frac{p+\eps}{2\sqrt{p\eps(1-p)(1-\eps)}}$. The Gibbs property for this instance of the five-vertex model can be also interpreted through transitional probabilities of discrete time TASEP with geometric jumps, see Section \ref{TASEPsec} for further details. Various questions about such TASEP were previously investigated e.g.\ in \cite{dieker2008determinantal,matetski2023tasep,motegi2013vertex,knizel2019generalizations}. In contrast to the general six-vertex model, our analysis in this situation is simplified by the existence of determinantal formulas for transition probabilities and connection to the Grothendieck polynomials.

\medskip

In line with conjectures about Question 6v, our main result is the construction of a rich class of Gibbs measures in the quadrant for the five vertex model, see Theorem \ref{coherent-thm} for further details. Some of these measures have probabilistic interpretations as time evolutions of TASEP started from the step initial condition and with prescribed asymptotic speeds of individual particles, see Theorem \ref{limittheo}. The phenomenon of asymptotic particle speeds is also known in $p=0$ case of the Gelfand-Tsetlin graph, since \cite{vershik1982characters}, which interpreted the parameters of Gibbs measures as normalized asymptotic lengths (i.e. speeds of growth) of rows and columns of corresponding randomly growing Young diagrams. However, there is a striking difference: in the Gelfand-Tsetlin graph any positive speeds are possible, but in our $p$--deformation we discovered only speeds larger than $\frac{p}{1-p}$ to be admissible.

We remark that the set of all Gibbs measures is convex, and therefore the distinguished role in classification is played by the ergodic measures, which are extreme points of this set. We do not yet know whether all the measures we constructed are ergodic, see Theorem \ref{Theorem_extreme}  for a partial result and Section \ref{Section_open_questions} for further discussion.

\bigskip

Classifications of Gibbs measures and coherent systems have two important consequences in the context of statistical mechanics and integrable probability. First, knowing Gibbs measures in specific geometries leads to predictions for the local limits of models in various domains. Along these lines, \cite{sheffield2005random} classified all translationally invariance Gibbs measures on lozenge tilings in terms of their slopes and subsequently \cite{aggarwal2023universality} (see also \cite{gorin2017bulk}) proved that these measures are the only possible bulk limits for uniformly random tilings of arbitrary domains. \cite{olshanski1996ergodic} classified Gibbs measures on spectra of corners of random matrices; based on that \cite{okounkov2006birth} predicted the universality of GUE-corners process as a limit of statistical mechanics models near boundaries;  \cite{aggarwal2022gaussian} proved this for lozenge tilings (and universality is expected to extend beyond, e.g., to the six-vertex model, cf.\ \cite{Gorin_Nicoletti_lectures,Gorin_Liechty_2023}). Okounkov and Sheffield predicted and \cite{corwin2014brownian,aggarwal2023strong} identified the Airy$_2$ line ensemble as a particular solution to a classification problem involving Brownian Gibbs property for a family of continuous curves, which was subsequently used as a tool for proving convergence towards the Airy$_2$ line ensemble. Similarly, we expect that our Gibbs measures (and, looking further ahead, the eventual answer to Question 6v) will describe possible local limits in the six-vertex model and its five-vertex degeneration.

Second, the Gibbs measures appearing as answers in the classification problems in infinite domains often turn out to be exactly solvable: many more formulas and algebraic structures are available for them, as compared to generic models of 2d statistical mechanics. For instance, the measures for the $p=0$ of our problem corresponding to the Gelfand-Tsetlin graph enjoy connections to determinantal point processes (e.g.~ \cite{borodin2008asymptotics}), Robinson-Schensted-Knuth correspondence
 (e.g. \cite{betea2018perfect} and references therein), to 2+1--dimensional interacting particle systems (e.g.\ \cite{borodin2014anisotropic}). Similarly, we expect that the answers to Question 6v and its five-vertex version are very special and worth further studies.

\bigskip

\subsection{Results and methods} Fix $p\in (0,1)$ and let $G_{\lambda/\mu}^{(0,-p)}$ denote the skew Grothendieck polynomials depending on a parameter $p$. $G_{\lambda/\mu}^{(0,-p)}$ are symmetric functions in variables $x_1,x_2,\dots$, see Section \ref{Gsect} for the definition. To formulate our results we use \emph{coherent systems}. These are collections $\{M_n\}_{n\geq 0}$ of probability measures $M_n$ on partitions $\lambda=(\lambda_1\ge\lambda_2\ge\dots\ge\lambda_n\ge 0)$ of length (number of non-zero parts) at most $n$ which satisfy the coherency relations
$$
\sum_{\lambda}M_{n+1}(\lambda)\frac{G_{\lambda/\mu}^{(0,-p)}(1)G_\mu^{(0,-p)}(1^n)}{G_{\lambda}^{(0,-p)}(1^{n+1})}=M_n(\mu),
$$
for each $n\geq0$ and each partition $\mu$ of length at most $n$. Coherent systems are closely related to limits of normalized Grothendieck polynomials: if for a sequence of partitions $\lambda(N)$ we define $M_k(\lambda)$ by the expansion
$$
 \lim_{N\to\infty} \frac{G^{(0,-p)}_{\lambda(N)}(x_1,\dots,x_k,1^{N-k})}{G^{(0,-p)}_{\lambda(N)}(1^N)}=\sum_{\lambda}M_{k}(\lambda)\frac{G_\lambda^{(0,-p)}(x_1,\dots, x_k)}{G_{\lambda}^{(0,-p)}(1^{k})},
$$
then, under suitable convergence conditions, the resulting measures $\{M_k\}_k$ form a coherent system. Coherency relation in this case follows from the branching rule for Grothendieck polynomials. Further, Remark \ref{5-vertexrem} explains that coherent systems are in correspondence with Gibbs measures for the five-vertex model in the quadrant.

Our first result constructs a family of coherent systems $\{M^{\Phi}_n\}_{n\geq 0}$ parametrized by functions $\Phi\in\mathcal F$. The space $\mathcal F$ consists of functions analytic on the unit disk $|z|\leq 1$, which can be obtained as the limit $\lim_k \Phi_k(z)$ of rational functions
$$
\Phi_k(z)=\prod_{i=1}^{n_k}\frac{1-x_{i,k}(z-1)}{1-y_{i,k}(z-1)},
$$
where $x_{i,k}\in [-1, \frac{p}{1-p}]$, $y_{i,k}\geq 0$, $y_{i,k}\geq x_{i,k}$ for every $i,k$ and the limit is uniform on $|z|\leq 1$. Given a function $\Phi\in\mathcal F$, we define the coherent system $\{M_n^\Phi\}_n$ using the decomposition
$$
\sum_{\lambda: l(\lambda)\leq n} M_n^\Phi(\lambda) \frac{G_{\lambda}^{(0,-p)}(z_1, \dots, z_n)}{G_{\lambda}^{(0,-p)}(1^n)}=\Phi(z_1)\Phi(z_2)\dots\Phi(z_n).
$$
\begin{theoAlph}[{Theorem \ref{coherent-thm} in the text}] \label{Theorem_A} $\{M_n^\Phi\}_n$ is a coherent system for any $\Phi\in\mathcal F$.
\end{theoAlph}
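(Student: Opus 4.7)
The plan is to split the proof into two parts: verifying the coherency relation (which is formal and follows from the branching rule) and verifying that $M_n^\Phi$ is a bona fide probability measure (which requires positivity and is the real content). I would do the rational case first and then pass to the limit.

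For coherency, first note that every $\Phi\in\mathcal F$ satisfies $\Phi(1)=1$, since each factor $\frac{1-x(z-1)}{1-y(z-1)}$ equals $1$ at $z=1$ and uniform convergence preserves this. Setting $z_n=1$ in the defining identity for $M_n^\Phi$ gives
$$
\sum_\lambda M_n^\Phi(\lambda)\,\frac{G_\lambda^{(0,-p)}(z_1,\dots,z_{n-1},1)}{G_\lambda^{(0,-p)}(1^n)} = \Phi(z_1)\cdots \Phi(z_{n-1}),
$$
which is the right-hand side of the defining identity for $M_{n-1}^\Phi$. On the left I apply the branching rule for skew Grothendieck polynomials, $G_\lambda^{(0,-p)}(z_1,\dots,z_{n-1},1)=\sum_\mu G_{\lambda/\mu}^{(0,-p)}(1)\,G_\mu^{(0,-p)}(z_1,\dots,z_{n-1})$, exchange summations, and use linear independence of the $G_\mu^{(0,-p)}$ to match coefficients of $G_\mu^{(0,-p)}(z_1,\dots,z_{n-1})/G_\mu^{(0,-p)}(1^{n-1})$. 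This yields exactly the stated coherency relation.

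The substantive step is showing that $M_n^{\Phi_k}(\lambda)\ge 0$ and $\sum_\lambda M_n^{\Phi_k}(\lambda)=1$ for the rational approximants $\Phi_k$. Normalization is immediate upon setting all $z_j=1$. For non-negativity I would establish a single-variable Grothendieck Cauchy identity of the form
$$
\sum_{\lambda}\, c(\lambda;x,y)\,\frac{G_\lambda^{(0,-p)}(z)}{G_\lambda^{(0,-p)}(1)} = \frac{1-x(z-1)}{1-y(z-1)},
$$
with $c(\lambda;x,y)\ge 0$ exactly when $x\in [-1,\tfrac{p}{1-p}]$ and $y\ge \max(0,x)$. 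Multiplicativity of the $n$-variable expansion (obtained by iterating the single-variable identity together with branching) then realizes $M_n^{\Phi_k}$ as a convolution of non-negative measures and hence as a probability measure. The constraint $x\le p/(1-p)$ is the genuinely new feature: it is the $p$-deformation of the $p=0$ Edrei/Thoma positivity region, and verifying this is exactly the positivity domain should come out of analyzing the coefficients $c(\lambda;x,y)$ in the Grothendieck basis.

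Finally, for general $\Phi\in\mathcal F$, the uniform convergence $\Phi_k\to\Phi$ on $|z|\le 1$ lifts to uniform convergence of $\prod_j\Phi_k(z_j)\to\prod_j\Phi(z_j)$ on the closed polydisk. Since each $M_n^{\Phi_k}$ is a probability measure on partitions with $\ell(\lambda)\le n$, the family is tight, and a standard compactness argument combined with uniqueness of the Grothendieck-basis expansion (on a dense subset of the polydisk) produces a unique non-negative limit $M_n^\Phi$ of total mass $1$ satisfying the defining identity; the coherency relation, being linear, passes to the limit. The main obstacle is the positivity step: locating the correct Cauchy/dual Cauchy identity for skew Grothendieck polynomials and pinning down that the prescribed parameter ranges for $x_{i,k}$ and $y_{i,k}$ coincide precisely with the region where the expansion coefficients are non-negative.
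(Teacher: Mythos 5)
Your high-level plan (coherency via branching, positivity via reduction to one-factor rational $\Phi$, then pass to the limit) does match the skeleton of the paper's argument. But there are two genuine gaps that the paper has to work around, and one of them you do not seem to notice.

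\textbf{The coherency step is not purely formal.} The defining identity \eqref{defofM} is, as stated, only an identity of formal power series in $z_1,\dots,z_n$ (see Proposition \ref{basics-wd}, which explicitly defers convergence). Setting $z_n=1$ is not an operation on formal power series: it requires knowing that $\sum_\lambda M_n^\Phi(\lambda)\,G_\lambda^{(0,-p)}(z_1,\dots,z_{n-1},z_n)/G_\lambda^{(0,-p)}(1^n)$ actually converges at $z_n=1$. That convergence is not trivial, because the sum is over infinitely many $\lambda$ and does not reduce to finitely many terms after fixing $\lambda_2,\dots,\lambda_{n+1}$ alone; it requires knowing that $g_\lambda^{(0,p)}(\Phi)$ decays fast enough. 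The paper isolates exactly this point in Proposition \ref{pre-coh}: after identifying $M_n^\Phi$ with a dual-Grothendieck specialization via the skew Cauchy identity (Propositions \ref{skewCauchy-prop}, \ref{specialization}), it bounds the remaining one-row sum by the Taylor coefficients of $\Phi(z)$ using the Jacobi--Trudi identity for $g_\lambda^{(0,p)}$, and only then is $z=1$ legitimate. Your proposal would also need to handle the case $\Phi(0)=0$ separately, since the specialization $\iota_\Phi$ and the Cauchy identity argument break down there; the paper uses Corollary \ref{shiftcor} to strip off powers of $z$ first.

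\textbf{The positivity mechanism is the theorem, and it is not pinned down.} You correctly identify that the crux is showing that the coefficients $c(\lambda;x,y)$ in the single-factor expansion are non-negative precisely on the stated region, but you leave this as an acknowledged obstacle. The paper's route is concrete: the composed specialization $\iota_\Phi$ for $\Phi(z)=\frac{1-x(z-1)}{1-y(z-1)}$ factors as $\psi_u\circ\rho_\gamma^*$ with $\gamma=\tfrac{x}{1+x}$ and $u=\tfrac{y}{1+y}-\tfrac{x}{1+x}$ (Proposition \ref{adjshiftproperties}), so that $g_{\lambda/\mu}^{(0,p)}(\Phi)=g_{\lambda/\mu}^{(\gamma,p-\gamma)}(u)$. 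The one-variable branching weight of Proposition \ref{g-branching-prop} is then a product of factors $u^b(u+\gamma)^{c-b}(p-\gamma)^{i}\gamma^{r-b}$, each exponent non-negative, and the parameter constraints $x\in[-1,\tfrac{p}{1-p}]$, $y\ge0$, $y\ge x$ are exactly equivalent to $u\ge0$, $u+\gamma\ge0$, $p-\gamma\ge0$. That is where the boundary $\tfrac{p}{1-p}$ comes from; without this step the proof is not complete.

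\textbf{Minor: the compactness/tightness language is unnecessary and unjustified.} A family of probability measures on the countable discrete set $\G^p_n$ is not automatically tight; some mass could escape to infinity. The paper sidesteps this cleanly: pointwise limits of non-negatives are non-negative (using $g_\lambda^{(0,p)}(\Phi_k)\to g_\lambda^{(0,p)}(\Phi)$ from uniform convergence and Proposition \ref{JT-g}), and then $\sum_\lambda M_n^\Phi(\lambda)=1$ is derived a posteriori from coherency and $M_0^\Phi(\varnothing)=1$, rather than by passing $\sum=1$ through a limit.
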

\noindent The difficult part of this result is showing that $M_n^\Phi(\lambda)\geq 0$ for all $n,\lambda$. We do it by using the Cauchy--Littlewood identities for Grothendieck polynomials and combinatorics of dual Grothendieck polynomials. We further explain in Section \ref{Section_independence} that the coherent systems $\{M_n^\Phi\}_n$  are independent: it is impossible to express one of them as a convex linear combination of others. The proof of the independence result is based on a reduction to de Finetti's theorem.

\medskip

Our other results concern the coherent systems $\{M_n^\Phi\}_n$ for the special choice
$$
\Phi(z)=\Phi^{\calA,\calB}(z)=\prod_{i=1}^k\frac{1-\frac{p}{1-p}(z-1)}{1-\alpha_i (z-1)}\prod_{i=1}^l\left(1+\frac{\beta_i-p}{1-p}(z-1)\right)
$$
for a pair of sequences $\mathcal A=(\alpha_1, \dots, \alpha_k), \mathcal B=(\beta_1, \dots, \beta_l)$ satisfying
$$
\alpha_1\geq \alpha_2\geq\dots\geq \alpha_k>\frac{p}{1-p},\qquad 1\geq \beta_1\geq\beta_2\geq\dots \geq \beta_l\geq p.
$$
In this case we use $\{M_n^{\calA,\calB}\}_n$ to denote the resulting coherent system. These systems are distinguished by the following property: the measures $M_n^{\calA,\calB}$ are supported on the partitions whose Young diagrams do not include the box $(k+1,l+1)$. In other words, when working with $M_n^{\calA,\calB}$ we only need to consider partitions contained in the infinite hook-shape with $k$ infinite rows and $l$ infinite columns. We study these systems in more detail and obtain a form of the Law of Large Numbers and Central Limit Theorem for them.

\begin{theoAlph}[{Theorem \ref{limittheo} in the text}] \label{Theorem_B} Let $\lambda(n)$ denote the random partition distributed according to $M^{\calA,\calB}_n$, let $\lambda'(n)$ denote the transpose partition, and $s$ denote the number of $i$ such that $\beta_i=1$. Then $\lambda(n)'_1=\dots=\lambda(n)'_s=n$ almost surely and as $n\to\infty$
$$
\left(\frac{\lambda(n)_1-\alpha_1n}{\sqrt{n\alpha_1(1+\alpha_1)}},\dots, \frac{\lambda(n)_k-\alpha_kn}{\sqrt{n\alpha_k(1+\alpha_k)}} \right) \to \bx^{GUE}_{\calA},
$$
$$
\left(\frac{\lambda(n)'_{s+1}-\beta_{s+1}n}{\sqrt{n\beta_{s+1}(1-\beta_{s+1})}},\dots, \frac{\lambda(n)'_l-\beta_ln}{\sqrt{n\beta_l(1-\beta_l)}} \right) \to \by^{GUE}_{\calB},
$$
where both convergences are in distribution and $\bx^{GUE}_{\calA}, \by^{GUE}_{\calB}$ denote random vectors defined in Section \ref{finitesect} in terms of GUE eigenvalue distributions. In particular, in probability, $\frac{\lambda(n)_i}{n}\to \alpha_i$ for $i\in [1,k]$ and $\frac{\lambda(n)'_i}{n}\to \beta_i$ for $i\in [1,l]$.
\end{theoAlph}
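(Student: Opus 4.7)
The plan is to combine an algebraic reduction for the almost-sure statement with contour-integral asymptotics for the CLT/LLN. I split the argument into four steps.

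The first assertion, that $\lambda(n)'_1=\cdots=\lambda(n)'_s=n$ almost surely, reduces to an algebraic observation about the generating identity. When $\beta_i=1$ the factor $1+\frac{\beta_i-p}{1-p}(z-1)$ simplifies to $z$, so $\Phi^{\calA,\calB}(z)=z^s\widetilde\Phi(z)$ with $\widetilde\Phi\in\mathcal F$. A short combinatorial check on set-valued tableaux shows that the first column of every shape-$\lambda$ tableau (with $\lambda_n\ge 1$) must be exactly $\{1\},\{2\},\ldots,\{n\}$, yielding the identity
$$G_\lambda^{(0,-p)}(z_1,\ldots,z_n)=z_1\cdots z_n\,G_{\lambda-1^n}^{(0,-p)}(z_1,\ldots,z_n)\qquad(\lambda_n\ge 1).$$
Plugging this into the defining identity and matching coefficients of $(z_1\cdots z_n)^s$ forces the measure to be supported on $\{\lambda:\lambda_n\ge s\}$, which is equivalent to $\lambda'_1=\cdots=\lambda'_s=n$. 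After absorbing the factor $z^s$ we may assume $s=0$ for the distributional analysis.

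Next, I would obtain a contour-integral representation for observables separating the large rows $(\lambda_1,\ldots,\lambda_k)$ and the large columns $(\lambda'_{s+1},\ldots,\lambda'_l)$. The same Cauchy--Littlewood and dual-Grothendieck machinery used in the proof of Theorem \ref{Theorem_A} lets one invert the generating identity and write probabilities such as $\P(\lambda_1\ge a_1,\ldots,\lambda_k\ge a_k)$ as multiple contour integrals whose integrand is built from $\Phi^{\calA,\calB}(z)^n$ and explicit rational kernels. The specific rational structure of $\Phi^{\calA,\calB}$ --- $k$ simple poles at $z=1+1/\alpha_i$ (from the $\alpha_i$ factors) and simple zeros coming from the $\beta_j$ factors --- makes these integrals tractable; analogous formulas for columns are obtained by a transposed application of the same machinery (or, equivalently, by the $\omega$-involution on the symmetric functions).

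Then I would carry out a saddle-point (steepest descent) analysis. Writing the integrand in the form $\exp\bigl(n\log\Phi^{\calA,\calB}(z)+\cdots\bigr)$, each $\alpha_i$ contributes a dominant critical region near $z=1+1/\alpha_i$; the first-order condition identifies $\lambda_i/n\to\alpha_i$ (LLN), and the second-order Gaussian expansion at the saddle produces fluctuations with variance $\alpha_i(1+\alpha_i)n$ --- the variance of a single geometric jump of mean $\alpha_i$, consistent with the TASEP interpretation. The analogous saddles from the $\beta_j$ zeros give the column statements with variance $\beta_j(1-\beta_j)n$. The joint limit carries the Vandermonde-type interaction built into the multi-contour integrand (ultimately coming from the symmetrizer in $G_\lambda^{(0,-p)}$): when several $\alpha_i$ coincide within a group, the colliding saddles combine via a Harish-Chandra/Itzykson--Zuber kernel and produce precisely the joint density of GUE eigenvalues for that group, while different groups decouple into independent GUE blocks. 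This is the structure of $\bx^{GUE}_{\calA}$ defined in Section \ref{finitesect}; the same analysis applied on the column side gives $\by^{GUE}_{\calB}$, and the LLN assertions are immediate corollaries.

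The main obstacle, in my view, is the second step: obtaining a sufficiently clean contour-integral formula for the joint distribution. Unlike the Schur ($p=0$) case, the ratios $G_\lambda^{(0,-p)}(z)/G_\lambda^{(0,-p)}(1^n)$ do not admit a simple determinantal form, and the $\tfrac{p}{1-p}$ shift in the numerator of $\Phi^{\calA,\calB}$ must be carefully tracked through the Cauchy--Littlewood manipulations and combinatorics of dual Grothendieck polynomials. Once a manageable integral representation is in hand, the saddle-point asymptotics and the identification of the Gaussian fluctuations with GUE-eigenvalue vectors follow familiar templates from the $p=0$ theory.
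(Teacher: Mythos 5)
Your high-level architecture is in the right spirit: the $s=0$ reduction via $\Phi^{\calA,\calB}(z)=z^s\tilde\Phi(z)$ and the shift identity $G^{(0,-p)}_\lambda(z_1,\dots,z_n)=z_1\cdots z_n\,G^{(0,-p)}_{\lambda-1^n}(z_1,\dots,z_n)$ (the paper derives this directly from the determinantal definition \eqref{Gdef} rather than via tableaux, but the identity is correct and this is exactly Corollary \ref{shiftcor} combined with Proposition \ref{hook}), and the eventual saddle-point / Vandermonde $\to$ GUE identification is the right flavor. However, the concrete route through Step 2 has a genuine gap that you yourself flag as ``the main obstacle,'' and the paper resolves it in a way your sketch does not anticipate. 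The paper does \emph{not} invert the generating identity into contour integrals over the $z$-variables with an integrand $\Phi^{\calA,\calB}(z)^n$. Instead, Proposition \ref{hook} gives a clean product formula $M^{\calA,\varnothing}_n(\lambda)=G^{(0,-p)}_\lambda(1^n)\,g^{(p,0)}_\lambda(\chi_1,\dots,\chi_k)\prod_i\left(\frac{1+p/(1-p)}{1+\alpha_i}\right)^{n}$ in which $\Phi$ appears only through a constant to the $n$th power and through a fixed specialization of $g_\lambda$. The two factors are then handled by \emph{different} tools: contour integrals in $k$ variables $u_i$ (Proposition \ref{Gintl} / Corollary \ref{Gint2}, leading to Theorems \ref{steeptheo1}, \ref{steeptheo2}) for $G_\lambda(1^n)$, and a determinant formula (Proposition \ref{detg}, leading to Theorem \ref{gtheo}) for $g_\lambda$. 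Your proposed saddle points at $z=1+1/\alpha_i$ are in fact \emph{poles} of $\Phi^{\calA,\calB}$, not stationary points of an exponential phase; the paper's saddle analysis happens in the $u_i$-variables at $u_i=\alpha_i/(1+\alpha_i)$, which is not visible from your sketch.

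You also do not address the general case in which both $\calA$ and $\calB$ are nonempty. In the paper this requires a genuine argument: using Proposition \ref{branchingMPhi} to split $M^{\calA,\calB}_n(\lambda)=\sum_\mu M^{\varnothing,\calB}_n(\lambda/\mu)M^{\calA,\varnothing}_n(\mu)$ (and the other order), together with the observation that $M^{\varnothing,\calB}_n(\lambda/\mu)$ vanishes unless $\lambda/\mu$ is a union of $l$ vertical strips (so $\lambda_i-\mu_i\leq l$), which gives a deterministic $O(1)$ sandwich and transfers the $\calB=\varnothing$ CLT to the full measure. This coupling step is essential and is not implied by the decoupling-of-saddles intuition. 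Finally, your phrasing suggests a joint statement about rows and columns becoming independent; the paper explicitly disclaims any joint law (its footnote notes that \eqref{limitrows} and \eqref{limitcolumns} are considered separately), and the coupling argument only gives the two marginals.
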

\noindent The proof of Theorem \ref{Theorem_B} is based on precise asymptotic analysis of Grothendieck polynomials $G^{(0,-p)}_{\lambda}$ when either the number of rows or the number of columns of $\lambda$ is fixed. This analysis applies the steepest descent method to two contour integral representations for Grothendieck polynomials developed in Section \ref{Section_contour_integral}.

Combining Theorem \ref{Theorem_B} with very general properties of coherent systems, we arrive at the following strengthening of the independence of $\{M^{\Phi}_n\}_{n\geq 0}$.

\begin{theoAlph}[{Theorem \ref{Theorem_extreme} in the text}]\label{Theorem_C} Let $\calA=(\alpha_1,\dots, \alpha_k)$ and $\calB=(\beta_1,\dots,\beta_l)$ be sequences satisfying
$$
\alpha_1\geq \alpha_2\geq\dots\geq \alpha_k>\frac{p}{1-p},\qquad 1\geq \beta_1\geq\beta_2\geq\dots \geq \beta_l\geq p.
$$
Then the coherent systems $M_n^{\calA,\varnothing}$ and $M_n^{\varnothing,\calB}$ are extreme points in the convex space of all coherent systems.
\end{theoAlph}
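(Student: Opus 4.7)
My plan is to invoke the Vershik--Kerov ergodic method: first decompose $M_n^{\calA,\varnothing}$ into extreme components via Choquet theory, then use Theorem \ref{limittheo} together with an asymptotic formula for normalized Grothendieck polynomials to force each extreme component to coincide with $M_n^{\calA,\varnothing}$. The case of $M_n^{\varnothing,\calB}$ follows in parallel by exchanging rows and columns (working with conjugate partitions and the column-version of the asymptotic), so I focus on $M_n^{\calA,\varnothing}$.

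\medskip

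The set of coherent systems is a Choquet simplex, so one can write
$$
M_n^{\calA,\varnothing} = \int M_n^{\omega}\, d\nu(\omega),
$$
where $M_n^\omega$ ranges over extreme coherent systems. Viewing each coherent system as a probability measure on sequences $(\lambda(0),\lambda(1),\dots)$ compatible with the backward branching, the quantities $\limsup_{N\to\infty} \lambda(N)_i/N$ are tail-measurable, and Theorem \ref{limittheo} gives $\lambda(N)_i/N \to \alpha_i$ in probability under $M^{\calA,\varnothing}$. This can be upgraded to almost-sure convergence by a Borel--Cantelli argument along a subsequence $N_j = j^2$, since the CLT bound in Theorem \ref{limittheo} provides $O(N^{-1/2+o(1)})$ deviations, with the gaps between $N_j$'s filled in by near-monotonicity along the backward chain. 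Because extreme coherent systems have trivial tail, for $\nu$-a.e. $\omega$ the same a.s. limit $\lambda(N)_i/N \to \alpha_i$ holds under $M_N^\omega$.

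\medskip

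Next I would prove the following asymptotic: for partitions $\lambda$ supported on at most $k$ rows with $\lambda_i/N \to \alpha_i > \frac{p}{1-p}$ as $N\to\infty$,
$$
\frac{G^{(0,-p)}_\lambda(z_1,\dots,z_k, 1^{N-k})}{G^{(0,-p)}_\lambda(1^N)} \xrightarrow[N\to\infty]{} \prod_{i=1}^{k} \Phi^{\calA,\varnothing}(z_i),
$$
uniformly on $|z_i|\le 1$. The proof uses the contour-integral representation from Section \ref{Section_contour_integral} together with steepest descent: the saddles are pinned down by the $\alpha_i$, and the threshold $\alpha_i > \frac{p}{1-p}$ is exactly what permits the required contour deformation. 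Iterating the coherency relation yields, for every coherent system $\{M_N\}$,
$$
\sum_\lambda M_N(\lambda)\, \frac{G^{(0,-p)}_\lambda(z_1,\dots,z_k, 1^{N-k})}{G^{(0,-p)}_\lambda(1^N)} = \sum_{\mu:\,\ell(\mu)\le k} M_k(\mu)\, \frac{G^{(0,-p)}_\mu(z_1,\dots,z_k)}{G^{(0,-p)}_\mu(1^k)}.
$$
Applying this identity to an extreme component $M_N^\omega$ and combining the above asymptotic (together with an a priori uniform bound on the integrand coming from the contour representation) with the a.s. concentration from the previous step, dominated convergence forces the right-hand side to equal $\prod_i \Phi^{\calA,\varnothing}(z_i)$ for $\nu$-a.e. $\omega$. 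By the defining identity of $M_k^{\calA,\varnothing}$, this pins down $M_k^\omega = M_k^{\calA,\varnothing}$ for every $k$, so the decomposition is trivial and $M^{\calA,\varnothing}$ is extreme.

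\medskip

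The main obstacle is the asymptotic formula for the normalized Grothendieck polynomial in the third paragraph. Unlike the use in Theorem \ref{limittheo}, where one examines behavior near $z = 1$, here I need uniform control over the entire disk $|z_i|\le 1$ together with a dominating bound valid for all partitions $\lambda$ whose scaled row profile has converged. Making the steepest-descent picture robust enough to survive the averaging over $\omega$, and clean enough to pass to the limit under the coherency identity, is the delicate point; fortunately the rigidity of the support condition (at most $k$ nonzero rows) and the strict inequality $\alpha_i > \frac{p}{1-p}$ should make the contour analysis carry through along the same lines as the proof of Theorem \ref{limittheo}.
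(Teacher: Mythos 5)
Your overall strategy is the Vershik--Kerov ergodic method, which is also the paper's strategy, but two of your individual steps have genuine gaps and the paper takes a slicker route that avoids both.

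\textbf{Gap 1: upgrading to almost-sure LLN.} You propose to upgrade the in-probability convergence $\lambda(N)_i/N\to\alpha_i$ to almost-sure convergence via Borel--Cantelli along $N_j=j^2$, citing ``$O(N^{-1/2+o(1)})$ deviations'' from Theorem~\ref{limittheo}. But Theorem~\ref{limittheo} is a distributional CLT: it gives weak convergence of the rescaled rows to a GUE vector, hence tightness, but \emph{no quantitative tail bound} of the form $\P(|\lambda(N)_i/N-\alpha_i|>\eps)\le c_j$ with $\sum_j c_j<\infty$. Without such a bound Borel--Cantelli does not apply; the local CLT in Theorem~\ref{steeptheo1} is uniform only on compacts in $\bx$ and the paper proves no uniform integrability or moment control that would yield summable tails. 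The paper sidesteps this entirely: it invokes the reverse martingale convergence theorem (Proposition~\ref{extrprop}(1)) to guarantee that $\lim_{N\to\infty}p^\downarrow_{N,n}(t_N,\lambda)$ \emph{exists} for $M$-almost every path, and then uses the CLT only to locate a \emph{subsequence} $(N_m)$ along which the rescaled rows land in a good set $A_{N_m,\delta}$. The a.s.\ existence of the limit plus identification along a subsequence is enough; no a.s.\ LLN is needed.

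\textbf{Gap 2: the asymptotic under mere LLN-scale control fails when $\alpha_i=\alpha_j$.} You assert that $\lambda_i/N\to\alpha_i$ with $\alpha_i>\frac{p}{1-p}$ suffices to conclude
\[
\frac{G^{(0,-p)}_\lambda(z_1,\dots,z_k,1^{N-k})}{G^{(0,-p)}_\lambda(1^N)}\longrightarrow\prod_{i=1}^k\Phi^{\calA,\varnothing}(z_i).
\]
This is true when all $\alpha_i$ are distinct, but the theorem allows $\alpha_i=\alpha_j$, and in that case Theorem~\ref{steeptheo1} produces a prefactor $\prod_{\alpha_i=\alpha_j,\ i<j}(x_i-x_j)$ that vanishes as the $\sqrt{N}$-rescaled rows collide. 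If $\lambda(N)_i-\lambda(N)_j=o(\sqrt{N})$ the numerator and denominator both tend to zero after the natural rescaling and the ratio's limit is not determined by Theorem~\ref{steeptheo1}. One needs $|x_i-x_j|\ge\delta>0$ at scale $\sqrt{N}$, not just the LLN. The paper handles this by defining the good set $A_{N,\delta}$ with an explicit separation condition $|x_i-x_j|>\delta$ and arguing via the CLT that almost every path visits $A_{N,\delta}$ infinitely often for some $\delta>0$; the limit is then computed as a ratio of two \emph{nonzero} limits. Your a.s.\ LLN---even if established---would still not license the asymptotic formula in the degenerate case, so this step must be reworked along the paper's lines (or one must impose distinct $\alpha_i$ and treat ties separately).

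Beyond these gaps, your route differs mildly in scaffolding: you begin from a Choquet decomposition $M=\int M^\omega\,d\nu$ and then argue each extreme component coincides with $M^{\calA,\varnothing}$, whereas the paper applies the sufficient criterion of Proposition~\ref{extrprop}(2) directly (if $\lim_N p^\downarrow_{N,n}(t_N,\cdot)=M_n(\cdot)$ a.e.\ then $M$ is extreme) without ever performing the decomposition. Both are legitimate Vershik--Kerov packagings, and the final step---equating power series in $G^{(0,-p)}_\lambda(z_1,\dots,z_n)$ and invoking linear independence---is the same in both.
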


Two intriguing follow-up questions remain open: Are all other $\{M_n^\Phi\}_n$ in Theorem \ref{Theorem_A} also extreme? Are there extreme coherent systems not from this list?

\subsection{Structure of the text} In Section \ref{Gsect} we give necessary background about stable Grothendieck polynomials. In Section \ref{Section_Branching_graph} we introduce the branching graph with Grothendieck weights and describe a family of coherent systems on it, proving Theorem \ref{Theorem_A}. In Section \ref{asyman} we provide asymptotic analysis of Grothendieck polynomials which is used in the following section.  In Section \ref{finitesect} we explore the asymptotic behavior for some of the constructed coherent systems (proving Theorem \ref{Theorem_B}) and show that they are extreme (proving Theorem \ref{Theorem_C}). Finally, in Section \ref{Section_open_questions} we discuss open questions and conjectures regarding potential further developments of the subject.

\subsection{Acknowledgements} We thank Alexei Borodin, Alexey Bufetov, and Grigori Olshanski for very helpful discussions.

\section{Grothendieck functions}\label{Gsect}

In this section we describe the necessary facts about (stable) Grothendieck symmetric functions. We are using the version of these functions from \cite{Yel16}, which is our primary reference, together with \cite{HJKSS21}. For this section we let $\fa, \fb$ denote a pair of generic parameters.

\subsection{Basic notation} First we remind the standard notation regarding partitions and symmetric functions, following \cite[Chapter I]{Mac95}. A partition is an integer sequence $\lambda=(\lambda_1,\lambda_2,\dots)$ such that $\lambda_1\geq\lambda_2\geq\dots\geq 0$. We use $\mathbb Y$ to denote the set of all partitions. For a partition $\lambda$ its length $l(\lambda)$ is the number of nonzero parts $\lambda_i$, $m_k(\lambda)$ is the number of $\lambda_i$ equal to $k$ and we also sometimes use $1^{m_1(\lambda)}2^{m_2(\lambda)}\dots$ to denote $\lambda$. We also define the conjugate partition $\lambda'$ where $\lambda'_i$ is the number of $j$ such that $\lambda_j\geq i$. For a pair of partitions $\lambda,\mu$ we write $\mu\subset\lambda$ if $\mu_i\leq\lambda_i$ for all $i$ and we use $\lambda/\mu$ to denote the corresponding skew diagram, i.e.\ a collection of boxes arranged in rows with row $i$ formed by boxes at $(i,\mu_i+1)$, $(i,\mu_i+2)$, \dots, $(i,\lambda_i)$. We set $|\lambda|=\lambda_1+\lambda_2+\dots$ and $|\lambda/\mu|=|\lambda|-|\mu|$. We say that $\mu$ interlaces $\lambda$ and write $\mu\preceq\lambda$ when
$$
\lambda_1\geq\mu_1\geq\lambda_2\geq\mu_2\geq\dots.
$$
This condition is equivalent to the skew diagram $\lambda/\mu$ being a horizontal strip, i.e.\ it has at most one box in each column.

We use $\Lambda_n$ to denote the graded ring of symmetric polynomials over $\Z[\fa,\fb]$ in $n$ variables $x_1, \dots, x_n$, and let $\Lambda$ denote the ring of symmetric functions over $\Z[\fa,\fb]$, which we treat as functions in infinitely many variables $(x_1,x_2,\dots)$. We use $\hat\Lambda_n, \hat\Lambda$ to denote the completions of the corresponding graded rings, which are equivalent to symmetric formal series in $(x_1,\dots, x_n)$ and $(x_1, x_2, \dots)$ respectively. Recall that $\Lambda$ has several graded bases, which are all labeled by the set of partitions. The \emph{complete symmetric functions} are defined by
$$
h_k(x_1,x_2,\dots)=\sum_{i_1\leq i_2\leq\dots\leq i_k} x_{i_1}\dots x_{i_k},\qquad h_\lambda=h_{\lambda_1}h_{\lambda_2}\dots.
$$
The functions $\{h_k\}_{k\geq 1}$ are algebraically independent and generate the algebra $\Lambda$, that is, $\{h_\lambda\}_\lambda$ is a graded basis of $\Lambda$. For finitely many variables, \emph{Schur polynomials} are given by
$$
s_\lambda(x_1,\dots, x_n)=\sum_{\varnothing=\lambda^{(0)}\preceq\lambda^{(1)}\preceq\dots\preceq \lambda^{(n)}=\lambda}x_1^{|\lambda^{(1)}/\lambda^{(0)}|}\dots x_n^{|\lambda^{(n)}/\lambda^{(n-1)}|}=\frac{\det[x_j^{\lambda_i+n-i}]_{1\leq i,j\leq n}}{\det[x_j^{n-i}]_{1\leq i,j\leq n}}.
$$
One can verify that $s_\lambda(x_1,\dots, x_n,0)=s_\lambda(x_1,\dots, x_n)$, which allows to define Schur symmetric functions $s_\lambda(x_1,x_2,\dots)\in\Lambda$. The functions $\{s_\lambda\}_\lambda$ form a graded basis of $\Lambda$.

We can define the Hall scalar product $\langle\cdot,\cdot\rangle$ on $\Lambda$ by setting $\{s_\lambda\}_\lambda$ to be an orthonormal basis. Equivalently, two graded bases $\{f_\lambda\}_\lambda$, $\{g_\lambda\}_\lambda$ are dual to each other with respect to the Hall product if and only if
$$
\sum_{\lambda} f_\lambda(x_1,x_2,\dots) g_\lambda(y_1,y_2,\dots)=\sum_{\lambda} s_\lambda(x_1,x_2,\dots)s_\lambda(y_1,y_2,\dots)=\prod_{i,j\geq 1}\frac{1}{1-x_iy_j}.
$$
The involution $\omega$ on $\hat\Lambda$ is defined by setting $\omega(h_k)=e_k$. Since $\{h_k\}_{k\geq 1}$ generate $\Lambda$, the action of $\omega$ can be extended to any symmetric function. In particular, $\omega(s_\lambda)=s_{\lambda'}$.

\subsection{Grothendieck functions} For a partition $\lambda$ of length at most $n$ the stable Grothendieck polynomial $G_\lambda^{(\fa,\fb)}(x_1, \dots, x_n)$ is defined by
\begin{equation}\label{Gdef}
G_\lambda^{(\fa,\fb)}(x_1, \dots, x_n)=\frac{\displaystyle\det\left[\frac{x_i^{\lambda_j+n-j}(1+\fb x_i)^{j-1}}{(1-\fa x_i)^{\lambda_j}}\right]_{1\leq i,j\leq n}}{\det\left[x_i^{n-j}\right]_{1\leq i,j\leq n}}.
\end{equation}
As a ratio of two skew-symmetric expressions, polynomials $G_\lambda^{(\fa,\fb)}$ are symmetric and we can treat them as elements of $\hat\Lambda_n$. Since
$$
\restr{\det\left[\frac{x_i^{\lambda_j+n+1-j}(1+\fb x_i)^{j-1}}{(1-\fa x_i)^{\lambda_j}}\right]_{1\leq i,j\leq n+1}}{x_{n+1}=0}=x_1x_2\dots x_n\det\left[\frac{x_i^{\lambda_j+n-j}(1+\fb x_i)^{j-1}}{(1-\fa x_i)^{\lambda_j}}\right]_{1\leq i,j\leq n}
$$
we have the stability property
$$
G_\lambda^{(\fa,\fb)}(x_1, \dots, x_n, 0)=G_\lambda^{(\fa,\fb)}(x_1, \dots, x_n),
$$
where for $l(\lambda)>n$ we set $G_\lambda^{(\fa,\fb)}(x_1, \dots, x_n)=0$. This allows us to define $G_\lambda^{(\fa,\fb)}\in\hat\Lambda$ as a symmetric power series in infinitely many variables $x_1, x_2, \dots$. Note that
\begin{equation}\label{expandG}
G_\lambda^{(\fa,\fb)}=s_\lambda+\text{higher degree terms},
\end{equation}
in particular the family $\{G_\lambda^{(\fa,\fb)}\}_{\lambda\in\mathbb Y}$ is linearly independent.

Define the dual Grothendieck functions $\{g_\lambda^{(\fa,\fb)}\}_\lambda$ as the dual family to $\{G_\lambda^{(-\fa,-\fb)}\}_\lambda$ with respect to Hall scalar product. In other words, we set
\begin{equation*}
g_\lambda =\sum_{\mu} a_{\mu\lambda} s_\mu
\end{equation*}
where $(a_{\lambda\mu})_{\lambda,\mu\in\mathbb Y}$ is the inverse of the transition matrix $(\langle G_\lambda^{\fa,\fb}, s_\mu \rangle)_{\lambda,\mu\in \mathbb Y}$. Due to \eqref{expandG} the transition matrix is upper-triangular with respect to an ordering where partitions of $m$ are smaller than partitions of $n$ for $m<n$. So the inverse $(a_{\lambda\mu})_{\lambda,\mu\in\mathbb Y}$ exists and is upper-triangular, implying
$$
g_\lambda^{(\fa,\fb)}=s_\lambda+\text{lower degree terms}.
$$
In particular, $\{g_\lambda^{(\fa,\fb)}\}_\lambda$ is a basis of $\Lambda$. We have the Cauchy identity
$$
\sum_\lambda G_\lambda^{(-\fa,-\fb)}(x_1, x_2,\dots)g_\lambda^{(\fa,\fb)}(y_1, y_2, \dots)=\prod_{i,j\geq 1}\frac{1}{1-x_iy_j}.
$$
We can also define $g_\lambda^{(\fa,\fb)}$ in a fashion similar to \eqref{Gdef}, see \cite[Definition 1.2]{HJKSS21}. We only give the $\fb=0$ case of that definition.

\begin{prop}[{\cite{HJKSS21}}]\label{detg} Let $\lambda$ be a partition of length at most $n$. Then
$$
g^{(\fa,0)}_\lambda(x_1, \dots, x_n)=\frac{\displaystyle\det\left[x_i^{n-j}\phi^{(\fa,0)}_{\lambda_j}(x_i)\right]_{1\leq i,j\leq n}}{\det\left[x_i^{n-j}\right]_{1\leq i,j\leq n}},
$$
where $\phi^{(\fa,0)}_0(x)=1$ and for $k>0$
$$
\phi^{(\fa,0)}_k(x)=x(x+\fa)^{k-1}.
$$
\end{prop}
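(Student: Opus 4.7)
The plan is to establish the claimed formula by verifying the Cauchy identity that characterizes $g_\lambda^{(\fa,0)}$ as the Hall dual basis of $\{G_\mu^{(-\fa,0)}\}$.

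Let $\tilde g_\lambda(x_1,\dots,x_n)$ denote the right-hand side of the proposition. First I would verify that $\tilde g_\lambda\in\Lambda$: symmetry in $x_1,\dots,x_n$ is immediate from the bialternant form, and the stability $\tilde g_\lambda(x_1,\dots,x_n,0)=\tilde g_\lambda(x_1,\dots,x_n)$ for $l(\lambda)\le n$ follows by Laplace-expanding the $(n{+}1)\times(n{+}1)$ numerator along the row obtained by setting $x_{n+1}=0$: the only non-vanishing entry in that row is the last one, $\phi_0(0)=1$, and the resulting cancellation with $\prod_i(x_i - x_{n+1})|_{x_{n+1}=0}=\prod_i x_i$ in the Vandermonde recovers the $n$-variable formula.

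The central tool is the substitution $\tilde y = y/(1+\fa y)$, under which $y^{\lambda+n-j}(1+\fa y)^{-\lambda}=y^{n-j}\tilde y^{\lambda}$, so the Grothendieck numerator reads $\det[y_i^{n-j}\tilde y_i^{\lambda_j}]$. A direct summation of the geometric series $1 + x\tilde y\sum_{k\ge 0}(\tilde y(x+\fa))^k$, followed by the substitution $\tilde y = y/(1+\fa y)$, yields the one-variable identity
\[\sum_{m\ge 0}\tilde y^{m}\,\phi_{m}(x) \;=\; \frac{1-\fa\tilde y}{1-(x+\fa)\tilde y} \;=\; \frac{1}{1-xy}.\]
The goal is then the Cauchy identity
\[\sum_{\lambda:\,l(\lambda)\le n} G_\lambda^{(-\fa,0)}(y_1,\dots,y_n)\,\tilde g_\lambda(x_1,\dots,x_n) \;=\; \prod_{i,k=1}^n\frac{1}{1-x_ky_i},\]
which, by the uniqueness of the dual basis, forces $\tilde g_\lambda = g_\lambda^{(\fa,0)}$.

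After clearing both Vandermondes and applying the classical Cauchy determinant identity on the right, this reduces to the determinantal identity
\[\sum_\lambda \det\!\bigl[y_i^{n-j}\tilde y_i^{\lambda_j}\bigr]\,\det\!\bigl[x_k^{n-j}\phi_{\lambda_j}(x_k)\bigr] \;=\; \det\!\Bigl[\tfrac{1}{1-x_k y_i}\Bigr].\]
The main obstacle lies here: a naive application of Cauchy--Binet to the bilinear form $\sum_m \tilde y_i^m\phi_m(x_k)=1/(1-x_ky_i)$ produces instead a decomposition with $\det[\tilde y_i^{\lambda_j+n-j}]\det[\phi_{\lambda_j+n-j}(x_k)]$, because the column factors $y_i^{n-j}$ and $x_k^{n-j}$ depend on the column index $j$ rather than on the partition entry $\lambda_j$. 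I would overcome this either through an LGV-style argument matched to a non-intersecting lattice-path interpretation of $g_\lambda^{(\fa,0)}$ in which $\phi_k(x)$ arises as the generating function of a single weighted path of length $k$, or through induction on $n$, using Laplace expansion of both $n\times n$ determinants together with the one-variable identity to reduce to the $(n{-}1)$-variable case and thereby extract a branching rule for $\tilde g_\lambda$ matching the known branching of $g_\lambda^{(\fa,0)}$; the base case $n=1$ reduces precisely to the one-variable identity above.
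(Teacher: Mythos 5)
The paper proves nothing here: Proposition~\ref{detg} is stated as the $\fb=0$ case of \cite[Definition 1.2]{HJKSS21} and cited without proof, so there is no in-paper argument to compare yours against. I will therefore assess your proposal on its own.

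Your preparatory reductions are all correct. The stability computation is right: setting $x_{n+1}=0$ in the $(n{+}1)\times(n{+}1)$ numerator kills every entry of the last row except the one with $\phi_{\lambda_{n+1}}(0)=\phi_0(0)=1$, and the residual factor $\prod_i x_i$ in the minor cancels against $\prod_i(x_i-x_{n+1})|_{x_{n+1}=0}$ in the $(n{+}1)$-variable Vandermonde. The substitution $\tilde y=y/(1+\fa y)$ does rewrite the numerator of $G_\lambda^{(-\fa,0)}$ as $\det\bigl[y_i^{n-j}\tilde y_i^{\lambda_j}\bigr]$, the one-variable identity $\sum_{m\ge0}\tilde y^m\phi_m(x)=1/(1-xy)$ is correctly derived, and the Cauchy identity $\sum_{l(\lambda)\le n}G_\lambda^{(-\fa,0)}(\by)\,\tilde g_\lambda(\bx)=\prod_{i,k}(1-x_ky_i)^{-1}$ does uniquely force $\tilde g_\lambda=g_\lambda^{(\fa,0)}$, because $\tilde g_\lambda=s_\lambda+\text{l.o.t.}$ (visible from setting $\fa=0$ in the determinant) so both families are bases. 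You are also right that the naive Cauchy--Binet fails: the column factors $y_i^{n-j}$ and $x_k^{n-j}$ depend on $j$ rather than on the shifted part $\mu_j=\lambda_j+n-j$, so neither matrix has the form $\det[F(\cdot,\mu_j)]$, and simultaneous column operations cannot disentangle this since they would mix the $\lambda_{j'}$-dependence across columns.

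The genuine gap is that the proposal stops exactly at this point. The identity
\[
\sum_{\lambda:\,l(\lambda)\le n}\det\!\bigl[y_i^{n-j}\tilde y_i^{\lambda_j}\bigr]\det\!\bigl[x_k^{n-j}\phi_{\lambda_j}(x_k)\bigr]=\det\!\Bigl[\tfrac{1}{1-x_ky_i}\Bigr]
\]
is the whole content of the proposition, and neither of your two suggested routes is carried out far enough to know that it closes. For the LGV route you would need to exhibit a lattice-path model in which weights factor correctly and for which the path family whose LGV determinant is $\det[x_k^{n-j}\phi_{\lambda_j}(x_k)]$ is non-intersecting precisely for partitions $\lambda$; that model exists for dual Grothendiecks but making the weights match $\phi_k(x)=x(x+\fa)^{k-1}$ and the $j$-dependent powers is not routine. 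For the induction route you would need to prove from the bialternant that $\tilde g_\lambda(x_1,\dots,x_n)=\sum_\mu \tilde g_{\lambda/\mu}(x_n)\,\tilde g_\mu(x_1,\dots,x_{n-1})$ with $\tilde g_{\lambda/\mu}(x)$ equal to the $\fb=0$ specialization of \eqref{gone}, i.e.\ $0$ unless each connected component of $\lambda/\mu$ sits in a single row, and otherwise $\fa^{i(\lambda/\mu)}x(x+\fa)^{c(\lambda/\mu)-1}$. The Laplace expansion of $\det[x_i^{n-j}\phi_{\lambda_j}(x_i)]$ along the last row does produce $(n{-}1)$-variable alternants, but the column exponents come out as $n-j'$ rather than $(n{-}1)-j''$, so a non-trivial manipulation is still needed before the induction hypothesis applies, and after that one must identify the combinatorial coefficient with the branching weight above. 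Both are plausible but unverified, so as written the proof has a hole precisely where you said the ``main obstacle'' lies. Since the paper delegates this to \cite{HJKSS21}, the most efficient fix is to follow that reference's proof; if you want a self-contained argument, the branching/induction route matching Proposition~\ref{g-branching-prop} is the one to pursue, and the base case $n=1$ is indeed your one-variable identity.
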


\subsection{Branching, Jacobi-Trudi and skew Cauchy identities} Both $G_\lambda^{(\fa,\fb)}$ and $g_\lambda^{(\fa,\fb)}$ have branching rules, but to describe them we need additional notation. For a skew partition $\lambda/\mu$ let $r(\lambda/\mu)$ denote the number of non-empty rows of the diagram of $\lambda/\mu$, $c(\lambda/\mu)$ denote the number of non-empty columns, $b(\lambda/\mu)$ be the number of connected components of the diagram and $i(\lambda/\mu):=|\lambda/\mu|-r(\lambda/\mu)-c(\lambda/\mu)+b(\lambda/\mu)$. See Figure \ref{Figskewg} for an example. Additionally, for $\lambda=(\lambda_1, \lambda_2, \dots)$ we set $\overline{\lambda}=(\lambda_2, \lambda_3, \dots)$.

\begin{figure}
\ydiagram{5+2, 2+2, 3, 2}
\caption{\label{Figskewg}} The skew diagram $\lambda/\mu$ for $\lambda=(4,3,2)$ and $\mu=(2)$. In this case we have $r(\lambda/\mu)=4$, $c(\lambda/\mu)=6$, $b(\lambda/\mu)=2$, $i(\lambda/\mu)=1$ and $g^{(\fa,\fb)}_{\lambda/\mu}(x)=\fb^2(\fa+\fb)x^2(x+\fa)^4$.
\end{figure}

\begin{prop}[{\cite[Proposition 8.8]{Yel16}}]\label{Gbranch} We have
$$
G_\lambda^{(\fa,\fb)}(x_1,\dots, x_{n+1})=\sum_{\mu\preceq\lambda}G^{(\fa,\fb)}_{\lambda/\mu}(x_{n+1})G^{(\fa,\fb)}_{\mu}(x_1, \dots, x_n),
$$
where we set
\begin{equation}
\label{Gone}
G^{(\fa,\fb)}_{\lambda/\mu}(x)=\begin{cases}
\left(\frac{x}{1-\fa x}\right)^{|\lambda/\mu|}\left(\frac{1+\fb x}{1-\fa x}\right)^{r(\mu/\overline{\lambda})}\qquad &\text{if}\ \mu\preceq\lambda,\\
0\qquad &\text{otherwise}.
\end{cases}
\end{equation}
\end{prop}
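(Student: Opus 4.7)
The plan is to derive the branching rule by extracting the $x_{n+1}$-dependence from the determinantal formula \eqref{Gdef}. Writing $y=x_{n+1}$, the $(n+1)\times (n+1)$ numerator determinant has its last row given by entries $y^{\lambda_j+n+1-j}(1+\fb y)^{j-1}/(1-\fa y)^{\lambda_j}$ for $j=1,\dots,n+1$, while the first $n$ rows depend only on $x_1,\dots,x_n$. After factoring $x_1\cdots x_n$ from the first $n$ rows (which shifts exponents to match the $n$-variable convention), the goal is to recognize the remaining expression, divided by the Vandermonde $\det[x_i^{n+1-j}]_{1\leq i,j\leq n+1}$, as $\sum_{\mu\preceq\lambda} G_{\lambda/\mu}^{(\fa,\fb)}(y)\, G_\mu^{(\fa,\fb)}(x_1,\dots,x_n)$.

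The key manipulation is a Cauchy--Binet style reorganization. Laplace expansion along the last row produces $n+1$ minors labeled by a deleted column index $j$, but the interlacing partitions $\mu\preceq\lambda$ form a richer set. Expanding the factors $(1-\fa y)^{-\lambda_j}$ as geometric series in $\fa y$ distributes the contributions across different $\mu$'s: after collecting terms, the power of $y$ should equal $|\lambda/\mu|+r(\mu/\overline{\lambda})$, and the coefficient should carry $(1+\fb y)^{r(\mu/\overline{\lambda})}$. I would verify this in the base cases $n=0$ (single-row $\lambda$, where both sides equal $(y/(1-\fa y))^{|\lambda|}$) and $n=1$ (direct two-variable computation), then induct on $n$, using the inductive hypothesis on the $n\times n$ minors.

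The main obstacle is the appearance of $r(\mu/\overline{\lambda})$ rather than some simpler quantity. This statistic emerges because the extra $(1+\fb y)^{j-1}$ factors in the Laplace terms must combine across minors through the interlacing structure. A cleaner alternative --- likely the approach of \cite{Yel16} --- is to use the set-valued (or genomic) tableau model for $G_\lambda^{(\fa,\fb)}$: branching then amounts to peeling off entries equal to $n+1$, the boxes holding them form a horizontal strip $\lambda/\mu$, and a direct enumeration of such strips with proper $\fa,\fb$ weighting yields \eqref{Gone}. In that model, $|\lambda/\mu|$ counts the total entries in the strip and $r(\mu/\overline{\lambda})$ counts those rows of $\mu/\overline{\lambda}$ in which column-type ``excess'' entries may be stacked. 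I expect the tableau route to be more transparent, since the combinatorial content of the two exponents is immediately visible; the algebraic route is a useful cross-check but involves heavier rational-function manipulation.
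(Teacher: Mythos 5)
There is no proof in this paper to compare against: Proposition~\ref{Gbranch} is stated as a citation to \cite[Proposition~8.8]{Yel16}, with no argument reproduced here. So the relevant question is simply whether your sketch would succeed.

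Your algebraic route, as described, has a genuine gap exactly where you flag the ``main obstacle.'' A Laplace expansion of the $(n+1)\times(n+1)$ numerator of \eqref{Gdef} along the last row gives $n+1$ minors, but none of these minors is a $G_\mu$-numerator for any partition $\mu$. Concretely, after deleting column $j$ and reindexing the surviving columns $j'<j$ by $b=j'$, the minor entry has $x_i$-exponent $\lambda_b+n+1-b$ (suggesting $\mu_b=\lambda_b+1$) but $(1-\fa x_i)$-exponent $\lambda_b$ (forcing $\mu_b=\lambda_b$) --- an inconsistency; and for $j'>j$ reindexed by $b=j'-1$, the $(1+\fb x_i)$-exponent is $b$ rather than the required $b-1$. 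The phrase ``Cauchy--Binet style reorganization'' and the proposed geometric-series expansion of $(1-\fa y)^{-\lambda_j}$ followed by ``collecting terms'' are precisely the steps that would have to repair both mismatches simultaneously and reassemble the result as a sum over interlacing $\mu$, producing the exponents $|\lambda/\mu|$ and $r(\mu/\overline{\lambda})$. None of this is carried out, and there is no reason to expect it to be routine: the two mismatches pull in different directions, and the recombination is not a standard determinant identity.

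Your tableau route is the correct direction --- Yeliussizov's proof is combinatorial, working with tableau/plane-partition models for these polynomials, and one does branch by stripping the maximal entry --- but as written it is also only a sketch. You have not specified which tableau model for $G^{(\fa,\fb)}_\lambda$ you are using, and the precise mechanism by which stripping entries equal to $n+1$ produces the exact weight in \eqref{Gone} (in particular, why the $\fb$-power is attached to $r(\mu/\overline{\lambda})$ and not to some other statistic of the strip) is deferred rather than proved. To turn this into an actual proof you would need to fix the model, show that the cells containing $n+1$ form a horizontal strip $\lambda/\mu$, and carefully account for (i) the multi-entry boxes contributing extra factors of $\fb$ and (ii) the $\fa$-weighted repeats, verifying that the weight factors as $G^{(\fa,\fb)}_{\lambda/\mu}(x_{n+1}) G^{(\fa,\fb)}_{\mu}(x_1,\dots,x_n)$. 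Your base-case checks ($n=0,1$) are a sensible sanity test but do not substitute for this.
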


\begin{prop}[{\cite[Theorem 8.6]{Yel16}}] \label{g-branching-prop} We have
$$
g_\lambda^{(\fa,\fb)}(x_1,\dots, x_{n+1})=\sum_{\mu}g^{(\fa,\fb)}_{\lambda/\mu}(x_{n+1})g^{(\fa,\fb)}_{\mu}(x_1, \dots, x_n),
$$
where we set
\begin{equation}
\label{gone}
 g^{(\fa,\fb)}_{\lambda/\mu}(x)=\begin{cases}\fb^{r(\lambda/\mu)-b(\lambda/\mu)}(\fa+\fb)^{i(\lambda/\mu)}x^{b(\lambda/\mu)}(x+\fa)^{c(\lambda/\mu)-b(\lambda/\mu)}\qquad& \text{if}\ \mu\subseteq\lambda,\\ 0\qquad & \text{otherwise.}\end{cases}
\end{equation}
\end{prop}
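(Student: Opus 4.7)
The natural approach is to use the combinatorial description of $g_\lambda^{(\fa,\fb)}$ as a weighted generating function over reverse plane partitions (RPPs) of shape $\lambda$, as developed in \cite{Yel16}. An RPP is a filling of $\lambda$ with positive integers that weakly increase along rows and down columns; its weight, which I denote $\mathrm{wt}^{(\fa,\fb)}(T)$, is a monomial in $\fa$, $\fb$, and the $x_i$'s that records how many times each value appears and, crucially, how it repeats within rows and within columns. The first step is to recall this formula
$$
g_\lambda^{(\fa,\fb)}(x_1,\dots,x_m)=\sum_{T}\mathrm{wt}^{(\fa,\fb)}(T),
$$
with $T$ ranging over RPPs of shape $\lambda$ with entries in $\{1,\dots,m\}$.

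The plan is then to peel off the largest variable $x_{n+1}$. Given an RPP $T$ of shape $\lambda$ with entries in $\{1,\dots,n+1\}$, let $\mu=\mu(T)\subseteq\lambda$ be the sub-diagram of cells carrying a value $\leq n$. Weak monotonicity forces $\mu$ to be a partition, $T|_\mu$ is an RPP of shape $\mu$ with entries in $\{1,\dots,n\}$, and $\lambda/\mu$ is filled entirely with the value $n+1$. The map $T\mapsto(\mu,T|_\mu)$ is a bijection and, because the RPP weight rule is local (it factors over columns and rows), the weight factorizes as
$$
\mathrm{wt}^{(\fa,\fb)}(T)=\mathrm{wt}^{(\fa,\fb)}(T|_\mu)\cdot W(\lambda/\mu;x_{n+1}),
$$
where $W(\lambda/\mu;x)$ is the weight of the monochromatic filling of $\lambda/\mu$ by $x$. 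Summing therefore reduces the proposition to the identity $W(\lambda/\mu;x)=g_{\lambda/\mu}^{(\fa,\fb)}(x)$ in \eqref{gone}.

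For the final step, I would factor $W$ over the connected components $C$ of $\lambda/\mu$. In a skew diagram every column and every row is contained in a single connected component, so the statistics are additive: $\sum_C r_C=r(\lambda/\mu)$, $\sum_C c_C=c(\lambda/\mu)$, $\#\{C\}=b(\lambda/\mu)$, and hence $\sum_C(|C|-r_C-c_C+1)=i(\lambda/\mu)$. On a single connected component, direct bookkeeping from Yeliussizov's weight rule should yield
$$
W(C;x)=\fb^{r_C-1}(\fa+\fb)^{|C|-r_C-c_C+1}\,x\,(x+\fa)^{c_C-1},
$$
whose product over the $b$ components is exactly the claimed formula.

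The main obstacle lies in this last step: correctly identifying, from the weight rule, how the factors $\fb$ (for rows with repetitions), $x+\fa$ (for columns with repetitions), and $\fa+\fb$ (for cells that are repeats in both a row and a column of the same component) arise on a monochromatic connected component. The contribution $(\fa+\fb)^{i}$ with $i=|\lambda/\mu|-r-c+b$ is the delicate term, since $i$ counts precisely the cells of the component that are neither the leftmost in their row nor the topmost in their column; getting this combinatorics right for an arbitrary connected skew shape—rather than just a horizontal or vertical strip, where only one of $\fb$ or $x+\fa$ appears and the $(\fa+\fb)$ factor is absent—requires the most care.
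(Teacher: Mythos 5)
This proposition is not proved in the paper at all: the authors state it as a direct citation of \cite[Theorem~8.6]{Yel16}, so there is no in-paper argument to compare your sketch against. What you have written is therefore an attempted reconstruction of Yeliussizov's proof rather than an independent route.

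As a reconstruction, the high-level plan is sound and almost certainly on the right track: peeling off the largest variable $x_{n+1}$ from a combinatorial generating-function description of $g_\lambda^{(\fa,\fb)}$ is the standard way to obtain a one-variable branching rule, and your observation that every row and every column of a skew diagram sits inside a single connected component, so that the statistics $r,c,b,i$ are additive, is correct and gives the right product
$$
\prod_{C}\fb^{\,r_C-1}(\fa+\fb)^{\,|C|-r_C-c_C+1}\,x\,(x+\fa)^{\,c_C-1}
=\fb^{\,r-b}(\fa+\fb)^{\,i}\,x^{\,b}\,(x+\fa)^{\,c-b}.
$$
The spot check for a single cell, a horizontal domino, a vertical domino, and a $2\times 2$ block confirms the formula. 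What remains genuinely unverified is the claim that $g_\lambda^{(\fa,\fb)}$ is given by the RPP generating function with a local weight rule that factors exactly as you assert, and that for a monochromatic connected skew component the book\-keeping gives one factor $x$, then $c_C-1$ factors $x+\fa$, $r_C-1$ factors $\fb$, and $|C|-r_C-c_C+1$ factors $\fa+\fb$. You flag this yourself, and it is indeed the entire content of the proof: one must pin down Yeliussizov's combinatorial model for the two-parameter $g_\lambda^{(\fa,\fb)}$ (which is a hybrid generalization of reverse plane partitions with row/column repetition markings, not ordinary RPPs) and verify that repeats that occur simultaneously in a row and a column of the same component contribute $\fa+\fb$. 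Without that verification the argument is a plausible outline rather than a proof, so you should treat Step 3 as the lemma to be proved, not a routine computation.
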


More generally, we can use branching rules above to define skew functions $G^{(\fa,\fb)}_{\lambda/\mu}, g^{(\fa,\fb)}_{\lambda/\mu}$ for arbitrary number of variables:

\begin{prop}\label{branching-rules} There exist symmetric functions $G^{(\fa,\fb)}_{\lambda/\mu}\in\hat\Lambda$, $g^{(\fa,\fb)}_{\lambda/\mu}\in\Lambda$ satisfying the following properties:
\begin{itemize}
\item One variable specializations $G^{(\fa,\fb)}_{\lambda/\mu}(x), g^{(\fa,\fb)}_{\lambda/\mu}(x)$ coincide with \eqref{Gone}, \eqref{gone};

\item $G^{(\fa,\fb)}_{\lambda/\varnothing}=G^{(\fa,\fb)}_{\lambda}$, $g^{(\fa,\fb)}_{\lambda/\varnothing}=g^{(\fa,\fb)}_{\lambda}$;

\item The following branching rules hold for any sets of variables $\mathbf{x}, \mathbf{y}$
$$
G_{\lambda/\mu}^{(\fa,\fb)}(\mathbf{x}, \mathbf{y})=\sum_{\nu}G^{(\fa,\fb)}_{\lambda/\nu}(\mathbf{x})G^{(\fa,\fb)}_{\nu/\mu}(\mathbf{y}),
$$
$$
g_{\lambda/\mu}^{(\fa,\fb)}(\mathbf{x}, \mathbf{y})=\sum_{\nu}g^{(\fa,\fb)}_{\lambda/\nu}(\mathbf{x})g^{(\fa,\fb)}_{\nu/\mu}(\mathbf{y}),
$$
\end{itemize}
\end{prop}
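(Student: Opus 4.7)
The plan is to define the multi-variable skew functions by iterating the one-variable branching formulas \eqref{Gone} and \eqref{gone}, and then to transfer the symmetry of the non-skew Grothendieck functions to the skew case. Explicitly, I set
$$
G^{(\fa,\fb)}_{\lambda/\mu}(x_1,\dots,x_n) := \sum_{\mu=\nu^{(0)}\preceq\nu^{(1)}\preceq\dots\preceq\nu^{(n)}=\lambda}\,\prod_{i=1}^n G^{(\fa,\fb)}_{\nu^{(i)}/\nu^{(i-1)}}(x_i),
$$
using \eqref{Gone} on the right, and analogously define $g^{(\fa,\fb)}_{\lambda/\mu}(x_1,\dots,x_n)$ by summing over chains $\mu=\nu^{(0)}\subseteq\nu^{(1)}\subseteq\dots\subseteq\nu^{(n)}=\lambda$ weighted by \eqref{gone}. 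The identity $G^{(\fa,\fb)}_{\lambda/\varnothing}=G^{(\fa,\fb)}_\lambda$ then follows by matching this definition against Proposition \ref{Gbranch} iterated starting from the empty partition, and similarly for $g$ using Proposition \ref{g-branching-prop}.

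Stability and the branching rule are essentially built into this construction. For stability, inspection of \eqref{Gone} gives $G^{(\fa,\fb)}_{\nu/\nu}(0)=1$ and $G^{(\fa,\fb)}_{\nu/\rho}(0)=0$ for $\nu\neq\rho$ (and the analogous statement for \eqref{gone}); hence appending $x_{n+1}=0$ forces the last link of each chain to be trivial, collapsing the $(n+1)$-variable sum to the $n$-variable one. For the branching rule, a chain from $\mu$ to $\lambda$ of total length $n+m$ decomposes uniquely at position $n$ as a chain from $\mu$ to some intermediate $\nu$ of length $n$, followed by a chain from $\nu$ to $\lambda$ of length $m$; the one-variable weights then factor, giving exactly the required identity.

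The main obstacle is to show that $G^{(\fa,\fb)}_{\lambda/\mu}(x_1,\dots,x_n)$ is symmetric in the $x_i$ (and similarly for $g$). My strategy is to realize the iteratively defined skew function as a coefficient in an expansion of a manifestly symmetric non-skew function. Iterating Proposition \ref{Gbranch}, first absorbing an auxiliary block $\mathbf{z}=(z_1,\dots,z_N)$ of variables and then the $x_i$'s, and regrouping the resulting chain sum at index $N$, produces
$$
G_\lambda^{(\fa,\fb)}(z_1,\dots,z_N,x_1,\dots,x_n) = \sum_\mu G_\mu^{(\fa,\fb)}(z_1,\dots,z_N)\,G^{(\fa,\fb)}_{\lambda/\mu}(x_1,\dots,x_n),
$$
with $G^{(\fa,\fb)}_{\lambda/\mu}$ on the right equal to the iterative definition above. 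The left-hand side is symmetric in $(x_1,\dots,x_n)$ because $G_\lambda^{(\fa,\fb)}$ is a symmetric function of all its variables jointly. Thanks to \eqref{expandG}, the transition matrix from $\{G_\mu^{(\fa,\fb)}(\mathbf{z})\}$ to $\{s_\mu(\mathbf{z})\}$ is upper-triangular by degree with ones on the diagonal; a degree-by-degree argument in $\mathbf{z}$ then shows that the family $\{G_\mu^{(\fa,\fb)}(\mathbf{z})\}$ is linearly independent over the ring of formal power series in $(x_1,\dots,x_n)$, so the coefficient of each $G_\mu^{(\fa,\fb)}(\mathbf{z})$ on the right is uniquely determined and inherits the symmetry of the left. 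The $g$ case is parallel using Proposition \ref{g-branching-prop}, and the linear-independence step is even cleaner there since $\{g_\mu^{(\fa,\fb)}\}$ is a genuine basis of $\Lambda$ ($g_\mu = s_\mu + $ lower degree terms).
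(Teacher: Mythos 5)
Your proof takes essentially the same route as the paper's: define the skew functions by iterated one-variable branching, verify stability via the specialization at $x=0$, get the branching rule by splitting chains at an intermediate index, and deduce symmetry by reading off $G^{(\fa,\fb)}_{\lambda/\mu}(\mathbf{x})$ as the coefficient of $G_\mu^{(\fa,\fb)}$ in the manifestly symmetric $G_\lambda^{(\fa,\fb)}(\mathbf{z},\mathbf{x})$. One small caution in your linear-independence step: with a finite block $\mathbf{z}=(z_1,\dots,z_N)$, the functions $G_\mu^{(\fa,\fb)}(\mathbf{z})$ vanish whenever $l(\mu)>N$, so the full family is not independent for fixed $N$; since only partitions $\mu\subseteq\lambda$ occur in the sum, taking $N\ge l(\lambda)$ (or simply using infinitely many auxiliary variables, as the paper does implicitly by working in $\hat\Lambda$) repairs this, and then the argument goes through as you describe.
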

\begin{proof} This is a standard argument which we give here for $G^{(\fa,\fb)}_{\lambda/\mu}$, the argument for $g_{\lambda/\mu}^{(\fa,\fb)}$ is identical. Let $\mathbf{x}=(x_1, \dots, x_n)$. Define $G_{\lambda/\mu}^{(\fa,\fb)}(\mathbf{x})$ using one-variable functions and the branching rule repeated for $n-1$ times. To show that the resulting functions are symmetric, note that by Proposition \ref{Gbranch} $G_{\lambda/\mu}^{(\fa,\fb)}(\mathbf{x})$ is the coefficient of $G_{\mu}^{(\fa,\fb)}(\mathbf{y})$ in $G_{\lambda}^{(\fa,\fb)}(\mathbf{x}, \mathbf{y})$ and the latter is symmetric. Finally, the stability follows from noticing that $G^{(\fa,\fb)}_{\lambda/\mu}(0)=0$ unless $\lambda=\mu$, so
$$
G_{\lambda/\mu}^{(\fa,\fb)}(\bx, 0)=\sum_{\nu}G^{(\fa,\fb)}_{\lambda/\nu}(\mathbf{x})G^{(\fa,\fb)}_{\nu/\mu}(0)=G^{(\fa,\fb)}_{\lambda/\mu}(\mathbf{x})
$$
and we can define $G^{(\fa,\fb)}_{\lambda/\mu}$ as an element of $\hat\Lambda$.
\end{proof}

We also have a skew version of the Cauchy identity.
\begin{prop}\label{skewCauchy-prop} For any fixed partition $\mu$ we have
$$
\sum_{\lambda} G^{(-\fa,-\fb)}_{\lambda/\mu}(x_1, x_2, \dots)g^{(\fa,\fb)}_{\lambda/\nu}(y_1, y_2, \dots)=\prod_{i,j\geq 1}\frac{1}{1-x_iy_j}\sum_{\lambda}G^{(-\fa,-\fb)}_{\nu/\lambda}(x_1, x_2, \dots)g^{(\fa,\fb)}_{\mu/\lambda}(y_1, y_2, \dots).
$$
\end{prop}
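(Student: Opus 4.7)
The plan is to adapt the classical double-sum/generating-function derivation of the skew Cauchy identity for Schur functions to the dual pair $(G^{(-\fa,-\fb)}_\lambda, g^{(\fa,\fb)}_\lambda)$. The two ingredients are the Cauchy identity and the branching rules of Proposition~\ref{branching-rules}. I will first extract two \emph{mixed Cauchy} identities — one for $G_{\lambda/\nu}$ paired with $g_\lambda$, and one for $g_{\lambda/\nu}$ paired with $G_\lambda$ — and then deduce the skew Cauchy identity by introducing two auxiliary alphabets and computing a double generating function in two different ways.

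For the mixed identities, I would start from the $\mu=\varnothing$ specialization of the $G$-branching rule, namely $G^{(-\fa,-\fb)}_\lambda(\bx, \by) = \sum_\nu G^{(-\fa,-\fb)}_{\lambda/\nu}(\bx) G^{(-\fa,-\fb)}_\nu(\by)$, and combine it with the ordinary Cauchy identity applied to the merged alphabet $(\bx, \by)$, i.e. $\sum_\lambda G^{(-\fa,-\fb)}_\lambda(\bx, \by) g^{(\fa,\fb)}_\lambda(\bz) = K(\bx; \bz) K(\by; \bz)$, where $K(\bx; \bz) = \prod_{i,j}(1-x_iz_j)^{-1}$. Comparing coefficients of $G^{(-\fa,-\fb)}_\nu(\by)$ (which are linearly independent) yields
$$
\sum_\lambda G^{(-\fa,-\fb)}_{\lambda/\nu}(\bx)\, g^{(\fa,\fb)}_\lambda(\bz) = g^{(\fa,\fb)}_\nu(\bz)\, K(\bx; \bz). \qquad (\mathrm{I})
$$
Running the symmetric argument with the $g$-branching rule and comparing coefficients of $g^{(\fa,\fb)}_\nu(\by)$ gives
$$
\sum_\lambda g^{(\fa,\fb)}_{\lambda/\nu}(\bx)\, G^{(-\fa,-\fb)}_\lambda(\bz) = G^{(-\fa,-\fb)}_\nu(\bz)\, K(\bx; \bz). \qquad (\mathrm{II})
$$

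With these in hand, I introduce two additional alphabets $\bw$ and $\bz$ and consider the double generating functions of both sides of the claimed identity against $G^{(-\fa,-\fb)}_\mu(\bw) g^{(\fa,\fb)}_\nu(\bz)$. Applied to the LHS, interchanging summations gives
$$
\sum_{\mu,\nu,\lambda} G^{(-\fa,-\fb)}_{\lambda/\mu}(\bx)\, g^{(\fa,\fb)}_{\lambda/\nu}(\by)\, G^{(-\fa,-\fb)}_\mu(\bw)\, g^{(\fa,\fb)}_\nu(\bz) = \sum_\lambda G^{(-\fa,-\fb)}_\lambda(\bx, \bw)\, g^{(\fa,\fb)}_\lambda(\by, \bz),
$$
where the two inner sums are collapsed by the branching rules of Proposition~\ref{branching-rules}, and the remaining sum in $\lambda$ equals $K(\bx,\bw;\by,\bz) = K(\bx; \by) K(\bx; \bz) K(\bw; \by) K(\bw; \bz)$ by Cauchy. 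Applied to the RHS, the factor $K(\bx; \by)$ pulls out, the inner sum over $\nu$ is evaluated by $(\mathrm{I})$, the inner sum over $\mu$ by $(\mathrm{II})$, and the remaining sum over $\lambda$ by Cauchy, producing the same fourfold product $K(\bx; \by) K(\bx; \bz) K(\bw; \by) K(\bw; \bz)$.

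Since the family $\{G^{(-\fa,-\fb)}_\mu(\bw) g^{(\fa,\fb)}_\nu(\bz)\}_{\mu, \nu}$ is linearly independent in the appropriate completed tensor product, matching coefficients of $G^{(-\fa,-\fb)}_\mu(\bw) g^{(\fa,\fb)}_\nu(\bz)$ on the two sides recovers the desired identity for each fixed $\mu, \nu$. I expect the main obstacle to be bookkeeping — keeping straight which alphabet pairs with which Cauchy kernel, and treating infinite sums of elements of $\hat\Lambda$ as formal series — but no genuine convergence issue arises because each monomial in $(\bx, \by, \bw, \bz)$ is produced by only finitely many $(\lambda, \mu, \nu)$.
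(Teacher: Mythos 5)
Your proof is correct and rests on the same toolkit as the paper (branching rules, the ordinary Cauchy identity, and coefficient extraction), but it takes a somewhat different route. The paper's proof is asymmetric and economical: it first establishes only your identity~(I) (the $\nu=\varnothing$ case) by introducing a single auxiliary alphabet and applying $\langle\cdot,g^{(\fa,\fb)}_\mu\rangle$ in those variables; it then proves the general case directly, adjoining one auxiliary alphabet $\bz$ to $\by$, using the $g$-branching rule to collapse the $\nu$-sum, applying~(I) once to the merged alphabet $(\by,\bz)$, and then running~(I) in reverse on the factor $K(\bx;\bz)g^{(\fa,\fb)}_\lambda(\bz)$ before stripping off the $g^{(\fa,\fb)}_\nu(\bz)$ coefficient. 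You instead derive both mixed identities~(I) and~(II), introduce \emph{two} auxiliary alphabets $\bw$ and $\bz$, and show that both sides of the target identity, after pairing with $G^{(-\fa,-\fb)}_\mu(\bw)\,g^{(\fa,\fb)}_\nu(\bz)$ and summing over $\mu,\nu$, collapse to the same fourfold product of Cauchy kernels, from which linear independence of that family extracts both indices simultaneously. What your version buys is a manifestly symmetric presentation that treats $\mu$ and $\nu$ (and $G$ and $g$) on an equal footing in a single computation; what the paper's version buys is that one auxiliary alphabet and one intermediate identity suffice, so the bookkeeping is lighter. Both handle the infinite sums correctly by graded degree. One small point you should make explicit: to justify interchanging the $\mu,\nu,\lambda$ sums on the left and invoking Cauchy on the merged alphabet, you need the observation (which the paper makes at the end of its proof) that for each fixed multidegree in $(\bx,\by,\bw,\bz)$ only finitely many triples $(\lambda,\mu,\nu)$ contribute; you allude to this in your last sentence but it is doing real work and deserves to be stated at the point where you first rearrange the triple sum.
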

\begin{proof}
First consider the case $\nu=\varnothing$. From the branching rule and the ordinary Cauchy identity we have
\begin{multline*}
\sum_{\lambda,\mu} G^{(-\fa,-\fb)}_{\lambda/\mu}(\mathbf{x})G^{(-\fa,-\fb)}_{\mu}(\mathbf{z})g^{(\fa,\fb)}_{\lambda}(\mathbf{y})=\prod_{i,j\geq 1}\frac{1}{1-x_iy_j}\prod_{i,j\geq 1}\frac{1}{1-z_iy_j}\\
=\sum_\mu g^{(\fa,\fb)}_{\mu}(\mathbf{y})G^{(-\fa,-\fb)}_{\mu}(\mathbf{z})\prod_{i,j\geq 1}\frac{1}{1-x_iy_j}.
\end{multline*}
Applying the scalar product $\langle\cdot, g_\mu^{(\fa,\fb)}\rangle$ to both sides with respect to the variables $\bz$ implies the claim.

For the general case we have
\begin{multline*}
\sum_{\lambda,\nu} G^{(-\fa,-\fb)}_{\lambda/\mu}(\mathbf{x})g^{(\fa,\fb)}_{\lambda/\nu}(\mathbf{y})g^{(\fa,\fb)}_{\nu}(\mathbf{z})=\prod_{i,j\geq 1}\frac{1}{1-x_iy_j}\prod_{i,j\geq 1}\frac{1}{1-x_iz_j}g^{(\fa,\fb)}_{\mu}(\mathbf{y},\mathbf{z})\\
=\prod_{i,j\geq 1}\frac{1}{1-x_iy_j}\sum_{\lambda}\prod_{i,j\geq 1}\frac{1}{1-x_iz_j}g^{(\fa,\fb)}_{\mu/\lambda}(\mathbf{y})g^{(\fa,\fb)}_{\lambda}(\mathbf{z})\\
=\prod_{i,j\geq 1}\frac{1}{1-x_iy_j}\sum_{\lambda,\nu}g^{(\fa,\fb)}_{\mu/\lambda}(\mathbf{y})G^{(-\fa,-\fb)}_{\nu/\lambda}(\bx)g^{(\fa,\fb)}_{\nu}(\mathbf{z}).
\end{multline*}
Taking the coefficient of $g^{(\fa,\fb)}_{\nu}(\mathbf{z})$ on both sides above we get the general skew-Cauchy identity. Note that we can take this coefficient even in the infinite sums above: for each $d\geq 0$ the component of the homogeneous degree $d$ in $\bx$ is computed only finitely many terms.
\end{proof}

For both $G^{(-\fa,-\fb)}_{\lambda/\mu}$ and $g^{(-\fa,-\fb)}_{\lambda/\mu}$ there are various analogues of Jacobi-Trudi identities, see \cite{HJKSS21}. In this work we need only two particular expressions described below. For both statements we use $f(\fa^n)$ to denote the evaluation of $f$ at $n$-copies of $\fa$.

\begin{prop}[{\cite[Theorem 5.2]{HJKSS21}}]\label{GJT} For $\lambda$ with $l(\lambda)\leq n$ we have
$$
G_\lambda^{(\fa,-\fb)}(\mathbf{x})=\prod_{i\geq 1}(1-\fb x_i)^n\det\left[f_{\lambda_i-i+j}^{(\lambda_i, j-i+1)}(\mathbf x)\right]_{1\leq i,j\leq n},
$$
where
$$
f_k^{(m, l)}(\mathbf x)=\begin{cases}\displaystyle\sum_{\substack{a,b\\a-b-c=k}}h_{a}(\mathbf{x})h_b(\fb^l)h_c(\fa^m)\qquad & \text{if}\ l\geq 0,\\\displaystyle\sum_{\substack{a,b\\a-b-c=k}}h_{a}(\mathbf{x})e_b((-\fb)^{-l})h_c(\fa^m)\qquad &\text{if}\ l\leq 0. \end{cases}
$$
\end{prop}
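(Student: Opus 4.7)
The plan is to derive this Jacobi--Trudi identity from the bialternant formula \eqref{Gdef} by representing each matrix entry $f_{\lambda_i-i+j}^{(\lambda_i,j-i+1)}(\mathbf x)$ as a sum of residues at $w=x_r$, factoring the resulting matrix as $M=PQ$ with the row dependence (through $\lambda_i$) and column dependence (through $j$) cleanly separated, and matching $\det P\cdot\det Q$ to the bialternant.

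First I would establish the generating function identity
\[
\sum_{k\in\mathbb Z}f_k^{(m,l)}(\mathbf x)\,z^k=\prod_r(1-x_rz)^{-1}\Bigl(1-\tfrac{\fb}{z}\Bigr)^{-l}\Bigl(1-\tfrac{\fa}{z}\Bigr)^{-m},
\]
valid for both $l\ge 0$ (via the binomial series $\sum_b h_b(\fb^l)z^{-b}=(1-\fb/z)^{-l}$) and $l\le 0$ (via $\sum_b e_b((-\fb)^{-l})z^{-b}=(1-\fb/z)^{-l}$ by the finite binomial theorem). Extracting the coefficient of $z^k$ on an annulus $\max(|\fa|,|\fb|)<|z|<1/\max_r|x_r|$ and substituting $w=1/z$ produces a single-contour integral whose integrand equals $w^{k+n-1}/[\prod_r(w-x_r)(1-\fb w)^l(1-\fa w)^m]$ and whose contour encloses only the simple poles $w=x_r$; a degree count shows the integrand is $O(w^{-2})$ at infinity, so there are no contributions from $w=0,\,1/\fa,\,1/\fb$, and
\[
f_k^{(m,l)}(\mathbf x)=\sum_{r=1}^n\frac{x_r^{k+n-1}}{\prod_{s\neq r}(x_r-x_s)(1-\fb x_r)^l(1-\fa x_r)^m}.
\]
Specializing to $k=\lambda_i-i+j$, $l=j-i+1$, $m=\lambda_i$ and splitting the summand into an $(i,r)$-factor and an $(r,j)$-factor yields $M=PQ$ with
\[
P_{ir}=\frac{x_r^{\lambda_i-i+n-1}(1-\fb x_r)^{i-1}}{(1-\fa x_r)^{\lambda_i}\prod_{s\neq r}(x_r-x_s)},\qquad Q_{rj}=\Bigl(\tfrac{x_r}{1-\fb x_r}\Bigr)^{j}.
\]

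The determinant of $Q$ is Vandermonde in $y_r:=x_r/(1-\fb x_r)$, and the identity $y_s-y_r=(x_s-x_r)/((1-\fb x_r)(1-\fb x_s))$ gives $\det Q=\prod_rx_r\cdot\prod_{r<s}(x_s-x_r)/\prod_r(1-\fb x_r)^n$. In $\det P$, pulling out the column factors $\prod_{s\neq r}(x_r-x_s)^{-1}$ produces, up to an overall sign $(-1)^{\binom{n}{2}}$, the transpose of the bialternant numerator of $G_\lambda^{(\fa,-\fb)}$ divided by $\prod_rx_r$, all divided by $\prod_{r<s}(x_s-x_r)^2$. Multiplying the two determinants, the Vandermondes and the $\prod_rx_r$ factors combine with the sign to yield exactly $1/\det[x_i^{n-j}]\cdot 1/\prod_i(1-\fb x_i)^n$ times the bialternant numerator in \eqref{Gdef}, i.e.\ $\det M=G_\lambda^{(\fa,-\fb)}(\mathbf x)/\prod_i(1-\fb x_i)^n$ as required. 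The extension from finitely many to infinitely many variables is immediate from stability of $G_\lambda^{(\fa,-\fb)}$. The main obstacle is the residue computation in step one: choosing the contour (or equivalently the direction of formal Laurent expansion) so that only $w=x_r$ contribute, ruling out leakage from $w=0,\,1/\fa,\,1/\fb$ or infinity; once past this point, the proof reduces to careful Vandermonde bookkeeping, hinging on the single algebraic identity $x_s(1-\fb x_r)-x_r(1-\fb x_s)=x_s-x_r$.
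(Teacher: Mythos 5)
The paper itself contains no proof of Proposition~\ref{GJT}; it is cited verbatim from \cite[Theorem~5.2]{HJKSS21}. Your proposal is a correct, self-contained derivation from the bialternant formula~\eqref{Gdef}. The generating-function identity $\sum_k f_k^{(m,l)}z^k=\prod_r(1-x_rz)^{-1}(1-\fb/z)^{-l}(1-\fa/z)^{-m}$ does hold uniformly for $l$ of either sign (the $h$- and $e$-series both produce $(1-\fb/z)^{-l}$), the coefficient extraction in the variable $w=1/z$ is valid on the stated annulus, and the rank-one factorization $M_{ij}=\sum_r P_{ir}Q_{rj}$ with $\det Q$ a Vandermonde in $y_r=x_r/(1-\fb x_r)$ reassembles the bialternant up to precisely the announced factors, using the identity $y_s-y_r=(x_s-x_r)/((1-\fb x_r)(1-\fb x_s))$ as you say. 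The one imprecision worth fixing is the justification that $w=0$ contributes nothing: the $O(w^{-2})$ decay at infinity does not by itself preclude a pole at the origin. What actually rules it out is the exponent bound $k+n-1=\lambda_i+(n-i)+(j-1)\ge 0$ for all $1\le i,j\le n$ and any partition $\lambda$, so $w^{k+n-1}$ is regular at $w=0$; the degree count at infinity is a useful consistency check, but it is logically a separate observation. With that phrasing tightened, and the usual remark that one may assume the $x_r$ distinct and then pass to the general case by polynomiality, your argument reproduces the cited Jacobi--Trudi identity cleanly; it is a genuine proof of a statement the paper merely quotes.
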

\begin{prop}[{\cite[Theorem 1.3]{HJKSS21}}]\label{JT-g} For $\lambda$ with $l(\lambda)\leq n$ we have
$$
g_\lambda^{(0,\fb)}(\mathbf x)=\det\big[h_{\lambda_i-i+j}[\mathbf x+(i-1)\fb]\big]_{1\leq i,j\leq n},
$$
where we use the plethystic notation
$$
h_n[\mathbf x+k\fb]:=\sum_{a+b=n}h_a(\mathbf x)h_b(\fb^k).
$$
\end{prop}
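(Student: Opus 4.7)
The plan is to prove the identity by combining the iterated branching rule from Proposition \ref{g-branching-prop} with a Lindström--Gessel--Viennot (LGV) lattice-path interpretation of the Jacobi--Trudi determinant. First, substituting $\fa=0$ into \eqref{gone} and using $i(\lambda/\mu)=|\lambda/\mu|-r(\lambda/\mu)-c(\lambda/\mu)+b(\lambda/\mu)$ collapses the one-variable skew function to the compact form
$$g^{(0,\fb)}_{\lambda/\mu}(y)=\fb^{|\lambda/\mu|-c(\lambda/\mu)}\,y^{c(\lambda/\mu)},$$
so iterating branching expresses $g_\lambda^{(0,\fb)}(\mathbf{x})$ as a weighted sum over chains $\varnothing=\mu^{(0)}\subseteq\mu^{(1)}\subseteq\dots\subseteq\mu^{(n)}=\lambda$, each step contributing $\fb^{|\mu^{(i)}/\mu^{(i-1)}|-c(\mu^{(i)}/\mu^{(i-1)})}\,x_i^{c(\mu^{(i)}/\mu^{(i-1)})}$.

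On the determinant side, the generating function
$$\sum_{k\geq 0}h_k[\mathbf{x}+m\fb]\,z^k=(1-\fb z)^{-m}\prod_l(1-x_l z)^{-1}$$
identifies the $(i,j)$ matrix entry $h_{\lambda_i-i+j}[\mathbf{x}+(i-1)\fb]$ with a weighted count of lattice paths in which the row-$i$ paths may use the $x_l$-step types together with $(i-1)$ additional distinguishable $\fb$-step types. By LGV, the determinant $JT_n(\lambda;\mathbf{x})$ then reduces, after the standard sign cancellation, to a signed weighted count of $n$-tuples of non-intersecting paths between suitable sources and sinks (the conventional shift places the source of row $i$ at height determined by $i$ and the sink of column $j$ at horizontal position $\lambda_i-i+j$).

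The principal step, which I expect to be the main obstacle, is to match the non-intersecting path tuples bijectively with the chain expansion above. I would set up the bijection so that a chain $\mu^{(0)}\subseteq\dots\subseteq\mu^{(n)}=\lambda$ corresponds to a tuple where the $x_i$-steps of the path in row $i$ record the distinct column positions of the skew shape $\mu^{(i)}/\mu^{(i-1)}$ (delivering the factor $x_i^{c(\mu^{(i)}/\mu^{(i-1)})}$), while the $(i-1)$ flagged $\fb$-step types accommodate the remaining $|\mu^{(i)}/\mu^{(i-1)}|-c(\mu^{(i)}/\mu^{(i-1)})$ non-column boxes. The delicate points are (a) verifying that the non-intersection condition on paths translates precisely into the nesting $\mu^{(i-1)}\subseteq\mu^{(i)}$, and (b) handling the row-dependent flagging of $\fb$-steps so that summing over all $\fb$-label assignments in row $i$ cleanly yields a single $\fb^{|\mu^{(i)}/\mu^{(i-1)}|-c(\mu^{(i)}/\mu^{(i-1)})}$ factor; this flagged structure mirrors that of flagged tableaux in Schubert/Grothendieck calculus and is where most of the technical work concentrates. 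As a fallback, if the LGV bookkeeping proves stubborn, one can instead perform an induction on $n$ in which the branching identity above is matched against a Laplace expansion of $JT_{n+1}(\lambda;\mathbf{x},y)$ using the convolution $h_k[(\mathbf{x},y)+m\fb]=\sum_{s\geq 0}y^{s}h_{k-s}[\mathbf{x}+m\fb]$, carefully tracking how the antisymmetrization forces the surviving shift vectors to correspond exactly to partitions $\mu\subseteq\lambda$.
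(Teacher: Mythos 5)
The paper does not prove this proposition---it simply imports \cite[Theorem~1.3]{HJKSS21}---so there is no in-text argument to compare against; what you have written is being evaluated on its own merits as a proof sketch.

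Your preparatory computations are correct: setting $\fa=0$ in \eqref{gone} and using $i(\lambda/\mu)=|\lambda/\mu|-r-c+b$ does give $g^{(0,\fb)}_{\lambda/\mu}(y)=\fb^{|\lambda/\mu|-c(\lambda/\mu)}y^{c(\lambda/\mu)}$, and iterating the branching rule correctly turns $g^{(0,\fb)}_\lambda(\mathbf x)$ into a sum over chains $\varnothing=\mu^{(0)}\subseteq\dots\subseteq\mu^{(n)}=\lambda$ (equivalently, reverse plane partitions of shape $\lambda$ with entries $\le n$) with the weight you state. The generating function $\sum_k h_k[\mathbf x+m\fb]\,z^k=(1-\fb z)^{-m}\prod_l(1-x_lz)^{-1}$ is also right. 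So the two sides of the identity you want to reconcile are correctly set up.

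However, the proof itself is entirely contained in the step you defer. Two concrete issues make this more than bookkeeping. First, for the LGV involution to even be available, the $\fb$-steps must be encoded as \emph{position-dependent} (e.g.\ horizontal steps at non-positive heights, with source $a_i$ placed at height $1-i$) rather than as source-dependent alphabets; otherwise swapping tails after a crossing produces a path carrying $\fb$-steps it is not entitled to. Your phrasing ``row-$i$ paths may use $(i-1)$ additional distinguishable $\fb$-step types'' does not yet commit to a geometry under which the sign-reversing involution is well defined, and the sources $(-\lambda_i+i,\,1-i)$ are not along an antidiagonal, so the non-intersection analysis is not the textbook one. Second, and more fundamentally, after LGV the surviving non-intersecting $h$-path tuples encode a \emph{column-strict} flagged object (in the Schur case this is precisely an SSYT), whereas your chain expansion encodes a \emph{weakly increasing} RPP with a one-box-per-column $x$-weighting. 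These are genuinely different combinatorial families, and the weight-preserving bijection between them (a Hillman--Grassl-type correspondence in the $\fb=1$ literature, suitably refined to track the $\fb$-exponent $|\mu^{(i)}/\mu^{(i-1)}|-c(\mu^{(i)}/\mu^{(i-1)})$) is the theorem, not a detail. The same remark applies to your fallback induction via Laplace expansion of $JT_{n+1}(\lambda;\mathbf x,y)$: matching the antisymmetrization against the branching $g_\lambda(\mathbf x,y)=\sum_{\mu\subseteq\lambda}\fb^{|\lambda/\mu|-c(\lambda/\mu)}y^{c(\lambda/\mu)}g_\mu(\mathbf x)$ requires controlling cancellation among non-partition shift vectors, and you have not indicated how. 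So the outline is headed in a reasonable direction and has no outright errors in the parts it carries out, but as written it does not constitute a proof: it sets up both sides and names the bijection that must be built without building it.
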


\subsection{Involution and parameter shifting automorphisms} There are several automorphisms of $\Lambda$, $\hat\Lambda$ whose action on Grothendieck functions can be easily described. We start with the involution $\omega$.

\begin{theo} \label{involution}We have
$$
\omega\left(G_\lambda^{(\fa,\fb)}\right)=G_{\lambda'}^{(\fb,\fa)}\qquad \omega\left(g_\lambda^{(\fa,\fb)}\right)=g_{\lambda'}^{(\fb,\fa)}
$$
\end{theo}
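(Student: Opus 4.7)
The plan is to reduce the two stated identities to a single one, then deduce it from a dual version of the Cauchy identity.

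\textbf{Step 1 (equivalence of the two identities).} The Hall scalar product is invariant under $\omega$ applied to both slots, so from the defining duality $\langle G_\lambda^{(-\fa,-\fb)},\, g_\mu^{(\fa,\fb)}\rangle = \delta_{\lambda\mu}$ we get that $\{\omega(G_\lambda^{(-\fa,-\fb)})\}_\lambda$ and $\{\omega(g_\mu^{(\fa,\fb)})\}_\mu$ form a pair of dual bases. If we know $\omega(G_\lambda^{(\fa,\fb)}) = G_{\lambda'}^{(\fb,\fa)}$, then (after $\fa\leftrightarrow\fa$, $\fb\leftrightarrow\fb$ sign flips) this pair is, up to the relabeling $\lambda\leftrightarrow\lambda'$, the canonical dual pair $\{G_\nu^{(-\fb,-\fa)}\},\{g_\nu^{(\fb,\fa)}\}$. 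Uniqueness of the dual basis then forces $\omega(g_\lambda^{(\fa,\fb)}) = g_{\lambda'}^{(\fb,\fa)}$. So it suffices to prove one identity, and I will focus on the one for $G$.

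\textbf{Step 2 ($\omega$ applied to the Cauchy identity).} Apply $\omega$ in the $\mathbf{x}$ variables to the Cauchy identity:
\[
\sum_\lambda \omega_x\bigl(G_\lambda^{(-\fa,-\fb)}\bigr)(\mathbf{x})\, g_\lambda^{(\fa,\fb)}(\mathbf{y}) \;=\; \omega_x\prod_{i,j}\frac{1}{1-x_iy_j} \;=\; \prod_{i,j}(1+x_iy_j).
\]

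\textbf{Step 3 (dual Cauchy identity and conclusion).} Establish the ``dual Cauchy identity''
\[
\prod_{i,j}(1+x_iy_j) \;=\; \sum_\mu G_{\mu'}^{(-\fb,-\fa)}(\mathbf{x})\, g_\mu^{(\fa,\fb)}(\mathbf{y}).
\]
Comparing this with the equation of Step 2 and using that $\{g_\mu^{(\fa,\fb)}\}_\mu$ is a basis of $\Lambda$, we extract $\omega_x(G_\lambda^{(-\fa,-\fb)}) = G_{\lambda'}^{(-\fb,-\fa)}$, equivalently $\omega(G_\lambda^{(\fa,\fb)}) = G_{\lambda'}^{(\fb,\fa)}$. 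By Step~1 the $g$-identity follows.

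\textbf{Main obstacle.} The technical heart is Step 3, the dual Cauchy identity. I see three natural routes. (i) A combinatorial bijection of Hecke / set-valued-tableau RSK type, extending the classical dual RSK correspondence that produces $\prod(1+x_iy_j)=\sum_\mu s_\mu(\mathbf{x})s_{\mu'}(\mathbf{y})$; this is the most conceptual and parallels the Schur case most directly. (ii) A direct algebraic derivation, inserting the Jacobi--Trudi formulas of Propositions~\ref{GJT} and \ref{JT-g} and applying the Lindström--Gessel--Viennot lemma (or Aitken's identity) to the resulting determinants; the main bookkeeping challenge is tracking the leading $\prod(1-\fb x_i)^n$ factor in Proposition~\ref{GJT} through the identification with $\prod(1+x_iy_j)$. (iii) Specialization or generating-function manipulation of the skew Cauchy identity of Proposition~\ref{skewCauchy-prop}. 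Cauchy identities of this shape are recorded in \cite{Yel16,HJKSS21}, so a short write-up can simply cite them; for a self-contained argument I would pursue (i).
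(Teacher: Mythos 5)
Your Step 1 (deducing the $g$-identity from the $G$-identity via the $\omega$-invariance of the Hall product and uniqueness of dual bases) is correct and is precisely the argument the paper uses. However, the paper simply cites \cite[Theorem 5.4]{Yel16} for the $G$-identity, whereas you attempt to derive it from a ``dual Cauchy identity'' in Step 3, and this is where the gap lies.

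The issue is that the dual Cauchy identity you invoke,
$\prod_{i,j}(1+x_iy_j)=\sum_\mu G_{\mu'}^{(-\fb,-\fa)}(\mathbf{x})\,g_\mu^{(\fa,\fb)}(\mathbf{y})$,
is, given the ordinary Cauchy identity, \emph{logically equivalent} to the assertion $\omega\left(G_\lambda^{(-\fa,-\fb)}\right)=G_{\lambda'}^{(-\fb,-\fa)}$ you are trying to prove: applying $\omega_x$ to the ordinary Cauchy identity converts one into the other. So route (iii) ``manipulating the skew Cauchy identity'' is circular unless you produce the parameter-swapped $(\fb,\fa)$-basis by some independent means. Routes (i) and (ii) could in principle close the gap, but you only sketch them; route (ii) in particular would require carefully passing the overall prefactor $\prod_i(1-\fb x_i)^n$ from Proposition \ref{GJT} through the determinant computation and tracking the $n$-dependence, which is not trivial. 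As it stands, the proposal reduces the theorem to an equally hard (indeed equivalent) statement and then defers it. A self-contained proof would need one of your routes carried through; a citation-based proof is essentially the paper's, so the cleanest fix is simply to cite \cite[Theorem 5.4]{Yel16} (or an equivalent dual Cauchy statement in \cite{Yel16,HJKSS21}) for the $G$-identity and keep your Step 1 for the $g$-identity.
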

\begin{proof} For $G^{(\fa,\fb)}_\lambda$ this is \cite[Theorem 5.4]{Yel16}. The claim for the dual functions $g_\lambda$ follows since $g^{(\fa,\fb)}_\lambda$ are dual to $G^{(-\fa,-\fb)}_\lambda$ and $\omega$ preserves the scalar product on $\Lambda$.
\end{proof}

Other automorphisms we consider allow us to change parameters $(\fa,\fb)$ in both $G_\lambda$ and $g_\lambda$. First, one can note directly from \eqref{Gdef} that
$$
G_\lambda^{(\fa,\fb)}\left(\frac{x_1}{1-\gamma x_1},\frac{x_2}{1-\gamma x_2},\frac{x_3}{1-\gamma x_3},\dots\right)=G_\lambda^{(\fa+\gamma,\fb-\gamma)}(x_1, x_2, x_3,\dots),
$$
where $\gamma$ is an additional parameter. So we define $\rho_\gamma:\hat\Lambda\to\hat\Lambda$ by
$$
\rho_\gamma f(x_1, x_2, \dots)=f\left(\frac{x_1}{1-\gamma x_1}, \frac{x_2}{1-\gamma x_2}, \dots\right).
$$
Note that $\rho_\gamma$ is a well-defined automorphism of $\hat\Lambda$, since the $n$th degree component of $\rho_\gamma f$ is computed by a finite computation using lower degree terms. Also one can immediately see that
$$
\frac{\frac{x}{1-\gamma_2 x}}{1-\gamma_1\frac{x}{1-\gamma_2 x}}=\frac{x}{1-\gamma_2x-\gamma_1x}=\frac{x}{1-(\gamma_1+\gamma_2)x}
$$
so $\rho_{\gamma_1} \rho_{\gamma_2}=\rho_{\gamma_1+\gamma_2}$.

To do a similar manipulation for the dual functions $g_\lambda$, consider the adjoint morphisms $\rho_\gamma^*$, which are uniquely defined by requiring
$$
\langle f, \rho_\gamma g\rangle=\langle \rho^*_\gamma f,  g\rangle
$$
for any $f,g\in\Lambda$. To give more explicit descriptions, recall the generating function for complete symmetric functions:
$$
H(z)=\sum_{k\geq 0}h_kz^k=\prod_{i\geq 1}\frac{1}{1-zx_i}.
$$
\begin{prop}\label{adjshiftproperties} (1) For any partition $\lambda$ we have $\rho_\gamma^*(g_\lambda^{(\fa,\fb)})=g_\lambda^{(\fa+\gamma,\fb-\gamma)}$. In particular $\rho^*_\gamma$ defines a map $\Lambda\to\Lambda$.

(2) The action of $\rho_\gamma^*$ on complete symmetric functions is described by
$$
\rho_\gamma^*(H(z))=H\left(\frac{z}{1-\gamma z}\right)=\prod_{i}\frac{1-\gamma z}{1-(x_i+\gamma)z}.
$$
(3) We have $\omega \rho_\gamma^*=\rho_{-\gamma}^*\omega$.
\end{prop}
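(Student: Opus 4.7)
The plan is to prove the three parts in order, with each leveraging the previous. The unifying theme is that the adjoint of $\rho_\gamma$ can be computed by transferring the known action of $\rho_\gamma$ on $G_\mu^{(\fa,\fb)}$ (noted just before the proposition statement) through the Cauchy duality between $\{G_\mu^{(-\fa',-\fb')}\}$ and $\{g_\mu^{(\fa',\fb')}\}$.

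For (1), the key point is that for \emph{any} choice of parameters, in particular $(\fa',\fb')=(\fa+\gamma,\fb-\gamma)$, the families $\{g_\mu^{(\fa+\gamma,\fb-\gamma)}\}_\mu$ and $\{G_\mu^{(-\fa-\gamma,-\fb+\gamma)}\}_\mu$ are dual bases with respect to the Hall product. I therefore expand $\rho_\gamma^* g_\lambda^{(\fa,\fb)}$ in the basis $\{g_\mu^{(\fa+\gamma,\fb-\gamma)}\}_\mu$ and compute its coefficients:
$$
\langle \rho_\gamma^* g_\lambda^{(\fa,\fb)},\, G_\mu^{(-\fa-\gamma,-\fb+\gamma)}\rangle
= \langle g_\lambda^{(\fa,\fb)},\, \rho_\gamma G_\mu^{(-\fa-\gamma,-\fb+\gamma)}\rangle
= \langle g_\lambda^{(\fa,\fb)},\, G_\mu^{(-\fa,-\fb)}\rangle
= \delta_{\lambda\mu},
$$
where the first equality is the defining adjoint property, the second uses $\rho_\gamma G_\mu^{(\fa',\fb')}=G_\mu^{(\fa'+\gamma,\fb'-\gamma)}$, and the third is the original duality. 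This gives $\rho_\gamma^* g_\lambda^{(\fa,\fb)} = g_\lambda^{(\fa+\gamma,\fb-\gamma)}$; since the image lies in $\Lambda$ and $\{g_\lambda^{(\fa,\fb)}\}_\lambda$ is a basis of $\Lambda$, this also shows that $\rho_\gamma^*$ restricts to a map $\Lambda\to\Lambda$.

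For (2), I use the reproducing property of the one-variable Cauchy kernel: for any $g\in\Lambda$,
$\langle H(z;y), g(y)\rangle_y = g(z)$,
which follows from writing $H(z;y) = \sum_k z^k h_k(y) = \sum_k s_{(k)}(z)\, s_{(k)}(y)$ and orthonormality of Schur functions. Reading (2) coefficient-by-coefficient in $z$ (so that each coefficient is a statement about $\rho_\gamma^* h_k \in \Lambda$), the defining adjoint property yields, for every $g\in\Lambda$,
$$
\langle \rho_\gamma^* H(z;y),\, g(y)\rangle_y
= \langle H(z;y),\, \rho_\gamma g(y)\rangle_y
= (\rho_\gamma g)(z)
= g\!\left(\tfrac{z}{1-\gamma z}\right)
= \Big\langle H\!\left(\tfrac{z}{1-\gamma z};y\right),\, g(y)\Big\rangle_y.
$$
Nondegeneracy of the Hall pairing identifies $\rho_\gamma^* H(z) = H(z/(1-\gamma z))$, and the explicit product formula is immediate algebra. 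For (3), both sides are linear on $\Lambda$, so it suffices to test on the basis $\{s_\lambda\}=\{g_\lambda^{(0,0)}\}_\lambda$. Using (1) with $(\fa,\fb)=(0,0)$ and Theorem \ref{involution}:
$$
\omega \rho_\gamma^* s_\lambda = \omega\, g_\lambda^{(\gamma,-\gamma)} = g_{\lambda'}^{(-\gamma,\gamma)},
\qquad
\rho_{-\gamma}^* \omega\, s_\lambda = \rho_{-\gamma}^* s_{\lambda'} = \rho_{-\gamma}^* g_{\lambda'}^{(0,0)} = g_{\lambda'}^{(-\gamma,\gamma)}.
$$
The only delicate point throughout is making sense of $\rho_\gamma^*$ on objects outside $\Lambda$ such as $H(z)$; this is handled by interpreting everything termwise in $z$, and no genuine analytic difficulty arises. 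Once the dual-basis framework is in place, all three parts reduce to the short manipulations above, so I expect no serious obstacle.
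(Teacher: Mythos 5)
Your proof is correct and takes essentially the same approach as the paper: part~(1) is identical, part~(2) differs only in that you apply the adjoint directly to the one-variable reproducing kernel while the paper works with the full two-variable Cauchy kernel and specializes $y_1=z$ at the end, and part~(3) tests the identity on $\{s_\lambda\}=\{g_\lambda^{(0,0)}\}$ rather than on the generic basis $\{g_\lambda^{(\fa,\fb)}\}$. These are cosmetic variations on the same argument.
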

\begin{proof}
(1) For partitions $\lambda,\mu$ we have
$$
\left\langle \rho_\gamma^*\left(g_\lambda^{(\fa,\fb)}\right),G_\mu^{(-\fa-\gamma,-\fb+\gamma)}\right\rangle=\left\langle g_\lambda^{(\fa,\fb)},\rho_\gamma \left(G^{(-\fa-\gamma,-\fb+\gamma)}_\mu\right)\right\rangle=\left\langle g_\lambda^{(\fa,\fb)},G^{(-\fa,-\fb)}_\mu\right\rangle=\delta_{\lambda,\mu}.
$$
This implies $\rho_\gamma^*\left(g_\lambda^{(\fa,\fb)}\right)= g_\lambda^{(\fa+\gamma,\fb-\gamma)}$.

(2) Consider infinite sets of variables $\bx=(x_1, x_2,\dots)$ and $\by=(y_1,y_2,\dots)$. Then
$$
(\rho_\gamma^*)_{\mathbf x}\prod_{i,j}\frac{1}{1-x_iy_j}=\sum_{\lambda,\mu} \langle \rho_\gamma^*s_\lambda \mid s_\mu\rangle s_\mu(\bx)s_\lambda(\by)=\sum_{\lambda,\mu} \langle s_\lambda \mid \rho_\gamma s_\mu\rangle s_\mu(\bx)s_\lambda(\by)=(\rho_\gamma)_{\mathbf y}\prod_{i,j}\frac{1}{1-x_iy_j}
$$
where we use $A_\mathbf{x}, A_\mathbf{y}$ to denote actions of an operator $A$ on the corresponding sets of variables. Hence
$$
(\rho_\gamma^*)_{\mathbf x}\prod_{i,j}\frac{1}{1-x_iy_j}=\prod_{i,j}\frac{1-\gamma y_j}{1-(x_i+\gamma)y_j}
$$
and setting $y_1=z$ and $y_2=y_3=\dots=0$ gives the expression for $\rho_\gamma^*(H(z))$.

(3) Since $g_\lambda^{(\fa,\fb)}$ form a basis of $\Lambda$, it is enough to verify that
$$
\omega \rho_\gamma^*g_\lambda^{(\fa,\fb)}=\rho_{-\gamma}^*\omega g_\lambda^{(\fa,\fb)}
$$
which follows from part (1) and Theorem \ref{involution}.
\end{proof}

\subsection{TASEP with geometric jumps and five-vertex model.}\label{TASEPsec} For $p\in (0,1)$ the functions $G_\lambda^{(0,-p)}(1^n)$ are closely related to two models, which we describe in this subsection.

The first model is a particle system called \emph{TASEP with geometric jumps}. The state space of this system consists of infinite particle configurations on $\mathbb Z$, where particles are located at sites $Y_1> Y_2> Y_3>\dots$ and satisfy $Y_i=-i$ for sufficiently large $i$. In other words, we consider a system with infinitely many particles on the lattice $\Z$, with each site $i\in\Z$ occupied by at most one particle and for sufficiently large $n$ there are $n$ particles weakly to the right of $-n$. 
Note that these states can be identified with partitions by setting $Y_i=\lambda_i-i$, which we equivalently write as $Y=\lambda+\delta$ by setting $\delta=(-1, -2, -3, \dots)$.

TASEP with geometric jumps is a discrete time Markov process $Y(t)$ on the particle configurations, where we use $Y(t)=(Y_1(t),Y_2(t),\dots)$ to describe the configuration at time $t\in\mathbb Z_{\geq 0}$. Evolution of the process is defined as follows:
\begin{itemize}
\item We start with $Y_i(0)=-i$ for all $i$.
\item During the step $Y(t)\to Y(t+1)$ each particle $Y_i$ jumps a random distance forward without overcoming the particle $Y_{i-1}$. All these jumps are independent and applied from left to right, so that $Y_1$ is updated last.

\item For $i> 1$ the jump distance $Y_i(t+1)-Y_i(t)$ has the following distribution:
$$
\P(Y_i(t+1)-Y_i(t)=d)=\begin{cases}(1-p)p^d\qquad & d+Y_i(t)\in \{Y_i(t), Y_i(t)+1,\dots, Y_{i-1}(t)-2\},
\\ p^d \qquad & d+Y_i(t)=Y_{i-1}(t)-1.
\end{cases}
$$
In other words, $Y_i$ tries to jump a geometrically distributed distance forward, but it is stopped by $Y_{i-1}$ which is not yet updated.

\item  The jump distance $Y_1(t+1)-Y_1(t)$ has geometric distribution
$$
\P(Y_1(t+1)-Y_1(t)=d)=(1-p)p^d.
$$
\end{itemize}

See the top part of Figure \ref{modelexample} for an example of one step of this process. It turns out that the distribution of this process can be described using Grothendieck polynomials.

\begin{prop}\label{GTASEP} Let $\lambda,\mu$ be partitions such that $l(\mu)\leq t$, and $n\in\Z_{\geq 0}$. Then
$$
\P(Y(t+n)=\lambda+\delta\mid Y(t)=\mu+\delta)=p^{|\lambda|-|\mu|}(1-p)^nG^{(0,-p)}_{\lambda/\mu}(1^n).
$$
In particular, $\P(Y(n)=\lambda+\delta)=p^{|\lambda|}(1-p)^nG^{(0,-p)}_{\lambda}(1^n)$.
\end{prop}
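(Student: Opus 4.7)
\medskip

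\textbf{Proof plan.} The natural approach is to reduce the claim to the one-step transition $n=1$ and then appeal to the Markov property together with the branching rule from Proposition~\ref{branching-rules}. Indeed, once the one-step identity
\[
 \P\bigl(Y(t+1)=\lambda+\delta\mid Y(t)=\mu+\delta\bigr)=p^{|\lambda|-|\mu|}(1-p)\,G^{(0,-p)}_{\lambda/\mu}(1)
\]
is established, the general case follows by induction: summing over the intermediate state $\nu+\delta$ at time $t+n$ and using $(\fa,\fb)=(0,-p)$,
\begin{align*}
 \P\bigl(Y(t+n+1)&=\lambda+\delta\mid Y(t)=\mu+\delta\bigr)\\
 &=\sum_{\nu}p^{|\lambda|-|\nu|}(1-p)\,G^{(0,-p)}_{\lambda/\nu}(1)\cdot p^{|\nu|-|\mu|}(1-p)^{n}\,G^{(0,-p)}_{\nu/\mu}(1^{n})\\
 &=p^{|\lambda|-|\mu|}(1-p)^{n+1}\,G^{(0,-p)}_{\lambda/\mu}(1^{n+1}),
\end{align*}
where the last step is exactly the branching rule from Proposition~\ref{branching-rules} for the variable set $(1,\dots,1,1)$.

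\textbf{The one-step computation.} Set $D_{i}:=\mu_{i-1}-\mu_{i}$ for $i\geq 2$. The rules of the dynamics force particle $i\geq 2$ to land at $Y_i(t+1)\in\{\mu_i-i,\mu_i-i+1,\dots,\mu_{i-1}-i\}$, i.e.\ the new $\lambda_i$ must lie in $\{\mu_i,\mu_i+1,\dots,\mu_{i-1}\}$; combined with the (vacuous) constraint on $Y_1$, this is equivalent to the interlacing $\mu\preceq\lambda$, which matches the support of $G^{(0,-p)}_{\lambda/\mu}$ in \eqref{Gone}. Assuming $\mu\preceq\lambda$, I multiply the independent jump distributions: particle $1$ contributes $(1-p)p^{\lambda_1-\mu_1}$; a particle $i\geq 2$ with $\lambda_i<\mu_{i-1}$ contributes $(1-p)p^{\lambda_i-\mu_i}$; and a particle $i\geq 2$ with $\lambda_i=\mu_{i-1}$ (the ``stuck against its predecessor'' case) contributes only $p^{\lambda_i-\mu_i}$, without the extra $(1-p)$. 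Collecting powers,
\[
 \P\bigl(Y(t+1)=\lambda+\delta\mid Y(t)=\mu+\delta\bigr)=p^{|\lambda|-|\mu|}(1-p)^{1+N},
\]
where $N=\#\{i\geq 2:\lambda_i<\mu_{i-1}\}$.

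\textbf{Combinatorial identification.} The remaining step is to recognize $N=r(\mu/\overline{\lambda})$. By definition the $i$-th row of $\mu/\overline{\lambda}$ is the row $\mu_i/\lambda_{i+1}$, which is non-empty exactly when $\mu_i>\lambda_{i+1}$; shifting the index $j=i+1$, this is precisely the set of $j\geq 2$ with $\mu_{j-1}>\lambda_j$, whose cardinality is $N$. Substituting $G^{(0,-p)}_{\lambda/\mu}(1)=(1-p)^{r(\mu/\overline{\lambda})}$ from \eqref{Gone} completes the $n=1$ case, and the induction above finishes the proof.

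\textbf{Main obstacle.} There is no essential analytic difficulty here; the whole argument is combinatorial bookkeeping. The only point that requires a moment of care is the ``boundary'' contribution when $\lambda_i=\mu_{i-1}$: one has to remember that the total probability mass at this value is $p^{D_i}$, not $(1-p)p^{D_i}$, and this is exactly what produces the $(1-p)^{r(\mu/\overline{\lambda})}$ factor rather than $(1-p)^{|\{i:\lambda_i>\mu_i\}|}$.
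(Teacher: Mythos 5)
Your proof is correct and follows essentially the same route as the paper: establish the one-step transition probability by matching it with $(1-p)^{r(\mu/\overline\lambda)+1}p^{|\lambda|-|\mu|}$, then bootstrap to general $n$ by the Markov property and the branching rule. The only thing the paper adds that you omit is the preliminary remark that $\P(Y(t)=\mu+\delta)>0$ whenever $l(\mu)\leq t$ (achieved by moving only particle $i$ at step $i$), which is what makes the conditional probability on the left-hand side well-defined in the first place; it is worth stating, though it is routine.
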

\begin{proof} First note that $\P(Y(m)=\mu+\delta)>0$ when $l(\mu)\leq m$. Indeed, we can achieve $Y(m)=\mu+\delta$ with positive probability by only moving the particle $Y_i$ to $\mu_i-i$ at step $i$ and keeping it stationary otherwise. So the conditional probability is well-defined.

Now consider the case $n=1$. Fix partitions $\lambda,\mu$. Since the particles $Y_i$ cannot overtake each other we have
$$
\P(Y(t+1)=\lambda+\delta\mid Y(t)=\mu+\delta)=0 \qquad \text{unless\ } \mu\prec \lambda.
$$
The same holds for $G_{\lambda/\mu}^{(0,-p)}$, so from now on assume $\mu\prec \lambda$. From Proposition \ref{Gbranch}
$$
p^{|\lambda|-|\mu|}(1-p)G^{(0,-p)}_{\lambda/\mu}(1)=p^{|\lambda|-|\mu|}(1-p)^{r(\mu\backslash\overline\lambda)+1}=p^{\lambda_1-\mu_1}(1-p)\prod_{i\geq 2} p^{\lambda_i-\mu_i}(1-p)^{\1_{\lambda_i<\mu_{i-1}}},
$$
where $\1_{\lambda_i<\mu_{i-1}}$ is the indicator of the condition ${\lambda_i<\mu_{i-1}}$.  At the same time, if $\lambda+\delta=Y(t+1)$ and $\mu+\delta=Y(t)$ then $\lambda_i-\mu_i$ is exactly the distance jumped by the particle $Y_i$. Also, for $i\geq 2$ the condition $Y_{i}(t+1)<Y_{i-1}(t)-1$ is equivalent to $\lambda_i<\mu_{i-1}$, so
$$
\P(Y(t+1)=\lambda+\delta\mid Y(t)=\mu+\delta)=p^{\lambda_1-\mu_1}(1-p)\prod_{i\geq 2} p^{\lambda_i-\mu_i}(1-p)^{\1_{\lambda_i<\mu_{i-1}}}=p^{|\lambda|-|\mu|}(1-p)G^{(0,-p)}_{\lambda/\mu}(1).
$$

For general $n$ note that $Y(t)$ satisfies the Markov property
\begin{multline*}
\P(Y(t+n)=\lambda+\delta\mid Y(t)=\mu+\delta)\\
=\sum_{\nu\preceq\lambda}\P(Y(t+n)=\lambda+\delta\mid Y(t+{n-1})=\nu+\delta)\P(Y(t+n-1)=\nu+\delta\mid Y(t)=\mu+\delta),
\end{multline*}
so the proof is finished using induction on $n$ and the branching from Proposition \ref{branching-rules}.
\end{proof}

\begin{figure}
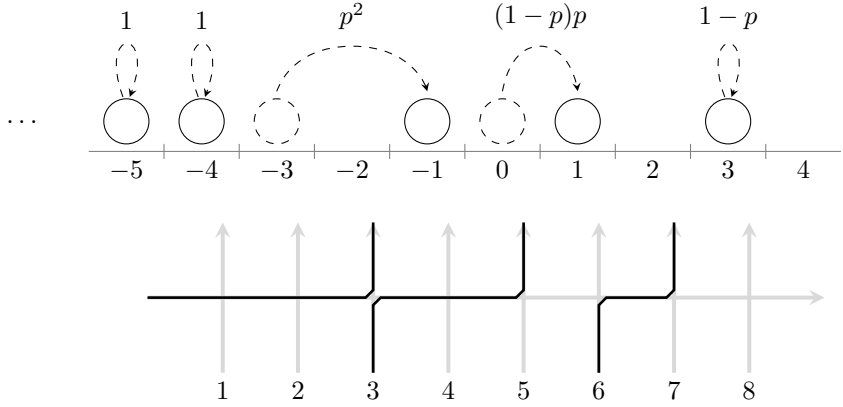


\tikzset{every loop/.style={min distance=10mm}}

\tikz{1}{

    \foreach \x in {-4, ..., 4} {
    	\draw[gray] (\x,-0.1)--(\x,0.1);
	\node[below]  at (\x+0.5, 0)  {$\x$};
	}
	\draw[gray] (-5,0) -- (5,0);
	
	\node[below]  at (-5+0.5, 0)  {$-5$};
		\node[left]  at (-5.5, 0.4)  {$\large\dots$};

    \draw[dashed] (0.5,0.4) circle (0.3cm);

    \draw[dashed] (-2.5,0.4) circle (0.3cm);

    \draw (3.5,0.4) circle (0.3cm);

    \draw(1.5,0.4) circle (0.3cm);

    \draw(-0.5,0.4) circle (0.3cm);

    \draw (-3.5,0.4) circle (0.3cm);

    \draw (-4.5,0.4) circle (0.3cm);
	\node at (3.5, 0.6) {} edge [in=70, out=110, dashed, ->, loop] ();
	\node at (-3.5, 0.6) {} edge [in=70, out=110, dashed, ->, loop] ();
	\node at (-4.5, 0.6) {} edge [in=70, out=110, dashed, ->, loop] ();
	\draw[dashed, ->] plot [smooth, tension=1.5] coordinates { (0.5,0.8) (1,1.4) (1.5,0.8)};
	\draw[dashed, ->] plot [smooth, tension=1.5] coordinates { (-2.5,0.8) (-1.5,1.4) (-0.5,0.8)};
    \node[above] at (3.5, 1.5) {$1-p$};
     \node[above] at (1, 1.5) {$(1-p)p$};
     \node[above] at (-1.5, 1.5) {$p^2$};
          \node[above] at (-3.5, 1.5) {$1$};
          \node[above] at (-4.5, 1.5) {$1$};

}
\newline
\vspace{1em}

\tikz{1}{
	\draw[lgray,line width=1.5pt,->] (0,0) -- (9,0);
	
	\foreach \x in {1, ..., 8} {
		\draw[lgray,line width=1.5pt,->] (\x,-1) -- (\x,1);
		\node[below] at (\x, -1) {$\x$};
	}
	\draw[black, line width=1pt] (0,0) -- (2.9,0) -- (3, 0.1) -- (3,1);
	\draw[black, line width=1pt] (3,-1) -- (3, -0.1) -- (3.1,0) -- (4.9, 0) -- (5, 0.1) -- (5,1);
	\draw[black, line width=1pt] (6,-1) -- (6, -0.1) -- (6.1,0) -- (6.9, 0) -- (7, 0.1) -- (7,1);
}

\caption{Top half: A possible step of TASEP with geometric jumps from $Y(2)$ to $Y(3)$, where $\mu=(4,2)$ and $\lambda=(4,3,2)$. The probability of this step is $(1-p)^2p^3$.\\ Bottom half: the partition function $Z_{3, \lambda/\mu}$ of the five-vertex model, with the same $\lambda,\mu$ as in the top half. The value of this partition function is $(1-p)^2p^3$.\label{modelexample} }
\end{figure}

\begin{figure}
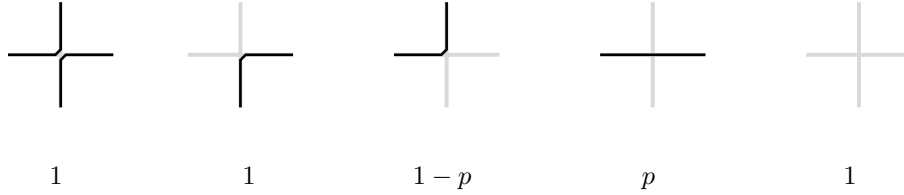

\begin{tabular}{ccccc}
\qquad
\tikz{0.7}{
	\draw[lgray,line width=1.5pt] (-1,0) -- (1,0);
	\draw[lgray,line width=1.5pt] (0,-1) -- (0,1);
	\draw[black, line width=1pt] (-1,0) -- (-0.1, 0) -- (0, 0.1) -- (0,1);
	\draw[black, line width=1pt] (0,-1) -- (0,-0.1) -- (0.1,0) -- (1,0);
}
\qquad
&
\quad
\tikz{0.7}{
	\draw[lgray,line width=1.5pt] (-1,0) -- (1,0);
	\draw[lgray,line width=1.5pt] (0,-1) -- (0,1);
	\draw[black, line width=1pt] (0,-1) -- (0,-0.1) -- (0.1,0) -- (1,0);
}
\qquad
&
\qquad
\tikz{0.7}{
	\draw[lgray,line width=1.5pt] (-1,0) -- (1,0);
	\draw[lgray,line width=1.5pt] (0,-1) -- (0,1);
	\draw[black, line width=1pt] (-1,0) -- (-0.1, 0) -- (0, 0.1) -- (0,1);
}
\qquad
&
\qquad
\tikz{0.7}{
	\draw[lgray,line width=1.5pt] (-1,0) -- (1,0);
	\draw[lgray,line width=1.5pt] (0,-1) -- (0,1);
	\draw[black, line width=1pt] (-1,0) -- (1,0);
}
\qquad
&
\qquad
\tikz{0.7}{
	\draw[lgray,line width=1.5pt] (-1,0) -- (1,0);
	\draw[lgray,line width=1.5pt] (0,-1) -- (0,1);
}
\qquad
\\[1.3cm]
\qquad
$1$
\qquad
&
\qquad
$1$
\qquad
&
\qquad
$1-p$
\qquad
&
\qquad
$p$
\qquad
&
\qquad
$1$
\qquad
\end{tabular}
\caption{Weights of possible local vertex configurations in the five-vertex model.\label{five-vertex-fig}}
\end{figure}

The second model is the \emph{stochastic five-vertex model}. Consider the infinite two-dimensional lattice $\Omega=\mathbb Z^2_{\geq 1}$ with vertices of the form $(i,j)\in\Z^2_{\geq 1}$. These vertices are connected by (oriented) edges $(i,j) \to (i, j+1)$ and $(i,j) \to (i+1, j)$ for $i,j\geq 1$, which we call internal edges. We also consider edges $(i,0) \to (i, 1)$ and $(0,j) \to (1, j)$ for $i,j\geq 1$, which we call boundary edges. A configuration of the five-vertex model is an assignment of "filled" or "empty" to each edge of the lattice such that the four edges around each vertex have one of the five possible local configurations depicted in Figure \ref{five-vertex-fig}. Note that all valid configurations have the following conservation law: the number of filled incoming edges is equal to the number of filled outgoing edges. A configuration is said to satisfy the \emph{domain-wall boundary condition} if all horizontal boundary edges $(0,j)\to(1,j)$ are filled while all vertical boundary edges $(i,0)\to(i,1)$ are empty. See Figure \ref{Fig_5v_configurations} for an example. More generally, we can consider configurations of the five-vertex model on an arbitrary domain $\Omega'\subset \Omega$, where we only fill in the edges containing at least one vertex from $\Omega'$. In this case edges connecting a vertex of $\Omega'$ with a vertex outside of $\Omega'$ are called the boundary edges of $\Omega'$.

 To each of the five possible local configurations around a vertex we assign a weight depending on $p\in (0,1)$, see Figure \ref{five-vertex-fig}. For a configuration $P$ on a finite domain $\Omega'$ we define its weight $w_{\Omega'}(P)$ by taking the product of the weights of all vertices in $\Omega'$. Given a configuration $b$ on the boundary edges of $\Omega'$ we define the partition function $Z_{\Omega',b}=\sum_{P}w_{\Omega'}(P)$, where the sum is over configurations on $\Omega'$ such that the states of boundary edges coincide with $b$. We distinguish two particular cases of this construction. Let $n\in\mathbb Z_{>0}$ and $\lambda,\mu$ be partitions such that $l(\lambda)\leq n, l(\mu)\leq n-1$.  Consider the $n$th row of $\mathbb Z^2_{\geq 1}$ with the following boundary condition:
 \begin{itemize}
 \item The left boundary edge $(0,n)\to(1,n)$ is filled;
 \item On the bottom boundary the edges of the form $(\mu_i+n-i, n-1)\to(\mu_i+n-i, n)$ for $i=1,\dots, n-1$ are filled;
 \item On the top boundary the edges of the form $(\lambda_i+n+1-i, n-1)\to(\lambda_i+n+1-i, n)$ for $i=1,\dots, n$ are filled;
 \item All other boundary edges are empty.
 \end{itemize}
 We use $Z_{n,\lambda/\mu}$ to denote the partition function of $n$th row with the boundary conditions above. Note that this infinite partition function is well-defined, since all edges to the right of column $\lambda_1+n$ must be empty and the corresponding vertices have weight $1$, so $Z_{n,\lambda/\mu}$ can be computed using only the finite rectangle $[1, \lambda_1+n]\times\{n\}$. See the bottom part of Figure \ref{modelexample} for an example. Similarly, we use $Z_{[1,n],\lambda}$ to denote the partition function of the bottom $n$ rows with the boundary conditions
 \begin{itemize}
 \item All left boundary edges are filled;
 \item On the top boundary the edges of the form $(\lambda_i+n+1-i, n-1)\to(\lambda_i+n+1-i, n)$ for $i=1,\dots, n$ are filled;
 \item All other boundary edges are empty.
 \end{itemize}

 \begin{prop}\label{partitionG} We have the following expression for the row partition functions:
 $$
 Z_{n,\lambda/\mu}=(1-p)p^{|\lambda|-|\mu|}G^{(0,-p)}_{\lambda/\mu}(1),\qquad  Z_{[1;n],\lambda}=(1-p)^np^{|\lambda|}G^{(0,-p)}_{\lambda}(1^n).
 $$
 \end{prop}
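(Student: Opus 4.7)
The plan is to analyze the single-row partition function $Z_{n,\lambda/\mu}$ by showing that the configuration in row $n$ is forced by the boundary data and directly computing its weight, then derive the multi-row formula by summing over chains of boundary partitions. The key point is that the prohibited vertex (a vertical pass-through without horizontal flow) makes the five-vertex model in one row rigid: processing vertices left-to-right in row $n$, at each vertex $(i,n)$ the incoming left edge is known (from the left boundary for $i=1$, or from the preceding vertex), the bottom edge $b_i$ is prescribed by $\mu$, and the top edge $t_i$ is prescribed by $\lambda$, so all four edges are determined. The only possibly ambiguous situation---left filled, bottom empty---is resolved by the value of $t_i$, yielding either type 3 (turn up) or type 4 (pass through). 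If $\mu\not\preceq\lambda$, the procedure reaches a dead end, consistent with $G^{(0,-p)}_{\lambda/\mu}(1)=0$.

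Assume $\mu\preceq\lambda$, so the configuration exists and is unique. Only types 3 and 4 have nontrivial weights $1-p$ and $p$, so the total weight equals $(1-p)^{T_3}p^{T_4}$, where $T_i$ is the number of type-$i$ vertices. Boundary counts give $T_1+T_3=n$ (filled top edges) and $T_1+T_2=n-1$ (filled bottom edges). The total number of filled horizontal edges equals $T_1+T_3+T_4$; summing horizontal travel distances over the $n$ paths---the leftmost path from column $0$ to $\lambda_n+1$, and the bottom-entering path at column $\mu_i+n-i$ to its exit at $\lambda_i+n+1-i$, matched to exits by non-crossing---gives $(\lambda_n+1)+\sum_{i=1}^{n-1}(\lambda_i-\mu_i+1)=|\lambda|-|\mu|+n$, so $T_4=|\lambda|-|\mu|$. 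A type-1 vertex occurs at a column where $b_c=t_c=1$; solving $\mu_i+n-i=\lambda_j+n+1-j$ subject to $\mu\preceq\lambda$ forces $j=i+1$ and $\mu_i=\lambda_{i+1}$, giving $T_1=\#\{i\in[1,n-1]:\mu_i=\lambda_{i+1}\}=(n-1)-r(\mu/\overline{\lambda})$. Hence $T_3=r(\mu/\overline{\lambda})+1$ and the total weight equals $(1-p)^{r(\mu/\overline{\lambda})+1}p^{|\lambda|-|\mu|}$, which matches $(1-p)\,p^{|\lambda|-|\mu|}G^{(0,-p)}_{\lambda/\mu}(1)$ by the one-variable formula \eqref{Gone}.

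For the bottom-$n$-rows partition function $Z_{[1;n],\lambda}$, decompose the sum by conditioning on the states of the internal vertical edges between adjacent rows, which form interlacing chains $\varnothing=\lambda^{(0)}\preceq\lambda^{(1)}\preceq\cdots\preceq\lambda^{(n)}=\lambda$. Applying the single-row formula in each row,
$$
Z_{[1;n],\lambda}=\sum_{\varnothing=\lambda^{(0)}\preceq\cdots\preceq\lambda^{(n)}=\lambda}\prod_{r=1}^{n}(1-p)\,p^{|\lambda^{(r)}|-|\lambda^{(r-1)}|}G^{(0,-p)}_{\lambda^{(r)}/\lambda^{(r-1)}}(1)=(1-p)^n p^{|\lambda|}\sum_{\text{chains}}\prod_{r=1}^{n}G^{(0,-p)}_{\lambda^{(r)}/\lambda^{(r-1)}}(1),
$$
and iterating the branching rule of Proposition \ref{Gbranch} identifies the remaining sum with $G^{(0,-p)}_{\lambda}(1^n)$. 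The main combinatorial obstacle is the identification $T_1=(n-1)-r(\mu/\overline{\lambda})$, which bridges the local vertex count and the global invariant $r(\mu/\overline{\lambda})$ appearing in the one-variable Grothendieck formula.
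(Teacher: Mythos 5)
Your proof is correct and follows essentially the same strategy as the paper's: establish that the one‑row boundary data determine a unique configuration (or none if $\mu\not\preceq\lambda$), count the vertex types to identify the weight as $(1-p)^{r(\mu/\overline\lambda)+1}p^{|\lambda|-|\mu|}$, and then get the multi‑row formula by horizontal slicing and the branching rule of Proposition~\ref{Gbranch}. The only cosmetic difference is in how you extract $T_4=|\lambda|-|\mu|$: you derive it indirectly from conservation laws (counting filled top edges and total horizontal travel), while the paper identifies the weight‑$p$ and weight‑$(1-p)$ vertices directly by their column positions between consecutive entry and exit columns; both give the same bookkeeping, and your identification $T_1=(n-1)-r(\mu/\overline\lambda)$ matches the paper's direct count of weight‑$(1-p)$ vertices.
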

 \begin{proof}
 For $Z_{n,\lambda/\mu}$ note that there exists at most one configuration satisfying the boundary conditions of $Z_{n,\lambda/\mu}$ because the state of each horizontal edge can be determined using the conservation law. This unique configuration can be visualized as a collection of $n-1$ up-right paths which connect $(\mu_i+n-i,n-1)$ and $(\lambda_{i}+n+1-i,n+1)$ for $i=1,\dots, n-1$ and a path entering from the left and leaving at column $\lambda_n+1$, see the bottom part of Figure \ref{modelexample}. These paths cannot be completely vertical since this would lead to a prohibited vertex configuration, hence $\lambda_{i}\geq \mu_i$. Since each edge can contain at most one path, the path exiting at column $\lambda_{i+1}+n-i$ should be weakly to the left of the next path entering at column $\mu_{i}+n-i$, leading to $\mu_i\geq \lambda_{i+1}$. So $Z_{n,\lambda/\mu}=0$ unless $\mu\preceq\lambda$. When $\mu\preceq\lambda$ all vertices strictly before column $\lambda_n+1$ and all vertices strictly between columns $\mu_i+n-i$ and $\lambda_{i}+n+1-i$ must have weight $p$, leading to $|\lambda|-|\mu|$ vertices with weight $p$. At the same time, vertices with weight $1-p$ are the right-most non-empty vertex $(\lambda_1+n, n)$ and vertices of the form $(\lambda_{i+1}+n-i,n)$ when $\lambda_{i+1}+n-i<\mu_i+n-i$. This leads to $r(\mu/\overline{\lambda})+1$ vertices with weight $1-p$. Comparing with Proposition \ref{Gbranch}, we get $Z_{n,\lambda/\mu}=(1-p)p^{|\lambda|-|\mu|}G^{(0,-p)}_{\lambda/\mu}(1)$.

 For $Z_{[1;n],\lambda}$ note that we have the following branching rule
 \begin{equation}\label{branchingZ}
 Z_{[1;n],\lambda}=\sum_{\mu:l(\mu)\leq n-1} Z_{[1;n-1],\mu} Z_{n,\lambda/\mu}.
 \end{equation}
Indeed, each configuration $P$ computing the partition function $Z_{[1;n],\lambda}$ can be sliced into a pair of configurations $P_1$ and $P_2$ such that $P_1$ is a configuration on the bottom $n-1$ rows, $P_2$ is a configuration on the $n$th row and the top boundary of $P_1$ coincides with the bottom boundary of $P_2$. Due to the conservation law, there must be exactly $n-1$ filled edges between rows $n$ and $n-1$, so the configuration between these rows can be encoded using a partition $\mu$ of length $\leq n-1$ in the same way as in $Z_{[1;n-1],\mu}$. This gives a weight preserving correspondence between configurations computing $Z_{[1;n],\lambda}$ and pairs of configurations computing $Z_{[1;n-1],\mu} Z_{n,\lambda/\mu}$, leading to \eqref{branchingZ}. Then the claim follows from the expression for $Z_{n,\lambda/\mu}$ and the branching rule of Proposition \ref{Gbranch}.
 \end{proof}


\section{Branching graph with Grothendieck weights} \label{Section_Branching_graph} In this section we describe the branching graph $\G^p$, which can be defined using Grothendieck polynomials $G_{\lambda/\mu}^{(0,-p)}$. We also remind the general notation used in the context  of graded graphs and construct a wide class of coherent systems on $\G^p$.

\subsection{Definition of the graph} Let $p\in [0,1)$. We consider an oriented weighted graded graph $\G^p$ with the vertex set $\bigsqcup_{n\geq 0}\G^p_n$, where the $n$th degree component $\G^p_n$ consists of partitions $\lambda$ of length $\leq n$, i.e. $n$-tuples of non-negative integers $\lambda_1\geq\lambda_2\geq\dots\geq\lambda_n\geq0$. In particular $\G^p_0$ is a singleton $\{\varnothing\}$. The edges of $\G^p$ are defined by requiring all edges to increase the degree by $1$ and $\mu\in\G^p_n$ goes to $\lambda\in\G^p_{n+1}$ if and only if $\mu\preceq\lambda$. To each edge $\mu\to\lambda$ we assign the weight (c.f. \eqref{Gone})
$$
w^p(\mu,\lambda)=G_{\lambda/\mu}^{(0,-p)}(1)=(1-p)^{r(\mu/\overline{\lambda})}.
$$
Note that all these weights are non-zero.

For a pair of vertices $\mu\in\G^p_n$, $\lambda\in\G^p_{n+k}$ with $n,k\geq 0$ we define
$$
\dim^{p}_{n,n+k}(\mu,\lambda)=\sum_{\mu=x_0\to x_1\to\dots\to x_{k}=\lambda} w^p(x_0,x_1)\dots w^p(x_{k-1}, x_k),
$$
where the sum is over all directed paths $x_0\to x_1\to\dots\to x_{k}$ connecting $\mu$ to $\lambda$ in $\G^p$. When $k=0$ we set $\dim^p_{n,n}(\mu,\lambda)=\delta_{\mu=\lambda}$ and when $n=0$ we write $\dim_n^p(\lambda):=\dim_{0,n}^p(\varnothing,\lambda)$, which is the sum of weights of all paths connecting $\varnothing$ to $\lambda$. Note that due to the branching rule of Proposition \ref{Gbranch} we have
$$
\dim^{p}_{n,n+k}(\mu,\lambda)=G^{(0,-p)}_{\lambda/\mu}(1^k).
$$
Define \emph{cotransition probabilities} $p^{\downarrow}_{n+1,n}(\lambda,\mu)$ for a pair of vertices $\mu\in\G^p_n, \lambda\in \G^p_{n+1}$ by
$$
p^{\downarrow}_{n+1,n}(\lambda,\mu):=\frac{w^p(\mu,\lambda)\dim_{n}^p(\mu)}{\dim_{n+1}^p(\lambda)}=\frac{G_{\lambda/\mu}^{(0,-p)}(1)G_\mu^{(0,-p)}(1^n)}{G_{\lambda}^{(0,-p)}(1^{n+1})}.
$$
Note that $p^{\downarrow}_{n+1,n}(\lambda,\mu)\geq 0$ for any $\mu\in\G^p_n, \lambda\in\G^p_{n+1}$ and for fixed $\lambda\in\G^p_{n+1}$ we have $\sum_{\mu\in\G^p_n}p^{\downarrow}_{n+1,n}(\lambda,\mu)=1$. So cotransition probabilities define Markov kernels $p^{\downarrow}_{n+1,n}:\G^p_{n+1}\dashrightarrow\G^p_n$ for $n\geq 0$. More generally, for $k\geq 0$ and $\mu\in\G^p_n, \lambda\in \G^p_{n+k}$ we define
\begin{equation}\label{downpdef}
p^{\downarrow}_{n+k,n}(\lambda,\mu)=\frac{\dim^p_{n,n+k}(\mu,\lambda)\dim_{n}^p(\mu)}{\dim_{n+k}^p(\lambda)}
\end{equation}

When $p>0$ the cotransition probabilities of $\G^p$ can also be described in terms of the TASEP with geometric jumps described in Section \ref{TASEPsec}. Namely, from Proposition \ref{GTASEP}  we have
$$
p^{\downarrow}_{n+1,n}(\lambda,\mu)=\dfrac{\P(Y(n+1)=\lambda+\delta\mid Y(n)=\mu+\delta)\ \P(Y(n)=\mu+\delta)}{\P(Y(n+1)=\lambda+\delta)}=\P(Y(n)=\mu+\delta\mid Y(n+1)=\lambda+\delta).
$$
In other words, the down-transition probabilities of $\G^p$ are exactly the cotransition probabilities of the process $Y(t)$ when $p\in (0,1)$.

Let $\calM(\G^p_n)$ denote the space of Borel probability measures on $\G^p_n$, where the latter is equipped with the discrete topology. Note that $\calM(\G^p_n)$ is a convex subset of the vector space $\mathbb R^{|\G^p_n|}$ of signed measures on $\Gamma_n$. Then the Markov kernels $p^{\downarrow}_{n+1,n}$ naturally induce the chain
$$
\calM(\G^p_0)\leftarrow\calM(\G^p_1)\leftarrow\dots \leftarrow\calM(\G^p_n)\leftarrow\dots
$$
of affine maps of convex sets.

\begin{defin} A \emph{coherent system} on $\G^p$ is an element of the projective limit $\varprojlim \calM(\G^p_n)$, that is, a family of measures $\{M_n\in\calM(\G^p_n)\}_{n\geq 0}$ such that for every $\mu\in\G^p_n$ the following coherency relation holds
\begin{equation}
\label{coherency-eq}
M_n(\mu)=\sum_{\lambda\in\G^p_{n+1}}p^{\downarrow}_{n+1,n}(\lambda,\mu) M_{n+1}(\lambda).
\end{equation}
Using the embedding $\varprojlim \calM(\G^p_n)\to\prod_{n\geq 0}\calM(\G^p_n)$ we define the Borel structure on the set of coherent systems $\varprojlim \calM(\G^p_n)$.
\end{defin}

Let $\mathcal T$ denote the set of paths on the graph $\G^p_n$, each such path is represented by a sequence of partitions $t=(t_n)$ where $t_n\in\G^p_n$ and $t_{n}\preceq t_{n+1}$. Clearly we have the embedding $\calT\subset \prod_{n\geq 0}\G^p_n$. Using the product topology on $\prod_{n\geq 0}\G^p_n$, $\calT$ is a closed subset of this product space and we equip $\calT$ with the induced topology. Let $\calT_{\leq N}$ denote the space of finite paths $\tau=(\tau_1,\dots, \tau_N)$ where $\tau_i\in\G_i^p$. We have continuous projection $\calT\to\calT_{\leq N}$ obtained by remembering only the first $N$ steps of a path. For $\tau\in \calT_{\leq N}$ define the \emph{cylinder set} $C_\tau\subset\calT$ as the preimage of $\{\tau\}$ under this projection, that is
$$
C_\tau=\{t\in\calT: t_1=\tau_1, \dots, t_N=\tau_N\}.
$$
For $\tau\in\calT_{\leq N}$ let $w^p(\tau)$ denote the product of all weights of edges along $\tau$.

\begin{defin}\label{centraldef} A Borel probability measure $M$ on $\calT$ is called \emph{central} if for every finite path $\tau\in \calT_{\leq N}$ the value $\frac{M(C_\tau)}{w^p(\tau)}$ depends only on the last vertex $\tau_{N}$. We use $M_n$ to denote the measure on $\G^p_n$ induced along the projection $\calT\to \G^p_n$.
\end{defin}
\begin{prop}\label{centrlacoheq} The assignment $M\mapsto \{M_n\}_{n\geq 0}$ is a one-to-one correspondence between central measures $M$ on $\calT$ and coherent systems $\{M_n\}_{n\geq 0}$ on $\G^p$. The reverse correspondence is determined by setting
$$
M(C_\tau)=\frac{w^p(\tau)}{\dim_n^p(\tau_n)} M_n(\tau_n)
$$
for $\tau\in\calT_{\leq n}$.
\end{prop}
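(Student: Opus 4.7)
The plan is to verify the correspondence in both directions by directly computing with the cylinder set formula, and then to use a standard extension argument to build a Borel measure from the cylinder values in the reverse direction.

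\textbf{Forward direction (central measure gives coherent system).} Suppose $M$ is a central measure on $\calT$. Since any $\tau \in \calT_{\leq n}$ ending at $\tau_n = \nu$ satisfies $M(C_\tau)/w^p(\tau) = c(\nu)$ for a single function $c$ depending only on $\nu$, summing $M(C_\tau)$ over all $\tau \in \calT_{\leq n}$ with $\tau_n = \nu$ yields
$$
M_n(\nu) = \sum_{\tau : \tau_n = \nu} M(C_\tau) = c(\nu) \sum_{\tau : \tau_n = \nu} w^p(\tau) = c(\nu)\, \dim_n^p(\nu),
$$
which identifies $c(\nu) = M_n(\nu)/\dim_n^p(\nu)$ and establishes the claimed formula for $M(C_\tau)$. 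For coherence, I would partition the cylinder $C_\tau$ according to the value $\tau_{n+1} = \lambda \succeq \tau_n$, obtaining $M(C_\tau) = \sum_\lambda M(C_{\tau,\lambda})$. Substituting the formula on both sides and cancelling the common factor $w^p(\tau)$ leaves precisely
$$
\frac{M_n(\tau_n)}{\dim_n^p(\tau_n)} = \sum_\lambda \frac{w^p(\tau_n, \lambda)}{\dim_{n+1}^p(\lambda)} M_{n+1}(\lambda),
$$
which, after multiplying by $\dim_n^p(\tau_n)$, is the coherency relation \eqref{coherency-eq} in view of the definition \eqref{downpdef} of $p^{\downarrow}_{n+1,n}$.

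\textbf{Reverse direction (coherent system gives central measure).} Given a coherent system $\{M_n\}_{n \ge 0}$, I would first \emph{define} a set function on the algebra of cylinder sets by
$$
\widetilde M(C_\tau) := \frac{w^p(\tau)}{\dim_n^p(\tau_n)} M_n(\tau_n), \qquad \tau \in \calT_{\leq n}.
$$
The nontrivial point is consistency: the value $\widetilde M(C_\tau)$ should coincide with $\sum_\lambda \widetilde M(C_{\tau,\lambda})$, which is exactly the calculation from the forward direction run in reverse — after factoring out $w^p(\tau)/\dim_n^p(\tau_n)$, the identity becomes the coherency relation \eqref{coherency-eq}, which holds by assumption. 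Non-negativity and centrality are immediate from the formula, and summing over the finitely-many-steps projection $\calT \to \calT_{\leq n}$ gives $\sum_{\nu \in \G^p_n} M_n(\nu) = 1$, so $\widetilde M$ is a consistent family of probability measures on the projective system $\{\calT_{\leq n}\}$.

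\textbf{Extension to a Borel measure on $\calT$.} Since $\calT$ carries the projective limit topology of the countable discrete spaces $\calT_{\leq n}$ (making it a Polish space), the Kolmogorov extension theorem produces a unique Borel probability measure $M$ on $\calT$ whose restriction to each $\calT_{\leq n}$ coincides with $\widetilde M$; equivalently, $M(C_\tau) = \widetilde M(C_\tau)$ for all cylinder sets. Centrality of $M$ follows directly from the defining formula. This completes the correspondence, and bijectivity is clear because both directions are given by the same explicit formula relating $M(C_\tau)$ and $M_n(\tau_n)$.

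\textbf{Main obstacle.} There is no substantial obstacle: the proof is a bookkeeping exercise, and the only point requiring a small amount of care is verifying that the consistency relation for the cylinder values is equivalent to the coherency relation \eqref{coherency-eq} — which after the explicit substitution becomes a one-line computation — together with invoking Kolmogorov's extension theorem to pass from a consistent family of probabilities on cylinder algebras to a genuine Borel measure on $\calT$.
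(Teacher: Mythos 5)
Your proof is correct and follows exactly the standard cylinder-set argument for equating central measures with coherent systems on a branching graph; the paper itself simply defers to a reference (``Straightforward modification of [Olshanski, Prop.~10.3]''), and the argument in that reference is the one you spell out, including the consistency check reducing to the coherency relation and the Kolmogorov-type extension over the projective system $\{\calT_{\leq n}\}_n$.
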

\begin{proof} Straightforward modification of \cite[Proposition 10.3]{Olsh01}.
\end{proof}

In this work we consider the problem of classification of all coherent systems on $\G^p$. Note that $\varprojlim \calM(\G^p_n)$ forms a convex subset of $\prod_{n\geq 0}\calM(\G^p_n)$. Theorem \ref{extremeredtheo} below reduces our classification problem to describing extreme points of $\varprojlim \calM(\G^p_n)$.

\begin{defin} The set of extreme points of the convex set $\varprojlim \calM(\G^p_n)$ is called the \emph{boundary} of $\G^p$. We denote it by $\Omega^p$, and for $\omega\in\Omega^p$ the corresponding coherent system is denoted by $M^{\omega}_n$.
\end{defin}

\begin{theo}[\cite{Olsh01}]\label{extremeredtheo} $\Omega^p$ is a Borel subset of $\varprojlim \calM(\G^p_n)$. For every coherent system $\{M_n\}_n$ on $\G^p$ there exists the unique Borel measure $m$ on $\Omega^p$ such that
$$
M_n(\lambda)=\int_{\Omega^p} M_n^\omega(\lambda)m(d\omega)\qquad\forall \lambda\in\G^p_n.
$$
Conversely, for each Borel measure $m$ on $\Omega^p$ the relation above defines a coherent system on $\Omega^p$. This gives a one-to-one correspondence between Borel measures on $\Omega^p$ and coherent systems on $\G^p$.
\end{theo}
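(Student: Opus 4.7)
The plan is to apply the standard Choquet/ergodic-decomposition framework for coherent systems on graded graphs developed in \cite{Olsh01} to our specific graph $\G^p$. First I would verify the required hypotheses: each level $\G^p_n$ is countable, every edge weight $w^p(\mu,\lambda) = (1-p)^{r(\mu/\overline\lambda)}$ is strictly positive, and every dimension $\dim^p_n(\lambda) = G^{(0,-p)}_\lambda(1^n)$ is finite and strictly positive, either from the determinantal formula \eqref{Gdef} or by iterating the positive-weight branching rule of Proposition \ref{Gbranch}. These are exactly the properties needed for the cotransition kernels \eqref{downpdef} to define a well-behaved projective system of Markov kernels.

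The argument then proceeds in three stages. First, by Proposition \ref{centrlacoheq}, coherent systems on $\G^p$ correspond affinely and bijectively to central Borel probability measures on the Polish path space $\calT$, so it suffices to prove the analogous integral-representation statement for the convex set $\Sigma$ of central measures. Second, I would topologize $\Sigma$ as a closed convex subset of the space of Borel probability measures on $\calT$ using the countable family of linear relations $M(C_\tau)/w^p(\tau) = M(C_{\tau'})/w^p(\tau')$ for paths $\tau,\tau' \in \calT_{\leq n}$ ending at the same vertex (Definition \ref{centraldef}); the fact that the extreme-point set $\Omega^p$ is Borel inside this Polish convex set is then classical. Third, I would identify the extreme points as precisely those central measures whose tail $\sigma$-algebra is $M$-trivial, which makes $\Sigma$ a Choquet simplex and allows the ergodic decomposition theorem to produce the unique Borel measure $m$ on $\Omega^p$ satisfying the asserted integral representation.

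The main obstacle I expect is the triviality-of-tail characterization of extremality. The easy direction---tail trivial implies extreme---follows because any non-trivial convex decomposition of a central measure would yield a non-constant, tail-measurable Radon--Nikodym density. The converse requires showing that for any tail event $A$ with $0 < M(A) < 1$, both conditional measures $M(\cdot \mid A)$ and $M(\cdot \mid A^c)$ remain central. This rests on the fact that tail events do not affect the relative weights assigned to cylinder sets $C_\tau$ with fixed endpoint $\tau_n$, which is precisely the condition of Definition \ref{centraldef}; concretely, $M(C_\tau \cap A) / M(C_{\tau'} \cap A) = w^p(\tau)/w^p(\tau')$ whenever $\tau,\tau' \in \calT_{\leq n}$ share the endpoint, since $A$ depends only on the tail beyond level $n$. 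Once this step is in place, Borel-ness of $\Omega^p$, existence of the decomposition, and uniqueness of $m$ all follow from the general Choquet theory for simplices, so the bulk of the work is really in making the tail-triviality argument precise for our backward-in-time Markov chain on $\calT$.
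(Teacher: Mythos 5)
The paper does not prove Theorem~\ref{extremeredtheo} but cites it directly to \cite{Olsh01}, and your outline reproduces precisely the approach taken there (and in the Vershik--Kerov tradition): pass to central measures on the Polish path space $\calT$ via Proposition~\ref{centrlacoheq}, characterize extreme central measures as those with $M$-trivial tail $\sigma$-algebra, and obtain the unique decomposing measure $m$ by disintegrating over the tail. Your verification that conditioning a central measure on a tail event preserves centrality---$M(C_\tau\cap A)/M(C_{\tau'}\cap A)=w^p(\tau)/w^p(\tau')$ for $\tau,\tau'$ sharing an endpoint, because $A\in\mathcal B_{-n}$ and centrality fixes the conditional law of $(t_1,\dots,t_{n-1})$ given $\mathcal B_{-n}$---is exactly the key computation, and it is correct.
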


\begin{rem}\label{GT-rem} When $p=0$ the graph $\G^p$ degenerates to a half of the extensively studied Gelfand-Tsetlin graph. In this case the boundary is described as follows. $\Omega=\Omega^0$ is the set of triples $(\alpha,\beta,\delta)$, where $\alpha,\beta$ are infinite real sequences
$$
\alpha_1\geq \alpha_2\geq\alpha_3\geq\dots\geq 0, \qquad 1\geq\beta_1\geq\beta_2\geq\beta_3\geq\dots\geq 0
$$
and $\delta$ is a real number such that $\sum_i\alpha_i+\sum_i\beta_i\leq \delta$. Embedding $\alpha,\beta$ into the infinite countable product $\mathbb R_{\geq 0}^{\infty}$, we define topology $\Omega$ by inducing from the product topology of $\mathbb R_{\geq 0}^{\infty}\times \mathbb R_{\geq 0}^{\infty}\times \mathbb R_{\geq 0}$. For a point $\omega=(\alpha,\beta,\delta)\in\Omega$ we also set $\gamma=\delta-\sum_i\alpha_i-\sum_i\beta_i$. Define
$$
\Phi^\omega(z):=e^{\gamma(z-1)}\prod_{i=1}^\infty\frac{1+\beta_i(z-1)}{1-\alpha_i(z-1)}.
$$
When treated as a function in $z$ and with fixed $\omega\in\Omega$ this is a meromorphic function on $\mathbb C$, analytic in a neighborhood of the unit disk. For $\lambda\in\G_n^0$ define $M_n(\lambda)$ by taking a finite collection of variables $z_1,\dots, z_k$ in a unit disk and writing its Taylor expansion
$$
\sum_{\lambda}M^{\omega}_n(\lambda)\frac{s_\lambda(z_1,\dots,z_k)}{s_\lambda(1^k)}=\Phi^\omega(z_1)\dots\Phi^\omega(z_k).
$$
Then $\{M^\omega_n\}_n$ is an extreme coherent system on $\G^0$ and $\Omega$ and every extreme coherent system is described in this way. See \cite{okounkov1998asymptotics,borodin2012boundary,petrov2014boundary,gorin2015asymptotics}.
\end{rem}

\begin{rem}\label{5-vertexrem} When $p\neq 0$ the coherent systems on $\G^p$ are equivalent to Gibbs measures for the five-vertex model from Section \ref{TASEPsec}. To see it, note that by Proposition \ref{centrlacoheq} coherent system are equivalent to central measures on $\mathcal T$. Now, given a path $t=(t^{(n)})\in\mathcal T$\footnote{Here we use a superscript to allow the notation $t_i^{(n)}$ for $i$th part of the partition $t^{(n)}\in\G^p_n$.} we can construct a five-vertex model configuration $\sigma(t)$ on $\mathbb Z_{\geq 1}^2$ as follows:  for each $i\geq 1$ consider the up-right path starting at $(0,i)$ and with vertical steps $(t_i^{(j)}+j+1-i, j)\to (t_i^{(j)}+j+1-i, j+1)$ for $j\geq i$. Then $\sigma(t)$ is the configuration obtained by filling only the edges contained in these up-right paths, see Figure \ref{example-central-gibbs} for an example.

The configuration $\sigma(t)$ satisfies \emph{domain-wall boundary conditions}: all boundary edges on the left boundary are filled while the boundary edges on the bottom boundary are empty. Moreover, $\sigma(t)$ has another restriction: for every $n$ there are exactly $n$ filled vertical edges between rows $n$ and $n+1$. Let $\Sigma$ denote the set of configurations of the five-vertex model satisfying these two conditions. Embedding $\Sigma$ into the product $\prod_{i,j\geq 1}S$, where $S$ is the five element set of possible local vertex configurations with the discrete topology, we equip $\Sigma$ with the product topology. Then, using the map $t\mapsto\sigma(t)$, a probability measure $M$ on $\mathcal T$ induces a probability measure on $\Sigma$, which we denote by $\P_M$.

Note that the correspondence $t\mapsto \sigma(t)$ preserves weights up to a scaling. Namely, for $n>0$ let $t^{(\leq n)}$ denote the restriction of a path $t$ to its first $n$ steps and let $\sigma_{\leq n}$ denote the restriction of a five-vertex configuration $\sigma$ to the first $n$ rows $\Z_{\geq 1}\times [1;n]$. Then the path weight $w^p(t^{(\leq n)})$ is equal to $G^{(0,-p)}_{t^{(n)}/t^{(n-1)}}(1)\dots G^{(0,-p)}_{t^{(1)}/t^{(0)}}(1)$, while the weight of the corresponding configuration $\sigma(t)_{\leq n}$ is given by $Z_{n,t^{(n)}/t^{(n-1)}}\dots Z_{1,t^{(1)}/t^{(0)}}$. By Proposition \ref{partitionG} we get
$$
w_{ \Z_{\geq 1}\times [1;n]}(\sigma(t)_{\leq n})=p^{|t^{(n)}|}(1-p)^n w^p(t^{(\leq n)}).
$$
Then the defining property of a central measure $M$ from Definition \ref{centraldef} is equivalent to the following Gibbs property for $\P_M$. Let $\rho$ be a five-vertex model configuration on $\Z_{\geq 1}\times [1;n]$ such that on the left and bottom boundaries $\rho$ satisfies the domain-wall boundary conditions while on the top boundary only the edges at columns $\lambda_i+n+1-i$ are filled for a partition $\lambda\in\G_n^p$. Then
$$
\P_M\left(\restr{\sigma}{\Z_{\geq 1}\times[1;n]}=\rho\right)=\frac{w_{ \Z_{\geq 1}\times [1;n]}(\rho)}{Z_{[1;n],\lambda}}.
$$
In other words, if $\rho$ is the restriction to the first $n$ rows of a $\P_M$-random configuration $\sigma$ and we have a fixed configuration of edges between rows $n$ and $n+1$ with $n$ filled edges, the conditional distribution satisfies the Gibbs property
$$
\P(\rho\mid \text{the\ configuration\ of\ edges\ between\ rows\ }n\ \text{and\ } n+1)\sim w_{\Z_{\geq 1}\times [1;n]}(\rho).
$$
This construction can also be reversed, producing a central measure $M$ on $\G^p$ from a Gibbs measure on $\Sigma$.
\end{rem}

\begin{figure}
\tikz{1.2}{
	\foreach \x in {1, ..., 10} {
		\draw[dashed,line width=0.5pt] (\x,0.5) -- (\x,5.5);
		\node[below] at (\x, 0) {$\x$};
	}
	\foreach \y in {1, ..., 5} {
		\draw[dashed,line width=0.5pt] (0.5, \y) -- (10.5, \y);
		\node[left] at (0, \y) {$\y$};
	}
	
	\draw[black, line width=1.5pt] (0.5,1) -- (2.9,1) -- (3, 1.1) -- (3,1.9) -- (3.1, 2) -- (4.9, 2) -- (5, 2.1) -- (5, 2.9) -- (5.1, 3) -- (7.9, 3) -- (8, 3.1) -- (8, 3.9) -- (8.1, 4) -- (8.9, 4) -- (9, 4.1) -- (9, 4.9) -- (9.1 ,5) -- (9.9, 5) -- (10, 5.1) -- (10, 5.5);
	\draw[black, line width=1.5pt] (0.5,2) -- (1.9,2) -- (2, 2.1) -- (2, 2.9) -- (2.1, 3) -- (2.9, 3) -- (3, 3.1) -- (3, 3.9) -- (3.1, 4) -- (3.9, 4) -- (4, 4.1) -- (4, 4.9) -- (4.1 ,5) -- (6.9, 5) -- (7, 5.1) -- (7, 5.5);
	\draw[black, line width=1.5pt] (0.5,3) -- (0.9, 3) -- (1, 3.1) -- (1, 3.9) -- (1.1, 4) -- (1.9, 4) -- (2, 4.1) -- (2, 4.9) -- (2.1 ,5) -- (2.9, 5) -- (3, 5.1) -- (3, 5.5);
	\draw[black, line width=1.5pt] (0.5,4) -- (0.9, 4) -- (1, 4.1) -- (1, 4.9) -- (1.1 ,5) -- (1.9, 5) -- (2, 5.1) -- (2, 5.5);
	\draw[black, line width=1.5pt] (0.5,5) -- (0.9, 5) -- (1, 5.1) -- (1, 5.5);
}
\caption{The five-vertex model configuration corresponding to a path starting from $\varnothing\to(2)\to (3,1)\to (5,1,0)\to (5,1,0,0)\to(5,3,0,0,0)\to\dots$.\label{example-central-gibbs}}
\end{figure}

\subsection{Coherent systems $M^\Phi_n$} Here we construct a family of coherent systems $M^\Phi_n$ on $\G^p$ using specializations of Grothendieck polynomials. From now on we fix $p\in [0,1)$.

Let $\overline{\mathbb D}$ denote the closed unit disk $\{z\in\mathbb C: |z|\leq 1\}$. Let $\mathcal F$ be the space of complex functions $\Phi(z)$ on $\overline{\mathbb D}$ such that there exists a sequence $\Phi_k(z)$ satisfying
\begin{itemize}
\item $\Phi_k(z)$ are complex functions converging to $\Phi(z)$ uniformly on $\overline{\mathbb D}$;
\item $\Phi_k(z)$ are of the form
\begin{equation}\label{Phik}
\Phi_k(z)=\prod_{i=1}^{n_k}\frac{1-x_{i,k}(z-1)}{1-y_{i,k}(z-1)},
\end{equation}
where $x_{i,k}\in [-1, \frac{p}{1-p}]$, $y_{i,k}\geq 0$, $y_{i,k}\geq x_{i,k}$ for every $i,k$.
\end{itemize}
Note that $\Phi(z)\in\mathcal{F}$ is analytic on the unit disk (the poles of $\Phi_k$ are at $\frac{1+y_{i,k}}{y_{i,k}}>1$) and $\Phi(1)=1$. We equip $\mathcal F$ with the topology of uniform convergence on $\overline{\mathbb{D}}$, which makes $\mathcal F$ a complete metric space.

For $n>0$, $\lambda\in\G^p_n, \Phi\in\mathcal F$ we define $M_n^{\Phi}(\lambda)$ by the following identity:
\begin{equation}
\label{defofM}
\sum_{\lambda\in\G^p_n} M_n^\Phi(\lambda) \frac{G_{\lambda}^{(0,-p)}(z_1, \dots, z_n)}{G_{\lambda}^{(0,-p)}(1^n)}=\Phi(z_1)\Phi(z_2)\dots\Phi(z_n).
\end{equation}
Also set $M^{\Phi}_0(\varnothing):=1$. We postpone the convergence questions until later. For now, we treat the identity above as an identity of formal power series in $z_1,\dots, z_n$, where $\Phi(z)\in\mathcal F$ is identified with its Taylor expansion at $0$. We also should explain why $M_n^\Phi(\lambda)$ are well-defined, which is done in Proposition \ref{basics-wd}. More generally, for $\lambda,\mu\in \G_n^p$ we define $M_n^{\Phi}(\lambda/\mu)$ by
\begin{equation}
\label{defofM-skew}
\sum_{\lambda\in\G^p_n} M_n^\Phi(\lambda/\mu) \frac{G_{\lambda}^{(0,-p)}(z_1, \dots, z_n)}{G_{\lambda}^{(0,-p)}(1^n)}=\Phi(z_1)\Phi(z_2)\dots\Phi(z_n)\frac{G_{\mu}^{(0,-p)}(z_1, \dots, z_n)}{G_{\mu}^{(0,-p)}(1^n)}.
\end{equation}
Our goal is to show that $\{M_n^\Phi\}_n$ form a coherent system on $\G^p$.

\begin{prop}\label{topologicalprop} Let $\Phi\in\mathcal F$. Then the Taylor expansion of $\Phi$ at $0$ has non-negative coefficients and converges uniformly on $\overline{\mathbb D}$.
\end{prop}
\begin{proof} First note that for $x\in [-1, \frac{p}{1-p}]$, $y\geq 0$, $y\geq x$ and $z\in\overline{\mathbb D}$ we have
$$
\frac{1-x(z-1)}{1-y(z-1)}=\frac{1+x}{1+y}+\sum_{n\geq 1} \frac{(y-x)y^{n-1}}{(1+y)^{n+1}}z^n,
$$
where all the coefficients $\frac{(y-x)y^{n-1}}{(1+y)^{n+1}}$ are non-negative. Hence for the function $\Phi_k(z)$ from \eqref{Phik} the Taylor expansion at $0$ is of the form 
$$
\Phi_k(z)=\sum_{n\geq 0} a_{n,k} z^n,
$$
where $a_{n,k}\geq 0$ and the series converges for $|z|\leq 1$. Since $\Phi_k(1)=1$ we also know that $\sum_{n\geq 0} a_{n,k}=1$.

Let $\Phi\in\mathcal F$. Then there is a sequence $(\Phi_k)_k$ of the form \eqref{Phik} converging uniformly to $\Phi$. By Cauchy's integral formula the Taylor expansions
$$
\Phi_k(z)=\sum_{n\geq 0} a_{n} z^n, \qquad \Phi_k(z)=\sum_{n\geq 0} a_{n,k} z^n
$$ 
satisfy $a_n=\lim_{k\to\infty} a_{n,k}\geq 0$ and $\sum_{n\geq 0} a_n\leq \lim_{k\to\infty} \sum_{n\geq 0} a_{n,k}=1$. This implies that $\sum_{n\geq 0} a_{n} z^n$ converges uniformly on $\overline{\mathbb D}$.
%
\end{proof}

\begin{prop}\label{basics-wd} The identities \eqref{defofM}, \eqref{defofM-skew} yield well-defined numbers $M_n^\Phi(\lambda), M_n^\Phi(\lambda/\mu)$  respectively. Moreover, $M_n^\Phi(\lambda/\mu)=0$ unless $|\lambda|\geq |\mu|$.
\end{prop}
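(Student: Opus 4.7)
The plan is to reduce everything to a triangular expansion argument in the topological completion $\hat\Lambda_n$ of symmetric polynomials in $n$ variables, exploiting the key fact (\ref{expandG}) that $G_\lambda^{(0,-p)}=s_\lambda+(\text{higher degree terms})$.

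First, I would check that both sides of (\ref{defofM}) and (\ref{defofM-skew}) make sense as elements of $\hat\Lambda_n$. The LHS is a symmetric formal power series in $z_1,\dots,z_n$: $\Phi\in\mathcal F$ is analytic on (a neighborhood of) the unit disk, hence admits a Taylor expansion at the origin, and $G_\mu^{(0,-p)}(z_1,\dots,z_n)$ is a symmetric polynomial. On the RHS, $\frac{G_\lambda^{(0,-p)}(z_1,\dots,z_n)}{G_\lambda^{(0,-p)}(1^n)}$ has lowest homogeneous degree $|\lambda|$ by (\ref{expandG}), so for each $k\geq 0$ only the finitely many $\lambda$ with $l(\lambda)\leq n$ and $|\lambda|\leq k$ contribute to the degree-$k$ component; the formal sum is therefore a well-defined element of $\hat\Lambda_n$ regardless of the coefficients $M_n^\Phi(\lambda)$.

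Second, I would establish existence and uniqueness of the expansion simultaneously by induction on the total degree. Given any symmetric formal power series $f\in\hat\Lambda_n$, I claim the coefficients $c_\lambda$ in $f=\sum_\lambda c_\lambda\frac{G_\lambda^{(0,-p)}}{G_\lambda^{(0,-p)}(1^n)}$ are uniquely determined degree by degree. Having determined $c_\nu$ for $|\nu|<k$, the degree-$k$ homogeneous part of $f$ equals
$$
\sum_{|\lambda|=k,\,l(\lambda)\leq n} c_\lambda\,\frac{s_\lambda(z_1,\dots,z_n)}{G_\lambda^{(0,-p)}(1^n)} + (\text{terms determined by }c_\nu\text{ with }|\nu|<k),
$$
because the only $s_\lambda$ with $|\lambda|=k$ appearing on the RHS come from the leading terms of the $G_\lambda^{(0,-p)}$ with $|\lambda|=k$. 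Since $\{s_\lambda:|\lambda|=k,\,l(\lambda)\leq n\}$ is linearly independent in the degree-$k$ component of $\Lambda_n$, the coefficients $\{c_\lambda:|\lambda|=k\}$ are uniquely determined. This yields the existence and uniqueness of $M_n^\Phi(\lambda)$ in (\ref{defofM}) and, with $f=\Phi(z_1)\cdots\Phi(z_n)\frac{G_\mu^{(0,-p)}(z_1,\dots,z_n)}{G_\mu^{(0,-p)}(1^n)}$, of $M_n^\Phi(\lambda/\mu)$ in (\ref{defofM-skew}).

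Third, for the vanishing statement, I would observe that $\Phi(z_1)\cdots\Phi(z_n)$ has nonzero constant term (its Taylor expansion starts at degree $0$), while $\frac{G_\mu^{(0,-p)}(z_1,\dots,z_n)}{G_\mu^{(0,-p)}(1^n)}$ has lowest homogeneous degree exactly $|\mu|$ with leading term $\frac{s_\mu}{G_\mu^{(0,-p)}(1^n)}$. Consequently the RHS of (\ref{defofM-skew}) has no nonzero homogeneous components of degree strictly less than $|\mu|$. Applying the inductive extraction from the previous paragraph to this series, at each degree $k<|\mu|$ the linear independence of the relevant Schur polynomials forces $M_n^\Phi(\lambda/\mu)=0$ for all $\lambda$ with $|\lambda|=k$. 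Hence $M_n^\Phi(\lambda/\mu)=0$ whenever $|\lambda|<|\mu|$.

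No single step poses a genuine obstacle: the proposition is a formal consequence of the triangularity $G_\lambda^{(0,-p)}=s_\lambda+(\text{higher})$ together with the linear independence of Schur polynomials. The mildest care is needed only in ensuring that the infinite sums of Grothendieck polynomials are interpreted correctly in $\hat\Lambda_n$, which is handled by the degree filtration observation above.
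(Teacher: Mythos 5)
Your proof is correct and follows essentially the same route as the paper: both rest on the triangularity $G_\lambda^{(0,-p)}=s_\lambda+(\text{higher degree terms})$ and extraction of coefficients degree by degree; the paper phrases this as explicitly inverting the Schur-to-Grothendieck transition matrix, whereas you perform the equivalent inductive extraction, and the two are interchangeable. One small slip in your third paragraph: $\Phi\in\mathcal F$ need \emph{not} have nonzero constant term --- for instance $\Phi^{\calA,\calB}$ with some $\beta_i=1$ vanishes at $z=0$, which is why the paper later treats the case $\Phi(0)=0$ separately in Corollary \ref{shiftcor} and in the proof of Theorem \ref{coherent-thm} --- so the parenthetical assertion is false in general. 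Fortunately your argument only uses that $\Phi(z_1)\cdots\Phi(z_n)$ is a formal power series (nonnegative degrees), not that its constant term is nonzero, so the vanishing conclusion $M_n^\Phi(\lambda/\mu)=0$ for $|\lambda|<|\mu|$ still stands.
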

\begin{proof}
Fix $\mu\in\G^p_n$. Note that $\Phi(z_1)\Phi(z_2)\dots\Phi(z_n)G_{\mu}^{(0,-p)}(z_1, \dots, z_n)$ is a symmetric power series in $z_1, \dots, z_n$. Since the Schur polynomials $\{s_\lambda(z_1,\dots, z_n)\}_{\lambda\in\G^p_n}$ form a basis of symmetric polynomials in $z_1,\dots, z_n$, we have a decomposition of the form
$$
\Phi(z_1)\Phi(z_2)\dots\Phi(z_n)G_{\mu}^{(0,-p)}(z_1, \dots, z_n)=\sum_{\lambda\in\G^p_n}c^\Phi_{\lambda\mu} s_\lambda(z_1,\dots,z_n).
$$
Since $G_\lambda^{(0,-p)}=s_\lambda+\text{higher\ order\ terms}$, we can invert the upper-triangular transition matrix getting $s_\lambda=G^{(0,-p)}_\lambda+\sum_{\nu:|\nu|>|\lambda|}a_{\lambda\nu}G_\nu^{(0,-p)}$. Then each $M_n^{\Phi}(\lambda/\mu)$ is given by the finite sum
$$
M_n^{\Phi}(\lambda/\mu)=G_\lambda^{(0,-p)}(1^n)\sum_{\nu:|\nu|\leq|\lambda|} c^\Phi_{\nu\mu} a_{\nu\lambda}.
$$

For the last statement note that the smallest degree of $\Phi(z_1)\dots\Phi(z_n)G_{\mu}^{(0,-p)}(z_1, \dots, z_n)$ is $|\mu|$, so $c^\Phi_{\lambda,\mu}=0$ unless $|\lambda|\geq |\mu|$. Thus, in the sum above computing $M_n^{\Phi}(\lambda/\mu)$ all terms vanish when $|\lambda|<|\mu|$.
\end{proof}

\begin{prop}\label{branchingMPhi} Let $\Phi_1,\Phi_2\in\mathcal{F}$. Then for any $n\geq 0$ and $\lambda,\nu\in \G^p_n$ we have
$$
M_n^{\Phi_1\Phi_2}(\lambda/\nu)=\sum_{\mu\in \G_n^p}M_n^{\Phi_1}(\lambda/\mu)M_n^{\Phi_2}(\mu/\nu).
$$
\end{prop}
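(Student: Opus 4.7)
The proof I would give is essentially a manipulation of the defining generating function identity \eqref{defofM-skew}, with an application of the linear independence of $\{G_\lambda^{(0,-p)}\}_{\lambda\in\G_n^p}$ at the end.

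The plan is to start from the right-hand side. Fix $\nu\in\G_n^p$ and write $z=(z_1,\dots,z_n)$. By \eqref{defofM-skew} applied to $\Phi_2$,
\begin{equation*}
\Phi_2(z_1)\cdots\Phi_2(z_n)\,\frac{G_\nu^{(0,-p)}(z)}{G_\nu^{(0,-p)}(1^n)}=\sum_{\mu\in\G_n^p} M_n^{\Phi_2}(\mu/\nu)\,\frac{G_\mu^{(0,-p)}(z)}{G_\mu^{(0,-p)}(1^n)}.
\end{equation*}
Multiplying both sides by $\Phi_1(z_1)\cdots\Phi_1(z_n)$ and then applying \eqref{defofM-skew} to $\Phi_1$ on each summand yields
\begin{equation*}
\Phi_1(z_1)\Phi_2(z_1)\cdots\Phi_1(z_n)\Phi_2(z_n)\,\frac{G_\nu^{(0,-p)}(z)}{G_\nu^{(0,-p)}(1^n)}=\sum_{\mu}M_n^{\Phi_2}(\mu/\nu)\sum_{\lambda}M_n^{\Phi_1}(\lambda/\mu)\,\frac{G_\lambda^{(0,-p)}(z)}{G_\lambda^{(0,-p)}(1^n)}.
\end{equation*}
On the other hand, by \eqref{defofM-skew} applied directly to $\Phi_1\Phi_2\in\mathcal F$, the left-hand side equals
\begin{equation*}
\sum_{\lambda} M_n^{\Phi_1\Phi_2}(\lambda/\nu)\,\frac{G_\lambda^{(0,-p)}(z)}{G_\lambda^{(0,-p)}(1^n)}.
\end{equation*}

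After that, the main step is to swap the order of summation on the double sum and equate coefficients in the basis $\{G_\lambda^{(0,-p)}(z)\}_\lambda$. Swapping is legitimate at the level of formal power series in $z$ because, by the last statement of Proposition \ref{basics-wd}, $M_n^{\Phi_1}(\lambda/\mu)=0$ unless $|\mu|\le|\lambda|$ and $M_n^{\Phi_2}(\mu/\nu)=0$ unless $|\nu|\le|\mu|$, so for each fixed $\lambda$ the $\mu$-sum $\sum_\mu M_n^{\Phi_1}(\lambda/\mu)M_n^{\Phi_2}(\mu/\nu)$ is finite (only $\mu$ with $|\nu|\le|\mu|\le|\lambda|$ contribute), and for each fixed degree in $z$ only finitely many $\lambda$ are involved in the expansions. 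Hence we obtain
\begin{equation*}
\sum_\lambda M_n^{\Phi_1\Phi_2}(\lambda/\nu)\,\frac{G_\lambda^{(0,-p)}(z)}{G_\lambda^{(0,-p)}(1^n)}=\sum_\lambda\!\left(\sum_\mu M_n^{\Phi_1}(\lambda/\mu)M_n^{\Phi_2}(\mu/\nu)\right)\!\frac{G_\lambda^{(0,-p)}(z)}{G_\lambda^{(0,-p)}(1^n)}.
\end{equation*}
Since $G_\lambda^{(0,-p)}=s_\lambda+$ higher degree terms, the polynomials $\{G_\lambda^{(0,-p)}(z_1,\dots,z_n)\}_{\lambda\in\G_n^p}$ are linearly independent in the ring of symmetric formal power series in $z$ (exactly as used in the proof of Proposition \ref{basics-wd}), so equating coefficients gives the desired identity.

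The only step with any real content is the bookkeeping that validates the rearrangements; I do not expect any genuine obstacle because both defining identities are identities of well-defined formal power series and the triangular support bound from Proposition \ref{basics-wd} makes every coefficient a finite sum. In particular, no analytic convergence needs to be invoked for this lemma.
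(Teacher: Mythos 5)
Your proof is correct and follows essentially the same route as the paper's: apply the defining identity \eqref{defofM-skew} for $\Phi_2$, multiply by $\prod_i\Phi_1(z_i)$, apply the identity for $\Phi_1$, and conclude by linear independence of $\{G_\lambda^{(0,-p)}\}_\lambda$. The only difference is that you spell out the triangular support bound from Proposition \ref{basics-wd} to justify the interchange of summation, which the paper leaves implicit.
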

\begin{proof} From the definition of $M^{\Phi_1\Phi_2}_n$ we have
$$
\sum_{\lambda\in\G^p_n} M_n^{\Phi_1\Phi_2}(\lambda/\nu) \frac{G_{\lambda}^{(0,-p)}(z_1, \dots, z_n)}{G_{\lambda}^{(0,-p)}(1^n)}=\frac{G_{\nu}^{(0,-p)}(z_1, \dots, z_n)}{G_{\nu}^{(0,-p)}(1^n)}\prod_{i=1}^n\Phi_1(z_i)\Phi_2(z_i).
$$
On the other hand, we can apply the defining relations for $M^{\Phi_1}_n,M^{\Phi_2}_n$ sequentially to get
\begin{multline*}
\frac{G_{\nu}^{(0,-p)}(z_1, \dots, z_n)}{G_{\nu}^{(0,-p)}(1^n)}\prod_{i=1}^n\Phi_1(z_i)\Phi_2(z_i)=\prod_{i=1}^n\Phi_1(z_i)\sum_{\mu\in\G^p_n} M_n^{\Phi_2}(\mu/\nu) \frac{G_{\mu}^{(0,-p)}(z_1, \dots, z_n)}{G_{\mu}^{(0,-p)}(1^n)}\\
=\sum_{\mu,\lambda\in\G^p_n} M_n^{\Phi_1}(\lambda/\mu)M_n^{\Phi_2}(\mu/\nu) \frac{G_{\lambda}^{(0,-p)}(z_1, \dots, z_n)}{G_{\lambda}^{(0,-p)}(1^n)}.
\end{multline*}
Comparing identities above, the claim follows from the linear independence of $G_{\lambda}^{(0,-p)}(z_1, \dots, z_n)$.
\end{proof}

\begin{cor}\label{shiftcor} Let $\Phi\in \mathcal F$. Then
$$
M^{z\Phi}_n(\lambda)=\begin{cases}M_n^{\Phi}(\lambda-1^n)\qquad& \text{if}\ \lambda_n>0,\\
0\qquad& \text{if}\ \lambda_n=0,
\end{cases}
$$
where $\lambda-1^n=(\lambda_1-1,\lambda_2-1,\dots,\lambda_n-1)$.

\end{cor}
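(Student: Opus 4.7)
The plan is to reduce the corollary to the definition \eqref{defofM} by observing that multiplying Grothendieck polynomials by $z_1 \cdots z_n$ has a clean combinatorial meaning: it shifts the indexing partition by $1^n$.

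First I would extract from the determinantal formula \eqref{Gdef} (with $\fa = 0$, $\fb = -p$) the identity
\begin{equation*}
z_1 z_2 \cdots z_n \, G_\lambda^{(0,-p)}(z_1, \dots, z_n) = G_{\lambda + 1^n}^{(0,-p)}(z_1, \dots, z_n),
\end{equation*}
valid for any $\lambda \in \G_n^p$. This is immediate after pulling the factor $z_i$ into the $i$-th row of the numerator determinant in \eqref{Gdef}: each entry $z_i^{\lambda_j + n - j}(1 - p z_i)^{j-1}$ becomes $z_i^{(\lambda_j + 1) + n - j}(1 - p z_i)^{j-1}$, which is exactly the numerator for $\lambda + 1^n$ (a partition of length at most $n$ with all parts positive). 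Specializing to $z_1 = \dots = z_n = 1$ further gives $G_{\lambda + 1^n}^{(0,-p)}(1^n) = G_{\lambda}^{(0,-p)}(1^n)$, so the normalized ratios in \eqref{defofM} transform in the same way:
\begin{equation*}
z_1 \cdots z_n \cdot \frac{G_\lambda^{(0,-p)}(z_1, \dots, z_n)}{G_\lambda^{(0,-p)}(1^n)} = \frac{G_{\lambda + 1^n}^{(0,-p)}(z_1, \dots, z_n)}{G_{\lambda + 1^n}^{(0,-p)}(1^n)}.
\end{equation*}

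Next I would multiply both sides of \eqref{defofM} for $M_n^{\Phi}$ by $z_1 \cdots z_n$ and apply the preceding identity term by term. This yields
\begin{equation*}
\prod_{i=1}^n (z_i \Phi(z_i)) \;=\; \sum_{\lambda \in \G_n^p} M_n^{\Phi}(\lambda) \, \frac{G_{\lambda + 1^n}^{(0,-p)}(z_1, \dots, z_n)}{G_{\lambda + 1^n}^{(0,-p)}(1^n)}.
\end{equation*}
Reindexing by $\mu = \lambda + 1^n$ (so $\mu$ ranges over partitions in $\G_n^p$ with $\mu_n > 0$), the right-hand side becomes $\sum_{\mu : \mu_n > 0} M_n^{\Phi}(\mu - 1^n) \, G_\mu^{(0,-p)}(z_1,\dots,z_n) / G_\mu^{(0,-p)}(1^n)$.

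Finally I would compare with the definition of $M_n^{z\Phi}$, which by \eqref{defofM} reads
\begin{equation*}
\sum_{\mu \in \G_n^p} M_n^{z\Phi}(\mu) \, \frac{G_\mu^{(0,-p)}(z_1, \dots, z_n)}{G_\mu^{(0,-p)}(1^n)} = \prod_{i=1}^n (z_i \Phi(z_i)).
\end{equation*}
Since $\{G_\mu^{(0,-p)}(z_1,\dots,z_n)\}_{\mu \in \G_n^p}$ is linearly independent (by \eqref{expandG}, the transition matrix to Schur polynomials is upper triangular with $1$s on the diagonal), equating the two expansions gives $M_n^{z\Phi}(\mu) = M_n^{\Phi}(\mu - 1^n)$ when $\mu_n > 0$ and $M_n^{z\Phi}(\mu) = 0$ when $\mu_n = 0$, as claimed. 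There is no real obstacle here beyond the determinantal observation at the start; the only subtlety is the bookkeeping that the shift $\lambda \mapsto \lambda + 1^n$ bijects $\G_n^p$ with the set of partitions in $\G_n^p$ whose last part is positive, which accounts for the two cases of the statement.
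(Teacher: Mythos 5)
Your proof is correct and rests on exactly the same key observation as the paper's, namely that $z_1\cdots z_n\,G_\lambda^{(0,-p)}(z_1,\dots,z_n)=G_{\lambda+1^n}^{(0,-p)}(z_1,\dots,z_n)$ (valid since $\fa=0$) together with $G_{\lambda+1^n}^{(0,-p)}(1^n)=G_\lambda^{(0,-p)}(1^n)$. The paper packages this by first computing $M^z_n(\lambda/\mu)$ and then invoking the multiplicativity from Proposition~\ref{branchingMPhi}, whereas you substitute the shift directly into the defining identity~\eqref{defofM} and conclude by linear independence; this is only a cosmetic difference in bookkeeping, not a different argument.
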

\begin{proof} From Proposition \ref{branchingMPhi} we have
$$
M^{z\Phi}_n(\lambda)=\sum_{\mu\in\G^p_n} M^{z}_n(\lambda/\mu)M^{\Phi}_n(\mu).
$$
However, from the definition \eqref{Gdef} it is clear that
$$
G^{(0,-p)}_{\mu}(z_1,\dots z_n)\prod_{i=1}^nz_i=G^{(0,-p)}_{\mu+1^n}(z_1,\dots z_n),
$$
so $M^{z}_n(\lambda/\mu)=1$ when $\lambda=\mu+1^n$ and $M^{z}_n(\lambda/\mu)=0$ otherwise.
\end{proof}

For the next property assume that $\Phi(0)\neq 0$. Define an algebra homomorphism $\iota_\Phi:\Lambda\to\C$ by its values on the generators $h_k$:
$$
\sum_{k\geq 0}\iota_\Phi(h_k)z^k=\Phi(z)/\Phi(0).
$$
To make expression shorter we use the notation $f(\Phi):=\iota_\Phi(f)$.

\begin{prop}\label{specialization} Let $\Phi\in\mathcal F$ and $\Phi(0)\neq 0$. Then for $\lambda,\mu\in\G_n^p$ we have
$$
M_n^{\Phi}(\lambda/\mu)=g_{\lambda/\mu}^{(0,p)}(\Phi)\Phi(0)^{n} \frac{G^{(0,-p)}_\lambda(1^n)}{G_\mu^{(0,-p)}(1^n)}.
$$
\end{prop}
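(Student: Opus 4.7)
The plan is to derive the formula by specializing the skew Cauchy identity (Proposition \ref{skewCauchy-prop}) via the homomorphism $\iota_\Phi$ and matching coefficients against the defining generating function \eqref{defofM-skew} of $M_n^\Phi(\lambda/\mu)$.

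First I would set $\fa=0$, $\fb=p$ in Proposition \ref{skewCauchy-prop} and take $\mu=\varnothing$ on the un-skewed $G$-side. Since $g^{(0,p)}_{\varnothing/\lambda}(y)$ vanishes unless $\lambda=\varnothing$ (in which case it equals $1$), the right-hand sum collapses and we obtain the ``Cauchy-like'' identity
$$
\sum_{\lambda}G^{(0,-p)}_{\lambda}(x_1,x_2,\dots)\,g^{(0,p)}_{\lambda/\mu}(y_1,y_2,\dots)=G^{(0,-p)}_{\mu}(x_1,x_2,\dots)\prod_{i,j\geq 1}\frac{1}{1-x_iy_j},
$$
which is an identity of formal power series in $x$ with coefficients in $\Lambda_y$: both sides decompose degree by degree in $x$, with each $x$-coefficient a well-defined element of $\Lambda$.

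Next I would apply the algebra homomorphism $\iota_\Phi\colon\Lambda\to\C$ to the $y$-variables on both sides. Since $\prod_{j\geq 1}(1-xy_j)^{-1}=\sum_{k\geq 0}h_k(y)x^k$ and the defining relation $\sum_k \iota_\Phi(h_k)z^k=\Phi(z)/\Phi(0)$ gives
$$
\iota_\Phi\!\left(\prod_{j\geq 1}\frac{1}{1-xy_j}\right)=\frac{\Phi(x)}{\Phi(0)},
$$
the multiplicativity of $\iota_\Phi$, applied after restricting to $x=(x_1,\dots,x_n,0,0,\dots)$, yields
$$
\sum_{\lambda\in\G^p_n} G^{(0,-p)}_{\lambda}(x_1,\dots,x_n)\, g^{(0,p)}_{\lambda/\mu}(\Phi) = G^{(0,-p)}_{\mu}(x_1,\dots,x_n)\,\Phi(0)^{-n}\prod_{i=1}^n \Phi(x_i).
$$

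Finally I would rewrite the displayed identity as
$$
\sum_{\lambda\in\G^p_n}\!\Phi(0)^n\frac{g^{(0,p)}_{\lambda/\mu}(\Phi)\,G^{(0,-p)}_{\lambda}(1^n)}{G^{(0,-p)}_{\mu}(1^n)}\,\frac{G^{(0,-p)}_{\lambda}(x_1,\dots,x_n)}{G^{(0,-p)}_{\lambda}(1^n)}=\prod_{i=1}^n\Phi(x_i)\,\frac{G^{(0,-p)}_{\mu}(x_1,\dots,x_n)}{G^{(0,-p)}_{\mu}(1^n)},
$$
and compare with \eqref{defofM-skew}. The expansion coefficients in the basis $\{G^{(0,-p)}_\lambda(x_1,\dots,x_n)\}_{\lambda\in\G^p_n}$ are uniquely determined (this is the uniqueness used implicitly in Proposition \ref{basics-wd}, stemming from triangularity with respect to the Schur basis), hence the claimed formula for $M_n^\Phi(\lambda/\mu)$ follows. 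The only nontrivial step is justifying the term-by-term application of $\iota_\Phi$, which is the main conceptual point: it is legitimate because the skew Cauchy identity lives in the completed bigraded ring where each $x$-coefficient is a genuine element of $\Lambda$, and $\iota_\Phi$ is a ring homomorphism on $\Lambda$; no analytic convergence of $\Phi$ is needed.
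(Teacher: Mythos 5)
Your proposal is correct and follows essentially the same route as the paper's own proof: specialize the skew Cauchy identity of Proposition \ref{skewCauchy-prop} at $(\fa,\fb)=(0,p)$ with one of the two partitions set to $\varnothing$, apply $\iota_\Phi$ to one set of variables, and match coefficients against \eqref{defofM-skew} using the linear independence of the $G_\lambda^{(0,-p)}$. The only difference is that you are slightly more explicit about why the term-by-term application of $\iota_\Phi$ is legitimate as a formal-power-series manipulation, a point the paper treats implicitly.
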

\begin{proof} We show that the expression above satisfies the definition of $M^\Phi_n(\lambda/\mu)$. Consider the skew Cauchy identity from Proposition \ref{skewCauchy-prop}:
$$
\sum_{\lambda} g^{(0,p)}_{\lambda/\mu}(\bx) G^{(0,-p)}_{\lambda}(z_1,\dots, z_n)=G^{(0,-p)}_{\mu}(z_1,\dots, z_n)\prod_{i=1}^n\left(\sum_{k\geq 0}h_k(\bx)z_i^k\right).
$$
Applying $\iota_\Phi$ to the functions in $\bx$ we get
$$
\sum_{\lambda} g^{(0,p)}_{\lambda/\mu}(\Phi) G^{(0,-p)}_{\lambda}(z_1,\dots, z_n)=G^{(0,-p)}_{\mu}(z_1,\dots, z_n)\prod_{i=1}^n\frac{\Phi(z_i)}{\Phi(0)}.
$$
Rearranging the terms we get
$$
\sum_{\lambda} g^{(0,p)}_{\lambda/\mu}(\Phi)\Phi(0)^{n}\frac{G^{(0,-p)}_{\lambda}(1^n)}{G^{(0,-p)}_{\mu}(1^n)} \frac{G^{(0,-p)}_{\lambda}(z_1,\dots, z_n)}{G^{(0,-p)}_{\lambda}(1^n)}=\frac{G^{(0,-p)}_{\mu}(z_1,\dots, z_n)}{G^{(0,-p)}_{\mu}(1^n)}\prod_{i=1}^n\Phi(z_i).
$$
\end{proof}

\begin{prop}\label{positivityM} Let $\Phi\in\mathcal F$. Then $M_n^{\Phi}(\lambda/\mu)\geq 0$ for any $\lambda,\mu\in\G_n^p$.
\end{prop}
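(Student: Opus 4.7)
The plan is to reduce the claim to single-factor $\Phi$'s, apply the specialization formula of Proposition \ref{specialization}, and then rewrite the resulting evaluation $g^{(0,p)}_{\lambda/\mu}(\Phi)$ using either the parameter-shift $\rho^*_\gamma$ or the involution $\omega$ so as to land in the range where Proposition \ref{g-branching-prop} delivers non-negativity manifestly.

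As in the proof of Proposition \ref{basics-wd}, each $M^\Phi_n(\lambda/\mu)$ is a polynomial in finitely many Taylor coefficients of $\Phi$ at $0$, and these coefficients depend continuously on $\Phi$ under uniform convergence on the disk. So it suffices to handle the approximants $\Phi_k = \prod_i \frac{1-x_{i,k}(z-1)}{1-y_{i,k}(z-1)}$. Proposition \ref{branchingMPhi} expresses $M_n^{\Phi_1\Phi_2}(\lambda/\nu)$ as the non-negative convolution $\sum_\mu M_n^{\Phi_1}(\lambda/\mu)M_n^{\Phi_2}(\mu/\nu)$, reducing positivity to each factor. When $x_{i,k}\le 0$ I further split $\frac{1-x(z-1)}{1-y(z-1)} = (1-x(z-1))\cdot\frac{1}{1-y(z-1)}$, so it suffices to prove positivity for $\Phi$ in one of three families: (A) $\frac{1-x(z-1)}{1-y(z-1)}$ with $0<x\le p/(1-p)$ and $y\ge x$, (B) $\frac{1}{1-y(z-1)}$ with $y\ge 0$, and (C) $1-x(z-1)$ with $x\in[-1,0]$. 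The only instance with $\Phi(0)=0$ is $x=-1$ in (C), where $\Phi(z)=z$; the skew analogue of Corollary \ref{shiftcor}, obtained by multiplying the defining identity by $z_1\cdots z_n$ and using $z_1\cdots z_n\,G^{(0,-p)}_\mu = G^{(0,-p)}_{\mu+1^n}$ with $G^{(0,-p)}_{\mu+1^n}(1^n) = G^{(0,-p)}_\mu(1^n)$, gives $M_n^z(\lambda/\mu) = \delta_{\lambda,\mu+1^n}\ge 0$. In the remaining cases $\Phi(0)>0$, and because $G^{(0,-p)}_\lambda(1^n)>0$, Proposition \ref{specialization} reduces the goal to $g^{(0,p)}_{\lambda/\mu}(\Phi)\ge 0$.

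For cases (A) and (B), setting $\tilde x = x/(1+x)\in[0,p]$ and $\tilde y = y/(1+y)\ge\tilde x$, a direct computation gives $\Phi(z)/\Phi(0) = (1-\tilde xz)/(1-\tilde yz)$, which by Proposition \ref{adjshiftproperties}(2) coincides with $\rho^*_{\tilde x}(H(z))$ evaluated at a single variable $\eta = \tilde y-\tilde x\ge 0$. Hence $\iota_\Phi = \mathrm{ev}_\eta\circ\rho^*_{\tilde x}$. The maps $\rho_\gamma$ and $\omega$ are Hopf morphisms of $\Lambda$ for the coproduct coming from union of variable sets, so their adjoints/involution respect the coproduct. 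Combining this with Proposition \ref{adjshiftproperties}(1) and Theorem \ref{involution} and comparing $\Delta\circ\rho^*_\gamma g^{(0,p)}_\lambda = (\rho^*_\gamma\otimes\rho^*_\gamma)\circ\Delta g^{(0,p)}_\lambda$ (respectively for $\omega$) yields the skew versions
\[
\rho^*_\gamma(g^{(0,p)}_{\lambda/\mu}) = g^{(\gamma,p-\gamma)}_{\lambda/\mu},\qquad \omega(g^{(0,p)}_{\lambda/\mu}) = g^{(p,0)}_{\lambda'/\mu'}.
\]
For cases (A), (B) this produces $g^{(0,p)}_{\lambda/\mu}(\Phi) = g^{(\tilde x,p-\tilde x)}_{\lambda/\mu}(\eta)$, and iterating Proposition \ref{g-branching-prop} with $\fa = \tilde x\ge 0$, $\fb = p-\tilde x\ge 0$, argument $\eta\ge 0$, and $\eta+\fa = \tilde y\ge 0$ exhibits it as a sum of products of manifestly non-negative one-variable terms. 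In case (C) with $\beta=-x\in[0,1)$, one checks $\Phi(z)/\Phi(0) = 1+\tfrac{\beta}{1-\beta}z = \sum_k e_k(\xi)z^k$ with $\xi = \beta/(1-\beta)\ge 0$, so $\iota_\Phi = \mathrm{ev}_\xi\circ\omega$ and $g^{(0,p)}_{\lambda/\mu}(\Phi) = g^{(p,0)}_{\lambda'/\mu'}(\xi) = 0^{r-b}p^i\xi^b(\xi+p)^{c-b}\ge 0$ by Proposition \ref{g-branching-prop}.

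The main technical difficulty lies in extending the parameter-shift and involution identities from $g_\lambda$ to $g_{\lambda/\mu}$; this is essential for steering the evaluation into the non-negative parameter regime of Proposition \ref{g-branching-prop}, and without it the explicit single-variable positivity of Proposition \ref{g-branching-prop} cannot be used. Once this is done and the three single-factor cases are dispatched, the general claim assembles from Proposition \ref{branchingMPhi} (non-negative convolution over products of factors) and the continuity of $\Phi\mapsto M^\Phi_n(\lambda/\mu)$ in the Taylor coefficients.
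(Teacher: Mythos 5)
Your argument is correct and follows essentially the same route as the paper: reduce to the rational approximants via Proposition \ref{branchingMPhi} and continuity, strip off factors of $z$ via Corollary \ref{shiftcor}, then apply Proposition \ref{specialization} and identify $\iota_\Phi$ with a one-variable evaluation composed with $\rho^*_\gamma$ so that $g^{(0,p)}_{\lambda/\mu}(\Phi)$ becomes a skew dual Grothendieck value in a non-negative parameter regime, read off from Proposition \ref{g-branching-prop}. The main difference is organizational: the paper treats a single family $\frac{1-x(z-1)}{1-y(z-1)}$ with $x\in(-1,p/(1-p)]$, $y\ge0$, $y\ge x$, and notices that $g^{(\gamma,p-\gamma)}_{\lambda/\mu}(u)$ remains manifestly non-negative even when $\gamma=\frac{x}{1+x}<0$ (because $u+\gamma=\frac{y}{1+y}\ge0$ and $p-\gamma\ge0$), whereas you split off the subcase $x\le 0$ and route it through the involution $\omega$. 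That detour is not needed — the paper's single uniform case already covers $x<0$ — but it is not wrong either. A genuine value-add in your write-up is that you flag and justify (via the Hopf-morphism/coproduct argument) the passage from $\rho^*_\gamma g^{(\fa,\fb)}_\lambda = g^{(\fa+\gamma,\fb-\gamma)}_\lambda$ and $\omega g^{(\fa,\fb)}_\lambda=g^{(\fb,\fa)}_{\lambda'}$ to their skew versions; the paper uses the skew versions silently both here and in the proof of Proposition \ref{hook}, so your remark fills a small gap rather than creating an unnecessary complication. One minor stylistic note: in cases (A) and (B) nothing needs to be ``iterated'' — Proposition \ref{g-branching-prop} directly gives the one-variable value $g^{(\tilde x,p-\tilde x)}_{\lambda/\mu}(\eta)$ as a single product of non-negative factors.
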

\begin{proof} Using Corollary \ref{shiftcor} we can divide $\Phi(z)$ by $z$ as long as $\Phi(0)=0$. Since $\Phi(z)$ is analytic and $\Phi(1)\neq 0$, its zero at $z=0$ must be of a finite order. So we only need to consider the case $\Phi(0)\neq 0$.

First assume that $\Phi(z)=\frac{1-x(z-1)}{1-y(z-1)}$ with $x\leq y$, $x\in (-1,\frac{p}{1-p}]$, $y\geq 0 $. Then we have
$$
\Phi(z)/\Phi(0)=\frac{1-\frac{x}{1+x}z}{1-\frac{y}{1+y}z}=\psi_u\rho^*_\gamma(H(z)),
$$
where $\gamma=\frac{x}{1+x}$, $\rho^*_\gamma$ is the automorphism from Proposition \ref{adjshiftproperties}, and $\psi_u$ denotes the single-variable specialization $f\mapsto f(u)$ with $u=\frac{y}{1+y}-\frac{x}{1+x}$. Hence $\iota_\Phi$ is the composition $\psi_u\circ\rho_\gamma$ and
$$
g_{\lambda/\mu}^{(0,p)}(\Phi)=g^{(\gamma,p-\gamma)}_{\lambda/\mu}(u).
$$
Since $u, u+\gamma, p-\gamma\geq 0$, Proposition \ref{g-branching-prop} implies $g^{(\gamma,p-\gamma)}_{\lambda/\mu}(u)\geq 0$. Then $M^{\Phi}_n(\lambda/\mu)\geq 0$ by Proposition \ref{specialization}.

For general $\Phi\in\mathcal F$ such that $\Phi(0)\neq 0$, write $\Phi$ as the uniform limit of functions $\Phi_k(z)$ of the form
$$
\Phi_k(z)=\prod_{i=1}^{n_k}\frac{1-x_{i,k}(z-1)}{1-y_{i,k}(z-1)},
$$
where $x_{i,k}\leq y_{i,k}$, $x_{i,k}\in (-1,\frac{p}{1-p}]$, $y_{i,k}\geq 0$. Since the limit is uniform, $\lim_{k\to\infty}h_i(\Phi_k)=h_i(\Phi)$ for any $i$ and $\Phi_k(0)\to \Phi(0)$, thus $g_{\lambda}^{(0,p)}(\Phi_k)\to g_{\lambda}^{(0,p)}(\Phi)$ by Proposition \ref{JT-g}. At the same time, since $\Phi_k$ is the product of $\frac{1-x (z-1)}{1-y(z-1)}$ from the first part of the proof, $g_{\lambda}^{(0,p)}(\Phi_k)\geq 0$ by Proposition \ref{branchingMPhi}. This proves the claim.
\end{proof}

\begin{prop}\label{pre-coh} Let $\Phi\in\mathcal F$, $\Phi(0)\neq 0$. Then for any $\mu\in\G^p_n$ we have
\begin{equation}\label{pre-coheq}
\sum_{\lambda\in\G^p_{n+1}}g^{(0,p)}_{\lambda}(\Phi) G^{(0,-p)}_{\lambda/\mu}(z)=g^{(0,p)}_{\lambda}(\Phi)\frac{\Phi(z)}{\Phi(0)},
\end{equation}
where the sum converges uniformly on $\overline{\mathbb D}$.
\end{prop}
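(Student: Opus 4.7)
The plan is to realize the claimed identity as a specialization of the skew Cauchy identity (Proposition \ref{skewCauchy-prop}) to one set of variables via $\iota_\Phi$, and then to upgrade this formal identity to an analytic one by approximating $\Phi$ by rational $\Phi_k\in\mathcal F$. (I read the right-hand side as $g^{(0,p)}_{\mu}(\Phi)\frac{\Phi(z)}{\Phi(0)}$, with the subscript $\lambda$ being a typo since $\lambda$ is the summation index on the left.)

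First, I would start from Proposition \ref{skewCauchy-prop} with $(\fa,\fb)=(0,p)$, $\nu=\varnothing$, and a single $x$-variable $x_1=z$, $x_i=0$ for $i\geq 2$:
\[
\sum_{\lambda} G^{(0,-p)}_{\lambda/\mu}(z)\,g^{(0,p)}_\lambda(\by) \;=\; g^{(0,p)}_\mu(\by)\,\prod_{j\geq 1}\frac{1}{1-zy_j},
\]
holding as a formal power-series identity in $\by$ and $z$. Since $\iota_\Phi:\Lambda\to\mathbb C$ is an algebra homomorphism with $\iota_\Phi(H(z))=\Phi(z)/\Phi(0)$, applying $\iota_\Phi$ to the $\by$-variables gives, formally, the claimed identity
$\sum_\lambda g^{(0,p)}_\lambda(\Phi)G^{(0,-p)}_{\lambda/\mu}(z)=g^{(0,p)}_\mu(\Phi)\,\Phi(z)/\Phi(0)$. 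The only remaining task is to verify this as a genuine convergent identity of analytic functions on a neighborhood of $\{|z|\leq 1\}$.

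Next, I would establish the analytic identity for each rational approximant $\Phi_k(z)=\prod_{i=1}^{n_k}\frac{1-x_{i,k}(z-1)}{1-y_{i,k}(z-1)}\in\mathcal F$. For a single factor $\phi(z)=\frac{1-x(z-1)}{1-y(z-1)}$, the proof of Proposition \ref{positivityM} shows $\iota_\phi=\psi_u\circ\rho^*_{\tilde x}$ with $\tilde x=\frac{x}{1+x}$, $u=\frac{y}{1+y}-\frac{x}{1+x}$, and then applying the operators $\rho^*_{\tilde x}$ and $\psi_u$ to the $\by$-variables of the skew Cauchy identity (via Proposition \ref{adjshiftproperties}(1,2)) turns it into
\[
\sum_\lambda G^{(0,-p)}_{\lambda/\mu}(z)\,g^{(\tilde x,p-\tilde x)}_\lambda(u)\;=\;g^{(\tilde x,p-\tilde x)}_\mu(u)\,\frac{1-\tilde x z}{1-(u+\tilde x)z},
\]
which is an honest convergent identity on the disk $|z|< 1/(u+\tilde x)=(1+y)/y\geq 1$. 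For a general product $\Phi_k=\prod_i\phi_i$, I would iterate this by using the branching rule $g^{(0,p)}_\lambda(\Phi\Psi)=\sum_\mu g^{(0,p)}_{\lambda/\mu}(\Phi)\,g^{(0,p)}_\mu(\Psi)$ (obtained by combining Proposition \ref{branchingMPhi} with Proposition \ref{specialization}) together with the skew version of the above, yielding the analytic identity for $\Phi_k$ on some neighborhood of the closed unit disk.

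Finally, I would pass to the limit $\Phi_k\to\Phi$. Since $g^{(0,p)}_\mu$ depends by Proposition \ref{JT-g} only on finitely many complete symmetric functions $h_i(\Phi)$, i.e.\ on finitely many Taylor coefficients of $\Phi/\Phi(0)$ at $0$, the uniform convergence $\Phi_k\to\Phi$ on a neighborhood of $\{|z|\leq 1\}$ (together with $\Phi(0)>0$, which follows from $\Phi_k(0)=\prod_i(1+x_{i,k})/(1+y_{i,k})\geq 0$) implies $g^{(0,p)}_\mu(\Phi_k)\Phi_k(z)/\Phi_k(0)\to g^{(0,p)}_\mu(\Phi)\Phi(z)/\Phi(0)$ uniformly on that neighborhood. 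On the left, every term $g^{(0,p)}_\lambda(\Phi)G^{(0,-p)}_{\lambda/\mu}(z)$ converges pointwise as $k\to\infty$, and the terms are nonnegative for real $z\in[0,1]$ by Proposition \ref{positivityM}. Hence Fatou's lemma applied at $z=1$ controls the absolute sum $\sum_\lambda g^{(0,p)}_\lambda(\Phi)G^{(0,-p)}_{\lambda/\mu}(1)$ by $g^{(0,p)}_\mu(\Phi)/\Phi(0)<\infty$, and the elementary bound
\[
\bigl|G^{(0,-p)}_{\lambda/\mu}(z)\bigr|=|z|^{|\lambda/\mu|}|1-pz|^{r(\mu/\overline\lambda)}\leq R^{|\lambda/\mu|}\left(\tfrac{1+pR}{1-p}\right)^{n}G^{(0,-p)}_{\lambda/\mu}(1)
\]
for $|z|\leq R$ then upgrades this to absolute uniform convergence on the disk $|z|\leq R$ for any $R\in(1,1/p)$ in which $\Phi$ is still analytic, via the Weierstrass M-test. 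This extends the identity analytically from real $z\in[0,1]$ to a neighborhood of the closed unit disk.

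The main obstacle is the final paragraph: producing a summable uniform-in-$k$ envelope on the left-hand side. My proposed route is to avoid uniform estimates in $k$ altogether by (i) passing to the limit term-by-term in $k$, (ii) controlling the absolute $\Phi$-sum at the single point $z=1$ via Fatou against the already-known finite limit of the right-hand side, and (iii) upgrading to uniform analytic convergence in a neighborhood of the unit disk using the explicit monomial bound on $G^{(0,-p)}_{\lambda/\mu}(z)$ in terms of $G^{(0,-p)}_{\lambda/\mu}(1)$.
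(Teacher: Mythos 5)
Your formal setup is identical to the paper's: start from the skew Cauchy identity (Proposition~\ref{skewCauchy-prop}) and apply $\iota_\Phi$, and you correctly read the right-hand side as $g^{(0,p)}_{\mu}(\Phi)\,\Phi(z)/\Phi(0)$ (the $\lambda$ in the statement is a typo). Where you diverge is in the justification of convergence. The paper argues directly for an arbitrary $\Phi\in\mathcal F$: for fixed $\mu$ only $\lambda_1$ is unbounded, and by the Jacobi--Trudi formula of Proposition~\ref{JT-g} the coefficient $g^{(0,p)}_\lambda(\Phi)$ with $\lambda_2,\dots,\lambda_{n+1}$ fixed is a fixed linear combination of $h_{\lambda_1+k}(\Phi)$; the remaining one-dimensional sum is then recognized as a tail of the Taylor series of $\Phi(z)$, which converges on a neighborhood of the closed disk by hypothesis. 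You instead first prove the analytic identity for rational approximants $\Phi_k$ and then pass to the limit via Fatou. That is a genuinely different route, but it is both longer (you must also verify the single-factor and iterated-product identities, which you state but do not carry out, and Fubini interchanges are needed in the iteration) and, as written, it contains a real gap in the final step.

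The gap is in combining Fatou at $z=1$ with your proposed envelope. Fatou at $z=1$ gives $\sum_\lambda g^{(0,p)}_\lambda(\Phi)\,G^{(0,-p)}_{\lambda/\mu}(1)<\infty$. Your bound for $|z|\leq R$ is $|G^{(0,-p)}_{\lambda/\mu}(z)|\leq R^{|\lambda/\mu|}\bigl(\tfrac{1+pR}{1-p}\bigr)^n G^{(0,-p)}_{\lambda/\mu}(1)$, so the Weierstrass $M$-test would require $\sum_\lambda g^{(0,p)}_\lambda(\Phi)\,R^{|\lambda/\mu|}\,G^{(0,-p)}_{\lambda/\mu}(1)<\infty$, and the extra factor $R^{|\lambda/\mu|}$ with $R>1$ is not controlled by the $z=1$ Fatou estimate. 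To get convergence on a neighborhood of the unit disk, as claimed, you should instead apply Fatou at a fixed $z=R\in(1,1/p)$ chosen inside the common domain of analyticity of $\Phi$ and of eventually all $\Phi_k$; since $G^{(0,-p)}_{\lambda/\mu}(R)=R^{|\lambda/\mu|}(1-pR)^{r(\mu/\overline\lambda)}>0$, the same nonnegativity argument gives $\sum_\lambda g^{(0,p)}_\lambda(\Phi)\,G^{(0,-p)}_{\lambda/\mu}(R)<\infty$, and then for $|z|\leq R$ one has the bounded ratio $|G^{(0,-p)}_{\lambda/\mu}(z)|\leq \bigl(\tfrac{1+pR}{1-pR}\bigr)^n G^{(0,-p)}_{\lambda/\mu}(R)$, which does yield a summable $M$-envelope and hence analytic equality on $|z|\leq R$. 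With that replacement your strategy goes through; but the paper's Jacobi--Trudi argument reaches the same conclusion in one step, for general $\Phi$, without any rational approximation or Fatou.
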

\begin{proof} Using the skew Cauchy identity from Proposition \ref{skewCauchy-prop} we have
$$
\sum_{\lambda} G^{(0,-p)}_{\lambda/\mu}(z)g^{(0,p)}_{\lambda}(x_1, x_2, \dots)=g^{(0,p)}_{\mu}(x_1, x_2, \dots)\prod_{i\geq 1}\frac{1}{1-x_iz}.
$$
Applying the map $\iota_\Phi$ in variables $x_1,x_2,\dots$ we get
$$
\sum_{\lambda\in\G_{n+1}^p} G^{(0,-p)}_{\lambda/\mu}(z)g^{(0,p)}_{\lambda}(\Phi)=g^{(0,p)}_{\mu}(\Phi)\frac{\Phi(z)}{\Phi(0)}.
$$
However, so far the identity above is proved for formal parameter $z$. To demonstrate the convergence we first recall that $G^{(0,-p)}_{\lambda/\mu}(z)$ vanishes unless $\mu\preceq \lambda$, so for fixed $\mu\in\G^p_n$ the sum above is over all $\lambda$ such that $\lambda_i\in [\mu_{i}, \mu_{i-1}]$. In particular, there are finitely many choices for $\lambda_2, \dots, \lambda_{n+1}$ and so it is enough to establish the convergence of
$$
\sum_{\lambda_1\geq \mu_1}G^{(0,-p)}_{\lambda/\mu}(z)g^{(0,p)}_{\lambda}(\Phi)
$$
with fixed $\lambda_2, \dots, \lambda_{n+1}$. Now applying Jacobi-Trudi identity Proposition \ref{JT-g} we have:
$$
g^{(0,p)}_{\lambda}(\Phi)=\det\left[\sum_{k=0}^{i-1}h_{\lambda_i-i+j-k}(\Phi)\binom{i-1}{k}p^k\right]_{1\leq i,j\leq n}=\sum_{k=-n+1}^{n-1} c_k h_{\lambda_1+k}(\Phi),
$$
where $c_k$ are constants which do not depend on $\lambda_1$. Hence, it is enough to verify the convergence of
$$
\sum_{\lambda_1\geq \mu_1}G^{(0,-p)}_{\lambda/\mu}(z)h_{\lambda_1+k}(\Phi).
$$
From \eqref{Gone} we have $|G^{(0,-p)}_{\lambda/\mu}(z)|\leq z^{|\lambda|-|\mu|}$ when $|z|\leq1<p^{-1}$, so the convergence of \eqref{pre-coheq} reduces to the convergence of
$$
\sum_{\lambda_1\geq \mu_1}z^{|\lambda|-|\mu|}h_{\lambda_1+k}(\Phi)=z^{\lambda_2+\dots+\lambda_{n+1}-|\mu|-k}\sum_{i\geq \mu_1+k}h_i(\Phi)z^i.
$$
This last series converges uniformly and absolutely on $\overline{\mathbb D}$ by Proposition \ref{topologicalprop}. So, both sides of \eqref{pre-coheq} are analytic functions on the unit disk with the same Taylor expansions at $0$ due to the formal identities.
\end{proof}

\begin{theo}\label{coherent-thm} $\{M_n^\Phi\}_n$ is a coherent system on $\G^p$ for any $\Phi\in\mathcal F$.
\end{theo}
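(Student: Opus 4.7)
The plan is to build on the preparatory results already established (Propositions \ref{specialization}, \ref{positivityM}, \ref{pre-coh} and Corollary \ref{shiftcor}) rather than work directly from the defining formal identity \eqref{defofM}. Three things must be verified: nonnegativity of $M_n^\Phi$, total mass equal to $1$, and the coherency relation \eqref{coherency-eq}. Nonnegativity is already Proposition \ref{positivityM}, so the substantive content is the other two items.

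First I would reduce to the case $\Phi(0)\neq 0$. Since $\Phi\in\mathcal F$ is analytic on a neighborhood of the closed unit disk and satisfies $\Phi(1)=1$, any zero at the origin has finite order $k\ge 0$, so we may write $\Phi(z)=z^k\widetilde\Phi(z)$ with $\widetilde\Phi\in\mathcal F$ and $\widetilde\Phi(0)\neq 0$. Iterating Corollary \ref{shiftcor} gives $M_n^\Phi(\lambda)=M_n^{\widetilde\Phi}(\lambda-k\cdot 1^n)$ when $\lambda_n\ge k$ and $0$ otherwise. Because the edge weights $w^p(\mu,\lambda)=(1-p)^{r(\mu/\overline\lambda)}$ and the dimensions $G_\lambda^{(0,-p)}(1^n)$ are invariant under the simultaneous column-shifts $\mu\mapsto\mu+1^n$, $\lambda\mapsto\lambda+1^{n+1}$ (as is immediate from \eqref{Gone} at $x=1$ and from $G_{\mu+1^n}^{(0,-p)}(z_1,\dots,z_n)=G_\mu^{(0,-p)}(z_1,\dots,z_n)\prod_i z_i$), the cotransition kernels $p^{\downarrow}_{n+1,n}$ are likewise shift-invariant, and coherency of $\{M_n^\Phi\}$ is equivalent to coherency of $\{M_n^{\widetilde\Phi}\}$.

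Now assume $\Phi(0)\neq 0$, so Proposition \ref{specialization} supplies the closed form
\begin{equation*}
M_n^\Phi(\lambda)=g_\lambda^{(0,p)}(\Phi)\,\Phi(0)^n\,G_\lambda^{(0,-p)}(1^n).
\end{equation*}
Plugging this into the right-hand side of \eqref{coherency-eq} yields, after the factors $G_\lambda^{(0,-p)}(1^{n+1})$ cancel,
\begin{equation*}
\sum_{\lambda\in\G^p_{n+1}}p^{\downarrow}_{n+1,n}(\lambda,\mu)\,M_{n+1}^\Phi(\lambda)=\Phi(0)^{n+1}\,G_\mu^{(0,-p)}(1^n)\sum_{\lambda}G_{\lambda/\mu}^{(0,-p)}(1)\,g_\lambda^{(0,p)}(\Phi).
\end{equation*}
Proposition \ref{pre-coh} applied at $z=1$ (which is inside the neighborhood of the unit disk on which the series converges uniformly) evaluates the inner sum to $g_\mu^{(0,p)}(\Phi)\cdot\Phi(1)/\Phi(0)=g_\mu^{(0,p)}(\Phi)/\Phi(0)$. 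Substituting back gives exactly $\Phi(0)^n G_\mu^{(0,-p)}(1^n) g_\mu^{(0,p)}(\Phi)=M_n^\Phi(\mu)$, which is the coherency relation.

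Finally, total mass is handled by induction on $n$. One has $M_0^\Phi(\varnothing)=1$ by definition, and summing the just-established coherency over $\mu\in\G^p_n$ and interchanging the (absolutely convergent, since all terms are nonnegative) order of summation using $\sum_\mu p^{\downarrow}_{n+1,n}(\lambda,\mu)=1$ yields $\sum_\mu M_n^\Phi(\mu)=\sum_\lambda M_{n+1}^\Phi(\lambda)$, so the total mass $1$ propagates from $n=0$ to all $n$. The only delicate point is the convergence at $z=1$ invoked in the coherency step, but this is precisely what Proposition \ref{pre-coh} was set up to guarantee; once this is in hand, the rest of the argument is a direct algebraic manipulation.
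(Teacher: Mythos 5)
Your proposal is correct and follows essentially the same route as the paper's own proof: reduce to $\Phi(0)\neq 0$ via Corollary \ref{shiftcor} and shift-invariance of the cotransition kernels, invoke Proposition \ref{positivityM} for nonnegativity, establish coherency by plugging the closed form of Proposition \ref{specialization} into \eqref{coherency-eq} and evaluating Proposition \ref{pre-coh} at $z=1$, and propagate total mass from $n=0$ by summing the coherency relation.
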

\begin{proof} We need to check that $M_n^\Phi(\lambda)$ define probability measures on $\G^p$ satisfying the coherency relation \eqref{coherency-eq}. From Proposition \ref{positivityM} we know that $M_n^\Phi(\lambda)\geq 0$ and $\sum_{\lambda\in \G^p_n} M^\Phi_n(\lambda)=1$ follows from coherency and $M_0^{\Phi}(\varnothing)=1$:
$$
\sum_{\lambda\in\G^p_{n+1}} M_{n+1}^\Phi(\lambda)=\sum_{\mu\in\G^p_{n}} M_{n}^\Phi(\mu)=\dots=M_0^\Phi(\varepsilon)=1.
$$
So, it is enough to verify \eqref{coherency-eq}.

First we show that if $M^{\Phi}_n$ satisfy the coherency relation then so does $M^{z\Phi}_n$. Indeed, from Corollary \ref{shiftcor} $M^{z\Phi}_n$ is supported on $\lambda$ with $\lambda_n>0$ and $M^{z\Phi}_n(\lambda)=M^{\Phi}_n(\lambda-1^n)$. So for every $\mu\in\G^p_{n-1}$ we need to establish
$$
M^{z\Phi}_{n-1}(\mu)=\sum_{\lambda\in\G^p_n: \lambda_n>0}p^{\downarrow}_n(\lambda,\mu) M^{\Phi}_{n}(\lambda-1^n).
$$
If $\mu_{n-1}=0$ then both sides vanish. If $\mu_{n-1}>0$ we can use $G_{\lambda+1^n}^{(0,-p)}(1^n)=G_{\lambda}^{(0,-p)}(1^n)$ and $G_{\lambda+1^n/\mu+1^{n-1}}^{(0,-p)}(1)=G_{\lambda/\mu}^{(0,-p)}(1)$ to get
$$
p^{\downarrow}_n(\lambda,\mu)=p^{\downarrow}_n(\lambda-1^n,\mu-1^{n-1})
$$
when $\lambda_n>0$. Hence the coherency of $M_n^{z\Phi}$ follows from the coherency of $M_n^{\Phi}$. So, dividing $\Phi$ by $z$ while $\Phi(0)=0$, we can reduce to the case $\Phi(0)\neq 0$.

Now consider $\Phi\in\mathcal F$ such that $\Phi(0)\neq 0$. By Proposition \ref{pre-coh}
$$
\sum_{\lambda\in\G^p_{n+1}}g^{(0,p)}_{\lambda}(\Phi) G^{(0,-p)}_{\lambda/\mu}(1)=\frac{g^{(0,p)}_{\lambda}(\Phi)}{\Phi(0)}.
$$
Then
\begin{multline*}
\sum_{\lambda\in\G^p_{n+1}}M_{n+1}^\Phi(\lambda)\frac{G^{(0,-p)}_{\lambda/\mu}(1)G^{(0,-p)}_{\mu}(1^n)}{G^{(0,-p)}_{\lambda}(1^{n+1})}=\Phi(0)^{n+1}G^{(0,-p)}_{\mu}(1^n)\sum_{\lambda\in\G^p_{n+1}}g^{(0,p)}_{\lambda}(\Phi)G^{(0,-p)}_{\lambda/\mu}(1)\\
=\Phi(0)^{n}G^{(0,-p)}_{\mu}(1^n)g^{(0,p)}_{\mu}(\Phi)=M_n^\Phi(\mu). \qedhere
\end{multline*}
\end{proof}

\subsection{Independence of $M^{\Phi}_n$} \label{Section_independence} Here we show that the coherent systems $\{M_n^\Phi\}_n$ constructed above cannot be expressed in terms of each other.

First note that $G_{(n)}^{(0,-p)}(z)=z^n$, so from \eqref{defofM} we get
\begin{equation}\label{TaylorM}
\sum_{(n)\in\G^p_1}M^\Phi_1((n)) z^n=\Phi(z).
\end{equation}
In other words, $M^\Phi_1((n))$ are Taylor coefficients of $\Phi$ and the coherent systems $\{M^\Phi_n\}_n$ are different for different $\Phi\in\mathcal F$.

\begin{prop} $\Phi\mapsto\{M_n^\Phi\}_n$ defines a closed embedding $\Upsilon:\mathcal F\hookrightarrow\varprojlim \calM(\G^p_n)$.
\end{prop}
\begin{proof}
First we are going to check that $M_n^{\Phi}(\lambda)$ is a continuous function of $\Phi\in\mathcal F$ for each fixed $n\geq0, \lambda\in\G_n^p$. Recall from the proof of Proposition \ref{basics-wd} that $M_n^{\Phi}(\lambda)$ depends polynomially on $c^\Phi_{\lambda,\varnothing}$, which in turn depend polynomially on the Taylor coefficients of $\Phi$. These Taylor coefficients depend continuously on $\Phi\in\mathcal F$ by Cauchy's integral formula, so we get the desired continuity. Since the topology on $\varprojlim \calM(\G^p_n)$ is the minimal topology such that $M_n(\lambda)$ are continuous, we get that $\Upsilon$ is continuous. 

Now assume that $\Phi_k\in\mathcal F$ is a sequence of functions such that $\Upsilon(\Phi_k)$ converge to a coherent system $\{M_n\}_n$. In other words, $\lim_{k\to\infty} M_n^{\Phi_k}(\lambda)=M_n(\lambda)$ for each $n\geq0, \lambda\in\G_n^p$. Consider
$$
\Phi(z)=\sum_{n\geq 0} M_1((1))z^n.
$$
We claim that $\Phi_k$ converge to $\Phi$ uniformly on $\overline{\mathbb D}$ and $M_n(\lambda)=M_n^\Phi(\lambda)$. This would imply that $\Upsilon(\mathcal F)\subset\varprojlim \calM(\G^p_n)$ is closed and $\Upsilon^{-1}:\Upsilon(\mathcal F)\to\mathcal F$ is continuous, finishing the proof. 

For the rest of the argument let $a_{n,k}=M^{\Phi_k}_1((n))$ and $a_{n}=M_1((n))$ denote the Taylor coefficients of $\Phi_k$ and $\Phi$ respectively. Then  $\lim_{k\to\infty}a_{n,k}=a_n$. For an arbitrary $\varepsilon>0$ fix sufficiently large $N$ such that $\sum_{n>N}a_n<\varepsilon$. Then for sufficiently large $k$ we have $\sum_{n=0}^N |a_n-a_{n,k}|<\varepsilon$, which implies
$$
\sum_{n>N} a_{N,k}=1-\sum_{n=0}^N a_{N,k}<1-\sum_{n=0}^N a_{N}+\varepsilon=\sum_{n>N}^N a_{N}+\varepsilon<2\varepsilon.
$$
Hence for sufficiently large $k$ we get $|\Phi(z)-\Phi_k(z)|<4\varepsilon$ for all $|z|\leq1$. This implies the uniform convergence $\Phi_k\rightrightarrows\Phi$ on $\overline{\mathbb D}$, which in turn leads to $\Phi\in\mathcal F$. Since $M_n^\Phi(\lambda)$ is continuous in $\Phi$, we get $M_n^{\Phi}(\lambda)=\lim_{k\to\infty}M_n^{\Phi_k}(\lambda)=M_n(\lambda)$.
\end{proof}

\begin{prop}\label{avargecoh} Let $\nu$ be a Borel probability measure on $\mathcal F$. Then
\begin{equation}\label{avaragecoh-eq}
M_n(\lambda)=\int_{\mathcal F} M^\Phi_n(\lambda)\nu(d\Phi)
\end{equation}
defines a coherent system on $\G^p$.
\end{prop}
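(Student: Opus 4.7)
The plan is to check that the formula \eqref{avaragecoh-eq} produces non-negative values summing to $1$ on each $\G^p_n$, and then to verify the coherency relation \eqref{coherency-eq}, in both cases reducing everything to the analogous facts for the individual coherent systems $\{M^\Phi_n\}_n$ established in Theorem \ref{coherent-thm} by a Fubini-type interchange of sum and integral.

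First I would check that \eqref{avaragecoh-eq} is well-defined as an integral. The map $\Phi \mapsto M_n^\Phi(\lambda)$ is continuous on $\mathcal F$ by the very definition of the topology on $\mathcal F$ given in the paragraph preceding the proposition, so it is Borel measurable; since $0 \leq M_n^\Phi(\lambda) \leq 1$ (by Proposition \ref{positivityM} and the fact that $\{M_n^\Phi\}_n$ is a coherent system), the integral against the probability measure $\nu$ converges and satisfies $0 \leq M_n(\lambda) \leq 1$.

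Next I would verify $\sum_{\lambda \in \G^p_n} M_n(\lambda) = 1$. Because the integrand is non-negative, Tonelli's theorem allows me to swap sum and integral:
\begin{equation*}
\sum_{\lambda \in \G^p_n} M_n(\lambda) = \sum_{\lambda \in \G^p_n}\int_{\mathcal F} M^\Phi_n(\lambda)\,\nu(d\Phi) = \int_{\mathcal F} \sum_{\lambda \in \G^p_n} M^\Phi_n(\lambda)\,\nu(d\Phi) = \int_{\mathcal F} 1 \, \nu(d\Phi) = 1,
\end{equation*}
where we used that each $\{M^\Phi_n\}_n$ is a probability measure, as established in Theorem \ref{coherent-thm}.

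Finally, for the coherency relation, fix $\mu \in \G^p_n$ and again apply Tonelli (all summands are non-negative) together with the known coherency of each $\{M^\Phi_n\}_n$:
\begin{equation*}
\sum_{\lambda \in \G^p_{n+1}} p^\downarrow_{n+1,n}(\lambda,\mu)\, M_{n+1}(\lambda) = \int_{\mathcal F} \sum_{\lambda \in \G^p_{n+1}} p^\downarrow_{n+1,n}(\lambda,\mu)\, M^\Phi_{n+1}(\lambda) \,\nu(d\Phi) = \int_{\mathcal F} M^\Phi_n(\mu)\,\nu(d\Phi) = M_n(\mu).
\end{equation*}
There is no serious obstacle here: the only point that needs a small comment is the measurability of $\Phi \mapsto M^\Phi_n(\lambda)$, which is immediate from continuity. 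Everything else is bookkeeping via Tonelli's theorem on non-negative quantities.
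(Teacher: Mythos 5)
Your argument is correct and follows essentially the same route as the paper: nonnegativity is immediate from Proposition \ref{positivityM}, and the total-mass and coherency identities are obtained by interchanging the sum over $\lambda$ with the integral over $\mathcal F$ for nonnegative integrands (you invoke Tonelli, the paper invokes monotone convergence — the same standard justification). Your extra remark on the measurability of $\Phi\mapsto M_n^\Phi(\lambda)$ is a reasonable small addition, but otherwise the two proofs coincide.
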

\begin{proof} Since $M^\Phi_n(\lambda)\geq 0$ for any $\Phi\in\mathcal F$, we have $M_n(\lambda)\geq0$. By the monotone convergence theorem we get
$$
\sum_{\lambda\in\G^p_n} M_n(\lambda)=\int_{\mathcal F}\sum_{\lambda\in\G^p_n}  M^\Phi_n(\lambda)\nu(d\Phi)=\int_{\mathcal F}1\ \nu(d\Phi)=1,
$$
$$
\sum_{\lambda\in\G^p_{n+1}} M_{n+1}(\lambda)p^{\downarrow}_{n+1,n}(\lambda,\mu)=\int_{\mathcal F}\sum_{\lambda\in\G^p_{n+1}}  M^\Phi_{n+1}(\lambda)p^{\downarrow}_{n+1,n}(\lambda,\mu)\nu(d\Phi)=\int_{\mathcal F}M^\Phi_n(\mu) \nu(d\Phi)=M_n(\mu).
$$
This implies that $\{M_n\}$ are probability measures on $\G_n^p$ satisfying the coherency relation \eqref{coherency-eq}.
\end{proof}

To prove independence we use the setting of de Finetti's theorem. Let $\mathbb Z_{\geq 0}^\infty$ be the space of infinite non-negative integer sequences $\bx=(x_1, x_2, \dots)$. Treating $\mathbb Z_{\geq 0}$ as a discrete topological space, equip $\mathbb Z_{\geq 0}^\infty$ with the product topology. Then any probability Borel measure $m$ on $\mathbb Z_{\geq 0}^\infty$ is determined by its values on the cylinder sets
$$
C_{n}(\ba)=\{\bx\in\mathbb Z^{\infty}_{\geq 0}: x_1=a_1,\dots, x_n=a_n\},
$$
where $n\geq 0$ and $\ba=(a_1,\dots, a_n)\in\Z_{\geq0}^n$. Conversely, given non-negative numbers $\{c_n(\ba)\}_{n\geq 0, \ba\in\Z_\geq0^n}$ satisfying
$$
\sum_{a_{n+1}\geq 0} c_{n+1}(a_1,\dots, a_{n+1})=c_n(a_1,\dots, a_n),\qquad c_0(\varnothing)=1,
$$
we can construct a unique Borel probability measure $m$ satisfying $m(C_{n}(\ba))=c_n(\ba)$ for all $n\geq 0$, $\ba=(a_1,\dots, a_n)\in\Z_{\geq0}^n$. A Borel measure $m$ on $\mathbb Z_{\geq 0}^\infty$ is called symmetric if for any permutation $\sigma\in\mathfrak S_n$ and any $\ba\in\Z^n_{\geq 0}$ we have
$$
m(C_{n}(a_1,\dots,a_n))=m(C_{n}(a_{\sigma(1)},\dots,a_{\sigma(n)})).
$$
Symmetric Borel probability measures on $\mathbb Z_{\geq 0}^\infty$ form a convex set denoted by $\mathcal M_{sym}(\mathbb Z_{\geq 0}^\infty)$. An important example of symmetric measures is given by the product measures $\mu^\infty$, where $\mu$ is a Borel measure on $\Z_{\geq 0}$ and
$$
\mu^\infty(C_{n}(a_1,\dots,a_n))=\mu(\{a_1\})\dots\mu(\{a_n\}).
$$
De Finetti's theorem implies that the set of extreme points of $\mathcal M_{sym}(\mathbb Z_{\geq 0}^\infty)$ consists of the product measures, see \cite[Theorem 5.3]{deFin}.

\begin{theo} Assume that for a Borel probability measure $\nu$ on $\mathcal F$ and $\Psi\in\mathcal F$ we have
$$
M^\Psi_n(\lambda)=\int_{\mathcal F} M^\Phi_n(\lambda)\nu(d\Phi)
$$
for all $n\geq0, \lambda\in\G^p_n$. Then $\nu$ is the Dirac measure $\delta_\Psi$ concentrated at $\Psi$.
\end{theo}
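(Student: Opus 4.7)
The plan is to extract a strong moment condition from the hypothesis by using the defining generating function identity \eqref{defofM}, then deduce $\nu = \delta_\Psi$ by a simple variance computation. Write $\Phi_k := [z^k]\Phi(z)$ for the Taylor coefficients; note that by \eqref{TaylorM} we have $\Phi_k = M_1^\Phi((k))$, so $\Phi \mapsto \Phi_k$ is a bounded continuous (hence Borel) function on $\mathcal{F}$ in the topology defined after \eqref{TaylorM}.

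The first step is to show that the hypothesis implies
$$
\int_{\mathcal{F}} \Phi_{a_1}\Phi_{a_2}\cdots\Phi_{a_n}\,\nu(d\Phi) = \Psi_{a_1}\Psi_{a_2}\cdots\Psi_{a_n}
$$
for every $n \geq 1$ and every $a_1,\ldots,a_n \geq 0$. Since $G_\lambda^{(0,-p)} = s_\lambda + (\text{higher degree terms})$, expanding $G_\lambda^{(0,-p)}(z_1,\ldots,z_n)/G_\lambda^{(0,-p)}(1^n)$ in monomials $z_1^{a_1}\cdots z_n^{a_n}$ shows that for fixed $(a_1,\ldots,a_n)$ only the finitely many $\lambda$ with $|\lambda|\leq a_1+\cdots+a_n$ contribute. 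So the identity \eqref{defofM} gives $\prod_i \Phi_{a_i}$ as a finite linear combination of the values $M_n^\Phi(\lambda)$ with coefficients that depend on $(a_1,\ldots,a_n,\lambda)$ but not on $\Phi$. Integrating this finite linear combination against $\nu$ (justified because $0 \le M_n^\Phi(\lambda) \le 1$ by Proposition~\ref{positivityM} and $\nu$ is a probability measure) and using the assumption $M_n^\Psi(\lambda) = \int M_n^\Phi(\lambda)\,\nu(d\Phi)$ yields the displayed moment identity.

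The second step is a variance computation. Taking $n=1,\,a_1=k$ gives $\int \Phi_k\,\nu(d\Phi) = \Psi_k$, and taking $n=2,\,a_1=a_2=k$ gives $\int \Phi_k^2\,\nu(d\Phi) = \Psi_k^2$. Therefore
$$
\int_{\mathcal{F}} (\Phi_k - \Psi_k)^2\,\nu(d\Phi) = 0,
$$
so $\Phi_k = \Psi_k$ for $\nu$-almost every $\Phi$, for each fixed $k \geq 0$.

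Finally, the set $\{\Phi \in \mathcal{F} : \Phi_k = \Psi_k \text{ for all } k \geq 0\}$ is a countable intersection of full-$\nu$-measure Borel sets, hence itself has $\nu$-measure one. Since every $\Phi \in \mathcal{F}$ is analytic on the unit disk and thus determined by its Taylor coefficients at $0$, this set equals $\{\Psi\}$. Consequently $\nu(\{\Psi\}) = 1$, i.e., $\nu = \delta_\Psi$. The only point requiring care is the interchange of the countable sum over $\lambda$ and the integral over $\mathcal{F}$ in step one; bypassing this by working directly at the level of Taylor coefficients reduces everything to finite sums, so no delicate convergence argument is needed—this is why the proof is short despite the complicated-looking generating function definition.
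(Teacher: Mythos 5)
Your proof is correct, and it takes a genuinely different route from the paper. The paper and your argument share the same combinatorial core: expanding the generating identity \eqref{defofM} at the level of Taylor coefficients shows that $c_n^\Phi(\ba):=[z_1^{a_1}\cdots z_n^{a_n}]\prod_i\Phi(z_i)=\prod_i M_1^\Phi((a_i))$, so integrating against $\nu$ produces the moment identities $\int\prod_i\Phi_{a_i}\,\nu(d\Phi)=\prod_i\Psi_{a_i}$. From that point on the paths diverge. The paper packages the numbers $\{c_n^\Phi(\ba)\}$ into a symmetric Borel measure $m^\Phi$ on $\Z_{\geq 0}^\infty$, observes that the factorization means $m^\Phi$ is a product measure, and then invokes de Finetti's theorem (that product measures are extreme in the convex set of exchangeable measures) to obtain a contradiction from any non-trivial decomposition $\mathcal F = A\sqcup B$. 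You instead use only the $n=1$ and $n=2$ moment identities to run an elementary variance computation, $\int_{\mathcal F}(\Phi_k-\Psi_k)^2\,\nu(d\Phi)=0$ for each $k$, and conclude $\Phi_k=\Psi_k$ $\nu$-a.s.\ for all $k$ simultaneously by countable intersection; since elements of $\mathcal F$ are analytic on the closed unit disk, Taylor coefficients determine them, giving $\nu=\delta_\Psi$. Your version is more self-contained --- it does not need de Finetti's theorem at all --- while the paper's version is more conceptual, placing the statement in the exchangeability framework. Both establish the result, and the small technical points you flagged (measurability and boundedness of $\Phi_k=M_1^\Phi((k))\in[0,1]$, and that only finitely many $\lambda$ with $|\lambda|\leq a_1+\cdots+a_n$ contribute to a fixed Taylor coefficient) are handled correctly.
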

\begin{proof}
Given a coherent system $\{M_n\}_n$ define $c_n(\mathbf a)$ by the decomposition
\begin{equation}\label{exchangedef}
\sum_{a_1,\dots, a_n\geq 0} c_n(\mathbf a) z_1^{a_1}\dots z_n^{a_n}=\sum_{\lambda\in\G^p_n}M_n(\lambda)\frac{G_\lambda^{(0,-p)}(z_1,\dots, z_n)}{G^{(0,-p)}_\lambda(1^n)}.
\end{equation}
In other words,
$$
c_n(\mathbf a)=\sum_{\lambda\in\G^p_n} b_{\lambda}(\ba)M_n(\lambda),
$$
where for $\ba\in\Z_{\geq0}^n$, $\lambda\in\G^p_n$ we define $b_{\lambda}(\ba)$ as the coefficient of $z_1^{a_1}\dots z_n^{a_n}$ in $\frac{G_\lambda^{(0,-p)}(z_1,\dots, z_n)}{G^{(0,-p)}_\lambda(1^n)}$. Note that the sum above is finite since $b_{\lambda}(\ba)=0$ unless $|\lambda|\leq a_1+\dots +a_n$. In particular, $c_n(\mathbf a)$ can be treated as a continuous function on $\varprojlim \calM(\G^p_n)$. Also, since $\{G_\lambda^{(0,-p)}(z_1,\dots, z_n)\}_{\lambda\in\G^p_n}$ are linearly independent, different coherent systems $\{M_n\}_n$ result in different collections $\{c_n(\mathbf a)\}_{n\geq 0, \ba\in\Z_{\geq0}^n}$.

For $\Phi\in\mathcal F$ let $c_n^{\Phi}(\mathbf a)$ denote the numbers $c_n(\mathbf a)$ corresponding to the coherent system $\{M^{\Phi}_n\}_n$.  Then \eqref{exchangedef} implies
\begin{equation*}
\sum_{a_1,\dots, a_n\geq 0} c_n^{\Phi}(\mathbf a) z_1^{a_1}\dots z_n^{a_n}=\Phi(z_1)\dots\Phi(z_n).
\end{equation*}
Taking the product of $n$ copies of \eqref{TaylorM} we get $c_n^{\Phi}(\mathbf a)=M_1^{\Phi}((a_1))\dots M_1^{\Phi}((a_n))\geq 0$. Hence $c_n^{\Phi}(\ba)$ define a product measure on $\Z_{\geq0}^\infty$, which we denote by $m^{\Phi}$. More generally, when $\{M_n\}_n$ is the coherent system from Proposition \ref{avargecoh} we have
$$
c_n(\ba)=\int_{\mathcal F} c_n^{\Phi}(\ba)\nu(d\Phi).
$$
Taking the integral of $m^{\Phi}$ over $\nu$ we get a symmetric measure $m$ with $m(C_n(\ba))=c_n(\ba)$ for $n\geq0$, $\ba\in\Z_{\geq0}^n$.

To sum it up, for each coherent system form Proposition \ref{avargecoh} we can construct a unique symmetric measure on $\Z_{\geq0}^\infty$, with systems $\{M^\Phi_n\}_n$ corresponding to product measures. Now take $\nu$ and $\Psi$ as in the statement of the theorem and assume that $\nu$ is not a Dirac measure. Since $\varprojlim \calM(\G^p_n)$ is metrizable, $\mathcal F$ is metrizable as well and we can find a Borel decomposition $\mathcal F=A\sqcup B$ such that $\nu(A),\nu(B)>0$. Restricting $\nu$ to $A$ and $B$ and using Proposition \eqref{avargecoh}, we get two coherent systems $\{M^A_n\}_n, \{M^B_n\}_n$ of the form \eqref{avaragecoh-eq} such that $M_n^\Psi=\nu(A) M^A_n+\nu(B)M^B_n$ for $n\geq 0$. However, if $m^A$, $m^B$ denote the symmetric measures on $\Z_{\geq 0}^\infty$ corresponding to $\{M^A_n\}_n$ and $\{M^B_n\}_n$, we get $\nu(A)m^A+\nu(B)m^B=m^\Psi$. This contradicts de Finetti's theorem since $m^\Psi$ is an extreme point of $\mathcal M_{sym}(\Z_{\geq0}^\infty)$.
\end{proof}

\subsection{Examples of $M^{\Phi}_n$} Here we describe several notable examples of $\{M^{\Phi}_n\}_n$.

\emph{Finite GT-type systems:} Let $\mathcal A=(\alpha_1, \dots, \alpha_k), \mathcal B=(\beta_1, \dots, \beta_l)$ be a pair of sequences such that
\begin{equation}\label{ineq}
\alpha_1\geq \alpha_2\geq\dots\geq \alpha_k>\frac{p}{1-p},\qquad 1\geq \beta_1\geq\beta_2\geq\dots \geq \beta_l\geq p.
\end{equation}
Set
\begin{equation}\label{Mab-def}
\Phi(z)=\Phi^{\calA,\calB}(z)=\prod_{i=1}^k\frac{1-\frac{p}{1-p}(z-1)}{1-\alpha_i (z-1)}\prod_{i=1}^l\left(1+\frac{\beta_i-p}{1-p}(z-1)\right).
\end{equation}
We say that the resulting system $M_n^\Phi$ is a \emph{finite system of GT-type} and we denote it by $M^{\calA,\calB}_n$.  Note that in the case $p=0$ these systems degenerate to a part of the boundary of Gelfand-Tsetlin graph in Remark \ref{GT-rem}, hence the name. In Section \ref{finitesect} we describe the limit law of $M^{\calA,\calB}_n$ as $n\to \infty$ and partially prove that these systems are extreme. Here we give a more explicit description for these systems.

\begin{prop}\label{hook} Let $\mathcal A=(\alpha_1, \dots, \alpha_k), \mathcal B=(\beta_1, \dots, \beta_l)$ be sequences satisfying \eqref{ineq}, $s$ denote the number of $i$ such that $\beta_i=1$ and $\tilde{\Phi}^{\calA,\calB}(z)=z^{-s}\Phi^{\calA,\calB}(z)$. We have $M_n^{\calA,\calB}(\lambda)=0$ unless $\lambda_n\geq s$ and
$$
\frac{M_n^\Phi(\lambda+s^n)}{\tilde\Phi^{\calA,\calB}(0)^{n}G_{\lambda+s^n}^{(0,-p)}(1^n)}=\sum_{\mu} g_{\mu}^{(p,0)}(x_1, \dots, x_k)g_{\lambda'/\mu'}^{(p,0)}(y_{s+1}, \dots, y_l)=\sum_{\mu} g_{\lambda/\mu}^{(p,0)}(x_1, \dots, x_k)g_{\mu'}^{(p,0)}(y_{s+1}, \dots, y_l),
$$
where $x_i=\frac{\alpha_i-p(1+\alpha_i)}{1+\alpha_i}$, and $y_i=\frac{\beta_i-p}{1-\beta_i}$.

 In particular, $M_n^\Phi(\lambda)=0$ unless $\lambda_i\leq l$ for every $i>k$, that is, $M_n^\Phi$ is supported by partitions inside the infinite "hook" with $k$ infinite rows and $l$ infinite columns.
\end{prop}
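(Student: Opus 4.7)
My plan is to peel off the $\beta_i=1$ factors with Corollary~\ref{shiftcor}, factor the remainder into an $\calA$-piece and a $\calB$-piece, apply Proposition~\ref{specialization} to each, and identify the resulting homomorphisms on $\Lambda$ via Proposition~\ref{adjshiftproperties} and Theorem~\ref{involution}.

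First I would use the identity $1+\frac{\beta_i-p}{1-p}(z-1)=z$ when $\beta_i=1$ to write $\Phi^{\calA,\calB}(z)=z^{s}\tilde\Phi^{\calA,\calB}(z)$, so that iterating Corollary~\ref{shiftcor} $s$ times yields $M_n^{\calA,\calB}(\lambda)=M_n^{\tilde\Phi^{\calA,\calB}}(\lambda-s^n)$ when $\lambda_n\geq s$ (and $0$ otherwise); this proves the support claim $\lambda_n\geq s$. Combined with $G_{\lambda+s^n}^{(0,-p)}(1^n)=G_\lambda^{(0,-p)}(1^n)$ (immediate from~\eqref{Gdef}), it remains to establish the main formula in the case $s=0$. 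In that case I would factor $\tilde\Phi^{\calA,\calB}=\Phi_1\Phi_2$ with $\Phi_1=\Phi^{\calA,\varnothing}$ and $\Phi_2$ the surviving $\calB$-product; neither factor vanishes at $z=0$, and direct computation gives the normalized forms $\Phi_1(z)/\Phi_1(0)=\prod_{i=1}^k\frac{1-pz}{1-(x_i+p)z}$ with $x_i=\alpha_i/(1+\alpha_i)-p$ and $\Phi_2(z)/\Phi_2(0)=\prod_{i=1}^l(1+y_iz)$ with $y_i=(\beta_i-p)/(1-\beta_i)$.

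Comparing with Proposition~\ref{adjshiftproperties}(2) identifies $\Phi_1(z)/\Phi_1(0)$ with $\rho_p^*(H(z))$ evaluated at $(x_1,\dots,x_k)$, so $\iota_{\Phi_1}$ factors as evaluation at $(x_1,\dots,x_k)$ composed with $\rho_p^*$; Proposition~\ref{adjshiftproperties}(1) then gives $g^{(0,p)}_\mu(\Phi_1)=g^{(p,0)}_\mu(x_1,\dots,x_k)$, and the skew analog $g^{(0,p)}_{\lambda/\mu}(\Phi_1)=g^{(p,0)}_{\lambda/\mu}(x_1,\dots,x_k)$ follows because $\rho_p^*$ intertwines the branching coproduct (since $\rho_p$ is a substitution of variables, hence a bialgebra morphism on $\Lambda$). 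Likewise $\Phi_2(z)/\Phi_2(0)=\omega H(z)$ at $(y_1,\dots,y_l)$, so Theorem~\ref{involution} together with the same bialgebra argument produces $g^{(0,p)}_\mu(\Phi_2)=g^{(p,0)}_{\mu'}(y_1,\dots,y_l)$ and $g^{(0,p)}_{\lambda/\mu}(\Phi_2)=g^{(p,0)}_{\lambda'/\mu'}(y_1,\dots,y_l)$. With these identifications, applying Proposition~\ref{specialization} to $\tilde\Phi^{\calA,\calB}$ and expanding $g^{(0,p)}_\lambda(\tilde\Phi^{\calA,\calB})$ through the branching coproduct $g^{(0,p)}_\lambda(\bx\cup\by)=\sum_\mu g^{(0,p)}_{\lambda/\mu}(\by)\,g^{(0,p)}_\mu(\bx)$ in either ordering of the alphabets associated with $\Phi_1,\Phi_2$ recovers the two equivalent sums asserted in the proposition.

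For the hook-support statement I would use that $g^{(p,0)}_\mu(x_1,\dots,x_k)=0$ unless $\ell(\mu)\leq k$, and that iterating the one-variable formula of Proposition~\ref{g-branching-prop} at $\fb=0$ shows $g^{(p,0)}_{\nu/\rho}$ in $m$ variables vanishes unless $\nu/\rho$ decomposes into $m$ horizontal strips, i.e.\ $\nu'_j-\rho'_j\leq m$ for all $j$. Applied to the second sum with $m=k$, together with the constraint $\mu_1\leq l-s$ needed for $g^{(p,0)}_{\mu'}(y_{s+1},\dots,y_l)\neq 0$, this forces the reduced partition to satisfy $\lambda_j\leq l-s$ for $j>k$; re-adding the $s^n$ shift then places the supported partitions inside the stated hook. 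The main technical subtlety I expect is the skew extension of Proposition~\ref{adjshiftproperties}(1) and Theorem~\ref{involution}: both reduce to the compatibility of $\rho_p^*$ and $\omega$ with the Hopf-algebra coproduct on $\Lambda$, which follows from $\rho_p$ (a substitution of variables) and the swap $h_k\leftrightarrow e_k$ being bialgebra morphisms, but it warrants careful bookkeeping.
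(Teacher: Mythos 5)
Your proposal is correct and follows essentially the same route as the paper's proof: peel off the $z^s$ factor via Corollary~\ref{shiftcor}, factor via Proposition~\ref{branchingMPhi} (equivalently, the $g$-coproduct) and Proposition~\ref{specialization}, and convert the two specializations to finite-alphabet evaluations of $g^{(p,0)}$ using Proposition~\ref{adjshiftproperties} and Theorem~\ref{involution}. The one place where you add something worth noting is your explicit remark that Proposition~\ref{adjshiftproperties}(1) and Theorem~\ref{involution} as stated give $\rho_p^*\,g_\lambda^{(0,p)}=g_\lambda^{(p,0)}$ and $\omega\,g_\lambda^{(0,p)}=g_{\lambda'}^{(p,0)}$ only for straight shapes, while the proof needs the skew versions. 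The paper applies the skew identity without comment; your Hopf-algebra justification (both $\rho_p^*$ and $\omega$ are coalgebra morphisms, $\omega$ visibly and $\rho_\gamma^*$ as the adjoint of the substitution homomorphism $\rho_\gamma$, so they intertwine the coproduct that defines the $g_{\lambda/\mu}$) is the right way to close that small gap and is sound. The only cosmetic difference is that for the hook-support conclusion you read it off from the second sum (using $g^{(p,0)}_{\lambda/\mu}(x_1,\dots,x_k)\neq0\Rightarrow\lambda'_j-\mu'_j\le k$ together with $\mu_1\le l-s$), whereas the paper uses the first sum (using $\ell(\mu)\le k$ together with $\lambda/\mu$ a union of $l$ vertical strips); both give $\lambda_i\le l-s$ for $i>k$ and hence $(\lambda+s^n)_i\le l$ for $i>k$.
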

\begin{proof}
Note that $s$ is the degree of zero of $\Phi$ at $z=0$, so using Corollary \ref{shiftcor} we can reduce the claim to the case $s=0$.

Assuming that $s=0$ and $\Phi(0)\neq 0$, we can use Propositions \ref{branchingMPhi}, \ref{specialization} to get
$$
\frac{M_n^\Phi(\lambda)}{\Phi(0)^nG_{\lambda}^{(0,-p)}(1^n)}=\sum_{\mu} g_{\mu}^{(0,p)}(\Phi_1)g_{\lambda/\mu}^{(0,p)}(\Phi_2)=\sum_{\mu} g_{\lambda/\mu}^{(0,p)}(\Phi_1)g_{\mu}^{(0,p)}(\Phi_2),
$$
where
$$
\Phi_1(z)=\prod_{i=1}^k\frac{1-\frac{p}{1-p}(z-1)}{1-\alpha_i (z-1)},\qquad \Phi_2(z)=\prod_{i=1}^l\left(1+\frac{\beta_i-p}{1-p}(z-1)\right).
$$
From Proposition \ref{adjshiftproperties} we see that
$$
\frac{\Phi_1(z)}{\Phi_1(0)}=\prod_{i=1}^k\frac{1-pz}{1-(x_i+p)z}=\pi_{\bx}\ \rho^*_pH(z),\qquad \frac{\Phi_2(z)}{\Phi_2(0)}=\prod_{i=1}^l(1+y_iz)=\pi_{\by}\ \omega H(z),
$$
where $\pi_{\bx}, \pi_{\by}$ are specializations $f\mapsto f(x_1,\dots, x_k), f\mapsto f(y_1,\dots, y_l)$ respectively. Hence $g_{\lambda/\mu}^{(0,p)}(\Phi_1)=[\rho^*_p g^{(0,p)}_{\lambda/\mu}](x_1,\dots, x_k)=g^{(p,0)}_{\lambda/\mu}(x_1,\dots, x_k)$ and $g_{\lambda/\mu}^{(0,p)}(\Phi_2)=[\omega g^{(0,p)}_{\lambda/\mu}](y_1,\dots, y_l)=g^{(p,0)}_{\lambda'/\mu'}(y_1,\dots, y_k)$. This proves the first part of the statement.

The second part follows from the branching rules in Propositions \ref{g-branching-prop},\ref{branching-rules}. Namely, the single variable function $g^{(p,0)}_{\lambda/\mu}(z)$ vanishes unless $\mu\preceq\lambda$, hence in the sum
$$
\sum_{\mu} g_{\mu}^{(p,0)}(x_1, \dots, x_k)g_{\lambda'/\mu'}^{(p,0)}(y_{1}, \dots, y_l)
$$
nonzero terms correspond to $\mu$ such that $l(\mu)\leq k$ and $\lambda/\mu$ is a union of $l$ vertical strips. This forces the condition $\lambda_i\leq l$ for all $i>k$.
\end{proof}

\emph{GT-type systems:} We can extend the previous example to infinite sequences as follows. Let $\mathcal A=(\alpha_1, \alpha_2, \dots), \mathcal B=(\beta_1, \beta_2, \dots)$ be infinite sequences such that
\begin{equation*}
\alpha_1\geq \alpha_2\geq\dots >\frac{p}{1-p},\qquad 1\geq \beta_1\geq\beta_2\geq\dots > p,
\end{equation*}
and
$$
\sum_{i} \left(\alpha_i-\frac{p}{1-p}\right)+\sum_{i} \left(\beta_i-p\right)<\infty.
$$
Moreover, choose two additional parameters $\gamma_{\mathcal A}, \gamma_{\mathcal B}>0$. Then
$$
\Phi(z)=e^{\gamma_{\mathcal A}(z-1)+\gamma_{\calB}\frac{(1-p)(z-1)}{1-pz}}\prod_{i=1}^\infty\frac{1-\frac{p}{1-p}(z-1)}{1-\alpha_i (z-1)}\prod_{i=1}^\infty\left(1+\frac{\beta_i-p}{1-p}(z-1)\right)
$$
is a uniform limit of the functions $\Phi$ from the previous example, so $\Phi\in\mathcal F$ (see the last example in this section for the construction of the exponential factors). When $p=0$ systems $M^{\Phi}_n$ with $\Phi(z)$ as above describe the full boundary of $\G^0$, as follows from the references in Remark \ref{GT-rem}

\emph{Plancherel-type systems:} For our next example first take $t\in [0,\frac{p}{1-p}), c\geq 0$ and consider
$$
\Phi(z)=\lim_{n\to\infty}\left(\frac{1-t(z-1)}{1-(t+cn^{-1})(z-1)}\right)^n=\lim_{n\to\infty}\left(1-\frac{c(z-1)}{n(1-t(z-1))}\right)^{-n}=\exp\left(c\frac{z-1}{1-t(z-1)}\right).
$$
The limit converges uniformly on a neighborhood of the unit disk, so $\Phi(z)\in\mathcal F$. Since $\mathcal F$ is closed under multiplication we get
$$
\exp\left(\sum_{i=1}^kc_i\frac{z-1}{1-t_i(z-1)}\right)\in\mathcal F
$$
for any $t_1, \dots, t_k\in [0,\frac{p}{1-p})$ $c_1,\dots, c_n\geq 0$. Now consider the discrete measure $\nu=\sum_{i=1}^kc_i\delta_{x_i}$ on $[0,\frac{p}{1-p}]$ and rewrite $\Phi(z)$ as
$$
\exp\left(\int_0^{\frac{p}{1-p}}\frac{z-1}{1-t(z-1)}\nu(dt)\right)\in\mathcal F.
$$
Since any finite Borel measure on $[0,\frac{p}{1-p}]$ can be approximated in distribution by discrete measures, we get
$$
\exp\left(\int_0^{\frac{p}{1-p}}\frac{z-1}{1-t(z-1)}\nu(dt)\right)\in\mathcal F.
$$
for any finite Borel measure on $[0,\frac{p}{1-p}]$.

\emph{TASEP with geometric jumps:} Let $\tilde p\in (0,p]$ and consider
$$
\Phi(z)=\left(1-\frac{\tilde{p}}{1-\tilde{p}}(z-1)\right)^{-1}=\frac{1-\tilde p}{1-\tilde pz}.
$$
Then $\frac{\Phi(z)}{\Phi(0)}=(1-\tilde{p}z)^{-1}$ and using Proposition \ref{specialization} we get
$$
M^\Phi_n(\lambda)=g_\lambda^{(0,p)}(\tilde{p})(1-\tilde{p})^nG_\lambda^{(0,-p)}(1^n).
$$
From Proposition \ref{g-branching-prop} we have $g_\lambda^{(0,p)}(\tilde{p})=p^{|\lambda|-\lambda_1}\tilde{p}^{\lambda_1}$ for any $\lambda$, so by Proposition \ref{GTASEP} we get
\begin{equation}\label{TASEPdegen}
M^\Phi_n(\lambda)=\frac{\tilde p^{\lambda_1} (1-\tilde{p})^n }{p^{\lambda_1} (1-p)^n}\P(Y(n)=\lambda+\delta)
\end{equation}
where $Y(n)$ is the TASEP with geometric jumps. When $\tilde{p}=p$, $M^\Phi_n$ are distributions of the TASEP with geometric jumps at time $n$. When $\tilde{p}<p$ the measures $M^\Phi_n$ are time $n$ distributions of the TASEP with geometric jumps, where the jumps of the first particle $Y_1$ have geometric distributions with rate $\tilde{p}$ instead of $p$.

\begin{rem} When $p=0$ the first three examples degenerate to extreme coherent systems on the positive part of Gelfand-Tsetlin graph and they fully describe its boundary, see \cite{Olsh01}. In this case the coherent systems are related to characters of the infinite unitary group, and the name for the third example comes from the connection to Plancherel measures on the unitary group. However, the coherent system from the last example does not have any analogue in the $p=0$ case: $\Phi(z)=\frac{1-p}{1-pz}$ degenerates to $1$ when $p=0$.
\end{rem}

  \section{Asymptotic analysis}\label{asyman}

 This section is dedicated to the asymptotic analysis of $G^{(0,-p)}_\lambda(1^N)$ and $g^{(p,0)}_\lambda(\chi_1,\dots, \chi_k)$ when $N\to\infty$ and $\lambda$ grows linearly with finitely many nonzero rows or columns. This analysis will be used later in Section \ref{finitesect} to study the systems $\{M^{\calA,\calB}_{n}\}_n$.

\subsection{Integral formulae for $G^{(0,\fb)}_\lambda$} \label{Section_contour_integral} Our asymptotic analysis is based on two contour integral formulae. The first formula comes from the fact that the functions $G_\lambda^{(0,\fb)}$ are degenerations of \emph{spin $q$-Whittaker functions} $\mathbb F_{\lambda/\mu}$, introduced in \cite{BW17}. Here we define these functions following the notation from \cite{BK21}. Let $s,\xi,q$ be a triple of complex parameters. For a single variable $x$ and a pair of partitions $\lambda,\mu$ set
$$
  \mathbb{F}_{\lambda/\mu}(x\mid\xi,s)=\begin{cases}
 (-x/\xi)^{|\lambda|-|\mu|}{\displaystyle \prod_{i\geq 1}}\dfrac{(x^{-1}s\xi;q)_{\lambda_i-\mu_i}(xs/\xi;q)_{\mu_i-\lambda_{i+1}}(q;q)_{\lambda_i-\lambda_{i+1}}}{(q;q)_{\lambda_i-\mu_i}(q;q)_{\mu_i-\lambda_{i+1}}(s^2;q)_{\lambda_i-\lambda_{i+1}}}\qquad& \text{if\ } \mu\preceq\lambda,\\
 0\qquad&\text{otherwise}.
 \end{cases}
$$
Here we use the $q$-Pochhammer symbol $(x;q)_n=\prod_{i=1}^n(1-xq^{i-1})$. Then for larger sets of variables $x_1, \dots, x_n$ we define $\mathbb F_{\lambda/\mu}(x_1,\dots, x_n\mid\xi,s)$ inductively using the expression above and the branching rule
 \begin{equation}
 \label{branchingsqW}
 \mathbb{F}_{\lambda/\mu}(x_1,\dots, x_n\mid\xi,s)=\sum_{\nu} \F_{\lambda/\nu}(x_1,\dots, x_{n-1}\mid\xi,s)\F_{\nu/\mu}(x_{n}\mid \xi, s).
 \end{equation}

\begin{theo}[{\cite[Theorem 8.1]{BW17},\cite[Theorem 5.13]{BK21}}]\label{integralsqW} Let $\lambda$ be a partition and $k$ be an integer such that $\lambda_1\leq k$. Assume that there exists a complex positively-oriented simple contour $\mathcal C$  such that
\begin{itemize}
\item $0$ and $s/\xi$ are inside the contour $\mathcal C$;
\item all points $(s\xi)^{-1}$ is outside of the contour $\mathcal C$;
\item the image $q\mathcal C$ of the contour $\mathcal C$ under the multiplication by $q$ is inside $\mathcal C$.
\end{itemize}
\begin{equation*}
\mathbb{F}_{\lambda}(x_1,\dots, x_n\mid\xi,s)\\
=\xi^{-|\lambda|}\oint_{\mathcal C}\frac{du_1}{2\pi\i u_1}\dots\oint_{\mathcal C}\frac{du_k}{2\pi\i u_k}\prod_{i<j}\frac{u_i-u_j}{u_i-qu_j}\prod_{i=1}^k\(\frac{(1-s\xi u_i)^{\lambda_i'-1}}{(u_i-\xi^{-1}s)^{\lambda'_i}}\prod_{j=1}^n\frac{1-u_ix_j}{1-u_i\xi s}\).
\end{equation*}
\end{theo}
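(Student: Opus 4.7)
The plan is to prove the formula by induction on $n$, the number of variables, using the branching rule \eqref{branchingsqW} that defines the multivariable spin $q$--Whittaker functions from single-variable pieces. Denote by $I_n(\lambda;x_1,\dots,x_n)$ the right-hand side of the claimed identity, so the goal is $\F_\lambda(x_1,\dots,x_n\mid \xi, s)=I_n(\lambda; x_1,\dots,x_n)$ whenever $\lambda_1\leq k$.

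\textbf{Base case ($n=1$).} I would compute the $k$-fold integral by iterated residues. Under the assumption $q\mathcal C\subset \mathcal C$, the poles of the integrand lying inside $\mathcal C$ are of two kinds: the order-$\lambda_i'$ poles at $u_i=s/\xi$ coming from $(u_i-\xi^{-1}s)^{-\lambda_i'}$, and the simple poles at $u_i=q u_j$ with $i<j$ coming from $\prod_{i<j}(u_i-qu_j)^{-1}$. The pole at $u_i=(s\xi)^{-1}$ is explicitly excluded. The Vandermonde factor $\prod_{i<j}(u_i-u_j)$ in the numerator kills residues where a string collapses to a repeated value of $s/\xi$; the surviving residue configurations are parametrized by compositions of $\{1,\dots,k\}$ into "strings" $u_{i_1}=s/\xi,\ u_{i_2}=qs/\xi,\ \dots$, and each such string contributes a product of $q$-Pochhammer symbols that match one row of the explicit product formula defining $\F_\lambda(x\mid\xi,s)$. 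Matching these contributions to the conjugate partition data is the concrete combinatorial verification.

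\textbf{Inductive step.} Assuming the formula for $n-1$ variables, substitute it into
$$
\F_\lambda(x_1,\dots,x_n\mid\xi,s)=\sum_{\mu\preceq\lambda}\F_\mu(x_1,\dots,x_{n-1}\mid\xi,s)\,\F_{\lambda/\mu}(x_n\mid\xi,s).
$$
After interchanging the finite sum with the contour integral, it suffices to verify the pointwise identity
$$
\sum_{\mu\preceq\lambda}\F_{\lambda/\mu}(x_n\mid\xi,s)\,\xi^{|\mu|-|\lambda|}\prod_{i=1}^k\frac{(1-s\xi u_i)^{\mu_i'-1}}{(u_i-\xi^{-1}s)^{\mu_i'}}=\prod_{i=1}^k\frac{(1-s\xi u_i)^{\lambda_i'-1}}{(u_i-\xi^{-1}s)^{\lambda_i'}}\cdot\prod_{i=1}^k\frac{1-u_ix_n}{1-u_i\xi s}
$$
for fixed $u_1,\dots,u_k$. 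Writing both sides in terms of the column coordinates $\mu_i'$ and $\lambda_i'$ reduces this to a product over columns of a one-variable $q$-binomial-type summation, which can be checked directly from the $q$-Pochhammer definition of $\F_{\lambda/\mu}(x_n\mid\xi,s)$.

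\textbf{Main obstacle.} The residue extraction in the base case is the technically delicate step: the poles $u_i=qu_j$ are genuinely inside $\mathcal C$ (because $q\mathcal C\subset\mathcal C$), so organizing the $k$-fold residue sum into non-crossing "strings" and showing only the string configurations indexed by $\lambda$ survive is the heart of the argument. A cleaner alternative, which is the route actually taken in \cite{BW17,BK21}, is to bypass the iterated residue bookkeeping by identifying $\F_\lambda$ with the matrix element of an explicit row-operator (built from a stochastic higher-spin vertex model), and to recognize the right-hand side as the corresponding Bethe-ansatz eigenfunction via the Yang--Baxter equation. In either approach, the compatibility of the nestedness condition $q\mathcal C\subset\mathcal C$ with the inductive step requires no new choice of contour, since the integrand picks up no additional $u_i$-poles as $n$ increases.
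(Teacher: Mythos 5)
This theorem is cited in the paper from \cite[Theorem 8.1]{BW17} and \cite[Theorem 5.13]{BK21}; the paper itself gives no proof, so there is no in-paper argument to compare against. Evaluating your sketch on its own terms, however, reveals a genuine gap in the inductive step.

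After substituting the $(n-1)$-variable formula into the branching rule and interchanging the finite sum over $\mu$ with the integral, you claim the resulting \emph{integrand} identity
\[
\sum_{\mu\preceq\lambda}\F_{\lambda/\mu}(x_n\mid\xi,s)\,\xi^{-|\mu|}\prod_{i=1}^k\frac{(1-s\xi u_i)^{\mu_i'-1}}{(u_i-\xi^{-1}s)^{\mu_i'}}
\;=\;
\xi^{-|\lambda|}\prod_{i=1}^k\frac{(1-s\xi u_i)^{\lambda_i'-1}}{(u_i-\xi^{-1}s)^{\lambda_i'}}\cdot\frac{1-u_ix_n}{1-u_i\xi s}
\]
holds pointwise in $u_1,\dots,u_k$. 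This is false whenever $k>\lambda_1$. Indeed, for any column index $i>\lambda_1$ one has $\lambda_i'=0$, and since $\mu\preceq\lambda$ forces $\mu_i'\in\{\lambda_i'-1,\lambda_i'\}\cap\Z_{\ge0}=\{0\}$, the factor attached to $u_i$ on the left is the $\mu$-independent constant $(1-s\xi u_i)^{-1}$, whereas the right-hand side carries the extra factor $\tfrac{1-u_ix_n}{1-u_i\xi s}$. The two integrands genuinely differ. The equality of \emph{integrals} still holds, but it is an identity after integration (coming from shuffling residues through the cross terms $\prod_{a<b}\tfrac{u_a-u_b}{u_a-qu_b}$), not a product-over-columns rearrangement of the integrand; so the residue bookkeeping you relegated only to the base case is in fact needed at every inductive step.

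A secondary issue: your stated reduction to a ``product over columns'' requires the sum over $\mu\preceq\lambda$ to factor, which is more naturally a factorization \emph{over rows} (each $\mu_i$ ranges independently in $[\lambda_{i+1},\lambda_i]$), and the row-versus-column bookkeeping with the Möbius variables $\tfrac{u_i-\xi^{-1}s}{1-s\xi u_i}$ still leaves the $k>\lambda_1$ mismatch above unresolved. As you yourself note, the route in \cite{BW17,BK21} circumvents all of this by realizing $\mathbb F_\lambda$ as a vertex-model matrix element and identifying the right-hand side via the Yang--Baxter equation, which is the clean way to handle the integral identity; if one insists on a direct residue proof, the inductive step has to be carried out at the level of integrals, not integrands.
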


Now we consider the degeneration where $s,\xi,q\to0$ in a way such that the ratio $s/\xi=-\fb$ remains constant. One can see that
$$
\lim_{\varepsilon\to0} \restr{\left( (-\xi)^{|\lambda|-|\mu|}\mathbb{F}_{\lambda/\mu}(x\mid\xi,s)\right)}{\substack{q=\xi=\varepsilon\\s=-\fb\varepsilon}}=\begin{cases}
 x^{|\lambda|-|\mu|}(1+x\fb)^{r(\mu/\overline{\lambda})}\qquad& \text{if}\ \mu\preceq\lambda,\\
 0\qquad&\text{otherwise}.
 \end{cases}
$$
Comparing with Proposition \ref{Gbranch} and using the branching rule we get
 $$
\lim_{\varepsilon\to0}  \restr{\left( (-\xi)^{|\lambda|-|\mu|}\mathbb{F}_{\lambda/\mu}(x_1, \dots, x_n\mid\xi,s)\right)}{\substack{q=\xi=\varepsilon\\s=-\fb\varepsilon}}=G^{(0,\fb)}_{\lambda/\mu}(x_1, \dots, x_n).
 $$
 Now we apply this degeneration to Theorem \ref{integralsqW}.
 \begin{cor} \label{Gint2} Let $\lambda$ be a partition and $k$ be an integer such that $\lambda_1\leq k$. Let $\mathcal C$ be a positively-oriented simple complex contour encircling both $0$ and $-\fb$. Then
 \begin{equation*}
G^{(0,\fb)}_{\lambda}(x_1,\dots, x_n)\\
=(-1)^{-|\lambda|}\oint_{\mathcal C}\frac{du_1}{2\pi\i u_1}\dots\oint_{\mathcal C}\frac{du_k}{2\pi\i u_k}\prod_{i<j}\frac{u_i-u_j}{u_i}\prod_{i=1}^k\(\frac{1}{(u_i+\fb)^{\lambda'_i}}\prod_{j=1}^n(1-u_ix_j)\).
\end{equation*}
 \end{cor}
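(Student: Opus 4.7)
\bigskip

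\noindent\textbf{Proof proposal.} The plan is to obtain this corollary as a direct degeneration limit of Theorem \ref{integralsqW}, specializing $q=\xi=\varepsilon$ and $s=-\fb\varepsilon$ and taking $\varepsilon\to 0$. The paper has already shown that under this specialization
$$
\lim_{\varepsilon\to 0}\Bigl((-\xi)^{|\lambda|}\,\mathbb{F}_{\lambda}(x_1,\dots,x_n\mid\xi,s)\Bigr)=G^{(0,\fb)}_{\lambda}(x_1,\dots,x_n),
$$
so the main task is to identify the $\varepsilon\to 0$ limit of the integral representation in Theorem \ref{integralsqW}, after multiplying both sides by $(-\xi)^{|\lambda|}$. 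The prefactor $\xi^{-|\lambda|}$ combined with $(-\xi)^{|\lambda|}$ yields $(-1)^{|\lambda|}$, accounting for the overall sign in the statement.

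Next I would track the specialization factor by factor inside the integrand. The factor $\prod_{i<j}\frac{u_i-u_j}{u_i-qu_j}$ converges to $\prod_{i<j}\frac{u_i-u_j}{u_i}$ since $q=\varepsilon\to 0$. Because $\xi^{-1}s=-\fb$ under the specialization, the factor $(u_i-\xi^{-1}s)^{\lambda_i'}$ becomes $(u_i+\fb)^{\lambda_i'}$. The remaining pieces $(1-s\xi u_i)^{\lambda_i'-1}$ and $(1-u_i\xi s)^{-1}$ both involve $s\xi=-\fb\varepsilon^2\to 0$ and so tend to $1$. Putting these convergences together, the integrand converges pointwise (in $(u_1,\dots,u_k)$) to the integrand appearing in the statement of the corollary.

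For the contour, I would verify that the admissibility hypotheses of Theorem \ref{integralsqW} are met for all sufficiently small $\varepsilon>0$ once we fix a positively oriented simple contour $\mathcal C$ encircling $0$ and $-\fb$ as in the statement. The condition ``$0$ and $s/\xi=-\fb$ inside $\mathcal C$'' is built into our choice; the condition ``$(s\xi)^{-1}=-(\fb\varepsilon^2)^{-1}$ outside $\mathcal C$'' holds for small $\varepsilon$ since this point escapes to infinity; and the condition $q\mathcal C\subset \mathcal C$ holds for small $\varepsilon$ because $q\mathcal C=\varepsilon\mathcal C$ shrinks toward $0$ which lies in the interior of $\mathcal C$.

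Finally I would justify the exchange of limit and integral. On the compact set $\mathcal C^k$, the only poles of the pre-limit integrand are at $u_i=qu_j=\varepsilon u_j$ and at $u_i=\xi^{-1}s=-\fb$; the first set collapses to $\{0\}$ and is uniformly bounded away from $\mathcal C$, and the second lies in the interior of $\mathcal C$ (and is independent of $\varepsilon$). Hence the integrand converges uniformly on $\mathcal C^k$, and passing the limit inside gives the claimed formula. No substantial obstacle is expected: the argument is a routine degeneration, with the only points to check being the uniform separation of poles from $\mathcal C$ and the admissibility of $\mathcal C$ for small $\varepsilon$, both of which are transparent from the specialization.
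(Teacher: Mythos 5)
Your proposal is correct and matches the paper's own (quite terse) argument: both degenerate the spin $q$-Whittaker integral of Theorem \ref{integralsqW} under $q=\xi=\varepsilon$, $s=-\fb\varepsilon$, $\varepsilon\to 0$, tracking each factor and verifying the contour admissibility and uniform convergence. The only thing the paper leaves entirely implicit — and you state explicitly and correctly — is that the contour conditions and the separation of poles from $\mathcal{C}$ hold uniformly for small $\varepsilon$ (with the pole at $(s\xi)^{-1}$ escaping to infinity and $q\mathcal{C}$ collapsing toward the interior point $0$), so the limit passes inside the integral.
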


Note that the complexity of the expression from Corollary \ref{Gint2} increases with the number of columns in $\lambda$. Our second contour integral expression behaves in the opposite way, with the number of integrals depending on the number of rows.

\begin{prop}\label{Gintl} Assume that $x_1, \dots, x_n, \fa, \fb$ are complex parameters satisfying $\max\{|\fa|,|\fb|\}<\min\{|x_i|^{-1}\}_i$. Let $\lambda$ be a partition, $k$ an integer such that $l(\lambda)\leq k$, and $\mathcal C$ be a positively-oriented simple contour encircling $0,\fa$ and $\fb$ and leaving $x_i^{-1}$ outside. Then
$$
G_\lambda^{(\fa,-\fb)}(x_1, \dots, x_n)=\oint_{\mathcal{C}}\cdots\oint_{\mathcal{C}} \frac{du_1}{2\pi \i}\dots \frac{du_k}{2\pi \i}\prod_{i<j}(u_i-u_j)\prod_{i=1}^k\left( \frac{1}{(u_i-\fa)^{\lambda_i}(u_i-\fb)^{k-i+1}}\prod_{j=1}^n\frac{1-\fb x_j}{1-u_ix_j}\right).
$$
\end{prop}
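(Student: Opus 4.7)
The plan is to derive the integral representation from the Jacobi--Trudi identity (Proposition \ref{GJT}) by recasting each matrix entry as a contour integral, then collapsing the resulting determinant by a Vandermonde computation. Applying Proposition \ref{GJT} with matrix dimension $k$ (which is allowed since $l(\lambda) \le k$) gives
$$
G_\lambda^{(\fa,-\fb)}(\mathbf{x}) = \prod_{j=1}^n (1-\fb x_j)^k \, \det\bigl[f_{\lambda_i-i+j}^{(\lambda_i,\,j-i+1)}(\mathbf{x})\bigr]_{1 \le i,j \le k}.
$$

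The first step is to express $f_k^{(m,l)}$ by a single contour integral valid for both signs of $l$. From the generating functions $\sum_a h_a(\mathbf{x}) z^a = \prod_j (1-z x_j)^{-1}$ and, uniformly in the sign of $l$, $\sum_b h_b(\fb^l) z^b = \sum_b e_b((-\fb)^{-l}) z^b = (1-\fb z)^{-l}$, one recognizes $f_k^{(m,l)}(\mathbf{x})$ as the coefficient of $z^k$ in $\prod_j (1-z x_j)^{-1} \cdot z^{m+l}/[(z-\fa)^m(z-\fb)^l]$. The hypothesis $\max(|\fa|,|\fb|) < \min|x_j|^{-1}$ ensures this Laurent series converges on an annulus enclosing $\{0,\fa,\fb\}$ and missing each $x_j^{-1}$, so for any $\mathcal{C}$ as in the statement,
$$
f_k^{(m,l)}(\mathbf{x}) = \oint_{\mathcal{C}} \frac{du}{2\pi\i} \, \frac{u^{m+l-k-1}}{(u-\fa)^m (u-\fb)^l} \prod_{j=1}^n \frac{1}{1-u x_j}.
$$

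Substituting $m = \lambda_i$, $l = j-i+1$, $k = \lambda_i - i + j$ makes the exponent $m + l - k - 1$ vanish, so each Jacobi--Trudi entry takes the clean form $\oint_{\mathcal{C}} \frac{du}{2\pi\i \,(u-\fa)^{\lambda_i}(u-\fb)^{j-i+1}} \prod_l (1-u x_l)^{-1}$. Expanding the $k \times k$ determinant as a sum over permutations and introducing independent integration variables $u_1,\dots,u_k \in \mathcal{C}$, the $i$-dependent factor $(u_i-\fa)^{-\lambda_i}\prod_l(1-u_i x_l)^{-1}$ pulls outside of the determinant, leaving $\det[(u_i-\fb)^{-(j-i+1)}]_{1 \le i,j \le k}$. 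Setting $v_i = (u_i - \fb)^{-1}$ reduces this to $\prod_i v_i^{2-i} \cdot \det[v_i^{j-1}] = \prod_i v_i^{2-i}\prod_{i<j}(v_j - v_i)$ by the Vandermonde identity. Writing $v_j - v_i = (u_i - u_j)/[(u_i-\fb)(u_j-\fb)]$ and noting that each $(u_i-\fb)$ appears with total exponent $(i-2) - (k-1) = -(k-i+1)$, one obtains
$$
\det\Bigl[\tfrac{1}{(u_i-\fb)^{j-i+1}}\Bigr]_{1 \le i,j \le k} = \prod_{i=1}^k (u_i - \fb)^{-(k-i+1)} \prod_{1 \le i < j \le k}(u_i - u_j).
$$

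Finally, distributing the prefactor $\prod_j (1-\fb x_j)^k = \prod_{i=1}^k \prod_{j=1}^n (1-\fb x_j)$ as one factor of $\prod_j(1-\fb x_j)$ per integration variable $u_i$ yields the claimed identity. The main obstacle is the first step: deriving the uniform contour integral for $f_k^{(m,l)}$ by reconciling the $l \ge 0$ and $l \le 0$ cases of the definition and justifying the exchange of Laurent expansion and contour integration, which is exactly where the assumption $\max(|\fa|,|\fb|) < \min|x_j|^{-1}$ is needed. The remaining determinant manipulation is routine.
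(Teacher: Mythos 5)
Your proposal is correct and takes essentially the same route as the paper: both start from the Jacobi--Trudi formula of Proposition \ref{GJT}, express each entry $f_k^{(m,l)}$ by a single contour integral valid uniformly in the sign of $l$, substitute so that the power of $u$ vanishes, and collapse the resulting determinant to a Vandermonde. The only superficial difference is in how the Vandermonde is exhibited (your change of variables $v_i = (u_i-\fb)^{-1}$ versus the paper factoring $(u_a-\fb)^{-(k-a+1)}$ out of each row and recognizing $\det\bigl[(u_a-\fb)^{k-b}\bigr]$).
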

\begin{proof} We use the Jacobi-Trudi formula for $G^{(\fa,-\fb)}_\lambda$. First, note that the functions $f^{(m,l)}_k$ from Proposition \ref{GJT} are defined by converging sums when $\max(|\fa|,|\fb|)<\min_i |x_i|^{-1}$. Note that the generating function
$$
\sum_{k\in\mathbb Z} f_k^{(m,l)}(\mathbf x) u^k=\sum_k\sum_{\substack{a,b,c:\\a-b-c=k}}h_a(\mathbf x)h_b(\fb^l)h_c(\fa^m)u^{a-b-c}=\frac{1}{(1-\fa u^{-1})^m(1-\fb u^{-1})^l}\prod_{i=1}^n\frac{1}{1-ux_i}
$$
is analytic in the annulus $\max(|\fa|,|\fb|)<|u|<\min_i |x_i^{-1}|$. Hence, we get
$$
f_k^{(m,l)}(\mathbf x) =\oint_{C} \frac{du}{2\pi \i u} \frac{u^{-k}}{(1-\fa u^{-1})^m(1-\fb u^{-1})^l}\prod_{i=1}^n\frac{1}{1-ux_i},
$$
where $C$ is a circle around $0$ containing $\fa,\fb$ and leaving $x_i^{-1}$ outside.

Applying Proposition \ref{GJT} leads to
$$
G_\lambda^{(\fa,-\fb)}(\mathbf{x})=\prod_{i\geq 1}(1-\fb x_i)^k\det\left[\oint_{C} \frac{du}{2\pi \i u} \frac{u^{-\lambda_a+a-b}}{(1-\fa u^{-1})^{\lambda_a}(1-\fb u^{-1})^{b-a+1}}\prod_{i=1}^n\frac{1}{1-ux_i}\right]_{1\leq a,b\leq k}.
$$
We can rewrite it as
\begin{multline*}
\sum_{\sigma\in \mathfrak{S}_k}(-1)^\sigma\prod_{i\geq 1}(1-\fb x_i)^n\oint_{C}\cdots\oint_{C} \frac{du_1}{2\pi \i u_1}\dots \frac{du_k}{2\pi \i u_k}\prod_{a=1}^k\left( \frac{u_a^{-\lambda_a+a-\sigma(a)}}{(1-\fa u_a^{-1})^{\lambda_a}(1-\fb u_a^{-1})^{\sigma(a)-a+1}}\prod_{i=1}^n\frac{1}{1-u_ax_i}\right)\\
=\sum_{\sigma\in \mathfrak{S}_k}(-1)^\sigma\oint_{C}\cdots\oint_{C} \frac{du_1}{2\pi \i}\dots \frac{du_k}{2\pi \i}\prod_{a=1}^k\left( \frac{1}{(u_a-\fa)^{\lambda_a}(u_a-\fb)^{\sigma(a)-a+1}}\prod_{i=1}^n\frac{1-\fb x_i}{1-u_ax_i}\right)\\
=\oint_{C}\cdots\oint_{C} \frac{du_1}{2\pi \i}\dots \frac{du_k}{2\pi \i}\det\left[(u_a-\fb)^{k-b}\right]_{1\leq a,b\leq b}\prod_{i=1}^k\left( \frac{1}{(u_i-\fa)^{\lambda_i}(u_i-\fb)^{k-i+1}}\prod_{j=1}^n\frac{1-\fb x_j}{1-u_ix_j}\right).
\end{multline*}
The proof is finished by rewriting the determinant in the last expression using the Vandermonde determinant and deforming the integration contour to $\mathcal C$.
\end{proof}

 \begin{rem} The functions $G_\lambda^{(\fa,0)}$ can also be obtained as a degeneration of \emph{spin Hall-Littlewood} functions $\mathsf{G}_\lambda(x_1,\dots, x_n\mid \Xi, \mathsf{S})$ from \cite{BP16}. Comparing \cite[Theorem 4.14]{BP16} with \eqref{Gdef} one can show that
 $$
 \lim_{\varepsilon\to0}\varepsilon^{|\lambda|}\restr{\mathsf{G}_\lambda(x_1,\dots, x_n\mid \Xi, \mathsf{S})}{\substack{q=0,\ \xi_0=\xi_1=\dots=\varepsilon^{-1}\\s_0=0,\ s_1=s_2=\dots=\varepsilon\fa}}=G^{(\fa, 0)}_\lambda(x_1,\dots, x_n),
 $$
 where $l(\lambda)\leq n$ and in the left-hand side $\lambda$ is treated as a signature of length $n$. Then the integral formula for $G^{(\fa, 0)}_\lambda$ obtained in Proposition \ref{Gintl} can be seen as a degeneration of the integral formula for $\mathsf{G}_\lambda(x_1,\dots, x_n\mid \Xi, \mathsf{S})$ from \cite[Corollary 7.16]{BP16}.
  \end{rem}

 \subsection{$G^{(0,-p)}_\lambda(1^N)$ with $k$ rows} Fix $k\in\mathbb Z_{>0}$ and a sequence $\alpha_1\geq\alpha_2 \dots\geq \alpha_k> 0$. For our analysis we consider sequences of partition $\lambda(N)$ such that $l(\lambda(N))\leq k$ and
$$
N^{-\frac12}\frac{\lambda(N)_i-\alpha_i N}{\sqrt{\alpha_i(1+\alpha_i)}}\to x_i,
$$
where $\bx=(x_1, \dots, x_k)\in C$ for a fixed compact compact subset $C\in\R^k$, but we allow $\bx$ to vary for different choices of sequences $\lambda(N)$. Let $\chi_i=\frac{\alpha_i}{1+\alpha_i}-p$ and $\tilde \chi_i=\max(0, \chi_i)$. We assume that $\chi_i\neq 0$ for all $i$ and we use $\tilde k$ to denote the number of $i$ such that $\alpha_i>\frac{p}{1-p}$. For $\alpha>0$ let $m(\alpha)$ denote the number of times $\alpha$ appears in $\calA=(\alpha_1, \dots, \alpha_k)$, for $\chi>0$ let $m(\chi)$ denote the multiplicity of $\chi$ in $(\chi_1,\dots, \chi_k)$ and set
$$
d=\tilde k+\sum_{\alpha>\frac{p}{1-p}}\binom{m(\alpha)}{2}=\sum_{\alpha>\frac{p}{1-p}}\binom{m(\alpha)+1}{2}=\sum_{\chi>0}\binom{m(\chi)+1}{2}.
$$
Finally, fix a vector $\bz=(z_1, \dots, z_n)\in\C^n$ satisfying $|z_i|<1$ for all $i$. We allow $\bz$ to be empty when $n=0$.

\begin{theo}\label{steeptheo1} With the notation above:

(1) The limit
$$
\lim_{N\to \infty}G^{(0,-p)}_{\lambda(N)}(\bz, 1^{N-n})\ N^{\frac{d}{2}}\prod_{i=1}^k\left(\frac{1-\tilde \chi_i-p}{1-p}\right)^{N} (\tilde\chi_i+p)^{\lambda(N)_i}
$$
does not depend on $\lambda(N)_i$ for $i$ such that $\alpha_i<\frac{p}{1-p}$. When $\alpha_i<\frac{p}{1-p}$ for all $i$ the limit above is equal to $1$.

(2) When $\alpha_1,\dots, \alpha_k>\frac{p}{1-p}$:
\begin{multline}\label{steeptheoeq1}
G^{(0,-p)}_\lambda(\bz, 1^{N-n})\ N^{\frac{d}{2}}\prod_{i=1}^k\left(\frac{1-\tilde \chi_i-p}{1-p}\right)^{N}(\tilde \chi_i+p)^{\lambda(N)_i}\\
\to Z (2\pi)^{-\frac{k}{2}}\prod_{i=1}^{k}\frac{e^{-\frac{x_i^2}{2}}}{\sqrt{\alpha_i(1+\alpha_i)}}\prod_{\substack{i<j\\\alpha_i=\alpha_j}}\frac{x_i-x_j}{\sqrt{\alpha_i(1+\alpha_i)}}\prod_{i=1}^n\Phi^{\calA,\varnothing}(z_i),
\end{multline}
where
$$
 Z=\frac{\prod_{\chi>0} (\chi+p)^{\binom{m(\chi)+1}{2}}\prod_{\substack {i<j:\\ \chi_i>\chi_j}}(\chi_i-\chi_j) }{\prod_{i=1}^k\chi_i^{k-i+1}},\qquad \Phi^{\calA,\varnothing}(z)=\prod_{i=1}^k\frac{1-\frac{p}{1-p}(z-1)}{1-\alpha_i(z-1)}.
$$
Moreover, this convergence is uniform over choices of $\lambda(N)$ such that $\bx$ vary over the compact subset $C\subset\R^k$ and $\{\lambda(N)_i-N\alpha_i-\sqrt{N\alpha_i(1+\alpha_i)}x_i\}_{i,N}$ are uniformly bounded. In other words, for a compact subset $C\subset \R^k$ and a constant $M$ there exists a positive real sequence $(\Lambda_n)$ such that $\lim_{n\to\infty}\Lambda_n = 0$ and
\begin{multline*}
\Big|G^{(0,-p)}_\lambda(\bz, 1^{N-n})\ N^{\frac{d}{2}}\prod_{i=1}^k\left(\frac{1-\tilde \chi_i-p}{1-p}\right)^{N}(\tilde \chi_i+p)^{\lambda(N)_i}\\
- Z (2\pi)^{-\frac{k}{2}}\prod_{i=1}^{k}\frac{e^{-\frac{x_i^2}{2}}}{\sqrt{\alpha_i(1+\alpha_i)}}\prod_{\substack{i<j\\\alpha_i=\alpha_j}}\frac{x_i-x_j}{\sqrt{\alpha_i(1+\alpha_i)}}\prod_{i=1}^n\Phi^{\calA,\varnothing}(z_i)\Big|<\Lambda_n
\end{multline*}
for every sequence $\lambda(N)$ such that $|\lambda(N)_i-N\alpha_i-\sqrt{N\alpha_i(1+\alpha_i)}x_i|< M$ for some vector $\bx\in C$ and all $i, N$.
\end{theo}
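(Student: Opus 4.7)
The plan is to apply the saddle-point / steepest descent method to the contour integral representation from Proposition~\ref{Gintl} with $\fa=0, \fb=p$. Setting $x_1=z_1,\dots,x_n=z_n,x_{n+1}=\cdots=x_N=1$ we get
$$
G_{\lambda(N)}^{(0,-p)}(\bz, 1^{N-n})=\oint_{\mathcal{C}}\!\cdots\!\oint_{\mathcal{C}}\prod_{a<b}(u_a-u_b)\prod_{i=1}^k \frac{\left(\tfrac{1-p}{1-u_i}\right)^{\!N-n}}{u_i^{\lambda(N)_i}(u_i-p)^{k-i+1}}\prod_{j=1}^n\frac{1-pz_j}{1-u_iz_j}\,\frac{du_1\cdots du_k}{(2\pi\i)^k}.
$$
Writing $\lambda(N)_i=\alpha_i N+\sqrt{N\alpha_i(1+\alpha_i)}\,x_i+O(1)$, the leading exponential weight for the variable $u_i$ is $u_i^{-\alpha_i N}(1-u_i)^{-N}$, whose logarithmic derivative vanishes at the saddle $u_i^{\ast}=\tfrac{\alpha_i}{1+\alpha_i}$, with second derivative $\tfrac{(1+\alpha_i)^3}{\alpha_i}$. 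Note $u_i^{\ast}-p=\chi_i$, so $u_i^{\ast}>p$ iff $\alpha_i>\tfrac{p}{1-p}$; moreover $\tilde\chi_i+p$ and $1-\tilde\chi_i-p$ are precisely the values chosen so that the normalization in the statement exactly cancels the leading $N$-exponential of the integrand at the saddle (or at the pole $u_i=p$, respectively).

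For part (2), all saddles lie outside $p$, so I would deform each $\mathcal{C}$ to the steepest descent contour through $u_i^{\ast}$, keeping $0$ and $p$ inside and $1,z_j^{-1}$ outside. Introducing local coordinates $u_i=u_i^{\ast}+t_i/\sqrt{N}$, the normalization factor turns the exponential into $\exp(-\tfrac{1}{2}\tfrac{(1+\alpha_i)^3}{\alpha_i}t_i^2-\sqrt{\tfrac{(1+\alpha_i)^3}{\alpha_i}}\,x_i t_i)+o(1)$, and the remaining factors converge to their values at $u_i=u_i^{\ast}$: the factor $\prod_j (1-pz_j)/(1-u_i^{\ast}z_j)$ produces $\Phi^{\calA,\varnothing}(z_j)$ up to explicit constants, and $(u_i^{\ast}-p)^{-(k-i+1)}=\chi_i^{-(k-i+1)}$ contributes to $Z$. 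For groups of coinciding $\alpha_i$'s, the corresponding saddles collide at $u^{\ast}$; the Vandermonde factor $\prod_{i<j,\,\alpha_i=\alpha_j}(u_i-u_j)=N^{-\binom{m(\chi)}{2}/2}\prod(t_i-t_j)$ compensates the degeneracy, yielding the prescribed $N^{-d/2}$ rescaling and, after the Gaussian integration, the Vandermonde factor $\prod_{i<j,\alpha_i=\alpha_j}(x_i-x_j)/\sqrt{\alpha_i(1+\alpha_i)}$ in the RHS of \eqref{steeptheoeq1}. Completing the Gaussian integrals produces the $(2\pi)^{-k/2}\prod_i e^{-x_i^2/2}/\sqrt{\alpha_i(1+\alpha_i)}$ factor; pairs $i<j$ with $\alpha_i\neq\alpha_j$ of the Vandermonde contribute $\prod_{\chi_i>\chi_j}(\chi_i-\chi_j)$; the collected constants give exactly $Z$.

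For part (1), the variables split into two groups. For indices with $\alpha_i>\tfrac{p}{1-p}$ (saddle outside $p$), the analysis of part (2) applies verbatim. For indices with $\alpha_i<\tfrac{p}{1-p}$, the saddle $u_i^{\ast}$ lies inside $p$, so deforming $\mathcal{C}$ to pass through it crosses the pole $u_i=p$ of order $k-i+1$; the residue at $p$ plus the tiny remaining contour around $u_i^{\ast}$ replaces the original integral. The residue at $u_i=p$ produces factors that depend on $\lambda(N)_i$ only through $p^{-\lambda(N)_i}(1-p)^{-(N-n)}$ times a polynomial in $\lambda(N)_i$ of degree $k-i$ coming from derivatives of $u_i^{-\lambda(N)_i}(1-u_i)^{-(N-n)}$ and the other factors; after multiplying by the normalization $(\tilde\chi_i+p)^{\lambda(N)_i}(\frac{1-\tilde\chi_i-p}{1-p})^N=p^{\lambda(N)_i}$ this is exactly cancelled and one must check that the residual polynomial part, once multiplied by the $N^{d/2}$ normalization (which in case (1) only counts saddles with $\chi>0$), contributes a $\lambda(N)_i$-independent leading term while the $t_i$-integral around $u_i^{\ast}$ is exponentially suppressed. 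The all-$\alpha_i<\tfrac{p}{1-p}$ case, where $d=0$, reduces to a pure residue computation that outputs $1$.

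The main technical obstacle will be the careful bookkeeping in part (1): showing that only the top-degree term of the residue polynomial in $\lambda(N)_i$ survives normalization and that lower-order polynomial dependence on $\lambda(N)_i$ is killed by vanishing constants (so the limit really is $\lambda(N)_i$-independent), while simultaneously handling the tail estimates making steepest descent uniform. Uniformity in $\bx\in C$ and in $O(1)$-perturbations of $\lambda(N)_i$ around $\alpha_i N+\sqrt{N\alpha_i(1+\alpha_i)}x_i$ follows from standard $\bx$-independent exponential bounds $|e^{-NS(u)}|\le Ce^{-cN|u-u^{\ast}|^2}$ on a neighborhood of the saddle together with integrability on the tails; the $O(1)$ perturbations only affect the prefactor $(u_i^{\ast}/u_i)^{O(1)}$, which is $1+O(N^{-1/2})$ on the rescaled contour.
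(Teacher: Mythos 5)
Your overall route is the same as the paper's: rewrite $G^{(0,-p)}_\lambda$ via the contour integral of Proposition~\ref{Gintl}, locate the saddles $u_i^{\ast}=\alpha_i/(1+\alpha_i)$, and for part (2) apply steepest descent with the change of variables $u_i=u_i^{\ast}+t_i/\sqrt{N}$ and Gaussian integration (your saddle and second-derivative computations agree with Lemma~\ref{Taylor1}, and your accounting of where the $(2\pi)^{-k/2}$, the Vandermonde factors, and $Z$ come from is consistent with Steps~2--4 of the paper). That portion is fine up to the usual tail estimates.

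Where the proposal has a genuine gap is part (1). You propose, for each bad index $i$ with $\alpha_i<p/(1-p)$, to pick up the residue of the $(k{-}i{+}1)$-fold pole of $R_k$ at $u_i=p$, and you correctly observe that this would produce a polynomial of degree $k-i$ in $\lambda(N)_i$; you then flag, as ``the main technical obstacle,'' the bookkeeping needed to show that the resulting $\lambda(N)_i$-dependence disappears in the limit. This obstacle is real for the computation you are setting up, and I do not see how your normalization $N^{d/2}$ (which counts only the good indices) is going to suppress it. The paper removes the obstacle entirely by choosing the processing order carefully: since the $\alpha_i$ are decreasing, all bad indices sit at the end, and one isolates $u_k$ alone (keeping the other bad contours at radius $p+\delta$, still enclosing the pole). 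In $R_k$ the pole at $u_k=p$ has order $k-k+1=1$: it is simple. Moreover $f(p)=1$ and the normalized exponential $\exp(N(h_{\lambda_k/N}(u_k)-h_{\lambda_k/N}(p)))$ evaluates to $1$ at $u_k=p$, so the residue is exactly $R_{k-1}(u_1,\dots,u_{k-1})$ times the same remaining integrand --- no polynomial in $\lambda(N)_k$ at all, and all $\lambda(N)_k$-dependence is already gone after one clean simple residue. Iterating from $i=k$ down to $i=\tilde k+1$ keeps the pole simple at every step because $R_k$ collapses to $R_{k-1}$, then $R_{k-2}$, etc., under successive residues. Meanwhile the integral over the deformed contours $C_1\times\cdots\times C_k$ is exponentially small by the steep-descent inequality of Lemma~\ref{descent-lem1} together with $h_{\alpha_k}(\chi_k+p)<h_{\alpha_k}(p)$.

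So your plan would work in spirit, but the residue analysis as written is set up for a higher-order pole that never needs to arise. You should take the residue only in $u_k$ (the last index), observe it is simple and evaluates cleanly, and then induct; the ``careful bookkeeping'' you are worried about is an artifact of processing the poles in a suboptimal order rather than a real difficulty. Once part (1) is reduced to the good indices, it feeds into part (2) exactly as you describe.
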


We use a standard steepest descent argument to establish this asymptotic behavior. The idea is to localize the integral to a small neighborhood of a critical point of the integrand, see e.g. \cite{Copson} or \cite{Erd56} for the basics of the method. Our first step is to rewrite the integral expression from Proposition \ref{Gintl} in an exponential form. Fix the analytic branch of $\ln(u)$ on $\C\backslash\R_{\leq0}$ with real values on $\R_{>0}$ and set
$$
h_{t}(u)=-\ln(1-u)-t\ln(u),\qquad R_k(u_1, \dots, u_k)=\prod_{i<j}(u_i-u_j)\prod_{i=1}^k\frac{1}{(u_i-p)^{k-i+1}},
$$
$$
f(u)=\prod_{i=1}^n\frac{1-u}{1-p}\frac{1-pz_i}{1-uz_i}.
$$
To make our computations more compact we also set $r_i=\frac{\alpha_i}{1+\alpha_i}=\chi_i+p$ and $\tilde r_i=\max(r_i, p)$. Then by Proposition \ref{Gintl} we have
\begin{multline}
\label{int1}
\prod_{i=1}^k\left(\frac{1-\tilde \chi_i-p}{1-p}\right)^{N}(\tilde \chi_i+p)^{\lambda(N)_i}G_{\lambda(N)}^{(0,-p)}(\bz, 1^{N-n})\\
=\oint_{\mathcal{C}}\frac{du_1}{2\pi \i}\cdots\oint_{\mathcal{C}}\frac{du_k}{2\pi \i}R_k(\bu)\prod_{i=1}^k\exp\left(N \left(h_{\lambda(N)_i/N}(u_i)-h_{\lambda(N)_i/N}(\tilde \chi_i+p)\right)\right) f(u_i),
\end{multline}
where $\mathcal{C}$ is a simple positively-oriented contour encircling $0, p$ and leaving $1, \{z_i^{-1}\}_i$ outside.

The next step is to find suitable descent contours. Let $C_{\alpha}$ denote the positively-oriented circle centered at $0$ of radius $\frac{\alpha}{1+\alpha}$.

\begin{lem} \label{descent-lem1} Let $\alpha,\beta\in\R_{>0}$. Then $C_\alpha$ is a steep descent contour for $\Re[h_\beta(z)]$, namely, $\Re\left[h_\beta\left(\frac{\alpha}{1+\alpha}e^{i\theta}\right)\right]$ decreases for $\theta\in (0,\pi)$ and increases for $\theta\in(-\pi, 0)$.
\end{lem}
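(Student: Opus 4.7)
The plan is to reduce the claim to an elementary monotonicity statement about the modulus $|1-u|$ as $u$ traverses $C_\alpha$. Writing $r=\alpha/(1+\alpha)$, I parameterize the contour by $u=re^{\i\theta}$ and compute the real part of $h_\beta$ directly using the principal branch of the logarithm: since $\Re[\ln(w)]=\ln|w|$, one gets
\[
\Re\bigl[h_\beta(re^{\i\theta})\bigr]=-\ln\bigl|1-re^{\i\theta}\bigr|-\beta\ln r.
\]
The second term is constant in $\theta$ because we are moving along a circle of fixed radius around the origin, so all the $\theta$-dependence lives in $-\ln|1-re^{\i\theta}|$. Note that this is where the hypothesis $\beta>0$ plays essentially no role beyond ensuring that $\ln$ is well-defined on the interior of the contour: the monotonicity does not actually depend on $\beta$.

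Next, I compute
\[
\bigl|1-re^{\i\theta}\bigr|^{2}=(1-r\cos\theta)^{2}+r^{2}\sin^{2}\theta=1-2r\cos\theta+r^{2},
\]
and differentiate to obtain
\[
\frac{d}{d\theta}\bigl|1-re^{\i\theta}\bigr|^{2}=2r\sin\theta.
\]
Since $r>0$, this derivative is strictly positive for $\theta\in(0,\pi)$ and strictly negative for $\theta\in(-\pi,0)$. Consequently $|1-re^{\i\theta}|$ strictly increases on $(0,\pi)$ and strictly decreases on $(-\pi,0)$, and therefore $-\ln|1-re^{\i\theta}|$ strictly decreases on $(0,\pi)$ and strictly increases on $(-\pi,0)$, which is exactly what the lemma claims.

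There is essentially no obstacle here; the argument is a direct calculation. The only point worth a brief sanity check is the choice of radius: the contour must genuinely be a circle about $0$ (so that $|u|$ is constant and the $-\beta\ln u$ term drops out of the analysis), and the specific value $r=\alpha/(1+\alpha)\in(0,1)$ is what one needs downstream so that $C_\alpha$ encloses $0$ and $p$ (when $\alpha>p/(1-p)$) while leaving $1$ outside, consistent with the contour requirements of Proposition \ref{Gintl}. The lemma itself, however, is just the observation that on a circle around the origin the modulus $|1-u|$ is monotone in the argument on each half of the circle, with extrema at $\theta=0$ and $\theta=\pm\pi$.
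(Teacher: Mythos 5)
Your proof is correct and follows essentially the same route as the paper's: both observe that the $-\beta\ln|u|$ term is constant on the circle, reduce to the behavior of $|1-re^{\i\theta}|^2=1+r^2-2r\cos\theta$, and differentiate to read off the sign from $\sin\theta$. The only cosmetic difference is that the paper differentiates $-\tfrac12\ln|1-re^{\i\theta}|^2$ directly to get $\tfrac{-\sin\theta}{1+r^2-2r\cos\theta}$, whereas you differentiate the squared modulus and then invoke monotonicity of the logarithm.
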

\begin{proof}
Set $r=\frac{\alpha}{1+\alpha}\in(0,1)$. It is enough to check that the derivative $\frac{d}{d\theta}\Re\left[h_\beta(re^{i\theta})\right]$ has the same sign as $-\sin(\theta)$. Note that $\Re[h_\beta(u)]=-\frac12\ln |1-u|^2-\beta\ln |u|$. Plugging $u=re^{i\theta}$, the derivative is
\begin{equation*}
\frac{d}{d\theta}\Re[h_\beta(re^{i\theta})]=-\frac{d}{d\theta}\frac12\ln |1-re^{i\theta}|^2=-\frac{d}{d\theta}\frac12\ln (1+r^2-2r\cos(\theta))=\frac{-\sin(\theta)}{1+r^2-2r\cos(\theta)}. \qedhere
\end{equation*}
\end{proof}

We also need several properties of $h_\alpha(u)$.

\begin{lem}\label{Taylor1} (1) Assume that $\alpha\in \R_{>0}$. We have $h'_\alpha(u^{crt})=0$ where $u^{crt}=\frac{\alpha}{1+\alpha}$.

(2)  Assume that $\alpha\in \R_{>0}$. Then $h_\alpha(\frac{\alpha}{1+\alpha})$ is the unique minimum of $h_\alpha(x)$ on $(0,1)$.

(3) Let $D$ be a convex compact set avoiding $0,1$. Then there exists a constant $K>0$ such that for any $z\in D$ and $\alpha\in \R_{>0}$ satisfying $u^{crt}=\frac{\alpha}{1+\alpha}\in D$ we have
\begin{equation}
\label{Taylor1eq}
\left|h_\alpha(u)-h_\alpha(u^{crt})-\frac{(1+\alpha)^3}{2\alpha}(u-u^{crt})^2\right|<K|u-u^{crt}|^3
\end{equation}
\end{lem}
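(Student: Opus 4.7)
All three parts follow from direct computation and standard Taylor estimates, once we observe that the constraint $u^{\mathrm{crt}} \in D$ forces $\alpha$ into a compact subset of $(0,\infty)$.

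\textbf{Part (1).} The plan is to differentiate directly: $h'_\alpha(u) = \frac{1}{1-u} - \frac{\alpha}{u}$, so solving $h'_\alpha(u) = 0$ gives $u(1+\alpha) = \alpha$, i.e.\ $u = \frac{\alpha}{1+\alpha}$.

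\textbf{Part (2).} The approach is to note that $h''_\alpha(u) = \frac{1}{(1-u)^2} + \frac{\alpha}{u^2} > 0$ for $u \in (0,1)$ and $\alpha > 0$, so $h_\alpha$ is strictly convex on $(0,1)$. Combined with $h_\alpha(u) \to +\infty$ as $u \to 0^+$ or $u \to 1^-$, the unique critical point from part (1) is the unique minimum.

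\textbf{Part (3).} The strategy is Taylor expansion with explicit third-derivative remainder. A direct computation using $u^{\mathrm{crt}} = \frac{\alpha}{1+\alpha}$ (so $1-u^{\mathrm{crt}} = \frac{1}{1+\alpha}$) gives
\[
h''_\alpha(u^{\mathrm{crt}}) \;=\; (1+\alpha)^2 + \frac{(1+\alpha)^2}{\alpha} \;=\; \frac{(1+\alpha)^3}{\alpha},
\]
which matches the quadratic coefficient in the statement. The integral form of Taylor's theorem on the segment from $u^{\mathrm{crt}}$ to $u$ (which lies in $D$ by convexity) then gives
\[
h_\alpha(u) - h_\alpha(u^{\mathrm{crt}}) - \tfrac{(1+\alpha)^3}{2\alpha}(u-u^{\mathrm{crt}})^2 \;=\; \int_{u^{\mathrm{crt}}}^{u} \tfrac{(u-t)^2}{2}\, h'''_\alpha(t)\, dt,
\]
where $h'''_\alpha(t) = \frac{2}{(1-t)^3} - \frac{2\alpha}{t^3}$. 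The remaining task is to produce a uniform bound on $|h'''_\alpha(t)|$ for $t \in D$ and $\alpha$ ranging over the admissible set. The key observation is that the constraint $u^{\mathrm{crt}} = \frac{\alpha}{1+\alpha} \in D$ restricts $\alpha$ to a compact subset of $(0,\infty)$, since $D$ is compact and avoids both $0$ and $1$. On this compact $\alpha$-range and on the compact set $D$ (which avoids $0$ and $1$), the expression $|h'''_\alpha(t)|$ is uniformly bounded. Bounding the integrand by this constant times $|u-t|^2$ and integrating yields the desired estimate \eqref{Taylor1eq} with a uniform $K$.

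The only subtle point is the uniformity claim in part (3); this is handled cleanly once one notes that the hypothesis $u^{\mathrm{crt}} \in D$ implicitly compactifies the parameter space of admissible $\alpha$. The rest is routine differentiation.
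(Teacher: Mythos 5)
Your proof is correct and follows the same route as the paper's, which simply cites ``direct computation of $h'_\alpha$'' for parts (1)--(2) and ``Taylor's theorem'' for part (3). You have in fact supplied the one nontrivial detail the paper elides: that the constraint $u^{\mathrm{crt}}\in D$ forces $\alpha$ into a compact subinterval of $(0,\infty)$, which is precisely what makes the third-derivative bound, and hence the constant $K$, uniform over the admissible $(\alpha,u)$.
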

\begin{proof}
The first two parts follow from the direct computation of $h'_\alpha$ resulting in
$$
h'_\alpha(u)=\frac{1+\alpha}{u(1-u)}\left(u-\frac{\alpha}{1+\alpha}\right).
$$
The last part follows from Taylor's theorem.
\end{proof}

\begin{proof}[Proof of Theorem \ref{steeptheo1}] Let
$$
I^{(N)}(\bu):=R_k(\bu)\prod_{i=1}^k \exp\left(N h_{\lambda(N)_i/N}(u_i)-N h_{\lambda(N)_i/N}(\tilde \chi_i+p)\right)f(u_i),
$$
so the left-hand side of \eqref{steeptheoeq1} is
\begin{equation}\label{proof1eq1}
N^{\sum_{\alpha>\frac{p}{1-p}}\binom{m(\alpha)+1}{2}}\oint_{\calC}\frac{du_1}{2\pi \i}\cdots\int_{\calC}\frac{du_k}{2\pi \i}I^{(N)}(\bu).
\end{equation}
Our proof is structured as follows: In Step 1 we show that the limit in \eqref{steeptheoeq1} does not depend on $\alpha_i<\frac{p}{1-p}$, so we can assume that $\alpha_i>\frac{p}{1-p}$ for all $i$. In Steps 2-3 we show that, up to an exponentially decaying error, the limit \eqref{steeptheoeq1} is given by
$$
N^{\sum_{\alpha>\frac{p}{1-p}}\binom{m(\alpha)+1}{2}}\int_{\chi_1+p-\i\varepsilon}^{\chi_1+p+\i\varepsilon}\frac{du_1}{2\pi \i}\cdots\int^{\chi_k+p+\i\varepsilon}_{\chi_k+p-\i\varepsilon}\frac{du_k}{2\pi \i}I^{(N)}(\bu),
$$
in other words, the only asymptotically meaningful part comes from the neighborhood of the critical points $\chi_i+p$ of $h_{\alpha_i}(u_i)$. In Step 4 we perform a change of variables which allows to take the limit as $N\to\infty$.

The uniform convergence part of the theorem will follow from the fact that all convergences and bounds in the proof will be uniform over all valid choices of $\lambda(N)_i$.

\emph{Step 1:} Our first goal is to get rid of all $\alpha_i\in (0, \frac{p}{1-p})$. We do it by induction: assume that $\alpha_k<\frac{p}{1-p}$ and, equivalently, $\chi_k<0$. Deform the contours in \eqref{int1} in the following way: if $\alpha_i>\frac{p}{1-p}$ let $C_i$ be $C_{\alpha_i}$, if $\alpha_i<\frac{p}{1-p}$ and $i\neq k$ set $C_i$ to be the circle $|z|=p+\delta$ for a small $\delta>0$ and  $C_k=C_{\alpha_k}$. Then
\begin{multline}\label{rescancel1}
\oint_{\calC}\frac{du_1}{2\pi \i}\cdots\int_{\calC}\frac{du_k}{2\pi \i}I^{(N)}(\bu)\\
=F+\oint_{{C}_1}\frac{du_1}{2\pi \i}\cdots\oint_{{C}_k}\frac{du_k}{2\pi \i}R_k(\bu)\prod_{i=1}^k\exp\left(N h_{\lambda{(N)}_i/N}(u_i)-N h_{\lambda{(N)}_i/N}(\tilde \chi_i+p)\right) f(u_i),
\end{multline}
where $F$ is the term coming from moving the contour of $u_k$ through the pole of $R_k(\bu)$ at $u_k=p$. Note that $R_k(\bu)$ and $\Phi^{\calA,\varnothing}(u_i)$ are uniformly bounded for $u_i\in C_i$ and $i=1,\dots, k$. By Lemma \ref{descent-lem1}
\begin{align*}
\left|\exp\left(h_{\lambda{(N)}_i/N}(u_i)-h_{\lambda{(N)}_i/N}(\tilde \chi_i+p)\right)\right|&\leq 1&\quad &\text{if}\ \alpha_i>\frac{p}{1-p};\\
\left|\exp\left(h_{\lambda{(N)}_i/N}(u_i)-h_{\lambda{(N)}_i/N}(\tilde \chi_i+p)\right)\right|&\leq \exp\left(h_{\lambda{(N)}_i/N}(p+\delta)-h_{\lambda{(N)}_i/N}(p)\right)&\quad &\text{if}\ \alpha_i<\frac{p}{1-p}, i\neq k;\\
\left|\exp\left(h_{\lambda{(N)}_k/N}(u_k)-h_{\lambda{(N)}_k/N}(\tilde \chi_k+p)\right)\right|&\leq \exp\left(h_{\lambda{(N)}_k/N}(r_k)-h_{\lambda{(N)}_k/N}(p)\right).&\quad &\\
\end{align*}
Note that $h_{\lambda{(N)}_i/N}(u)$ converges uniformly to $h_{\alpha_i}(u)$ on any compact avoiding $0$ and $1$ since $h_{\lambda{(N)}_i/N}(u)-h_{\alpha_i}(u)=(\alpha_i-\frac{\lambda{(N)_i}}{N})\ln(1-z)$. So
$$
h_{\lambda{(N)}_i/N}(p+\delta)-h_{\lambda{(N)}_i/N}(p)\to h_{\alpha_i}(p+\delta)-h_{\alpha_i}(p),
$$
$$
h_{\lambda{(N)}_k/N}(\chi_k+p)-h_{\lambda{(N)}_k/N}(p)\to h_{\alpha_k}(\chi_k+p)-h_{\alpha_k}(p).
$$
By second part of Lemma \ref{Taylor1} $h_{\alpha_k}(\chi_k+p)-h_{\alpha_k}(p)<0$, and picking $\delta>0$ small enough we can achieve
$$
h_{\alpha_k}(\chi_k+p)-h_{\alpha_k}(p)+\sum_{i:\chi_i<0, i<k} h_{\alpha_i}(p+\delta)-h_{\alpha_i}(p)<0.
$$
Hence, for sufficiently large $n$ and small enough $\delta>0$ we can find $d>0$ such that
$$
\left|\exp\left(N h_{\lambda{(N)}_i/N}(u_i)-N h_{\lambda{(N)}_i/N}(\tilde \chi_i+p)\right)\right|<e^{-dN}.
$$
So, in the right-hand side of \eqref{rescancel1} the integral over $C_1,\dots, C_k$ decays exponentially and the only non-vanishing summand is $F$. Computing the residue at $u_k=p$ and using
$$
f(p)=1,\qquad  \qquad \restr{(u_k-p)R_k(\bu)}{u_k=p}=R_{k-1}(u_1, \dots, u_{k-1}),
$$
we get
$$
F=\oint_{C_1}\frac{du_1}{2\pi \i}\cdots\oint_{{C}_{k-1}}\frac{du_{k-1}}{2\pi \i}R_{k-1}(u_1,\dots, u_{k-1})\prod_{i=1}^{k-1}\exp\left(N h_{\lambda{(N)}_i/N}(u_i)-N h_{\lambda{(N)}_i/N}(\tilde \chi_i+p)\right) f(u_i),
$$
which is exactly the expression from the theorem for $\tilde\lambda{(N)}=(\lambda{(N)}_1,\dots,\lambda{(N)}_{k-1})$. So, by induction on $k$, we can remove all rows with $\alpha_i<\frac{p}{1-p}$.

\emph{Step 2:} From now on we can assume that $\alpha_1, \dots, \alpha_k>\frac{p}{1-p}$. Then $\chi_1+p, \dots, \chi_k+p\in (p,1)$ and we can deform the contours in \eqref{int1} to $C_{\alpha_1}, \dots, C_{\alpha_n}$ respectively without picking up any residues. Let $C^\varepsilon_\alpha$ be the arc of $C_\alpha$ contained in the $\varepsilon$-neighborhood of $\frac{\alpha}{1+\alpha}$. Our next goal is to show that for sufficiently small $\varepsilon>0$ we can find $d_1, K_1>0$ such that
\begin{equation}
\label{concentrate1}
\left|\oint_{\mathcal{C}_{\alpha_1}}\frac{du_1}{2\pi \i}\cdots\oint_{\mathcal{C}_{\alpha_k}}\frac{du_k}{2\pi \i}I^{(N)}(\bu)-\int_{\mathcal{C}^{\varepsilon}_{\alpha_1}}\frac{du_1}{2\pi \i}\cdots\int_{\mathcal{C}^{\varepsilon}_{\alpha_k}}\frac{du_k}{2\pi \i}I^{(N)}(\bu)\right|<K_1e^{-Nd_1}.
\end{equation}
Note that $R_k(\bu)$ and $f(u_i)$ can be uniformly bounded when $u_i\in C_{\alpha_i}$, so we only need to bound
$$
\exp\left(\sum_{i=1}^N h_{\lambda{(N)}_i/N}(u_i)- h_{\lambda{(N)}_i/N}(\chi_i+p)\right).
$$
Let $u^{\varepsilon}_{i}, u^{-\varepsilon}_{i}$ denote the endpoints of $C^{\varepsilon}_{\alpha_i}$ with the positive and negative imaginary part respectively. Consider the minimum $d=\min_i\Re[h_{\alpha_i}(\chi_i+p)-h_{\alpha_i}(u^{\varepsilon}_i)]$. By Lemma \ref{descent-lem1} we have $d<0$. Then for $u\in C_{\alpha_i}\backslash C_{\alpha_i}^{\varepsilon}$ we have
$$
\Re\left[h_{\alpha_i}(u)-h_{\alpha_i}(\chi_i+p)\right]\leq -d.
$$
At the same time, $h_{\lambda{(N)}_i/N}(u)-h_{\lambda{(N)}_i/N}(\chi_i+p)$ converges uniformly to $h_{\alpha_i}(u)-h_{\alpha_i}(\chi_i+p)$ for $u\in C_{\alpha_i}$.  So for sufficiently large $N$ we have
$$
\left|(h_{\lambda{(N)}_i/N}(u)-h_{\lambda{(N)}_i/N}(\chi_i+p)) - (h_{\alpha_i}(u)-h_{\alpha_i}(\chi_i+p))\right|<d/2
$$
uniformly for all $i$ and $u\in C_{\alpha_i}$. In particular, for $u\in C_{\alpha_i}\backslash C_{\alpha_i}^{\varepsilon}$
$$
\Re\left[h_{\lambda{(N)}_i/N}(u)-h_{\lambda{(N)}_i/N}(\chi_i+p))\right]<\Re\left[h_{\alpha_i}(u)-h_{\alpha_i}(\chi_i+p))\right]+\frac{d}2\leq -\frac d2.
$$
Finally, for any $u\in C_{\alpha_i}$ we have $\Re\left[h_{\lambda{(N)}_i/N}(u)-h_{\lambda{(N)}_i/N}(\chi_i+p))\right]\leq 0$ by Lemma  \ref{descent-lem1}, so
$$
\exp\left(\sum_{i=1}^k N h_{\lambda{(N)}_i/N}(u_i)-N h_{\lambda{(N)}_i/N}(\chi_i+p)\right)<\exp\left(-\frac{Nd}2\right)
$$
as long as $\bu\in \mathcal{C}_{\alpha_1}\times\dots\times \mathcal{C}_{\alpha_k}\backslash \mathcal{C}^{\varepsilon}_{\alpha_1}\times\dots\times \mathcal{C}^{\varepsilon}_{\alpha_k}$. This implies \eqref{concentrate1}.

\emph{Step 3:} Now we want to show that, with sufficiently small $\varepsilon>0$, we have
\begin{equation}
\label{straighten1}
\left|\int_{\mathcal{C}^{\varepsilon}_{\alpha_1}}\frac{du_1}{2\pi \i}\cdots\int_{\mathcal{C}^{\varepsilon}_{\alpha_k}}\frac{du_k}{2\pi \i}I^{(N)}(\bu)-\int_{\chi_1+p-\i\varepsilon}^{\chi_1+p+\i\varepsilon}\frac{dz_1}{2\pi \i}\cdots\int^{\chi_k+p+\i\varepsilon}_{\chi_k+p-\i\varepsilon}\frac{dz_k}{2\pi \i}I^{(N)}(\bu)\right|<K_2e^{-Nd_2},
\end{equation}
where the second integral is taken over the corresponding vertical line segments. To prove it note that $I^{(N)}(\bu)$ is analytic on $|u_i-\chi_i-p|<\varepsilon$ for sufficiently small $\varepsilon$. So the segment $[\chi_i+p-\i\varepsilon, \chi_i+p+\i\varepsilon]$ can be deformed to the contour $D_i=[\chi_i+p-\i\varepsilon, u_i^{-\varepsilon}]\cup C_{\alpha_i}^\varepsilon\cup [u_i^{\varepsilon},\chi_i+p+\i\varepsilon]$ without changing the integral. Then, similarly to Step 2, it is enough to show exponential decay when $u_i\in [\chi_i+p\pm\i\varepsilon, u_i^{\pm\varepsilon}]$ for at at least one $i$. Since $R_k(\bu)$ and $f(u_i)$ are uniformly bounded for all considered values of $\bu$ and $\left|\exp\left(N h_{\lambda{(N)}_i/N}(u_i)-N h_{\lambda{(N)}_i/N}(\chi_i+p)\right)\right|\leq 1$ when $u_i\in C_{\alpha_i}$, it is enough to show exponential decay for $\exp\left(N h_{\lambda{(N)}_i/N}(u_i)-N h_{\lambda{(N)}_i/N}(\chi_i+p)\right)$ when $u_i\in [\chi_i+p\pm\i\varepsilon, u_i^{\pm\varepsilon}]$.

Reducing $\varepsilon$ if necessary, we apply Lemma \ref{Taylor1} to find $K$ such that \eqref{Taylor1eq} holds uniformly in $\varepsilon$-neighborhood of $\chi_i+p$ for every $i$. Note that $\mathrm{Arg}(u^{\pm\varepsilon}_i-\chi_i-p)\to\pm\pi/2$ as $\varepsilon\to 0$, so for sufficiently small $\varepsilon$ we have
$$
|\Re[u^{\pm\varepsilon}_i-\chi_i-p]|<\varepsilon/10, \qquad |\Im[u^{\pm\varepsilon}_i-\chi_i-p]|>\frac{9\varepsilon}{10}.
$$
Then all points $u\in[u^{\pm\varepsilon}_i, \chi_i+p\pm\varepsilon\i]$ satisfy
$$
\Re[u]\in(\chi_i+p-\varepsilon/5, \chi_i+p+\varepsilon/5),\qquad |\Im[u]|>\frac{9\varepsilon}{10}.
$$
From \eqref{Taylor1eq} we have
$$
\Re\left[h_{\alpha_i}(u)- h_{\alpha_i}(\chi_i+p)\right]<\Re\left[\frac{(1+\alpha_i)^3}{2\alpha_i}(u-\chi_i-p)^2+K(u-\chi_i-p)^3\right].
$$
From our assumptions on $u$ we get
$$
\Re (u-\chi_i-p)^2<-\frac{81}{100}\varepsilon^2+\frac{1}{25}\varepsilon^2<-\varepsilon^2/2,
$$
$$
\Re (u-\chi_i-p)^3\leq \varepsilon^3.
$$
Combining these bounds and using the uniform convergence $h_{\lambda{(N)}_i/N}\rightrightarrows  h_{\alpha_i}$, we can decrease $\varepsilon$ further to get
 $$
\Re\left[h_{\lambda{(N)}_i/N}(u)- h_{\lambda{(N)}_i/N}(\chi_i+p)\right]<-d_2
 $$
for some $d_2>0$ and sufficiently large $N$. This implies \eqref{straighten1}

\emph{Step 4:} From now fix $\varepsilon>0$ sufficiently small so that Steps 2,3 hold, as well as Lemma \ref{Taylor1}. We perform the change of variables $u_i=\chi_i+p+\frac{v_i}{\sqrt{N}}$ in the integral
$$
\int_{\chi_1+p-\i\varepsilon}^{\chi_1+p+\i\varepsilon}\frac{du_1}{2\pi \i}\cdots\int^{\chi_k+p+\i\varepsilon}_{\chi_k+p-\i\varepsilon}\frac{du_k}{2\pi \i}\exp\left(N\sum_{i=1}^n  h_{\lambda{(N)}_i/N}(u_i)- h_{\lambda{(N)}_i/N}(\chi_i+p)\right)R_k(\bu)\prod_{i=1}^k f(u_i).
$$
Let us see what happens with each term in the integral. First, $f(u_i)=\prod_{j=1}^m\frac{1-pz_j}{1-p}\frac{1-\chi_i-p-\sqrt{N}^{-1}v_i}{1-(\chi_i+p)z_j-\sqrt{N}^{-1}v_iz_j}$ converges to $f(\chi_i+p)$. Moreover, this convergence is uniform when $v_a$ are allowed to vary over compact subsets of $\i\R$. For $R_k(\bu)$ clearly
$$
\prod_{i=1}^k\frac{1}{(\chi_i+\sqrt{N}^{-1}v_i)^{k-i+1}}\to \prod_{i=1}^k\frac{1}{\chi_i^{k-i+1}},
$$
and
$$
\sqrt{N}^{\sum_{\alpha}\binom{m(\alpha)}{2}} \prod_{i<j}(u_a-u_b) \to \prod_{\substack{i<j:\\ \alpha_i=\alpha_j}}(v_a-v_b)\prod_{\substack{i<j:\\\alpha_i>\alpha_j}}(\chi_i-\chi_j).
$$
Both convergences are again uniform when $v_a$ are allowed to vary over compact subsets of $\i\R$.

Finally, using Taylor expansions of $\ln(u), \ln(1-u)$ in the $\varepsilon$-neighborhood of $\chi_i+p$ we have
\begin{multline*}
h_{\lambda{(N)}_i/N}(u_i)- h_{\lambda{(N)}_i/N}(\chi_i+p)=\ln(1-r_i)-\ln(1-u_i)+\frac{\lambda{(N)}_i}{N}\left(\ln(r_i)-\ln(u_i)\right)\\
=\left(\frac{1}{1-r_i}-\frac{\lambda(N)_i}{Nr_i}\right)(u_i-r_i)+\left(\frac{1}{2(1-r_i)^2}+\frac{\lambda(N)_i}{2Nr_i^2}\right)(u_i-r_i)^2+\Omega(u_i),
\end{multline*}
where $|\Omega(u_i)|<K|u_i-\chi_i-p|^3$ and we use $r_i=\chi_i+p$ to make expressions shorter. By direct computations
$$
N\left(\frac{1}{1-r_i}-\frac{\lambda({N})_i}{Nr_i}\right)(u_i-r_i)=\frac{1+\alpha_i}{\alpha_i}\frac{\alpha_i N-\lambda{(N)}_i}{\sqrt{N}}v_i\to-\sqrt{\frac{(1+\alpha_i)\alpha_i}{(\chi_i+p)^2}}x_iv_i,
$$
$$
N\left(\frac{1}{2(1-r_i)^2}+\frac{\lambda(N)_i}{2Nr_i^2}\right)(u_i-r_i)^2 =\left(\frac{1}{(1-r_i)^2}+\frac{\lambda{(N)}_i}{Nr_i^2}\right)\frac{v_i^2}{2}\to \frac{(1+\alpha_i)\alpha_i}{(\chi_i+p)^2}\frac{v_i^2}{2},
$$
$$
N|\Omega(u_i)|<\frac{K|v_i|}{\sqrt{N}}\to 0,
$$
with all convergences being uniform when $v_i$ are on a compact. Also note that when $u_i=\chi_i+p+\sqrt{N}^{-1}v_i$ we have
\begin{multline*}
N\Re\left[h_{\lambda{(N)}_i/N}(u_i)- h_{\lambda{(N)}_i/N}(\chi_i+p)\right]=-\left(\frac{1}{2(1-r_i)^2}+\frac{\lambda_i^{(N)}}{2Nr_i^2}\right)|v_i|^2+N\Re\left[\Omega(u_i)\right]\\
<-2d_3 |v_i|^2+N\Re\left[\Omega(u_i)\right]
\end{multline*}
for some $d_3>0$. Reducing $\varepsilon$ we can assume $K\varepsilon<d_3$, so $N|\Omega(u_i)|<d_3|v_i|^2$ for $v_i\in [-\i\varepsilon\sqrt{N},\i\varepsilon\sqrt{N}]$ and
$$
N\Re\left[h_{\lambda{(N)}_i/N}(u_i)- h_{\lambda{(N)}_i/N}(\chi_i+p)\right]<-d_3|v_i|^2.
$$
Summing it up, after the change of variables we get
$$
\sqrt{N}^{d}\int_{\chi_1+p-\i\varepsilon}^{\chi_1+p+\i\varepsilon}\frac{du_1}{2\pi \i}\cdots\int^{\chi_k+p+\i\varepsilon}_{\chi_k+p-\i\varepsilon}\frac{du_k}{2\pi \i}I^{(N)}(\bu)=\int_{-\i\sqrt{N}\varepsilon}^{\i\sqrt{N}\varepsilon}\frac{dv_1}{2\pi \i}\cdots\int^{\i\sqrt{N}\varepsilon}_{-\i\sqrt{N}\varepsilon}\frac{dv_k}{2\pi \i}\sqrt{N}^{d-k}I^{(N)}(\bu),
$$
where $\sqrt{N}^{d-k}I^{(N)}(\bu)$ is uniformly integrable on $[-\i\sqrt{N}\varepsilon,\i\sqrt{N}\varepsilon]^k$ and $\sqrt{N}^{d-k}I^{(N)}(\bu)$ converges uniformly when $v_a$ are on compact subsets of $\i\R$. This implies the convergence of the integral above to
$$
\prod_{i=1}^k\frac{f(\chi_i+p)}{\chi_i^{k-i+1}}\prod_{\substack{i<j:\\\chi_i>\chi_j}}(\chi_i-\chi_j)\int_{\i\R}\frac{dv_1}{2\pi \i}\cdots\int_{\i\R}\frac{dv_k}{2\pi \i}\exp\left(\sum_{i=1}^k-\sqrt{\frac{\alpha_i(1+\alpha_i)}{(\chi_i+p)^2}}x_iv_i+\frac{(1+\alpha_i)\alpha_i}{2(\chi_i+p)^2}v_i^2\right)\prod_{\substack{i<j:\\ \alpha_i=\alpha_j}}(v_i-v_j).
$$
The last integral factors into independent parts corresponding to each value of $\alpha_i$ and using Lemma \ref{gaussian} below we get the desired limit.
\end{proof}

\begin{lem}\label{gaussian} Let $\sigma>0$, $x_1, \dots, x_k\in\R$. Then
\begin{equation}\label{gaussianeq}
\int_{\i\R}\frac{dv_1}{2\pi\i}\dots\int_{\i\R}\frac{dv_k}{2\pi\i}\exp\left(\sum_{i=1}^k -\sigma x_iv_i+\frac{\sigma^2 v_i^2}{2}\right)\prod_{i<j}(v_i-v_j)=(2\pi)^{-\frac k2}\sigma^{-k(k+1)}\prod_{i=1}^ke^{-\frac{x_i^2}{2}}\prod_{i<j}(x_i-x_j).
\end{equation}
\end{lem}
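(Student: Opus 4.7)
The plan is to reduce the $k$-dimensional integral to a product of one-dimensional integrals by expanding the Vandermonde as a determinant, and then evaluating each one-dimensional integral via a generating-function computation. First I would rescale $v_i = u_i/\sigma$: this absorbs $\sigma$ from both the exponent and the Vandermonde, and produces explicit powers of $\sigma$ from the change of measure and from $\prod_{i<j}(v_i-v_j) = \sigma^{-\binom{k}{2}}\prod_{i<j}(u_i-u_j)$. The problem reduces to computing
$$
J_k(\mathbf x) := \int_{\i\R}\frac{du_1}{2\pi\i}\cdots\int_{\i\R}\frac{du_k}{2\pi\i}\exp\Bigl(\sum_{i=1}^k\bigl(-x_iu_i + \tfrac{u_i^2}{2}\bigr)\Bigr)\prod_{i<j}(u_i-u_j).
$$

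Next I would write $\prod_{i<j}(u_i - u_j) = (-1)^{\binom k 2}\det[u_i^{j-1}]_{i,j=1}^k$, expand by Leibniz, and use Fubini to turn $J_k(\mathbf x)$ into a signed sum of products of the one-dimensional integrals
$$
I_m(x) := \int_{\i\R}\frac{du}{2\pi\i}\,e^{-xu+u^2/2}u^m.
$$
To evaluate $I_m(x)$ I would use the generating-function trick: summing against $t^m/m!$ gives
$$
\sum_{m\ge 0}\frac{t^m}{m!}I_m(x) = \int_{\i\R}\frac{du}{2\pi\i}e^{-(x-t)u+u^2/2} = \frac{1}{\sqrt{2\pi}}e^{-(x-t)^2/2},
$$
where the last equality follows by shifting the contour through $u\mapsto u+(x-t)$ (justified because $\Re(u^2/2)\to -\infty$ on vertical lines) and then computing the resulting Gaussian integral. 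Matching against the standard identity $e^{tx-t^2/2} = \sum_{m}\frac{t^m}{m!}He_m(x)$ for the probabilist's Hermite polynomials yields
$$
I_m(x) = \frac{1}{\sqrt{2\pi}}\,e^{-x^2/2}\,He_m(x).
$$

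Reassembling the pieces, $J_k(\mathbf x)$ equals $(2\pi)^{-k/2}\prod_i e^{-x_i^2/2}$ times $(-1)^{\binom k 2}\det[He_{j-1}(x_i)]_{i,j=1}^k$. Because $He_{j-1}$ is monic of degree $j-1$, column operations reduce this determinant to $\det[x_i^{j-1}]_{i,j} = (-1)^{\binom k 2}\prod_{i<j}(x_i-x_j)$, so the two sign factors cancel. Reintroducing the prefactors of $\sigma$ from the initial rescaling yields the claimed identity \eqref{gaussianeq}.

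The content of the proof is essentially the identification of the ``Hermite determinant'' $\det[He_{j-1}(x_i)]$ with the Vandermonde, together with the computation of the 1D moments $I_m(x)$; both steps are standard. The only subtleties are the contour-shift argument used to evaluate $\sum_m(t^m/m!)I_m(x)$ and the interchange of sum and integral in the generating-function step, but both are immediate because the Gaussian factor $e^{u^2/2}$ provides super-exponential decay along $\i\R$. There is no real obstacle; the proof is a bookkeeping exercise once the generating-function formula for $I_m(x)$ is in hand.
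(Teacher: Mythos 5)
Your proof is correct and follows essentially the same route as the paper's: both rescale by $\sigma$, write the Vandermonde as a determinant so the $k$-fold integral factorizes into one-dimensional Gaussian moment integrals, and then collapse the resulting determinant back to a Vandermonde in $\mathbf x$ via column operations, using the fact that each entry is a monic polynomial of the appropriate degree in $x_i$. The only stylistic difference is that you evaluate the one-dimensional moments $I_m(x)$ by a Hermite generating-function computation, while the paper instead shifts each contour $v_i\mapsto \i t + x_i$ and reads off the monic polynomial structure of $\int_{\R}e^{-t^2/2}(\i t+x_i)^{k-j}\,\frac{dt}{2\pi}$ directly, without ever naming the polynomials; these are two presentations of the same idea. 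One caveat about your last sentence: you assert that reinstating the $\sigma$-prefactors reproduces \eqref{gaussianeq}, but the substitution $v_i=u_i/\sigma$ produces $\sigma^{-k}$ from the measure and $\sigma^{-\binom{k}{2}}$ from the Vandermonde, i.e.\ $\sigma^{-k(k+1)/2}$ in total, whereas \eqref{gaussianeq} states $\sigma^{-k(k+1)}$. A $k=1$ sanity check confirms the exponent should indeed be $-k(k+1)/2$, and the paper's own rescaling step yields the same power you do, so this appears to be a typo in the displayed statement rather than a gap in either argument --- but you should not present a conclusion as matching the stated identity when your own bookkeeping gives a different power.
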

\begin{proof}
After rescaling $v_i\mapsto \sigma^{-1}v_i$, the left-hand side of \eqref{gaussianeq} is equal
$$
\sigma^{-k(k+1)}\det \left[\int_{\i \R}e^{-x_iv_i+\frac{v_i^2}{2}}v_i^{k-j}\frac{dv_i}{2\pi\i}\right]_{1\leq i,j\leq k}.
$$
Due to the exponential decay, we can shift the contour of $v_i$ to $\i\R+ x_i$ and after the change of variables $v_i=\i t+x_i$ we get
$$
\sigma^{-k(k+1)}\det \left[\int_{\R}e^{ -\frac{x_i^2}{2} - \frac{t^2}{2}}(\i t+x_i)^{k-j}\frac{dt}{2\pi}\right]_{1\leq i,j\leq k}.
$$
Note that the $(i,j)$ entry in the matrix above is a polynomial of degree $k-j$ in $x_i$ and using column operations we can cancel out the lower degree terms, getting
$$
\sigma^{-k(k+1)}\prod_{i=1}^ke^{-\frac{x_i^2}{2}}\det \left[\int_{\R}e^{- \frac{t^2}{2}}x_i^{k-j}\frac{dt}{2\pi}\right]_{1\leq i,j\leq k}.
$$
Using the expression for Vandermonde determinant again yield the right-hand side of \eqref{gaussianeq}.
\end{proof}


 \subsection{$G^{(0,-p)}_\lambda(1^N)$ with $k$ columns} Now we consider the functions $G_{\lambda}^{(0,-p)}(1^N)$ as $N\to\infty$ when the number of columns of $\lambda$ is finite. This computation is similar to the case with finitely many rows.

Again, fix $k\in\mathbb Z_{>0}$ and a sequence $\beta_1, \beta_2, \dots, \beta_k$ such that
$$
1>\beta_1\geq \beta_2\geq \dots\geq \beta_k>0.
$$
We consider partition sequences $\lambda(N)$ such that $\lambda(N)_1\leq k$ and
$$
N^{-\frac12}\frac{\lambda(N)'_i-\beta_i N}{\sqrt{\beta_i(1-\beta_i)}}\to y_i\,
$$
where $\by=(y_1, \dots, y_k)\in \R^k$. 
Let $\chi_i=\frac{\beta_i-p}{1-\beta_i}$ and $\tilde \chi_i=\max(0, \chi_i)$. We again assume that $\chi_i\neq 0$ for all $i$ and use $\tilde k$ to denote the number of $i$ such that $\chi_i>0$ (equivalent to $\beta_i>p$), $m(\beta)$ to denote the number of times $\beta$ appears in $(\beta_1, \dots, \beta_k)$ and $\bz=(z_1, \dots, z_n)\in\C^n$ be a fixed vector satisfying $|z_i|<1$. Set
$$
d=\tilde k+\sum_{\beta>p}\binom{m(\beta)}{2}=\sum_{\beta>p}\binom{m(\beta)+1}{2}.
$$

\begin{theo}\label{steeptheo2} With the notation above:

(1) The limit
\begin{equation}\label{indepeqlim2}
\lim_{N\to \infty}G^{(0,-p)}_{\lambda(N)}(\bz, 1^{N-n})N^{\frac d2}\prod_{i=1}^k\frac{(\tilde \chi_i+p)^{\lambda(N)'_i}}{(1+\tilde \chi_i)^N}
\end{equation}
does not depend on $\lambda(N)'_i$ with $\beta_i<p$. When $\beta_i<p$ for all $i$ the limit above is $1$.

(2) When $\beta_1,\dots, \beta_k>p$:
\begin{equation}\label{steeptheoeq2}
G^{(0,-p)}_{\lambda(N)}(\bz, 1^{N-n})N^{\frac d2}\prod_{i=1}^k\frac{(\tilde \chi_i+p)^{\lambda(N)'_i}}{(1+\tilde \chi_i)^N}\to
Z\ (2\pi)^{-\frac{k}{2}}\prod_{i=1}^{k}\frac{e^{-\frac{y_i^2}{2}}}{\sqrt{\beta_i(1-\beta_i)}}\prod_{\substack{i<j\\\beta_i=\beta_j}}\frac{y_i-y_j}{\sqrt{\beta_i(1-\beta_i)}}\prod_{i=1}^n\Phi^{\varnothing, \calB}(z_i),
\end{equation}
where
$$
 Z=\frac{\prod_{\chi} (\chi+p)^{\binom{m(\chi)+1}{2}}\prod_{\substack {i<j:\\ \chi_i>\chi_j}}(\chi_i-\chi_j) }{\prod_{i=1}^k\chi_i^{k-i+1}},\qquad \Phi^{\varnothing, \calB}(z)=\prod_{i=1}^k\left(1+\frac{\beta_i-p}{1-p}(z-1)\right)  .
$$
Moreover, this convergence is uniform over sequences $(\lambda(N))_N$ such that $\by\in C$ for a compact subset $C\subset\R^k$ and $\lambda(N)'_i-N\beta_i-\sqrt{N\beta_i(1-\beta_i)}y_i$ are bounded by a uniform constant $M$.

\end{theo}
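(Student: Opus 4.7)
The plan is to mirror the proof of Theorem~\ref{steeptheo1}, now based on the dual integral representation of Corollary~\ref{Gint2}, in which the number of contour integrations matches the number of columns $k$. Starting from
\[
G^{(0,-p)}_{\lambda(N)}(\bz,1^{N-n}) = (-1)^{-|\lambda(N)|}\oint_{\mathcal{C}}\cdots\oint_{\mathcal{C}} \prod_{i=1}^k \frac{du_i}{2\pi \i u_i}\prod_{i<j}\frac{u_i-u_j}{u_i}\prod_{i=1}^k \frac{(1-u_i)^{N-n}\prod_{j=1}^n(1-u_i z_j)}{(u_i - p)^{\lambda(N)'_i}},
\]
I identify the exponentially dominant factor as $\exp\bigl(N\,\tilde h_{\beta_i}(u_i)+O(\log N)\bigr)$ with $\tilde h_\beta(u):=\log(1-u)-\beta\log(u-p)$. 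Direct differentiation gives the unique critical point $u^*_\beta = (p-\beta)/(1-\beta) = -\chi$, which sits in $(0,p)$ when $\chi<0$ and on the negative real axis when $\chi>0$.

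For part~(1), I would argue by induction on $k$. Fix an index $i$ with $\beta_i<p$, so $\tilde\chi_i=0$ and the normalization for column $i$ collapses to $p^{\lambda(N)'_i}$. The $u_i$-integrand has a simple pole at $u_i=0$ from the measure $\tfrac{du_i}{2\pi\i u_i}$, and its residue there evaluates to $(-p)^{-\lambda(N)'_i}$ times the same integral in $k-1$ variables with column $i$ deleted. Combined with the global sign $(-1)^{-\lambda(N)'_i}$ coming from Corollary~\ref{Gint2} and with the normalization $p^{\lambda(N)'_i}$, this factor telescopes to $1$, independent of $\lambda(N)'_i$. The complementary contribution, the residue at $u_i=p$ of order $\lambda(N)'_i$, has magnitude (after normalization) $\binom{N}{\lambda(N)'_i}(1-p)^{N-\lambda(N)'_i}p^{\lambda(N)'_i}\sim \exp(-N\,H(\beta_i\|p))$ by Stirling, where $H$ is the Kullback--Leibler divergence; this is exponentially small since $\beta_i\ne p$. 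Iterating removes all columns with $\beta_i<p$ and yields part~(1).

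For part~(2), assume $\chi_1,\dots,\chi_k>0$. The key geometric step is to deform each $u_i$-contour to the circle $\{|u_i-p|=\chi_i+p\}$, which encircles $0$ and $p$ and passes through $u^*_i=-\chi_i$ at its leftmost point. On this circle $|u_i-p|$ is constant, so $\Re\tilde h_{\beta_i}$ varies only through $\log|1-u_i|$, and a short computation shows this attains its maximum at $u^*_i$, verifying the steep descent property. I would then execute the analogues of Steps~2--4 of Theorem~\ref{steeptheo1}: localize each integral to an $\varepsilon$-arc around $u^*_i$ (exponentially small remainder), straighten that arc to a vertical segment through $u^*_i$ by analyticity, and rescale $u_i=u^*_i+w_i/\sqrt N$. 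The saddle identity $\tilde h'_{\beta_i}(u^*_i)=0$ cancels the $O(\sqrt N)$ Taylor terms, leaving a Gaussian quadratic with coefficient $\tilde h''_{\beta_i}(u^*_i) = (1-\beta_i)^3/[\beta_i(1-p)^2]$ and a linear drift proportional to $y_i$. Careful bookkeeping of the prefactors --- the normalization cancels the integrand's magnitude at the saddle; $\prod_j(1-u^*_i z_j)=\prod_j(1+\chi_i z_j)$ reassembles, over $i$, into $\prod_{j=1}^n\Phi^{\varnothing,\calB}(z_j)$; the Vandermonde factor $\prod_{i<j}(u_i-u_j)/u_i$ evaluates at the saddle points to give, after simplification, the factor $Z$ from strictly distinct $\chi$'s --- together with the Gaussian integration via Lemma~\ref{gaussian} produces the right-hand side of \eqref{steeptheoeq2}.

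The main technical obstacle, as in Theorem~\ref{steeptheo1}, is the treatment of coinciding values $\beta_i=\beta_j$: the saddles collide, the contours must be handled together, and the factor $\prod_{i<j,\beta_i=\beta_j}(y_i-y_j)$ emerges only after recognizing that within a block of equal $\beta$'s the Vandermonde $(u_i-u_j)$ rescales under $u_a\mapsto u^*_a+w_a/\sqrt N$ to $(w_i-w_j)/\sqrt N$, supplying the extra powers of $N$ absorbed into the exponent $d/2$ and then applying Lemma~\ref{gaussian} block by block among indices of equal $\beta$. Uniformity over $\by$ in a compact set and over bounded fluctuations $|\lambda(N)'_i-\beta_iN-\sqrt{N\beta_i(1-\beta_i)}y_i|<M$ follows because every estimate in the argument (exponential decay off the saddle, the Taylor remainder bound, and the Gaussian integrability) is uniform in these parameters.
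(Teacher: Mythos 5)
Your proposal follows essentially the same route as the paper's proof of Theorem~\ref{steeptheo2}: it starts from the column-type contour formula of Corollary~\ref{Gint2}, works with the same phase function $h_\beta(u)=\log(1-u)-\beta\log(p-u)$, identifies the same critical point $u^*=(p-\beta)/(1-\beta)=-\chi$, uses the same family of descent circles $C_\beta$ centered at $p$, and closes Part~(2) with the same localize--straighten--rescale procedure feeding into Lemma~\ref{gaussian}, block by block over equal $\beta$'s. Two places in Part~(1) would need tightening to match the paper's rigor: (i) your ``simple pole at $u_i=0$ from the measure'' is only correct for $i=k$ --- the factor $\prod_{j>i}(u_i-u_j)/u_i$ raises the pole order at $u_i=0$ to $k-i+1$ --- so the induction must peel off the \emph{last} column each time (which is automatic since the $\beta_i$'s are decreasing, but should be stated); and (ii) your Stirling estimate for the ``residue at $u_i=p$'' captures the correct exponential rate $H(\beta_i\,\|\,p)$ but neglects the Vandermonde coupling with the remaining $u_j$'s; the paper instead controls this contribution rigorously by deforming all contours to $C_{\beta_1}\times\dots\times C_{\beta_k}$ (with the trick of radius $p+\delta$ for intermediate columns with $\beta_i<p$, so as not to cross $u=0$) and invoking the steep-descent Lemma~\ref{descent-lem2} together with $h_{\beta_k}(-\chi_k)-h_{\beta_k}(0)<0$ from Lemma~\ref{Taylor2}.
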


This is proved by the steepest descent analysis of the integral expression from Corollary \ref{Gint2}. Fix the analytic branch of $\ln(u)$ on $\C\backslash\R_{\leq0}$ with real values on $\R_{>0}$. Let
$$
h_t(u)=\ln(1-u)-t\ln(p-u),\qquad f(u)=\prod_{i=1}^n \frac{1-z_i u}{1-u}
$$
$$
R_k(u_1, \dots, u_k)=\prod_{i<j}(u_i-u_j)\prod_{i=1}^k\frac{1}{u_i^{k-i+1}}.
$$
Then
\begin{multline}
\label{int2}
\prod_{i=1}^k\frac{(\tilde \chi_i+p)^{\lambda(N)'_i}}{(1+\tilde \chi_i)^N}G_{\lambda(N)}^{(0,-p)}(\bz, 1^{N-n})\\
=\oint_{\mathcal{C}}\frac{du_1}{2\pi \i}\cdots\oint_{\mathcal{C}}\frac{du_k}{2\pi \i}R_k(\bu)\prod_{i=1}^k\exp\left(N \left(h_{\lambda(N)'_i/N}(u_i)-h_{\lambda(N)'_i/N}(-\tilde \chi_i)\right)\right) f(u_i),
\end{multline}
where $\calC$ is a positively oriented simple contour encircling $0$ and $p$. Let $C_{\beta}$ denote the positively-oriented circle centered at $p$ of radius $\frac{(1-p)\beta}{1-\beta}$. Note that $C_\beta$ passes through $p-\frac{(1-p)\beta}{1-\beta}=\frac{p-\beta}{1-\beta}$, which is the critical point of $h_\beta(z)$.

\begin{lem} \label{descent-lem2} Let $\beta, \tilde\beta\in(0,1)$. Then $\Re\left[h_{\tilde\beta}\left(p-\frac{(1-p)\beta}{1-\beta}e^{i\theta}\right)\right]$ decreases for $\theta\in (0,\pi)$ and increases for $\theta\in(-\pi, 0)$, so $C_\beta$ is a steep descent contour for $\Re[h_{\tilde\beta}(z)]$.
\end{lem}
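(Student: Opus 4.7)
The plan is to mimic the proof of Lemma \ref{descent-lem1} by a direct one-variable calculation. Parametrize the contour as $u(\theta) = p - re^{\i\theta}$, where $r = \frac{(1-p)\beta}{1-\beta}$ is the radius of $C_\beta$. The key simplification is that $|p - u(\theta)| = r$ is constant on the contour, so the $-\tilde\beta \ln(p-u)$ term in $h_{\tilde\beta}$ contributes a constant to the real part and the parameter $\tilde\beta$ drops out entirely. Consequently,
$$
\Re\bigl[h_{\tilde\beta}(u(\theta))\bigr] = \tfrac12 \ln\bigl|1-u(\theta)\bigr|^2 - \tilde\beta \ln r.
$$

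Next I would expand
$$
\bigl|1-u(\theta)\bigr|^2 = \bigl|(1-p) + re^{\i\theta}\bigr|^2 = (1-p)^2 + 2(1-p)r\cos\theta + r^2,
$$
and differentiate in $\theta$ to obtain
$$
\frac{d}{d\theta}\Re\bigl[h_{\tilde\beta}(u(\theta))\bigr] = \frac{-(1-p)r\sin\theta}{(1-p)^2 + 2(1-p)r\cos\theta + r^2}.
$$
The denominator equals $|1-u(\theta)|^2$, which is positive unless $u(\theta) = 1$; since $r \ne 1-p$ in generic cases (and the only exceptional value $\beta = 1/2$ gives a single non-issue at $\theta = \pi$ where the integrand of Corollary \ref{Gint2} is still regular, having no singularity at $u=1$), the derivative has the same sign as $-\sin\theta$. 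This yields the asserted monotonicity on $(0,\pi)$ and $(-\pi, 0)$, and hence the steep descent property.

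There is no substantive obstacle here: the computation is fully elementary and parallels the proof of Lemma \ref{descent-lem1}, with the role of the circle $C_\alpha$ centered at $0$ replaced by the circle $C_\beta$ centered at $p$. The only minor point worth flagging in the writeup is the observation that constancy of $|p-u|$ on $C_\beta$ is precisely what makes the single-variable argument independent of the exponent $\tilde\beta$, mirroring how the analogous constancy of $|u|$ on $C_\alpha$ made Lemma \ref{descent-lem1} independent of the exponent $\beta$.
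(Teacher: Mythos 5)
Your proof is correct and takes the same route as the paper's: parametrize $C_\beta$ by $u(\theta)=p-re^{i\theta}$, note that $|p-u|\equiv r$ is constant so the $\tilde\beta$-dependent term of $h_{\tilde\beta}$ contributes nothing to $\frac{d}{d\theta}\Re\bigl[h_{\tilde\beta}\bigr]$, expand $|1-u(\theta)|^2$, and read off that the derivative has the sign of $-\sin\theta$. A minor remark: your $+2(1-p)r\cos\theta$ carries the correct sign (the paper's displayed intermediate expression has a spurious minus on the cosine term, which fortunately does not change the final sign conclusion), and your aside about $\beta=1/2$ and the regularity in Corollary \ref{Gint2} is superfluous, since $|1-u(\theta)|^2$ is strictly positive for all $\theta$ in the open intervals $(0,\pi)$ and $(-\pi,0)$ regardless of $\beta$.
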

\begin{proof}
Set $r=\frac{(1-p)\beta}{1-\beta}$. It is enough to check that the derivative $\frac{d}{d\theta}\Re\left[h_{\tilde\beta}(p-re^{i\theta})\right]$ has the same sign as $-\sin(\theta)$. This follows from the computation below:
$$
\frac{d}{d\theta}\Re[h_\beta(p-re^{i\theta})]=\frac{d}{d\theta}\frac12\ln \left|1+\frac{\beta e^{i\theta}}{1-\beta}\right|^2=\frac{d}{d\theta}\frac12\ln \left(1+\frac{\beta^2}{(1-\beta)^2}-\frac{2\beta}{1-\beta}\cos(\theta)\right)=\frac{-\sin(\theta)}{\frac{\beta}{1-\beta}+\frac{1-\beta}{\beta}-2\cos(\theta)}.
$$
\end{proof}

\begin{lem}\label{Taylor2} (1) Assume that $\beta\in (0,1)$. We have $h'_\beta(u^{crt})=0$ where $u^{crt}=\frac{p-\beta}{1-\beta}=p-\frac{(1-p)\beta}{1-\beta}$.

(2)  Assume that $\beta\in (0,1)$. Then $h_\beta(\frac{p-\beta}{1-\beta})$ is the unique minimum of $h_\beta(x)$ on $(-\infty,p)$.

(3) Let $D$ be a convex compact set avoiding $p,1$. Then there exists a constant $K>0$ such that for any $z\in D$ and $\beta\in (0,1)$ satisfying $u^{crt}=\frac{p-\beta}{1-\beta}\in D$ we have
\begin{equation}
\label{Taylor2eq}
\left|h_\beta(u)-h_\beta(u^{crt})-\frac{(1-\beta)^3}{2\beta(1-p)^2}(u-u^{crt})^2\right|<K(u-u^{crt})^3.
\end{equation}
\end{lem}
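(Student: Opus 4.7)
The plan is a direct computation, mirroring Lemma \ref{Taylor1}. Differentiating $h_\beta(u)=\ln(1-u)-\beta\ln(p-u)$ gives
$$h_\beta'(u)=-\frac{1}{1-u}+\frac{\beta}{p-u}=\frac{(1-\beta)u-(p-\beta)}{(1-u)(p-u)},$$
and solving $h_\beta'(u)=0$ immediately yields $u^{crt}=\frac{p-\beta}{1-\beta}$, proving (1). For (2), I would observe that $h_\beta$ is smooth on $(-\infty,p)$ and blows up to $+\infty$ at both ends: as $u\to p^-$ the term $-\beta\ln(p-u)$ dominates, and as $u\to-\infty$ the term $\ln(1-u)$ does. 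Since by (1) there is a unique critical point in $(-\infty,p)$, it must be the global minimum.

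For (3), I would invoke Taylor's theorem with integral remainder. Differentiating once more gives
$$h_\beta''(u)=-\frac{1}{(1-u)^2}+\frac{\beta}{(p-u)^2}, \qquad h_\beta'''(u)=-\frac{2}{(1-u)^3}+\frac{2\beta}{(p-u)^3}.$$
Using the identities $1-u^{crt}=\frac{1-p}{1-\beta}$ and $p-u^{crt}=\frac{(1-p)\beta}{1-\beta}$, a short direct calculation gives
$$h_\beta''(u^{crt})=-\frac{(1-\beta)^2}{(1-p)^2}+\frac{(1-\beta)^2}{\beta(1-p)^2}=\frac{(1-\beta)^3}{\beta(1-p)^2},$$
which is exactly twice the coefficient appearing in \eqref{Taylor2eq}.

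It remains to bound the cubic remainder uniformly. Since $D$ is compact and avoids $p$ and $1$, there exist $c_1,c_2>0$ such that $|1-u|\geq c_1$ and $|p-u|\geq c_2$ for all $u\in D$. Combined with $\beta\in(0,1)$, this yields a uniform bound $|h_\beta'''(u)|\leq \tfrac{2}{c_1^3}+\tfrac{2}{c_2^3}$ on $D$, with a constant independent of $\beta$. Because $D$ is convex and $h_\beta$ is holomorphic on a neighborhood of $D$, the segment $[u^{crt},u]$ lies in $D$ and the integral form of Taylor's theorem gives
$$h_\beta(u)-h_\beta(u^{crt})-\tfrac{1}{2}h_\beta''(u^{crt})(u-u^{crt})^2=\frac{(u-u^{crt})^3}{2}\int_0^1(1-t)^2 h_\beta'''\bigl(u^{crt}+t(u-u^{crt})\bigr)\,dt,$$
whose modulus is at most $\tfrac{1}{6}\sup_{v\in D}|h_\beta'''(v)|\cdot|u-u^{crt}|^3\leq K|u-u^{crt}|^3$ for a constant $K$ depending only on $D$.

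There is no serious obstacle here; the only point requiring care is to verify that the bound $K$ can indeed be chosen independently of $\beta$, which follows because the only $\beta$-dependence in $h_\beta'''$ is through the factor $\beta\in(0,1)$, and because the constraint $u^{crt}\in D$ keeps $\beta$ away from $0$ and $1$ (via the relation $\beta=\frac{p-u^{crt}}{1-u^{crt}}$) but this is not needed for bounding the third derivative itself.
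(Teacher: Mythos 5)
Your proof is correct and takes essentially the same route as the paper's, which is abbreviated to the two formulas $h'_\beta(u)=\frac{u-\frac{p-\beta}{1-\beta}}{(1-u)(p-u)}$ and $h''_\beta\bigl(\frac{p-\beta}{1-\beta}\bigr)=\frac{(1-\beta)^3}{\beta(1-p)^2}$ followed by ``follows from.'' You supply the missing details: direct differentiation for (1), monotonicity/boundary behavior for (2), and Taylor's theorem with integral remainder plus a uniform bound on $h'''_\beta$ over $D$ for (3); all of these match the intended argument (and parallel the proof of Lemma~\ref{Taylor1}). Two small remarks: the paper's displayed $h'_\beta$ drops an overall factor of $(1-\beta)$ in the numerator (your form $\frac{(1-\beta)u-(p-\beta)}{(1-u)(p-u)}$ is the correct one, and the discrepancy is harmless since the zero is the same), and the cube on the right-hand side of \eqref{Taylor2eq} should be $|u-u^{crt}|^3$ as you wrote and as in \eqref{Taylor1eq}.
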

\begin{proof}
Follows from
\begin{equation*}
h'_\beta(u)=\frac{u-\frac{p-\beta}{1-\beta}}{(1-u)(p-u)},\qquad h''_\beta\left(\frac{p-\beta}{1-\beta}\right)=\frac{(1-\beta)^3}{\beta(1-p)^2}.\qedhere
\end{equation*}
\end{proof}

\begin{proof}[Proof of Theorem \ref{steeptheo2}] Our argument follows the same steps as the proof of Theorem \ref{steeptheo1}. Let
$$
I^{(N)}(\bu):=R_k(\bu)\prod_{i=1}^k \exp\left(N h_{\lambda(N)'_i/N}(u_i)-N h_{\lambda(N)'_i/N}(-\tilde \chi_i)\right)f(u_i),
$$
so we study the limit
\begin{equation}\label{studiedlimit2}
\lim_{n\to\infty} N^{\frac d2}  \oint_{\calC}\frac{du_1}{2\pi \i}\cdots\int_{\calC}\frac{du_k}{2\pi \i}I^{(N)}(\bu).
\end{equation}
In the first step we want to remove columns with $\beta_i<p$. Let $\beta_k<p$ and deform the integration contours to $C_1, \dots, C_k$ where $C_i=C_{\beta_i}$ if $\beta_i>p$ or $i=k$ and $C_i$ is the circle around $p$ of radius $p+\delta$ otherwise.
\begin{equation}\label{rescancel2}
\oint_{\calC}\frac{du_1}{2\pi \i}\cdots\int_{\calC}\frac{du_k}{2\pi \i}I^{(N)}(\bu)=F+\oint_{\mathcal{C}_1}\frac{du_1}{2\pi \i}\cdots\oint_{\mathcal{C}_k}\frac{du_k}{2\pi \i}R_k(\bu)\prod_{i=1}^k\exp\left(N h_{\lambda(N)'_i/N}(u_i)-N h_{\lambda(N)'_i/N}(-\tilde \chi_i)\right) f(u_i),
\end{equation}
where $F$ comes from the reside at $u_k=0$. To show that the integral over $C_1,\dots, C_k$ decays exponentially we use Lemma \ref{descent-lem2}, getting
$$
\Re\left[\sum_{i=1}^kh_{\lambda(N)'_i/N}(u_i)-h_{\lambda(N)'_i/N}(-\tilde \chi_i)\right]\leq h_{\beta_k}(-\chi_k)-h_{\beta_k}(0)+\sum_{\beta_i<p, i\neq k}h_{\beta_i}(-\delta)-h_{\beta_i}(0)+O\left(\frac{1}{\sqrt{N}}\right).
$$
Using the second part of Lemma \ref{Taylor2}, $h_{\beta_k}(-\chi_k)-h_{\beta_k}(0)<0$, and picking $\delta>0$ small enough we can achieve, uniformly over $u_i\in C_i$,
$$
\Re\left[\sum_{i=1}^kh_{\lambda(N)'_i/N}(u_i)-h_{\lambda(N)'_i/N}(-\tilde \chi_i)\right]<-d
$$
for sufficiently large $N$ and fixed $d>0$. Since $f(u)$ and $R_k(\bu)$ are uniformly bounded we get exponential decay of the integral over $C_1, \dots, C_k$. So, the only asymptotically meaningful part in the right-hand side of \eqref{rescancel2} is $F$. Computing the residue at the simple pole at $u_k=0$ we get
$$
\oint_{\calC}\frac{du_1}{2\pi \i}\cdots\int_{\calC}\frac{du_{k-1}}{2\pi \i}R_{k-1}(u_1, \dots, u_{k-1})\prod_{i=1}^{k-1} \exp\left(N h_{\lambda(N)'_i/N}(u_i)-N h_{\lambda(N)'_i/N}(-\tilde \chi_i)\right)f(u_i),
$$
which is the integral we study with the $k$th column of $\lambda(N)$ removed. By induction the limit \eqref{indepeqlim2} does not depend on the columns of $\lambda(N)$ with $\beta_i<p$.

When $\beta_1\geq\dots\geq \beta_k>p$ we can deform the integration contours in \eqref{studiedlimit2} to $C_{\beta_1}, \dots, C_{\beta_k}$ for $u_1, \dots, u_k$ respectively without crossing any singularities. Then, repeating the second and the third steps from Theorem \ref{steeptheo1}, we can use the steep descent property from Lemma \ref{descent-lem2} and Taylor approximation from Lemma \eqref{Taylor2} to get
\begin{equation}
\label{concentrate2}
\left|\oint_{\mathcal{C}_{\beta_1}}\frac{du_1}{2\pi \i}\cdots\oint_{\mathcal{C}_{\beta_k}}\frac{du_k}{2\pi \i}I^{(N)}(\bu)-\int_{-\chi_1+\i\varepsilon}^{-\chi_1-\i\varepsilon}\frac{du_1}{2\pi \i}\cdots\int_{-\chi_k+\i\varepsilon}^{-\chi_k-\i\varepsilon}\frac{du_k}{2\pi \i}I^{(N)}(\bu)\right|<Ke^{-Nd}
\end{equation}
for sufficiently small $\varepsilon>0$ and $K,d>0$. Note that the integrals over $[-\chi_i+\i\varepsilon, -\chi_i-\i\varepsilon]$ are oriented downwards.

Finally we perform the change of variables $u_i=-\chi_i-\frac{v_i}{\sqrt{N}}$ in the integral
\begin{equation}\label{tmp222}
\int_{-\chi_1+\i\varepsilon}^{-\chi_1-\i\varepsilon}\frac{du_1}{2\pi \i}\cdots\int_{-\chi_k+\i\varepsilon}^{-\chi_k-\i\varepsilon}\frac{du_k}{2\pi \i}\exp\left(N\sum_{i=1}^k  h_{\lambda(N)_i'/N}(u_i)- h_{\lambda(N)'_i/N}(-\chi_i)\right)R_k(\bu)\prod_{i=1}^k f(u_i).
\end{equation}
We have as $N\to\infty$
$$
f(u_i)=f\left(-\chi_i-\frac{v_i}{\sqrt{N}}\right)\to f(-\chi_i),
$$
$$
\prod_{i=1}^k\frac{1}{u_i^{k-i+1}}=\prod_{i=1}^k\frac{1}{(-\chi_i-\sqrt{N}^{-1}v_i)^{k-i+1}}\to (-1)^{\binom{k}{2}+k}\prod_{i=1}^k\frac{1}{\chi_i^{k-i+1}},
$$
$$
N^{\frac12\sum_{\beta}\binom{m(\beta)}{2}} \prod_{i<j}(u_a-u_b) \to(-1)^{\binom{k}{2}} \prod_{\substack{i<j:\\ \beta_i=\beta_j}}(v_i-v_j)\prod_{\substack{i<j:\\\beta_i>\beta_j}}(\chi_i-\chi_j).
$$
Using Taylor expansions for $\ln(p-u)$, $\ln(1-u)$ and $\lambda_i'(N)=\beta_iN+y_i\sqrt{N\beta_i(1-\beta_i)}+o(\sqrt{N})$ we get
\begin{multline*}
Nh_{\lambda(N)_i'/N}(u_i)-Nh_{\lambda(N)_i'/N}(-\chi_i)=N(\ln(1-u_i)-\ln(1+\chi_i)) -\lambda(N)'_i(\ln(p-u_i)-\ln(\chi_i+p))\\
\to-\sqrt{\frac{\beta_i(1-\beta_i)}{(\chi_i+p)^2}}y_iv_i+\frac{\beta_i(1-\beta_i)}{2(\chi_i+p)^2}v_i^2.
\end{multline*}
The uniform bounds and convergences from Step 4 of Theorem \ref{steeptheo1} are verified in exactly the same way, so the integral \eqref{tmp222} converges to
$$
\prod_{i=1}^k\frac{f(-\chi_i)}{\chi_i^{k-i+1}}\prod_{\substack{i<j:\\\chi_i>\chi_j}}(\chi_i-\chi_j)\int_{\i\R}\frac{dv_1}{2\pi \i}\cdots\int_{\i\R}\frac{dv_k}{2\pi \i}\exp\left(\sum_{i=1}^n-\sqrt{\frac{\beta_i(1-\beta_i)}{(\chi_i+p)^2}}y_iv_i+\frac{\beta_i(1-\beta_i)}{2(\chi_i+p)^2}v_i^2\right)\prod_{\substack{i<j:\\ \beta_i=\beta_j}}(v_i-v_j).
$$
The proof is concluded by applying Lemma \ref{gaussian}.
\end{proof}


 \subsection{Asymptotic behavior of $g^{(p,0)}_\lambda$} Our final goal is to study $g^{(p,0)}_\lambda(\chi_1, \dots, \chi_k)$ for a fixed sequence of variables $\chi_1\geq \chi_2\ge\dots \geq \chi_k>0$. Recall that $g^{(p,0)}_\lambda(\chi_1,\dots, \chi_k)$ vanish unless $l(\lambda)\leq k$. Assume that $\lambda(N)$ is a sequence of partitions of length $k$ such that
 $$
 \lambda(N)_i=\alpha_i N+ t_i\sqrt{N} +O(1),
 $$
 where $\alpha_1\geq\dots\geq \alpha_k>0$ and $\bt=(t_1,\dots, t_k)\in\R^k$ do not depend on $N$. We assume that $\chi_1,\dots, \chi_k$ and $\alpha_1,\dots, \alpha_k$ have exactly the same ordering in the sense that both are decreasing and $\alpha_i=\alpha_j$ if and only if $\chi_i=\chi_j$. Also, for any $\chi> 0$ let $m(\chi)$ be the number of times $\chi$ appears in $(\chi_1,\dots, \chi_k)$.

 \begin{theo}\label{gtheo} With the notation above we have
 $$
 N^{-\sum_{\chi}\frac12\binom{m(\chi)}{2}} \frac{g_{\lambda(N)}^{(p,0)}(\chi_1,\dots, \chi_k)}{\prod_{i=1}^k(\chi_i+p)^{\lambda(N)_i}}\to Z^{-1}\frac{1}{\prod_{\chi>0}\prod_{i=1}^{m(\chi)}(i-1)!} \prod_{\substack{i<j\\ \chi_i=\chi_j}}(t_i-t_j),
 $$
 where
 $$
 Z=\frac{\prod_{\chi} (\chi+p)^{\binom{m(\chi)+1}{2}}\prod_{\substack {i<j\\ \chi_i>\chi_j}}(\chi_i-\chi_j) }{\prod_{i=1}^k\chi_i^{k-i+1}}
 $$
 Moreover, the convergence above is uniform over sequences $\lambda(N)$ such that $\bt\in C$ for a compact subset $C\subset\R^k$ and $\lambda(N)_i-\alpha_i N-t_i\sqrt{N}$ is bounded by a uniform constant $M$.
 \end{theo}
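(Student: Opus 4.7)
My plan is to base the proof on the Jacobi--Trudi formula of Proposition \ref{detg}. Since $\alpha_j > 0$, for large $N$ all parts $\lambda(N)_j$ are positive, and
$$g^{(p,0)}_{\lambda(N)}(\chi_1,\dots,\chi_k) = \frac{\det[\chi_i^{k-j+1}(\chi_i+p)^{\lambda(N)_j-1}]_{i,j}}{\det[\chi_i^{k-j}]_{i,j}}.$$
To handle possible coincidences among the $\chi_i$, group them: let $\xi_1 > \xi_2 > \dots$ be the distinct values with multiplicities $m_1, m_2, \dots$, and write $\chi_i = \xi_{g(i)} + \epsilon_i$ for formal perturbations. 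Taylor expanding each row and using multilinearity of the determinant, both numerator and denominator share a common leading $\epsilon$-factor $\prod_g \prod_{s<s'}(\epsilon_{i_{g,s'}} - \epsilon_{i_{g,s}})/\prod_{r=0}^{m_g-1} r!$ that cancels in the ratio, giving the exact confluent formula
$$g^{(p,0)}_\lambda(\chi) = \frac{\det[F_j^{(s_i-1)}(\xi_{g(i)})]_{i,j}}{\det[V_j^{(s_i-1)}(\xi_{g(i)})]_{i,j}},$$
where $F_j(x) = x^{k-j+1}(x+p)^{\lambda_j-1}$, $V_j(x) = x^{k-j}$, and $s_i \in \{1,\dots,m_{g(i)}\}$ is the rank of position $i$ within its group $G_{g(i)}$. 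The denominator is a confluent Vandermonde and is obtained directly from the $\epsilon$-limit of $\prod_{i<j}(\chi_i - \chi_j)$, giving $(-1)^{\sum_g \binom{m_g}{2}}\prod_g \prod_{r=0}^{m_g-1} r! \cdot \prod_{g<g'}(\xi_g - \xi_{g'})^{m_g m_{g'}}$, a nonzero constant independent of $N$.

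For the numerator, the first step is the uniform estimate $F_j^{(s-1)}(\xi) = \lambda_j^{s-1} \xi^{k-j+1}(\xi+p)^{\lambda_j - s}(1 + O(1/N))$, obtained because $(\log F_j)' \sim \lambda_j/(\chi+p)$ is large and successive derivatives mainly hit the exponential factor, with relative error $O(1/\lambda_j)$ uniform in the bounded order $s \leq k$. Expanding the determinant by Leibniz, the weight $\prod_i (\xi_{g(i)} + p)^{\lambda_{\sigma(i)}}$ is maximized --- by the rearrangement inequality applied to the monotone sequences $\alpha_j$ and $\ln(\xi_{g(i)}+p)$ --- exactly when $\sigma$ preserves the group partition, i.e.\ $\sigma \in \Sigma' := \prod_g S_{m_g}$; other $\sigma$ give exponentially suppressed contributions. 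For $\sigma \in \Sigma'$, both $\prod_i (\xi_{g(i)}+p)^{\lambda_{\sigma(i)}-s_i} = \prod_g (\xi_g + p)^{L_g - \binom{m_g+1}{2}}$ (with $L_g = \sum_{i \in G_g} \lambda_i$) and $\prod_i \xi_{g(i)}^{k-\sigma(i)+1} = \prod_i \chi_i^{k-i+1}$ are $\sigma$-invariant. The remaining signed sum factors over groups and collapses via the Vandermonde identity:
$$\sum_{\sigma \in \Sigma'} \mathrm{sign}(\sigma)\prod_i \lambda_{\sigma(i)}^{s_i-1} = \prod_g \det[\lambda_{i_{g,j}}^{i-1}]_{i,j=1,\dots,m_g} = \prod_g \prod_{i<j}(\lambda_{i_{g,j}} - \lambda_{i_{g,i}}).$$
Since all $\alpha_j$ within a group agree, each within-group difference scales as $\sqrt{N}$: $\lambda_{i_{g,j}} - \lambda_{i_{g,i}} = (t_{i_{g,j}} - t_{i_{g,i}})\sqrt{N} + O(1)$, yielding $N^{\sum_g \binom{m_g}{2}/2} \prod_{i<j,\,\chi_i = \chi_j}(t_j - t_i)(1 + O(1/\sqrt{N}))$.

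Assembling the two determinants and dividing by $\prod_i (\chi_i + p)^{\lambda_i} = \prod_g (\xi_g + p)^{L_g}$, the sign $(-1)^{\sum_g \binom{m_g}{2}}$ in the denominator is absorbed by converting $(t_j - t_i)$ into $(t_i - t_j)$. The identifications $\prod_g (\xi_g+p)^{\binom{m_g+1}{2}} = \prod_\chi (\chi+p)^{\binom{m(\chi)+1}{2}}$, $\prod_g \prod_{r=0}^{m_g-1} r! = \prod_\chi \prod_{i=1}^{m(\chi)} (i-1)!$, and $\prod_{g<g'}(\xi_g - \xi_{g'})^{m_g m_{g'}} = \prod_{i<j,\,\chi_i > \chi_j}(\chi_i - \chi_j)$ complete the match with $Z^{-1}\tfrac{1}{\prod_\chi \prod_{i=1}^{m(\chi)} (i-1)!}\prod_{i<j,\,\chi_i=\chi_j}(t_i - t_j)$. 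Uniform convergence over compact $\bt$ and bounded $\lambda_j - \alpha_j N - t_j \sqrt{N}$ follows because the $O(1/N)$ derivative error and $O(1/\sqrt{N})$ Vandermonde error are both uniform. The delicate point is the rank-one collapse within each group: the naive per-entry leading term $\lambda_j^{s_i-1} \sim N^{s_i-1}$ would predict $\det \sim N^{\sum_g \binom{m_g}{2}}$, but inside each group the factor $\lambda_j^{s-1}$ is effectively row-dependent only (since $\alpha_j$ is constant on the group), making the in-group block rank one at this order. The Vandermonde identity captures precisely the leading nonvanishing term of the correct size $N^{\sum_g \binom{m_g}{2}/2}$, because each within-group $\lambda$-difference is of order $\sqrt{N}$ rather than $N$.
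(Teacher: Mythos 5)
Your proposal follows the same skeleton as the paper's proof: start from the determinantal formula of Proposition~\ref{detg}, handle coincidences among the $\chi_i$ by a confluent/l'H\^opital-type limit, expand the resulting determinant by the Leibniz formula, use the monotonicity of $\ln(\chi_i+p)$ to show only group-preserving permutations survive, and collapse the within-group sum by a Vandermonde identity. The difference is your normalization, and it creates a genuine gap for groups of size~$\geq 3$.

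The paper first multiplies through by $\prod_i(x_i+p)^{1-\lambda_i}x_i^{-k+i-1}$, turning the matrix entry into $x_i^{\,i-j}(x_i+p)^{\lambda_j-\lambda_i}$. Within a group where $\alpha_i=\alpha_j$, the exponent $\lambda_j-\lambda_i=O(\sqrt N)$, so the $\rho(i)$-fold derivative yields a coefficient that is an \emph{exact} polynomial of degree $\rho(i)$ in $\lambda_j-\lambda_i$. Hence $N^{-\rho(i)/2}X(N)_{ij}$ converges entrywise, and the block determinant just converges to the determinant of the limiting matrix; no further cancellation is needed at that stage, the Vandermonde identity only gets used on the (finite) limiting array $[(t_j-t_i)^{\rho(i)}]$. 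You instead work with the raw entries $F_j(x)=x^{k-j+1}(x+p)^{\lambda_j-1}$, where the exponent is $O(N)$. Each $F_j^{(s-1)}(\xi)$ is then $\sim\lambda_j^{s-1}\cdot(\cdots)$, of magnitude $N^{s-1}$, and you rely on the Vandermonde cancellation inside the group to bring the within-group determinant down from the naive scale $N^{\binom{m_g}{2}}$ to the true scale $N^{\binom{m_g}{2}/2}$. But your error estimate, a per-entry factor $(1+O(1/N))$, only gives an error in the determinant of the naive size $O(N^{\binom{m_g}{2}-1})$. For $m_g\geq 3$ this is \emph{larger} than the signal $N^{\binom{m_g}{2}/2}$, so the proof as written does not close.

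The underlying arithmetic does in fact repair itself: e.g.\ with $m_g=3$, the two subleading contributions $\mathbf b=(0,0,2)$ and $\mathbf b=(0,1,1)$ in the Leibniz expansion of $F^{(s-1)}_j$ each produce an $O(N^{3/2})$ term, but they come with binomial weights $1$ and $2$ and cancel in pairs. However, the subleading terms of $F^{(s-1)}_j$ carry coefficients depending explicitly on the column index $j$ through the $x^{k-j+1}$ factor (factors such as $k-j+1$), so the columns are \emph{not} pure polynomials in $\lambda_j$ and the required cancellations need a separate calculation that a relative $(1+O(1/N))$ bound per entry cannot supply. Either track the exact polynomial structure of $F_j^{(s-1)}$ through the Vandermonde step, or, more simply, adopt the paper's pre-normalization which makes the exponent of $(x+p)$ within each group $O(\sqrt N)$ and removes the hazard entirely.
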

 \begin{proof} We use the determinantal formula from Proposition \ref{detg}. Note that for sufficiently large $N$ we have $\lambda(N)_i>0$ for all $i$, so
 $$
 g_{\lambda(N)}^{(p,0)}(x_1,\dots, x_k)=\frac{\det\left[x_i^{k-j+1}(x_i+p)^{\lambda(N)_j-1}\right]}{\prod_{i<j}(x_i-x_j)}.
 $$
 Rewrite it as
 $$
  g_{\lambda(N)}^{(p,0)}(x_1,\dots, x_k)\prod_{i=1}^k(x_i+p)^{1-\lambda(N)_i}x_i^{-k+i-1}=\frac{\det\left[x_i^{i-j}(x_i+p)^{\lambda(N)_j-\lambda(N)_i}\right]}{\prod_{i<j}(x_i-x_j)}.
 $$
 Now we want to substitute $x_i=\chi_i$. However, we need to be careful when $\chi_i=\chi_j$ since both sides of the fraction on the right-hand side vanish in this case. To get around it we use l'H\^opital's rule in the following way: for $i=1,\dots, k$ let $\rho(i)$ denote the number of $j$ such that $j<i, \chi_i=\chi_j$. Then we set $x_i=\chi_i$ one by one from $i=1$ to $i=k$. At the step $i$ during this substitution, both sides of the fraction have zero of order $\rho(i)$ at $x_i=\chi_i$, so we apply the derivative $\frac{\partial^{\rho(i)}}{(\partial x_i)^{\rho(i)}}$ to both sides. In the end we arrive at
 \begin{equation}\label{firststepg}
 g_{\lambda(N)}^{(p,0)}(\chi_1,\dots, \chi_k)\prod_{i=1}^k(\chi_i+p)^{1-\lambda(N)_i}\chi_i^{-k+i-1}=\frac{\det X(N)}{\prod_{i=1}^{k}(-1)^{\rho(i)}\rho(i)!\prod_{\substack {i<j\\ \chi_i>\chi_j}}(\chi_i-\chi_j)},
 \end{equation}
 where $X(N)$ is the $k\times k$ matrix
 $$
 X(N)_{ij}=\restr{\frac{d^{\rho(i)}}{dx_i^{\rho(i)}}\left(x_i^{i-j}(x_i+p)^{\lambda(N)_j-\lambda(N)_i}\right)}{x_i=\chi_i}.
 $$

So, we need to analyze $\det X(N)$ as $N\to\infty$. Let $\mathcal P$ be the partition of $\{1,\dots, k\}$ with $i,j$ in the same part if and only if $\chi_i=\chi_j$, this partition has the form $[1; l_1]\sqcup [l_1+1, l_2]\sqcup\dots\sqcup [l_{s-1}+1, l_s]$ for some integers $l_1,\dots, l_s$. Let $\mathfrak{S}_k$ denote the group of permutations of $k$ and $\mathfrak{S}_{\mathcal P}\subset \mathfrak{S}_k$ denote the subgroup preserving each part of $\mathcal P$. Rewrite $\det X(N)$ as
$$
\det X(N)=\sum_{\sigma\in \mathfrak{S}_k}(-1)^\sigma X(N)_\sigma, \qquad X(N)_\sigma=\prod_{i=1}^k X(N)_{i,\sigma(i)}.
$$
Our first goal is to show that only $\sigma\in \mathfrak{S}_{\mathcal P}$ contribute to $\lim_{N} \det X(N)$. We have
$$
X(N)_{ij}=\restr{\frac{d^{\rho(i)}}{dx_i^{\rho(i)}}\left(x_i^{i-j}(x_i+p)^{\lambda(N)_j-\lambda(N)_i}\right)}{x_i=\chi_i}=\sum_{\substack{a,b\geq 0\\a+b=\rho(i)}}c_{i,j}^{a,b}(N)\ \chi_i^{i-j-a}(\chi_i+p)^{\lambda(N)_j-\lambda(N)_i-b},
$$
where $c_{i,j}^{a,b}(N)$ depend on $N$ as polynomials in $\lambda(N)_j-\lambda(N)_i$ of degree $b$. In particular, since $\lambda_i(N)=\alpha_i N+O(\sqrt{N})$ we have
$$
|X(N)_{ij}|<K_{ij}e^{N(\alpha_j-\alpha_i)\ln(\chi_i+p)+\varepsilon_{ij}\sqrt{N}}.
$$
for some constants $K_{ij}, \varepsilon_{ij}$. Note that these constants can be chosen uniformly for all valid choices of $\lambda(N)$ in the second part of the theorem. Combining these upper bounds we get for some $K_\sigma, \varepsilon_\sigma$
$$
|X(N)_{\sigma}|<K_{\sigma}\exp\left(\varepsilon_{\sigma}\sqrt{N}+N\sum_{i=1}^{k}(\alpha_{\sigma(i)}-\alpha_i)\ln(\chi_i+p)\right).
$$

We claim that for any $\sigma\notin \mathfrak{S}_{\mathcal P}$ the following inequality holds:
\begin{equation}\label{lemineq}
\sum_{i=1}^{k}\ln(\chi_i+p)(\alpha_{\sigma(i)}-\alpha_i)<0.
\end{equation}
Indeed, note that
$$
\sum_{i=1}^{k}\ln(\chi_i+p)(\alpha_{\sigma(i)}-\alpha_i)=\sum_{i=1}^{k-1} (\ln(\chi_i+p) -\ln (\chi_{i+1}+p))(\alpha_{\sigma[1,i]}-\alpha_{[1,i]}),
$$
where for $A\subset[1;k]$ we set $\alpha_{A}=\sum_{i\in A}\alpha_i$ and $\alpha_{\sigma A}=\sum_{i\in A}\alpha_{\sigma(i)}$. Since $\alpha_i$ and $\chi_i$ decrease, we have $\ln (\chi_i+p) -\ln (\chi_{i+1}+p)\geq 0$ and $\alpha_{[1,i]}\geq \alpha_{\sigma[1,i]}$ for every $i$, so each term in the sum $\sum_{i=1}^{k-1} (\ln (\chi_i+p) -\ln (\chi_{i+1}+p))(\alpha_{\sigma[1,i]}-\alpha_{[1,i]})$ is non-positive. Moreover, this sum is $0$ only if $\alpha_{[1,l_i]}=\alpha_{\sigma[1, l_i]}$ for each $i=1,\dots, s$. In other words, for any part $P\in\mathcal P$ we must have $\alpha_P=\alpha_{\sigma P}$, and by monotonicity this is only possible when $\sigma\in S_{\mathcal P}$.

Using \eqref{lemineq}, for any $\sigma\notin S_{\mathcal P}$ we can find $d_\sigma>0$ such that
$$
|X(N)_{\sigma}|<K_{\sigma}\exp\left(-d_\sigma N \right).
$$
Hence
$$
\lim_N N^{-\sum_{\chi}\frac12\binom{m(\chi)}{2}}\det X(N)=\lim_NN^{-\sum_{\chi}\frac12\binom{m(\chi)}{2}}\sum_{\sigma\in \mathfrak{S}_{\mathcal P}} (-1)^{\sigma}X(N)_{\sigma}=\lim_N \prod_{P\in\mathcal P}N^{-\frac12\binom{|P|}{2}} \det[X(N)_{ij}]_{i,j\in P}.
$$

To finish the proof we compute the limit of $N^{-\frac12\binom{|P|}{2}}\det[X(N)_{ij}]_{i,j\in P} $ for each part of the partition $\mathcal P$. Fix $P\in\mathcal P$, and let $\alpha_i=\alpha$ and $\chi_i=\chi$ for all $i\in P$. Note that for any $i,j\in P$ we have $\lambda(N)_j-\lambda(N)_i=(t_j-t_i)\sqrt{N}+O(1)$. Hence $c_{ij}^{ab}(N)$ is of order at most $N^{\frac{b}{2}}$ and
$$
N^{-\frac{\rho(i)}{2}}X(N)_{ij}=N^{-\frac{\rho(i)}{2}}c_{i,j}^{0,\rho(i)}(N)\chi_i^{i-j}(\chi_i+p)^{\lambda(N)_j-\lambda(N)_i-\rho(i)}+o(1),
$$
in other words, the only asymptotically meaningful part of $N^{-\frac{\rho(i)}{2}}X(N)_{ij}$ comes from applying $\frac{d^{\rho(i)}}{(dx_i)^{\rho(i)}}$ only to $x_i^{\lambda(N)_j-\lambda(N)_i}$. Hence
$$
N^{-\frac{\rho(i)}{2}}X(N)_{ij}\to (t_j-t_i)^{\rho(i)}\chi_i^{i-j}(\chi_i+p)^{\lambda(N)_j-\lambda(N)_i-\rho(i)}
$$
and we get
$$
\sqrt{N}^{-\binom{|P|}{2}}\det[X(N)_{ij}]_{i,j\in P} =\det[N^{-\frac{\rho(i)}{2}}X(N)_{ij}]_{i,j\in P}\to \det\left[(t_j-t_i)^{\rho(i)}\chi^{i-j}(\chi+p)^{\lambda_j(N)-\lambda_i(N)-\rho(i)}\right]_{i,j\in P}
$$
Using basic row and column manipulations, we get
$$
\lim_N \sqrt{N}^{-\binom{|P|}{2}}\det[X(N)_{ij}]_{i,j\in P} = (\chi+p)^{-\binom{|P|}{2}}\det\left[(t_j-t_i)^{\rho(i)}\right]_{i,j\in P}.
$$
Note that $(t_j-t_i)^{\rho(i)}=f_i(t_j)$ for some monic polynomials $f_i(t)$ of degree $\rho(i)$, hence
$$
\det\left[(t_j-t_i)^{\rho(i)}\right]_{i,j\in P}=\det\left[f_i(t_j)\right]_{i,j\in P}=\det\left[t_j^{i-1}\right]_{i\in [1,|P|],j\in P}=\prod_{\substack{i,j\in P\\ i<j}}(t_j-t_i).
$$

To sum everything up, we get
$$
N^{-\sum_{\chi}\frac12\binom{m(\chi)}{2}}\det X(N)\to \prod_{P\in \mathcal P} (\chi_P+p)^{-\binom{|P|}{2}} (-1)^{\binom{|P|}{2}} \prod_{\substack{i<j\\ \chi_i=\chi_j}}(t_i-t_j).
$$
The proof is finished by applying this limit to \eqref{firststepg}.
 \end{proof}

\section{Finite GT-type coherent systems}\label{finitesect}

In this section we study in close detail the finite GT-type coherent systems $M^{\calA, \calB}_n$ from the previous section. First we establish the limit law for $M^{\calA, \calB}_n$ and then we prove that these measures are extremal when either $\calA$ or $\calB$ is empty. Throughout this section $p\in (0,1)$ and  $\calA=(\alpha_1,\dots, \alpha_k)$, $\calB=(\beta_1,\dots, \beta_l)$ denote sequences satisfying \eqref{ineq}. We also use $s$ to denote the number of $i$ such that $\beta_i=1$.

\subsection{Limit law}
To describe the behavior of $M^{\calA, \calB}_n$ we use the following notation. For a sequence $\calA$ of length $k$ let $\calP_\calA$ denote the set partition of $\{1,\dots, k\}$ such that $i$ and $j$ are in the same part if and only if $\alpha_i=\alpha_j$. Similarly let $\calP_{\calB}$ denote the analogous set partition for $\calB$. Recall that the density function of ordered eigenvalues of GUE random $n\times n$ matrices is given by
$$
\rho^{GUE}_n(x_1, \dots, x_n)=\frac{1}{(2\pi)^{\frac n2} \prod_{i=1}^ni!}\prod_{i=1}^n e^{-x_i^2/2}\prod_{i<j}(x_i-x_j)^2,
$$
where $x_1\geq x_2\geq \dots \geq x_n$. Taking a copy of such densities for each part of $\calP_\calA$ we define
$$
\rho^{GUE}_{\calA}(x_1, \dots, x_k)=\frac{1}{(2\pi)^{\frac k2} \prod_{P\in\calP_\calA}\prod_{i=1}^{|P|}(i-1)!}\prod_{i=1}^k e^{-x_i^2/2}\prod_{\substack{i<j\\ \alpha_i=\alpha_j}}(x_i-x_j)^2,
$$
which we treat as a probability density with respect to the Lebesgue measure $dx_1\dots dx_k$ on the space $\Delta_{\calA}\subset\R^k$ of points $\bx\in\R^k$ such that $x_i\geq x_j$ when $i<j$ and $\alpha_i=\alpha_j$. Similarly we define $\rho^{GUE}_{\calB}(y_{s+1}, \dots, y_l)$, however we ignore indices $i$ with $\beta_i=1$:
$$
\rho^{GUE}_{\calB}(y_{s+1}, \dots, y_l)=\frac{1}{(2\pi)^{\frac {l-s}2} \prod_{P\in\tilde\calP_\calB}\prod_{i=1}^{|P|}(i-1)!}\prod_{i=s+1}^l e^{-y_i^2/2}\prod_{\substack{i<j\\ \beta_i=\beta_j<1}}(y_i-y_j)^2,
$$
where $\tilde\calP_{\calB}$ is the partition of $\{s+1, \dots, l\}$ obtained by removing from $\calP_{\calB}$ the part $\{1,\dots, s\}$ corresponding to $\beta_i=1$. Finally, let $\bx^{GUE}_{\calA}$, $\by^{GUE}_{\calB}$ denote the random vectors distributed according to $\rho^{GUE}_{\calA}$ and $\rho^{GUE}_{\calB}$ respectively.

\begin{theo}\label{limittheo} Let $\lambda(n)$ denote the random partition distributed according to $M^{\calA,\calB}_n$. Then $\lambda(n)'_1=\dots=\lambda(n)'_s=n$ almost surely and as $n\to\infty$
\begin{equation}\label{limitrows}
\left(\frac{\lambda(n)_1-\alpha_1n}{\sqrt{n\alpha_1(1+\alpha_1)}},\dots, \frac{\lambda(n)_k-\alpha_kn}{\sqrt{n\alpha_k(1+\alpha_k)}} \right) \to \bx^{GUE}_{\calA},
\end{equation}
\begin{equation}\label{limitcolumns}
\left(\frac{\lambda(n)'_{s+1}-\beta_{s+1}n}{\sqrt{n\beta_{s+1}(1-\beta_{s+1})}},\dots, \frac{\lambda(n)'_l-\beta_ln}{\sqrt{n\beta_l(1-\beta_l)}} \right) \to \by^{GUE}_{\calB},
\end{equation}
where both convergences are in distribution.\footnote{The convergences \eqref{limitrows} and \eqref{limitcolumns} are considered separately, we do not claim anything about the joint law.} In particular, in probability, $\frac{\lambda(n)_i}{n}\to \alpha_i$ for $i\in [1,k]$ and $\frac{\lambda(n)'_i}{n}\to \beta_i$ for $i\in [1,l]$.
\end{theo}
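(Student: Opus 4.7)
The approach is to reduce to the case $s=0$, prove the row marginal \eqref{limitrows} in the pure-row case $\calB = \varnothing$ by direct substitution into the explicit formula of Proposition \ref{hook}, and extend to general $\calB$ via a branching decomposition of $M_n^{\calA,\calB}$. For the reduction, when $\beta_1 = \dots = \beta_s = 1$ direct computation shows $\Phi^{\calA,\calB}(z) = z^s\, \Phi^{\calA,\calB_0}(z)$ with $\calB_0 = (\beta_{s+1}, \dots, \beta_l)$; iterating Corollary \ref{shiftcor} $s$ times then yields both the almost-sure claim $\lambda(n)'_j = n$ for $j \leq s$ and the identity $M_n^{\calA,\calB}(\lambda) = M_n^{\calA,\calB_0}(\lambda - s^n)$, so we assume $s=0$ from here on. The column marginal \eqref{limitcolumns} is handled by an entirely analogous argument with Theorem \ref{steeptheo2} applied to $\lambda'$ in place of Theorem \ref{steeptheo1}; the in-probability LLN statements are immediate consequences of the $\sqrt n$ fluctuation scale.

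Consider the pure-row case $\calB = \varnothing$. Proposition \ref{hook} gives $M_n^{\calA,\varnothing}(\lambda) = \Phi^{\calA,\varnothing}(0)^n\, G^{(0,-p)}_\lambda(1^n)\, g^{(p,0)}_\lambda(\chi_1, \dots, \chi_k)$, supported on $l(\lambda) \leq k$, with $\chi_i = \alpha_i/(1+\alpha_i) - p$ matching the variables appearing in Theorems \ref{steeptheo1} and \ref{gtheo}. For sequences $\lambda(n)_i = \alpha_i n + x_i \sqrt{n\alpha_i(1+\alpha_i)} + O(1)$, substitute these two asymptotic expansions. A chain of clean cancellations results: the identity $1 - \chi_i - p = 1/(1+\alpha_i)$ makes $\Phi^{\calA,\varnothing}(0)^n$ cancel the exponential prefactor $\prod_i((1-\chi_i-p)/(1-p))^{-n}$ appearing in Theorem \ref{steeptheo1}; the $(\chi_i+p)^{\pm\lambda(n)_i}$ factors cancel across the two theorems; the common constant $Z$ appears as reciprocals and cancels; and the powers of $n$ combine into $n^{-k/2}$, matching the Jacobian $\prod \sqrt{n\alpha_i(1+\alpha_i)}$ for the rescaling $\lambda_i \to x_i$. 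The remaining factors reassemble into the density $\rho^{GUE}_\calA(x_1, \dots, x_k)$, and the uniform convergence clauses of Theorems \ref{steeptheo1}, \ref{gtheo} upgrade this to convergence in distribution.

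For general $\calB$, the product $\Phi^{\calA,\calB} = \Phi^{\calA,\varnothing}\,\Phi^{\varnothing,\calB}$ and Proposition \ref{branchingMPhi} give $M_n^{\calA,\calB}(\lambda) = \sum_\mu M_n^{\calA,\varnothing}(\lambda/\mu)\,M_n^{\varnothing,\calB}(\mu)$, so one may sample $\mu \sim M_n^{\varnothing,\calB}$ first and then $\lambda$ from the conditional kernel. Under $M_n^{\varnothing,\calB}$ the partition $\mu$ is deterministically contained in the $l$-wide strip ($\mu_i \leq l$ for every $i$), while Proposition \ref{specialization} combined with the proof of Proposition \ref{hook} identifies the skew kernel as $M_n^{\calA,\varnothing}(\lambda/\mu) = g^{(p,0)}_{\lambda/\mu}(\chi_1, \dots, \chi_k)\,\Phi^{\calA,\varnothing}(0)^n\,G^{(0,-p)}_\lambda(1^n)/G^{(0,-p)}_\mu(1^n)$. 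The specialization $g^{(p,0)}_{\lambda/\mu}(\chi_1, \dots, \chi_k)$ vanishes unless $\lambda/\mu$ decomposes into $k$ horizontal strips, which forces $\lambda_i \leq \mu_{i-k}$ for $i > k$, so the tail of $\lambda$ also lies in the $l$-wide strip. Since $\mu_1, \dots, \mu_k$ are bounded by $l$ while the first $k$ rows of $\lambda$ are of order $n$, the dependence of $(\lambda_1, \dots, \lambda_k)$ on $\mu$ is a bounded perturbation; summing the pure-row asymptotic of the previous paragraph over the essentially finite collection of relevant $\mu$ recovers the same $\rho^{GUE}_\calA$ limiting density. The main technical obstacle is to justify this interchange of the $\mu$-sum with the scaling limit uniformly in the perturbation, and this is precisely what the uniform convergence statements in Theorems \ref{steeptheo1} and \ref{gtheo} are designed to provide.
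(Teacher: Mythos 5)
Your reduction to $s=0$ and your pure-row case ($\calB=\varnothing$) track the paper exactly: the algebraic identities $\chi_i+p=\alpha_i/(1+\alpha_i)$, $1-\chi_i-p=1/(1+\alpha_i)$, and $\Phi^{\calA,\varnothing}(0)=\prod_i\bigl((1-p)(1+\alpha_i)\bigr)^{-1}$ make the prefactors cancel as you describe, and the product of the limits from Theorems \ref{steeptheo1} and \ref{gtheo} indeed assembles into $\rho^{GUE}_{\calA}$. That part is fine.

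Your general-case argument, however, has a genuine gap, and it comes from committing to a single factorization. You write $M_n^{\calA,\calB}(\lambda)=\sum_\mu M_n^{\calA,\varnothing}(\lambda/\mu)\,M_n^{\varnothing,\calB}(\mu)$, condition on $\mu$, and then try to extract the row marginal of $\lambda$ from the skew kernel $M_n^{\calA,\varnothing}(\lambda/\mu)$. Two things go wrong. First, $\mu$ is not a bounded perturbation: under $M_n^{\varnothing,\calB}$ only the \emph{widths} $\mu_i\le l$ are bounded, but $\mu$ has $O(n)$ nonzero rows and $|\mu|=O(n)$, so the skew objects $g^{(p,0)}_{\lambda/\mu}$ and $G^{(0,-p)}_\lambda(1^n)/G^{(0,-p)}_\mu(1^n)$ are genuinely different from their straight ($\mu=\varnothing$) counterparts. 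Second, the uniform-convergence clauses of Theorems \ref{steeptheo1} and \ref{gtheo} are stated only for non-skew $G^{(0,-p)}_\lambda$ and $g^{(p,0)}_\lambda$ with $\lambda$ varying over a $\sqrt n$-window; they say nothing about skew functions $g^{(p,0)}_{\lambda/\mu}$ or ratios over $\mu$, so they do not ``precisely provide'' the interchange you invoke. You would need a separate (skew) asymptotic analysis to close this.

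The paper sidesteps all of this by using \emph{both} factorizations from Proposition \ref{branchingMPhi} and choosing a different one for each marginal. For \eqref{limitrows}, sample $\mu\sim M_n^{\calA,\varnothing}$ first (so $\mu$ already has the GUE row fluctuations by Case 1), then pass to $\lambda$ via $M_n^{\varnothing,\calB}(\lambda/\mu)$; since that kernel is supported on $\lambda/\mu$ a union of $l$ vertical strips, $0\le\lambda_i-\mu_i\le l$ holds deterministically for every $i$, so the row fluctuations of $\lambda$ and $\mu$ differ by at most $l=O(1)$, which vanishes after division by $\sqrt n$. For \eqref{limitcolumns}, use the symmetric decomposition with $\mu\sim M_n^{\varnothing,\calB}$ and $\lambda/\mu$ a union of $k$ horizontal strips, giving $0\le\lambda'_i-\mu'_i\le k$. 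No control of any conditional law or skew asymptotics is needed. You should restructure your general case along these lines.
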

\begin{proof} First we reduce the problem to the case $s=0$. Note from Proposition \ref{hook} that $\lambda(n)$ is supported on partitions $\lambda\in\G^p_n$ with $\lambda_n\geq s$, and the distribution of $\lambda(n)-s^n$ is $M_n^{\calA,\tilde\calB}$, where $\tilde\calB=(\beta_{s+1},\dots, \beta_l)$. In particular, $\lambda(n)'_1=\dots=\lambda(n)'_s=n$ almost surely and replacing $\lambda(n)$ by $\lambda(n)-s^n$  reduces the problem to the case $s=0$. Now we consider several cases.

\emph{Case 1: $\calB=\varnothing$.} From Proposition \ref{hook} we have
$$
M^{\calA,\varnothing}_n(\lambda)= G_{\lambda}^{(0,-p)}(1^n) g_{\lambda}^{(p,0)}(\chi_1, \dots, \chi_k)\prod_{i=1}^k \left(\frac{1+\frac{p}{1-p}}{1+\alpha_i}\right)^n,
$$
where $\chi_i=\frac{\alpha_i-p(1+\alpha_i)}{1+\alpha_i}=\frac{\alpha_i}{1+\alpha_i}-p$ and $\lambda$ has at most $k$ rows. Perform the change of variables
\begin{equation}\label{change1}
\lambda(\bx)_i=n\alpha_i+ x_i\sqrt{n\alpha_i(\alpha_i+1)} , \qquad i=1,\dots, k
\end{equation}
and consider the resulting induced measure $M^{\calA,\varnothing}_n(\lambda(\bx))$ on $\Delta_{\calA}$, which is supported on the lattice $\alpha_i n+ x_i\sqrt{n\alpha_i(\alpha_i+1)} \in\mathbb Z$. \footnote{Note that for fixed $n$ there exist $\lambda\in\G^p_n$ such that the corresponding $\bx$ is outside of $\Delta_\calA$, so the induced measure on $\Delta_{\calA}$ lacks $M^{\calA,\varnothing}_n(\lambda)$ for some partitions $\lambda\in\G^p_n$. This is not an issue since we show that the induced measure $M^{\calA,\varnothing}_n(\lambda(\bx))$ converges weakly to GUE distribution, so the total probability of such "bad" $\lambda$ goes to $0$.}
Then we need to prove the following weak convergence of measures on $\Delta_{\calA}$:
$$
M^{\calA,\varnothing}_n(\lambda(\bx))\to \rho^{GUE}_{\calA}(x_1, \dots, x_k)dx_1\dots dx_k.
$$
To do it it is enough to prove that as $n\to\infty$
\begin{equation}\label{Aconv}
n^{k/2}\prod_{i=1}^k\sqrt{\alpha_i(1+\alpha_i)} M^{\calA,\varnothing}_n(\floor{\lambda(\bx)})\to \rho^{GUE}_{\calA}(x_1, \dots, x_k),
\end{equation}
where the convergence is uniform on compact subsets of $\Delta_{\calA}$ and for $\bx\in\Delta_{\calA}$ we define $\floor{\lambda(\bx)}$ by taking floor of each $\lambda(\bx)_i$ in \eqref{change1} .  

To show \eqref{Aconv} we use asymptotic analysis of Grothendieck polynomials from Section \ref{asyman}. Theorem \ref{steeptheo1} implies that as $n\to\infty$
\begin{equation*}
G^{(0,-p)}_\lambda(1^n)\ n^{\frac{k}{2}+\frac12\sum_{P\in\calP_\calA}\binom{|P|}{2}}\prod_{i=1}^k\left(\frac{1-\chi_i-p}{1-p}\right)^{n}(\chi_i+p)^{\lambda_i}\to
Z (2\pi)^{-\frac{k}{2}}\prod_{i=1}^{k}\frac{e^{-\frac{x_i^2}{2}}}{\sqrt{\alpha_i(1+\alpha_i)}}\prod_{\substack{i<j\\\alpha_i=\alpha_j}}\frac{x_i-x_j}{\sqrt{\alpha_i(1+\alpha_i)}},
\end{equation*}
where $Z>0$ is a constant depending only on $\calA$ and $\lambda=\floor{\lambda(\bx)}$ implicitly depends on $n$. With the same $\lambda$ and $Z$, Theorem \ref{gtheo} shows that
 $$
 n^{-\sum_{P\in\calP_\calA}\frac12\binom{|P|}{2}} \frac{g_{\lambda}^{(p,0)}(\chi_1,\dots, \chi_k)}{\prod_{i=1}^k(\chi_i+p)^{\lambda_i}}\to Z^{-1}\frac{1}{\prod_{P\in\calP_\calA}\prod_{i=1}^{|P|}(i-1)!} \prod_{\substack{i<j\\ \chi_i=\chi_j}}\sqrt{\alpha_i(1+\alpha_i)}(x_i-x_j).
 $$
Moreover, both convergences are uniform when $\bx$ varies over compact subsets of $\Delta_{\calA}$. Taking the product of these results, we get
$$
n^{\frac{k}{2}} G^{(0,-p)}_\lambda(1^n)g_{\lambda}^{(p,0)}(\chi_1,\dots, \chi_k)\ \prod_{i=1}^k\left(\frac{1-\chi_i-p}{1-p}\right)^{n}\to\frac{\prod_{i=1}^{k}\frac{e^{-\frac{x_i^2}{2}}}{\sqrt{\alpha_i(1+\alpha_i)}}\prod_{\substack{i<j\\\alpha_i=\alpha_j}}(x_i-x_j)^2}{ (2\pi)^{\frac{k}{2}}\prod_{P\in\calP_\calA}\prod_{i=1}^{|P|}(i-1)!}.
$$
Since $\frac{1-\chi_i-p}{1-p}=\frac{1}{(1-p)(1+\alpha_i)}=\frac{1+\frac{p}{1-p}}{1+\alpha_i}$, we get
$$
n^{\frac{k}{2}}\prod_{i=1}^k\sqrt{\alpha_i(1+\alpha_i)} G^{(0,-p)}_\lambda(1^n)g_{\lambda}^{(p,0)}(\chi_1,\dots, \chi_k)\ \prod_{i=1}^k\left(\frac{1+\frac{p}{1-p}}{1+\alpha_i}\right)^{n}\to\rho^{GUE}_{\calA}(x_1, \dots, x_k),
$$
which is exactly what we needed to prove.

\emph{Case 2: $\calA=\varnothing$.} This case is analogous to the previous one. From Proposition \ref{hook} we have
$$
M^{\varnothing,\calB}_n(\lambda)= G_{\lambda}^{(0,-p)}(1^n) g_{\lambda'}^{(p,0)}(\chi_1, \dots, \chi_l)\prod_{i=1}^l\left(\frac{1-\beta_i}{1-p}\right)^n,
$$
where $\chi_i=\frac{\beta_i-p}{1-\beta_i}$ and $\lambda$ has at most $l$ columns. Now we use the change of variables
\begin{equation}\label{change2}
\lambda(\by)'_i=n\beta_i + y_i\sqrt{n\beta_i(1-\beta_i)}, \qquad i=1,\dots, l.
\end{equation}
Then we need to prove as $n\to\infty$
\begin{equation}\label{Bconv}
n^{l/2}\prod_{i=1}^l\sqrt{\beta_i(1-\beta_i)} M^{\varnothing,\calB}_n(\floor{\lambda(\by)})\to \rho^{GUE}_{\calB}(y_1,\dots, y_l),
\end{equation}
where the convergence is uniform on compact subsets of $\Delta_{\calB}$ and $\floor{\lambda(\by)}$, by a slight abuse of notation, is obtained by taking the floor of the columns $\lambda(\by)_i'$. Using the results from Section \ref{asyman}, Theorem \ref{steeptheo2} implies
$$
G^{(0,-p)}_\lambda(1^{n})n^{\frac l2+\frac12\sum_{P\in\calP_\calB}\binom{|P|}{2}}\prod_{i=1}^l\frac{(\chi_i+p)^{\lambda'_i}}{(1+\chi_i)^n}\to
Z\ (2\pi)^{-\frac{l}{2}}\prod_{i=1}^{l}\frac{e^{-\frac{y_i^2}{2}}}{\sqrt{\beta_i(1-\beta_i)}}\prod_{\substack{i<j\\\beta_i=\beta_j}}\frac{y_i-y_j}{\sqrt{\beta_i(1-\beta_i)}},
$$
and from Theorem \ref{gtheo}
 $$
 n^{-\sum_{P\in\calP_\calB}\frac12\binom{|P|}{2}} \frac{g_{\lambda'}^{(p,0)}(\chi_1,\dots, \chi_l)}{\prod_{i=1}^l(\chi_i+p)^{\lambda'_i}}\to Z^{-1}\frac{1}{\prod_{P\in\calP_\calB}\prod_{i=1}^{|P|}(i-1)!} \prod_{\substack{i<j\\ \chi_i=\chi_j}}\sqrt{\beta_i(1-\beta_i)}(y_i-y_j).
 $$
In both relations $\lambda=\floor{\lambda(\by)}$ is given by \eqref{change2}, $Z>0$ is a constant depending only on $\calB$, and the convergences are uniform over compact subsets of $\Delta_\calB$. Taking the product, we get
$$
n^{\frac l2}G^{(0,-p)}_\lambda(1^{n})g_{\lambda'}^{(p,0)}(\chi_1,\dots, \chi_l)\prod_{i=1}^l\frac{1}{(1+\chi_i)^n}\to\frac{\prod_{i=1}^{l}\frac{e^{-\frac{y_i^2}{2}}}{\sqrt{\beta_i(1-\beta_i)}}\prod_{\substack{i<j\\\beta_i=\beta_j}}(y_i-y_j)^2}{ (2\pi)^{\frac{l}{2}}\prod_{P\in\calP_\calB}\prod_{i=1}^{|P|}(i-1)!}.
$$
Since $1+\chi_i=\frac{1-p}{1-\beta_i}$ this is equivalent to \eqref{Bconv}.

\emph{General case.} Using Proposition \ref{branchingMPhi} we have
$$
M^{(\calA,\calB)}_n(\lambda)=\sum_{\mu\in\G^p_n}M^{(\varnothing,\calB)}_n(\lambda/\mu)M^{(\calA,\varnothing)}_n(\mu)=\sum_{\mu\in\G^p_n}M^{(\calA,\varnothing)}_n(\lambda/\mu)M^{(\varnothing,\calB)}_n(\mu).
$$
In other words, $\lambda$ distributed according to $M^{(\calA,\calB)}_n$ can be sampled in two ways. On one hand, we can obtain $\lambda$ by first sampling $\mu$ according to $M^{(\calA,\varnothing)}_n(\mu)$ and then getting $\lambda$ using $M^{(\varnothing,\calB)}_n(\lambda/\mu)$. From Proposition \ref{specialization} and the argument from Proposition \ref{hook}, $M^{(\varnothing,\calB)}_n(\lambda/\mu)$ vanishes unless $\lambda/\mu$ is a union of $l$ vertical strips, in particular $\lambda_i-\mu_i\leq l$ for all $i$. Hence in the limit
$$
\lim_{n\to\infty}\left(\frac{\lambda(n)_1-\alpha_1n}{\sqrt{n\alpha_1(1+\alpha_1)}},\dots, \frac{\lambda(n)_k-\alpha_kn}{\sqrt{n\alpha_k(1+\alpha_k)}} \right)
$$
we can replace $\lambda$ by $\mu$ and from the first case
$$
\left(\frac{\mu_1-\alpha_1n}{\sqrt{n\alpha_1(1+\alpha_1)}},\dots, \frac{\mu_k-\alpha_kn}{\sqrt{n\alpha_k(1+\alpha_k)}} \right) \to \bx^{GUE}_{\calA}.
$$
On the other hand we can sample $\lambda$ by first sampling $\mu$ according to $M^{(\varnothing,\calB)}_n(\mu)$ and then getting $\lambda$ using $M^{(\calA,\varnothing)}_n(\lambda/\mu)$. The latter vanishes unless $\lambda/\mu$ is a union of $k$ horizontal strips, hence from the second case we get \eqref{limitcolumns}.
\end{proof}

\subsection{Extremality of $M_n^{\calA,\varnothing}$ and $M_n^{\varnothing,\calB}$} The other goal of this section is to prove the following theorem.

\begin{theo} \label{Theorem_extreme} Let $\calA=(\alpha_1,\dots, \alpha_k)$ and $\calB=(\beta_1,\dots,\beta_l)$ be sequences satisfying \eqref{ineq}. Then the coherent systems $M_n^{\calA,\varnothing}$ and $M_n^{\varnothing,\calB}$ are extreme.
\end{theo}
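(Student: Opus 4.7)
My plan is to establish extremality via a Vershik--Kerov-style ergodic argument: first, I will show that along $\lambda(N) \sim M^{\calA,\varnothing}_N$ the cotransition probabilities $p^\downarrow_{N,m}(\lambda(N),\mu)$ converge in probability to the deterministic value $M^{\calA,\varnothing}_m(\mu)$ for each fixed $m$ and $\mu \in \G^p_m$, and then deduce extremality from a short absolute continuity argument.

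For the convergence step, I will apply the branching rule (Proposition \ref{Gbranch}) to write, for $\mathbf z = (z_1,\dots,z_m)$,
$$
\frac{G^{(0,-p)}_{\lambda(N)}(\mathbf z,1^{N-m})}{G^{(0,-p)}_{\lambda(N)}(1^N)} = \sum_{\mu \in \G^p_m} \frac{G^{(0,-p)}_\mu(\mathbf z)}{G^{(0,-p)}_\mu(1^m)}\, p^\downarrow_{N,m}(\lambda(N),\mu).
$$
By Theorem \ref{steeptheo1}, as $N \to \infty$ the left-hand side tends to $\prod_{i=1}^m \Phi^{\calA,\varnothing}(z_i)$, uniformly over $\lambda(N)$ whose normalized coordinates $x_i = (\lambda(N)_i - N\alpha_i)/\sqrt{N\alpha_i(1+\alpha_i)}$ stay in a fixed compact set. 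By the defining identity \eqref{defofM}, this limit coincides with $\sum_\mu M^{\calA,\varnothing}_m(\mu)\, G^{(0,-p)}_\mu(\mathbf z)/G^{(0,-p)}_\mu(1^m)$. Since $\{G^{(0,-p)}_\mu\}$ is a graded triangular basis of $\hat\Lambda_m$, extracting Taylor coefficients in $\mathbf z$---which I will justify via Cauchy's integral formula, using that the contour-integral estimates of Section \ref{asyman} remain uniform as $\mathbf z$ ranges over a polydisk $|z_i| \le \rho < 1$---yields the termwise convergence $p^\downarrow_{N,m}(\lambda(N),\mu) \to M^{\calA,\varnothing}_m(\mu)$, uniform over $\lambda(N)$ with normalized coordinates in a fixed compact. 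Combined with the tightness of those coordinates provided by Theorem \ref{limittheo}, this upgrades to convergence in $M^{\calA,\varnothing}_N$-probability.

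For extremality, assume $M^{\calA,\varnothing}_n = t M^{(1)}_n + (1-t) M^{(2)}_n$ with $0 < t < 1$ and $\{M^{(i)}_n\}$ coherent systems. Then $M^{(i)}_N \le t^{-1} M^{\calA,\varnothing}_N$ pointwise, so convergence in $M^{\calA,\varnothing}_N$-probability transfers to convergence in $M^{(i)}_N$-probability. Since $p^\downarrow_{N,m} \in [0,1]$, the bounded convergence theorem gives
$$
\lim_{N\to\infty}\ \sum_{\lambda\in\G^p_N} M^{(i)}_N(\lambda)\, p^\downarrow_{N,m}(\lambda,\mu) = M^{\calA,\varnothing}_m(\mu).
$$
But by the coherency of $\{M^{(i)}_n\}$ the left-hand side equals $M^{(i)}_m(\mu)$ identically in $N$, so $M^{(i)}_m(\mu) = M^{\calA,\varnothing}_m(\mu)$ for every $m, \mu$, forcing $M^{(1)} = M^{(2)} = M^{\calA,\varnothing}$. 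The argument for $M^{\varnothing,\calB}_n$ is identical, using Theorem \ref{steeptheo2} and the CLT for columns in Theorem \ref{limittheo} in place of the row versions.

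The main technical obstacle will be the coefficient extraction in the first paragraph: Theorem \ref{steeptheo1} is phrased for fixed complex $\mathbf z$ with $|z_i|<1$, and I need uniform convergence on a polydisk in order to pass from pointwise convergence of the generating function to convergence of the individual coefficients. I expect this to follow with little extra work by tracing the steepest-descent estimates of Section \ref{asyman}, which are already uniform in $\mathbf z$ on compact subsets of $\{|z_i|<1\}$; alternatively, a Montel-type normal families argument from a uniform upper bound on the ratio would also suffice.
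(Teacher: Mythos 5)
Your overall strategy is a genuinely different and, in the second half, cleaner route than the paper's. The paper proves extremality via a reverse-martingale argument on the path space $\calT$ (Proposition~\ref{extrprop}, following Olshanski): it extracts a subsequence of levels along which the conditional probabilities converge for $M$-almost every path $t$, and then uses that the almost-sure statement transfers to any central measure absolutely continuous with respect to $M$. You instead prove convergence of $p^{\downarrow}_{N,m}(\lambda(N),\mu)$ to $M^{\calA,\varnothing}_m(\mu)$ in $M^{\calA,\varnothing}_N$-probability directly at the finite levels, and then close the argument with a short absolute-continuity and bounded-convergence step: your second paragraph is correct as written, since $M^{(i)}_N \le t^{-1}M^{\calA,\varnothing}_N$ transfers convergence in probability, $p^\downarrow_{N,m}\in[0,1]$, and coherency makes $\sum_\lambda M^{(i)}_N(\lambda)\,p^{\downarrow}_{N,m}(\lambda,\mu)=M^{(i)}_m(\mu)$ independent of $N$. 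This avoids the backward-martingale and path-space machinery entirely, which is a genuine simplification.

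The gap is in your first paragraph, and it is not where you locate it. You identify $\mathbf z$-uniformity as the main technical obstacle, but Theorem~\ref{steeptheo1} already gives that. The real problem is uniformity in $\lambda(N)$: the limits in Theorem~\ref{steeptheo1} for both $G^{(0,-p)}_{\lambda(N)}(\mathbf z,1^{N-m})$ and $G^{(0,-p)}_{\lambda(N)}(1^N)$ (after the common normalization) are proportional to the Vandermonde $\prod_{i<j,\,\alpha_i=\alpha_j}(x_i-x_j)$, which vanishes at collision points $x_i=x_j$. Your target is the \emph{ratio} of these two quantities, which near collisions is a $0/0$ form: it is not controlled by the theorem and need not converge uniformly (or at all) when $\mathbf x$ approaches a collision hyperplane inside an arbitrary compact $C$. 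So your claim that the ratio converges ``uniformly over $\lambda(N)$ whose normalized coordinates stay in a fixed compact set'' is false as stated. The fix is exactly what the paper's proof builds in: restrict to $\Delta_{\calA,\delta}=\{\mathbf x: |x_i|<\delta^{-1},\ |x_i-x_j|>\delta\}$, where the Vandermonde is bounded below, and use that $\mathbf x^{GUE}_\calA$ almost surely has distinct coordinates so $\mathbf P(\mathbf x^{GUE}_\calA\notin\Delta_{\calA,\delta})\to0$ as $\delta\to0$. The tightness from Theorem~\ref{limittheo} which you already invoke then absorbs the small-probability collision event, and the rest of your coefficient-extraction argument (triangularity of $\{G_\mu\}$ with respect to degree plus Cauchy estimates) goes through. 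You should also note that for $M^{\varnothing,\calB}_n$ with some $\beta_i=1$, Theorem~\ref{steeptheo2} does not apply directly: those columns are deterministically frozen at height $n$, and one must factor out $z_i^s$ and pass to the reduced partition before applying the steepest-descent result, as the paper does in Step~2.
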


The remainder of this section is dedicated to the proof of this theorem. Recall from Proposition \ref{centrlacoheq} that coherent systems on $\G^p$ are equivalent to central measures on the space of paths $\calT$. Our approach to extremality relies on the following properties of central measures:

\begin{prop} \label{extrprop}Let $M$ be a central measure on $\calT$.

(1) For $M$-almost every path $t\in\calT$ the limit
$$
\lim_{N\to\infty}\frac{\dim^p_n(\lambda)\dim^p_{n,{N}}(\lambda, t_{N})}{\dim^p_{N}(t_{N})}=\lim_{N\to\infty}p^{\downarrow}_{N,n}(t_{N}, \lambda)
$$
exists for all $n\geq 0, \lambda\in\G^p_n$. Here $p^{\downarrow}_{N,n}$ is defined by \eqref{downpdef}.

(2) Assume that for $M$-almost every path $t\in\calT$ we have
\begin{equation}\label{extrassumption}
\lim_{N\to\infty}p^{\downarrow}_{N,n}(t_{N}, \lambda)=M_n(\lambda), \qquad \forall n\geq 0, \lambda\in\G^p_n.
\end{equation}
Then $M$ is extreme.
\end{prop}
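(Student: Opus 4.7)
The plan is to identify the ratio $p^{\downarrow}_{N,n}(t_N, \lambda)$ with a conditional expectation under $M$, and then invoke the reverse martingale convergence theorem. First I would introduce the decreasing filtration $\mathcal{G}_N := \sigma(t_N, t_{N+1}, \ldots)$ on $\calT$, together with the indicator $\Psi_{n,\lambda}(t) := \mathbf{1}[t_n = \lambda]$ for each fixed $n \geq 0$ and $\lambda \in \G^p_n$. The central technical step is to show that
\[
 p^{\downarrow}_{N,n}(t_N, \lambda) \;=\; E_M\bigl[\Psi_{n,\lambda} \,\big|\, \mathcal{G}_N\bigr] \qquad M\text{-a.s.}
\]
For this I would start from the defining relation of Proposition \ref{centrlacoheq}, namely $M(C_\tau) = w^p(\tau) M_{|\tau|}(\tau_{|\tau|})/\dim^p_{|\tau|}(\tau_{|\tau|})$, and marginalize over the past on a finite cylinder $\{t_N = \mu_N,\,t_{N+1}=\mu_{N+1},\ldots,\,t_{N+k}=\mu_{N+k}\}$. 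The resulting conditional law of $(t_0, \ldots, t_{N-1})$ factorizes so that it depends only on $\mu_N$ and is proportional to the path weight $w^p$; summing over paths that pass through $\lambda$ at level $n$ then reproduces the ratio $\dim^p_n(\lambda)\dim^p_{n,N}(\lambda,\mu_N)/\dim^p_N(\mu_N)$ from \eqref{downpdef}. Part (1) follows: since $\Psi_{n,\lambda}$ is bounded and $(\mathcal{G}_N)_{N\geq 0}$ is a decreasing filtration, the backward martingale convergence theorem yields $M$-a.s.\ convergence of $p^{\downarrow}_{N,n}(t_N, \lambda)$ for each fixed $(n, \lambda)$, and a countable intersection of null sets upgrades this to simultaneous convergence for all $(n,\lambda)$.

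For part (2), I would argue by contradiction: suppose $M = \alpha M' + (1-\alpha) M''$ with $0 < \alpha < 1$ and distinct central measures $M', M''$. Then $M' \ll M$. Because the identification in part (1) used only the centrality of the measure, the same formula $p^{\downarrow}_{N,n}(t_N,\lambda) = E_{M'}[\Psi_{n,\lambda}\mid\mathcal{G}_N]$ holds $M'$-a.s., and the reverse martingale converges $M'$-a.s.\ to $E_{M'}[\Psi_{n,\lambda}\mid\mathcal{G}_\infty]$ with $\mathcal{G}_\infty := \bigcap_N \mathcal{G}_N$. On the other hand, the hypothesis gives $p^{\downarrow}_{N,n}(t_N,\lambda) \to M_n(\lambda)$ $M$-a.s., hence also $M'$-a.s.\ by absolute continuity. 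Comparing the two limits forces $E_{M'}[\Psi_{n,\lambda}\mid\mathcal{G}_\infty] = M_n(\lambda)$ $M'$-a.s.; taking $E_{M'}$ on both sides yields $M'_n(\lambda) = M_n(\lambda)$ for every $n,\lambda$, contradicting $M' \neq M$.

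The main obstacle I anticipate is the careful bookkeeping in the Markov-type factorization that identifies $p^{\downarrow}_{N,n}(t_N,\lambda)$ as a conditional expectation under any central measure; once that identification is in place, the remainder is a standard application of backward martingale convergence together with the absolute-continuity inheritance of a.s.\ limits. Both ingredients are routine in the Vershik-Kerov ergodic method, but the write-up must keep a logical distinction between the measure-free function $p^{\downarrow}_{N,n}(\cdot,\lambda)$ on $\calT$ and its role as an $M$- or $M'$-conditional expectation.
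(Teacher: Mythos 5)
Your proposal is correct and follows essentially the same route as the paper's proof: identifying $p^{\downarrow}_{N,n}(t_N,\lambda)$ as the conditional expectation of the indicator $\mathbf{1}[t_n=\lambda]$ with respect to the decreasing tail filtration (the paper's $\mathcal B_{-N}$), applying reverse martingale convergence for part (1), and for part (2) transferring the $M$-a.s.\ limit to a component of a convex decomposition via absolute continuity and matching marginals. The only cosmetic difference is that you phrase part (2) as a contradiction while the paper argues directly that any component $P$ of a decomposition satisfies $P_n=M_n$, hence $P=M$.
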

\begin{proof} Our proof follows the lines of \cite[Proposition 10.8]{Olsh01}.

Call two paths $t,t'\in\calT$ $N$-equivalent if $t_m=t'_m$ for $m\geq N$. Let $\xi_N(t)$ denote the $N$-equivalence class of $t$. We say that $t,t'\in\calT$ are $\infty$ equivalent if $t,t'$ are $N$-equivalent for some $N$. Let $\mathcal B_{-N}$ denote the $\sigma$-algebra of Borel sets $S$ satisfying $\xi_N(t)\subset S$ for $t\in S$, similarly we define $\mathcal B_{-\infty}$. We have
$$
\mathcal B_{-\infty}\subset \dots \mathcal B_{-2}\subset \mathcal B_{-1}, \qquad \bigcup_{N\geq 1}\mathcal B_{-N}=\mathcal B_{-\infty}.
$$

Let $\psi$ be a bounded Borel function on $\calT$, which we treat as a random variable with respect to a central measure $M$. Define $\psi_N=\mathbb E_M[\psi\mid \mathcal B_{-N}]$ and $\psi_{\infty}=\mathbb E_M[\psi\mid \mathcal B_{-\infty}]$ denote the conditional expectations. Then $\psi_{-N}$ form a reverse martingale. By the reverse martingale convergence theorem \cite[Theorem XI.15]{Doob} we get
$$
\psi_{N}\to \psi_{\infty} \qquad M-\text{almost\ everywhere}.
$$

Now let $n\geq 0$, $\lambda\in \G^p_n$ and consider
$$
\psi^{\lambda}(t)=\begin{cases} 1\qquad &t_n=\lambda,\\
0\qquad &\text{otherwise.}
\end{cases}
$$
Since $M$ is central, the conditional expectations $\psi^\mu_{N}$ are given by
$$
\psi^\mu_{N}(t)=\sum_{t'\in\xi_N(t)} \psi^\mu(t')\frac{w^p(t_{\leq N}')}{\dim^p_N(t_N)},
$$
where we set $t'_{\leq N}=(t_1',\dots t'_N)\in\calT_{\leq N}$. When $N\geq m$ only the terms with $t'_m=\mu$ matter and we get
$$
\psi^\mu_{N}(t)=\frac{1}{\dim^p_N(t_N)}\sum_{\substack{\varnothing=\tau_0\to\dots\to \tau_n=\lambda\\\lambda=\tau_{n}\to\dots\to \tau_N=t_N}}w^p(\tau)=\frac{\dim^p_n(\lambda)\dim^p_{n,N}(\lambda, t_N)}{\dim^p_N(t_N)}=p^{\downarrow}_{N,n}(t_{N}, \lambda).
$$

Now we are ready to prove both parts of the statement. The first part follows from the fact that $p^{\downarrow}_{N,n}(t_{N}, \lambda)=\psi^\lambda_{N}(t)$ converges to $\psi^\lambda_{\infty}(t)$ for $M$-almost every path $t$, and there are countably many choices of $n\geq 0, \lambda\in\G^p_n$.
For the second part assume that $M=w P+(1-w) Q$ for $w\in (0,1)$ and central measures $P,Q$. Repeat the construction above treating $\psi$ as a random variable with respect to $P$ instead of $M$ and taking the corresponding conditional expectations, we use $\tilde\psi^\lambda=\psi^\lambda, \tilde\psi_N^\lambda=\mathbb E_P[\psi^\lambda\mid\mathcal B_{-N}]$ and $\tilde\psi^\lambda_\infty=\mathbb E_P[\psi^\lambda\mid\mathcal B_{-\infty}]$ to denote the resulting functions. Then for any $n\geq 0,\lambda\in\G^p_n$ we have
$$
P_n(\lambda)=\mathbb E_P[\tilde \psi^{\lambda}]=\mathbb E_P[\tilde \psi^{\lambda}_\infty].
$$
Since $P$ is absolutely continuous with respect to $M$, by \eqref{extrassumption} we have $\tilde \psi^{\lambda}_\infty(t)=\lim_{N\to\infty}p^{\downarrow}_{N,n}(t_{N}, \lambda)=M_n(\lambda)$ for $P$-almost every path $t\in\calT$, so $P_n(\lambda)=\mathbb E_P[\tilde \psi^{\lambda}_\infty]=M_n(\lambda)$. Repeating this argument for all $n,\lambda$ we get $P=M$.
\end{proof}

We also need the following convergence property.
\begin{prop}\label{uniconv} Let $n\geq 0$, $r\in (0,1)$. Then the series
$$
\sum_{\lambda\in\G^p_k}c_\lambda \frac{G_\lambda^{(0,-p)}(z_1, \dots, z_n)}{G_\lambda^{(0,-p)}(1^n)}
$$
converges uniformly as a function of complex variables $\{c_\lambda\}_{\lambda}$, $z_1,\dots, z_n$ satisfying $|c_\lambda|\leq 1$, $|z_i|\leq r$ for all $\lambda, i$. In particular, the series is analytic in $z_1,\dots, z_k$ on the open unit disk for fixed $\{c_\lambda\}_\lambda$ satisfying $|c_\lambda|\leq 1$ for all $\lambda$.
\end{prop}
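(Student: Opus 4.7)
The natural strategy is Weierstrass's $M$-test: I will produce numbers $M_\lambda \ge 0$, independent of the complex parameters, such that
$$
\Bigl|c_\lambda \tfrac{G_\lambda^{(0,-p)}(z_1,\dots,z_n)}{G_\lambda^{(0,-p)}(1^n)}\Bigr| \le M_\lambda \quad \text{whenever } |c_\lambda|\le 1,\ |z_i|\le r,
$$
with $\sum_\lambda M_\lambda < \infty$. Nonzero terms in the sum have $l(\lambda) \le n$, so I may and will restrict to $\lambda \in \G^p_{\min(k,n)}$.

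The key step is to bound $\bigl|G_\lambda^{(0,-p)}(z_1,\dots,z_n)/G_\lambda^{(0,-p)}(1^n)\bigr|$ by $C_{n,p,r}\,r^{|\lambda|}$. Iterating the branching rule of Proposition \ref{Gbranch} expresses both numerator and denominator as sums over the same set of interlacing chains $\varnothing = \mu^{(0)} \preceq \mu^{(1)} \preceq \cdots \preceq \mu^{(n)} = \lambda$. Using \eqref{Gone},
$$
G_\lambda^{(0,-p)}(z_1,\dots,z_n) = \sum_{\mu^{(\cdot)}} \prod_{i=1}^n z_i^{|\mu^{(i)}/\mu^{(i-1)}|}(1-p z_i)^{r(\mu^{(i-1)}/\overline{\mu^{(i)}})},
$$
$$
G_\lambda^{(0,-p)}(1^n) = \sum_{\mu^{(\cdot)}} \prod_{i=1}^n (1-p)^{r(\mu^{(i-1)}/\overline{\mu^{(i)}})}.
$$
For $|z_i|\le r$ a triangle inequality gives $|z_i|^{|\mu^{(i)}/\mu^{(i-1)}|} \le r^{|\mu^{(i)}/\mu^{(i-1)}|}$ and $|1-p z_i| \le 1 + p r$, so
$$
\bigl|G_\lambda^{(0,-p)}(z_1,\dots,z_n)\bigr| \le r^{|\lambda|} \sum_{\mu^{(\cdot)}} \prod_{i=1}^n (1+pr)^{r(\mu^{(i-1)}/\overline{\mu^{(i)}})}.
$$

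Now I will compare the latter sum term-by-term with the denominator. Since $\mu^{(i-1)}$ has at most $i-1$ nonzero parts, $r(\mu^{(i-1)}/\overline{\mu^{(i)}}) \le i-1$, hence $\sum_{i=1}^n r(\mu^{(i-1)}/\overline{\mu^{(i)}}) \le \binom{n}{2}$ for every chain. Setting $\rho := (1+pr)/(1-p) > 1$, each chain contribution in the numerator bound is at most $\rho^{\binom{n}{2}}$ times the corresponding chain contribution in the denominator. Summing,
$$
\Bigl|\tfrac{G_\lambda^{(0,-p)}(z_1,\dots,z_n)}{G_\lambda^{(0,-p)}(1^n)}\Bigr| \le \rho^{\binom{n}{2}}\, r^{|\lambda|}.
$$

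Finally, $\sum_{\lambda\in\G^p_n} r^{|\lambda|} = \prod_{i=1}^n (1-r^i)^{-1} < \infty$ by the standard change of variable $\mu_i = \lambda_i - \lambda_{i+1}$. Thus $M_\lambda := \rho^{\binom{n}{2}}\, r^{|\lambda|}$ does the job, and the $M$-test yields uniform convergence of the series on the stated set; analyticity in $z_1,\dots,z_n$ then follows from the uniform convergence of polynomials. There is no serious obstacle in this argument — the only slightly delicate point is the $\lambda$-independent bound on $\sum_i r(\mu^{(i-1)}/\overline{\mu^{(i)}})$, which is exactly what allows the numerator/denominator comparison to yield a clean geometric bound $r^{|\lambda|}$.
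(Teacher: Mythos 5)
Your proof is correct and follows essentially the same route as the paper: bound the ratio $G_\lambda^{(0,-p)}(z_1,\dots,z_n)/G_\lambda^{(0,-p)}(1^n)$ by a constant times $r^{|\lambda|}$ via the one-variable branching weights \eqref{Gone}, then apply the Weierstrass $M$-test using $\sum_\lambda r^{|\lambda|}=\prod_{i=1}^n(1-r^i)^{-1}<\infty$. The only cosmetic difference is that you compare numerator and denominator chain-by-chain (getting the slightly sharper constant $\bigl((1+pr)/(1-p)\bigr)^{\binom{n}{2}}$), whereas the paper bounds each separately through the chain count $\dim_n(\lambda)=s_\lambda(1^n)$ and uses the looser exponent $n^2$; both yield the required $\lambda$-independent constant.
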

\begin{proof}
It is enough to give a converging uniform upper bound. Note from Proposition \ref{Gbranch} that for partitions $\mu\preceq \lambda$ and $z\in\mathbb C$ with $|z|\leq r$ we have
$$
|G_{\lambda/\mu}^{(0,-p)}(z)|=|z|^{|\lambda|-|\mu|}|1-pz|^{r(\mu/\overline{\mu})}\leq r^{|\lambda|-|\mu|}(1+pr)^{l(\lambda)}.
$$
Using the branching rule of Proposition \ref{Gbranch}, for any $\lambda\in\G^p_{k}$ we get
$$
|G_{\lambda}^{(0,-p)}(z_1,\dots, z_k)|\leq\sum_{\varnothing=\lambda^{(0)}\preceq\dots\preceq \lambda^{(k)}=\lambda}\ \prod_{i=1}^{k}|G_{\lambda^{(i)}/\lambda^{(i-1)}}^{(0,-p)}(z_i)|\leq  r^{|\lambda|}(1+pr)^{k^2}\dim_k(\lambda),
$$
where $\dim_k(\lambda)=\dim^0_k(\lambda)=s_\lambda(1^n)$ is the number of partition sequences $\varnothing=\lambda^{(0)}\preceq\dots\preceq \lambda^{(k)}=\lambda$. Similarly we have
$$
|G_{\lambda/\mu}^{(0,-p)}(1)|=(1-p)^{r(\mu/\overline{\mu})}\geq (1-p)^k,
$$
so $|G_{\lambda}^{(0,-p)}(1^k)|\geq (1-p)^{k^2}\dim_k(\lambda)$. Hence
$$
\left|c_\lambda \frac{G_\lambda^{(0,-p)}(z_1, \dots, z_k)}{G_\lambda^{(0,-p)}(1^k)}\right|\leq  r^{|\lambda|}\left(\frac{1+pr}{1-p}\right)^{k^2}.
$$
The uniform convergence now follows from the convergence of $\sum_{\lambda\in\G^p_k} r^{|\lambda|}=\prod_{i=1}^k\frac{1}{1-r^i}$.
\end{proof}

\begin{proof}[Proof of Theorem \ref{Theorem_extreme}] Our plan is to combine Theorem \ref{limittheo} with the second part of Proposition \ref{extrprop}. Let $M^{\calA,\calB}$ denote the central measure corresponding to $M_n^{\calA,\calB}$ and let $s$ be the number of $i$ such that $\beta_i=1$.

{\bfseries Step 1:} For $\delta>0$ define $\Delta_{\calA,\delta}\subset\Delta_{\calA}$ as a subset of points $\bx=(x_1,\dots x_k)\in\Delta_{\calA}$ satisfying $|x_i|<\delta^{-1}$ for all $i$ and $|x_i-x_j|>\delta$ for all $i<j$. For $n\geq 0$ let $A_{n,\delta}\subset\G_n^p$ denote the set of $\lambda$ such that $l(\lambda)\leq k$ and
$$
\left(\frac{\lambda_1-n\alpha_1}{\sqrt{n\alpha_1(1+\alpha_1)}}, \dots, \frac{\lambda_k-n\alpha_k}{\sqrt{n\alpha_k(1+\alpha_k)}}\right)\in\Delta_{\calA,\delta}.
$$
We claim that for $M^{\calA,\varnothing}$-almost every path $t\in\calT$ we can find $\delta>0$ and a subsequence $(t_{N_k})$ such that $t_{N_k}\in A_{N_k,\delta}$ for all $k$. Indeed, assume $t$ is distributed according to $M^{\calA,\varnothing}$. Then for any $\delta>0$, $N<k$ we have $\P(\forall n>N\ t_n\notin A_{n,\delta})\leq \P(t_k\notin A_{k,\delta})$. Taking $k\to\infty$ and using Theorem \ref{limittheo} we get $\P(\forall n>N\ t_n\notin A_{n,\delta})\leq \P(\bx_{\calA}^{GUE}\notin\Delta_{\calA,\delta})$ so for any $\delta>0$ we have
$$
\P(\exists N: \forall n>N\ t_n\notin A_{n,\delta})\leq \P(\bx_{\calA}^{GUE}\notin\Delta_{\calA,\delta}).
$$
Since $\P(\bx_{\calA}^{GUE}\in\Delta_{\calA,\delta})\to 0$ as $\delta\to 0$, almost surely we can find $\delta>0$ such $t_n\in A_{n,\delta}$ for infinitely many $n$.

Continuing with this setup, let $\delta>0$ and $(t_{N_m})$ be a subsequence such that $t_{N_m}\in A_{N_m,\delta}$ for all $k\geq 0$. Then $\frac{\lambda_i-N_m\alpha_i}{\sqrt{N_m\alpha_i(1+\alpha_i)}}$ is bounded for every $i,m$ and, taking a subsequence of $(t_{N_m})$ if necessary, we can assume that $x_i=\lim_{m\to\infty}\frac{(t_{N_m})_i-N_m\alpha_i}{\sqrt{N_m\alpha_i(1+\alpha_i)}}$ exists for $i=1,\dots, k$. Note that all $x_i$ must be distinct since $|x_i-x_j|\geq \delta$ when $i\neq j$. Let $z_1,\dots, z_n$ be complex variables satisfying $|z_i|<1$ and consider
$$
\lim_{m\to\infty}\frac{G^{(0,-p)}_{t_{N_m}}(z_1,\dots, z_n, 1^{N_m-n})}{G^{(0,-p)}_{t_{N_m}}(1^{N_m})}.
$$
From Theorem \ref{steeptheo1} we have
\begin{multline*}
\lim_{m} N_m^{r}\ G^{(0,-p)}_{t_{N_m}}(z_1,\dots, z_n, 1^{N_m-n}) \prod_{i=1}^k\left(\frac{1}{(1-p)(1+\alpha_i)}\right)^{N_m}\left(\frac{\alpha_i}{1+\alpha_i}\right)^{(t_{N_m})_i}\\
\to Z (2\pi)^{-\frac{k}{2}}\prod_{i=1}^{k}\frac{e^{-\frac{x_i^2}{2}}}{\sqrt{\alpha_i(1+\alpha_i)}}\prod_{\substack{i<j\\\alpha_i=\alpha_j}}\frac{x_i-x_j}{\sqrt{\alpha_i(1+\alpha_i)}}\prod_{i=1}^k\Phi^{\calA,\varnothing}(z_i),
\end{multline*}
where $r$ is the number of pairs $i\leq j$ such that $\alpha_i=\alpha_j$, $Z>0$ is a constant depending only on $\calA$ and $\Phi^{\calA,\varnothing}(z)$ is defined by \eqref{Mab-def}. Since $x_i-x_j\neq 0$ we take a ratio of two such limits getting
\begin{equation}\label{stepA}
\lim_{m}\frac{G^{(0,-p)}_{t_{N_m}}(z_1,\dots, z_n, 1^{N_m-n})}{G^{(0,-p)}_{t_{N_m}}(1^{N_m})}=\prod_{i=1}^k\frac{\Phi^{\calA,\varnothing}(z_i)}{\Phi^{\calA,\varnothing}(1)}=\prod_{i=1}^k\Phi^{\calA,\varnothing}(z_i).
\end{equation}
To sum it up, for $M^{\calA,\varnothing}$-almost every path $t\in\calT$ we can find a subsequence $(t_{N_m})$ such that \eqref{stepA} holds.

{\bfseries Step 2:} We can repeat the argument above for $M^{\varnothing, \calB}$, with the only difference coming from the case $\beta_i=1$. For $\delta>0$ define $\Delta_{\calB,\delta}$ consisting of $\by=(y_{s+1},\dots y_l)\in\Delta_{\calB}$ satisfying $|y_i|<\delta^{-1}$ for all $i>s$ and $|y_i-y_j|>\delta$ for all $s<i<j$. Let $B_{n,\delta}\subset\G_n^p$ denote the set of $\lambda$ such that $\lambda_1\leq l$, $\lambda'_1=\dots=\lambda_s'=n$ and
$$
\left(\frac{\lambda_{s+1}'-n\beta_{s+1}}{\sqrt{n\beta_{s+1}(1-\beta_{s+1})}}, \dots, \frac{\lambda'_l-n\beta_l}{\sqrt{n\beta_l(1-\beta_l)}}\right)\in\Delta_{\calB,\delta}.
$$
Then, using Theorem \ref{limittheo} in the same way as in Step 1, for $M^{\varnothing,\calB}$-almost every path $t$ we can find $\delta>0$ and a subsequence $(t_{N_m})$ such that $t_{N_m}\in B_{N_m,\delta}$ for all $m$. Taking a further subsequence, we can assume that the limits $y_i=\lim_{m\to\infty}\frac{(t_{N_m})'_i-N_m\beta_i}{\sqrt{N_m\beta_i(1-\beta_i)}}$ exist for $i=s+1,\dots, l$.

Let $z_1,\dots, z_n$ be complex variables satisfying $|z_i|<1$. Note that the first $s$ columns of $t_{N_m}$ are frozen and have length $N_m$, hence
$$
\frac{G^{(0,-p)}_{t_{N_m}}(z_1,\dots, z_n, 1^{N_m-n})}{G^{(0,-p)}_{t_{N_m}}(1^{N_m})}=\frac{z_1^sz_2^2\dots z_n^sG^{(0,-p)}_{\tilde t_{N_m}}(z_1,\dots, z_n, 1^{N_m-n})}{G^{(0,-p)}_{\tilde t_{N_m}}(1^{N_m})},
$$
where $\tilde t_N=t_N-s^N$ is obtained by removing the frozen columns. Now we can apply Theorem \ref{steeptheo2} getting
\begin{multline*}
\lim_{m} N_m^{r}\ G^{(0,-p)}_{\tilde t_{N_m}}(z_1,\dots, z_n, 1^{N_m-n}) \prod_{i=s+1}^l\left(\frac{1-\beta_i}{1-p}\right)^{N_m}\left(\frac{\beta_i(1-p)}{1-\beta_i}\right)^{(t_{N_m})'_i}\\
\to Z (2\pi)^{-\frac{l}{2}}\prod_{i=s+1}^{k}\frac{e^{-\frac{y_i^2}{2}}}{\sqrt{\beta_i(1-\beta_i)}}\prod_{\substack{i<j\\\beta_i=\beta_j\neq 1}}\frac{y_i-y_j}{\sqrt{\beta_i(1-\beta_i)}}\prod_{i=1}^k\Phi^{\varnothing,\tilde\calB}(z_i),
\end{multline*}
where $r$ is the number of pairs $i\leq j$ such that $\beta_i=\beta_j\neq 1$, $Z>0$ is a constant depending only on $\calB$ and $\tilde\calB=(\beta_{s+1},\dots, \beta_l)$. Hence we get
$$
\lim_{m\to\infty}\frac{G^{(0,-p)}_{t_{N_m}}(z_1,\dots, z_n, 1^{N_m-n})}{G^{(0,-p)}_{t_{N_m}}(1^{N_m})}=\prod_{i=1}^nz_i^s\lim_{m\to\infty}\frac{G^{(0,-p)}_{\tilde t_{N_m}}(z_1,\dots, z_n, 1^{N_m-n})}{G^{(0,-p)}_{\tilde t_{N_m}}(1^{N_m})}=\prod_{i=1}^nz_i^s\Phi^{\varnothing,\tilde\calB}(z_i)=\prod_{i=1}^n\Phi^{\varnothing,\calB}(z_i).
$$

{\bfseries Step 3:} From now on the argument is identical for both $M^{\calA,\varnothing}$ and $M^{\varnothing,\calB}$, so we use $M, \Phi$ to either denote $M^{\calA,\varnothing}, \Phi^{\calA,\varnothing}$ or $M^{\varnothing,\calB}, \Phi^{\varnothing,\calB}$.

From the previous steps and the first part of Proposition \ref{extrprop} for $M$-almost every path $t\in\calT$ we have the following two conditions
\begin{itemize}
\item For every $n\geq 0$ and $\lambda\in \G_n^p$ the limit $\lim_{N\to\infty}p^{\downarrow}_{N,n}(t_N,\lambda)$ exists.
\item There exists a subsequence $(t_{N_m})$ such that for any collection of complex parameters $z_1,\dots, z_n$ satisfying $|z_i|<1$ we have
\begin{equation}\label{conditionont}
\lim_{m\to\infty}\frac{G^{(0,-p)}_{t_{N_m}}(z_1,\dots, z_n, 1^{N_m-n})}{G^{(0,-p)}_{t_{N_m}}(1^{N_m})}=\prod_{i=1}^n\Phi(z_i).
\end{equation}
\end{itemize}
By the second part of Proposition \ref{extrprop}, to finish the proof it is enough to check that $\lim_{N\to\infty}p^{\downarrow}_{N,n}(t_N,\lambda)=M_n(\lambda)$ for $n\geq 0, \lambda\in\G^p_n$ and a path $t$ satisfying \eqref{conditionont}. So from now on we consider only such paths.

Fix $n\geq 0$. Then for $N>n$ and complex parameters $z_1,\dots, z_n$ we can use the branching rule from Proposition \ref{branching-rules} to get
\begin{multline}\label{tmpidentityprelimit}
\sum_{\lambda\in\G_n^p}p^{\downarrow}_{N,n}(t_N, \lambda)\frac{G_{\lambda}^{(0,-p)}(z_1,\dots, z_n)}{G_{\lambda}^{(0,-p)}(1^n)}=\frac{\sum_{\lambda\in\G_n^p}G_{t_N/\lambda}^{(0,-p)}(1^{N-n})G_{\lambda}^{(0,-p)}(z_1,\dots, z_n)}{G_{t_N}^{(0,-p)}(1^N)}\\=\frac{G_{t_N}^{(0,-p)}(z_1,\dots, z_n, 1^{N-n})}{G_{t_N}^{(0,-p)}(1^N)}.
\end{multline}
Note that the sums above are finite since only terms with $\lambda\subset t_N$ do not vanish. Now let us assume that $|z_1|,\dots, |z_n|<1$ and take the limit of both sides as $N\to\infty$. First we deal with the left-hand side of \eqref{tmpidentityprelimit}. For $\lambda\in\G_{n}^p$ let $c_\lambda=\lim_{N\to\infty} p^{\downarrow}_{N,n}(t_N, \lambda)$, which exists by our assumptions on $t$. By Proposition \ref{uniconv} we can exchange the limit and the summation and get
$$
\lim_{N\to\infty}\sum_{\lambda\in\G_n^p}p^{\downarrow}_{N,n}(t_N, \lambda)\frac{G_{\lambda}^{(0,-p)}(z_1,\dots, z_n)}{G_{\lambda}^{(0,-p)}(1^n)}=\sum_{\lambda\in\G_n^p}c_\lambda\frac{G_{\lambda}^{(0,-p)}(z_1,\dots, z_n)}{G_{\lambda}^{(0,-p)}(1^n)}.
$$
Consider the right-hand side of \eqref{tmpidentityprelimit}. From the left-hand side we already know the limit as $N\to\infty$ exists, so we can take it along the subsequence $(t_{N_m})$ from \eqref{conditionont}. Recalling the definition of $M_n(\lambda)$ from \eqref{defofM}, we get
$$
\sum_{\lambda\in\G_n^p}c_\lambda\frac{G_{\lambda}^{(0,-p)}(z_1,\dots, z_n)}{G_{\lambda}^{(0,-p)}(1^n)}=\prod_{i=1}^n\Phi(z_i)=\sum_{\lambda\in\G_n^p}M_n(\lambda)\frac{G_{\lambda}^{(0,-p)}(z_1,\dots, z_n)}{G_{\lambda}^{(0,-p)}(1^n)}.
$$
Both sums above are analytic functions in $z_1,\dots, z_n$ on the unit disk, so the identity above holds in the space of formal power series in $\bz$ as well. Since $G_{\lambda}^{(0,-p)}(z_1,\dots, z_n)$ are linearly independent we get $M_n(\lambda)=c_\lambda=\lim_{N\to\infty} p^{\downarrow}_{N,n}(t_N, \lambda)$ for every $\lambda\in\G^p_n$.
\end{proof}


 \section{Open questions and future directions} \label{Section_open_questions}

In this section we discuss conjectures and questions related to the coherent systems on $\G^p$.

Our first conjecture describes extreme coherent systems supported on partitions with finitely many rows or columns.

\begin{conj}\label{conj1} Let $\{M_n\}_n$ be an extreme coherent system on $\G^p$ supported on partitions contained in the hook with $k$ infinite rows and $l$ infinite columns, that is, $M_n(\lambda)=0$ unless $\lambda_{k+1}\leq l$. Then $\{M_n\}_n$ must be of the form $\{M^{\calA,\calB}_n\}_n$ for $\calA=(\alpha_1,\ \dots, \alpha_k)$, $\calB=(\beta_1,\ \dots, \beta_k)$ satisfying
$$
\alpha_1\geq\alpha_2\geq\dots\geq\alpha_k\geq \frac{p}{1-p},\qquad 1\geq\beta_1\geq\beta_2\geq\dots\geq\beta_k\geq p.
$$
\end{conj}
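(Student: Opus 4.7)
The plan is to adapt the Vershik-Kerov ergodic strategy used in the proof of Theorem \ref{Theorem_extreme}. Since $\{M_n\}$ is extreme, standard tail-triviality combined with Proposition \ref{extrprop}(1) and the reverse martingale argument in its proof gives that, for $M$-almost every path $t=(t_N)\in\calT$ and every $n,\lambda$, $\lim_N p^{\downarrow}_{N,n}(t_N,\lambda) = M_n(\lambda)$. Applying Proposition \ref{uniconv} to interchange the summation over $\lambda$ with this limit, exactly as in \eqref{tmpidentityprelimit}, one obtains
$$
\lim_{N\to\infty} \frac{G^{(0,-p)}_{t_N}(z_1,\dots,z_n,1^{N-n})}{G^{(0,-p)}_{t_N}(1^N)} = \sum_{\lambda\in\G_n^p} M_n(\lambda)\,\frac{G_\lambda^{(0,-p)}(z_1,\dots,z_n)}{G_\lambda^{(0,-p)}(1^n)}
$$
uniformly on compact subsets of the open unit polydisk. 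By \eqref{defofM} and linear independence of $\{G_\lambda^{(0,-p)}\}_\lambda$, to prove the conjecture it suffices to identify the left-hand side with $\prod_{i=1}^n \Phi^{\calA,\calB}(z_i)$ for some admissible pair $(\calA,\calB)$ depending only on the tail class of $t$.

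Because $M$ is supported on the $(k,l)$-hook, $M$-a.e.\ path $t$ satisfies $t_N\in\text{hook}$ for every $N$, so the only coordinates of $t_N$ that can grow unboundedly with $N$ are the row lengths $(t_N)_1,\dots,(t_N)_k$ and the column lengths $(t_N)'_1,\dots,(t_N)'_l$; the tail rows $(t_N)_{k+1},(t_N)_{k+2},\dots$ are each bounded by $l$ and the tail columns by $k$. By compactness in $[0,1]^{k+l}$, pass to a subsequence $(N_m)$ along which $(t_{N_m})_i/N_m\to\alpha_i$ for $i\leq k$ and $(t_{N_m})'_j/N_m\to\beta_j$ for $j\leq l$. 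Part (1) of Theorems \ref{steeptheo1} and \ref{steeptheo2} — the irrelevance of subcritical rows and columns in the respective steepest descents — allows one to replace any $\alpha_i<p/(1-p)$ by $p/(1-p)$ and any $\beta_j<p$ by $p$ without affecting the ratio's asymptotic; the corresponding factor in $\Phi^{\calA,\calB}$ trivializes at those boundary values, producing an admissible $(\calA,\calB)$ as in the statement. Since the full-sequence limit exists, its value is independent of the chosen subsequence.

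The decisive step — and the main obstacle — is a joint steepest-descent analysis of
$$
\frac{G^{(0,-p)}_{t_{N_m}}(z_1,\dots,z_n,1^{N_m-n})}{G^{(0,-p)}_{t_{N_m}}(1^{N_m})}
$$
when $t_{N_m}$ is a microscopic perturbation of the macroscopic hook profile $(\calA,\calB)$. The two contour-integral formulas developed in Section \ref{Section_contour_integral} — Proposition \ref{Gintl} (a $k$-fold integral efficient for few-row partitions) and Corollary \ref{Gint2} (its column analogue) — handle the row and column regimes in isolation, as realized in Theorems \ref{steeptheo1} and \ref{steeptheo2}. In the hook regime they must be combined: a natural route is to decompose $t_{N_m}$ into its row arm and column arm via a skew Cauchy-type identity around the inner $k\times l$ block, apply Proposition \ref{Gintl} to the $k$-row sub-partition and Corollary \ref{Gint2} to the $l$-column sub-partition, and perform a simultaneous $(k+l)$-fold steepest descent whose critical points localize at $\alpha_i/(1+\alpha_i)$ for the row variables and at $-(\beta_j-p)/(1-\beta_j)$ for the column variables. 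The leading asymptotic should then multiplicatively reproduce $\prod_{i=1}^n \Phi^{\calA,\calB}(z_i)$, matching Theorems \ref{steeptheo1}(2) and \ref{steeptheo2}(2) in the two extreme cases. I expect the main difficulty to lie precisely in this unification: the two integral formulas arise from degenerations of structurally different integrable models (spin $q$-Whittaker and spin Hall-Littlewood), and bridging them on hook shapes — either through a single $(k+l)$-fold integral representation of $G^{(0,-p)}_\lambda$ for hook-supported $\lambda$, or through an explicit algebraic splitting identity adapted to hooks — appears to require new input beyond the methods developed in this paper.
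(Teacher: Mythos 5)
The statement you have been asked to ``prove'' is Conjecture \ref{conj1}; it carries no proof in the paper, and the authors explicitly discuss --- in the paragraphs immediately following it --- why proving even the $l=0$ or $k=0$ case would require pushing their steepest-descent estimates past several technical barriers. Your proposal is therefore not to be judged against a target proof but against that discussion, and the comparison is favorable: your argument via the Vershik--Kerov/de Finetti ergodic strategy (extremality $\Rightarrow$ almost-sure regularity of the tail, Proposition \ref{extrprop}, then the interchange of limit and sum using Proposition \ref{uniconv} and \eqref{tmpidentityprelimit}) is precisely the route the paper identifies, and your reformulation in terms of the limit \eqref{conjlimit} of ratios of Grothendieck polynomials matches the paper word-for-word.

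The genuine gap is the one you yourself flag, and it is worth naming precisely. Theorems \ref{steeptheo1}(1) and \ref{steeptheo2}(1) are proved \emph{only} for $\lambda(N)$ with a uniformly bounded number of rows (resp.\ columns): the row version starts from the $k$-fold integral of Proposition \ref{Gintl} and the column version from the $l$-fold integral of Corollary \ref{Gint2}, and neither formula has length scaling linearly in $N$. For hook-supported partitions with both $k\geq 1$ and $l\geq 1$ neither theorem applies, and there is no established factorization of $G_\lambda^{(0,-p)}$ into a ``row arm'' contribution and a ``column arm'' contribution; your proposed ``skew Cauchy-type identity around the inner $k\times l$ block'' is speculation, not an identity appearing in the paper or, as far as I know, in the Grothendieck literature. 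Moreover even in the pure-row case the discussion after Conjecture~\ref{conj1} lists three unsolved technical obstructions --- $\alpha_1=\infty$ (your compactness argument in $[0,1]^{k+l}$ should be in $[0,\infty]^k\times[0,1]^l$ to accommodate this), the critical case $\alpha_i=\tfrac{p}{1-p}$, and the degenerate-spacing case $\lambda(N)_i-\lambda(N)_j=\overline o(\sqrt N)$ --- none of which Theorem~\ref{steeptheo1} as proved covers; your proposal does not fill these either. So what you have is an accurate map of the open problem, not a proof, and your closing paragraph candidly says as much.
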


Note that when either $k=0$ or $l=0$ we can almost prove Conjecture \ref{conj1} using methods of this paper. By \cite[Proposition 10.8]{Olsh01} the converse to the second part of Proposition \ref{extrprop} is true: if $M$ is an extreme central measure then for $M$-almost every path $(t_N)$ we should have
\begin{equation}\label{limitcondition}
\lim_{N\to\infty} p^{\downarrow}_{N,n}(t_N, \lambda)=M_n(\lambda), \qquad \lambda\in \G^p_n.
\end{equation}
From Proposition \ref{uniconv} and \eqref{tmpidentityprelimit} this implies
$$
\lim_{N\to\infty} \frac{G_{t_N}^{(0,-p)}(z_1,\dots, z_n, 1^{N-n})}{G_{t_N}^{(0,-p)}(1^N)}=\sum_{\lambda\in\G^p_n}\lim_{N\to\infty} p^{\downarrow}_{N,n}(t_N, \lambda)\frac{G_{\lambda}^{(0,-p)}(z_1,\dots, z_n)}{G_{\lambda}^{(0,-p)}(1^n)}=\sum_{\lambda\in\G^p_n}M_n(\lambda)\frac{G_{\lambda}^{(0,-p)}(z_1,\dots, z_n)}{G_{\lambda}^{(0,-p)}(1^n)}.
$$
where the convergence is uniform over complex $z_1,\dots, z_n$ satisfying $|z_i|< r<1$. So to prove Conjecture \ref{conj1} when $l=0$ it is enough to show that for any sequence $\lambda(N)$ of partitions of length $\leq k$ such that the limits $\lim\frac{\lambda(N)_i}{N}=\alpha_i$ exist we have
\begin{equation}
\label{conjlimit}
\lim_{N\to\infty} \frac{G_{\lambda(N)}^{(0,-p)}(z_1,\dots, z_n, 1^{N-n})}{G_{\lambda(N)}^{(0,-p)}(1^N)} =\begin{cases}\prod_{i=1}^n\Phi^{\tilde{\calA},\varnothing}(z_i)\qquad &\text{if}\ \alpha_1<\infty,\\1\qquad &\text{if}\ \alpha_1=\infty,\end{cases}
\end{equation}
where we allow the limits $\alpha_i$ to be $\infty$ and $\tilde{\calA}=(\tilde{\alpha_1},\dots, \tilde{\alpha_k})$ with $\tilde{\alpha_i}=\max(\alpha_i, \frac{p}{1-p})$. The asymptotic analysis in Section \ref{asyman} comes close to proving \eqref{conjlimit}, however due to technical limitations it does not cover the situations when $\alpha_1=\infty$, $\alpha_i=\frac{p}{1-p}$ or $\lambda(N)_i-\lambda(N)_{j}=\overline{o}(\sqrt{N})$ for a pair $i\neq j$. We believe that these technical limitations can be resolved and the steepest descent approach might be sufficient to prove \eqref{conjlimit} when either $k=0$ or $l=0$.

When $M_n$ is not supported on hook shapes our understanding of the situation is much less clear. We start from the simplest non-trivial example.

\begin{figure}[t]
  \includegraphics[width=0.47\textwidth]{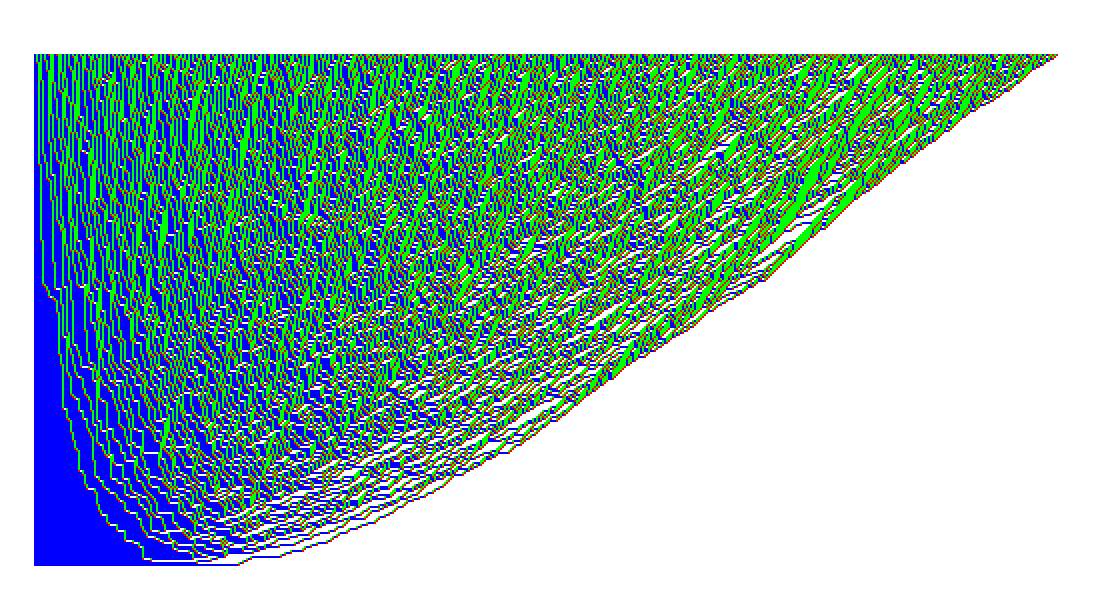}
   \hfill
  \includegraphics[width=0.47\textwidth]{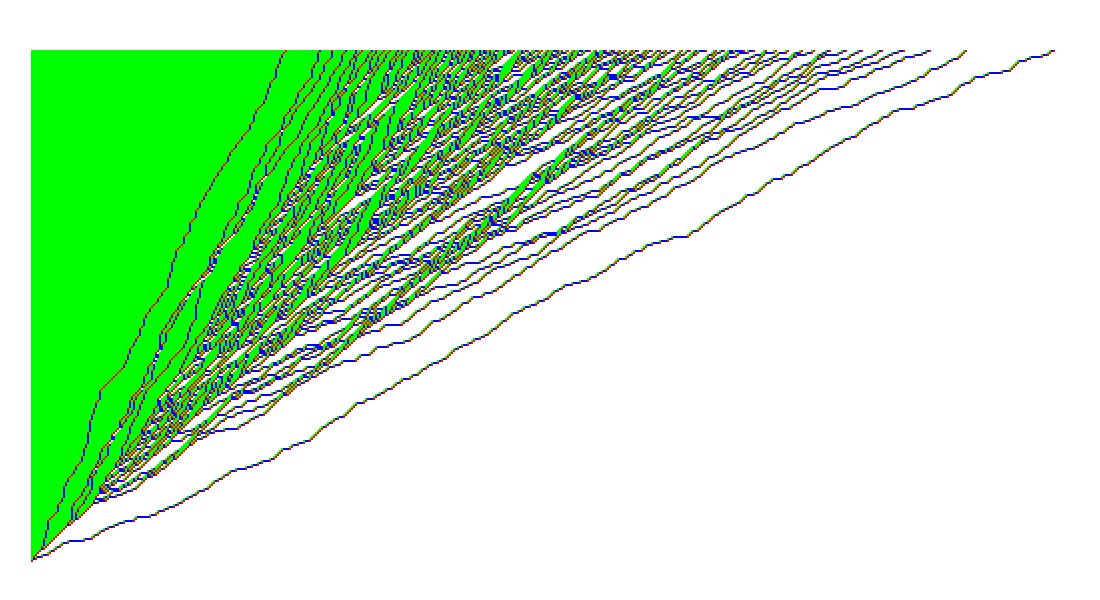}
\caption{A sample of the distribution $\P(\tau)=\frac{w^p(\tau)}{\dim^p_{N}(\lambda)}$ on paths $\tau=(\tau_0, \dots, \tau_{N})$ with $p=0.5$, $N=256$ and $\tau_{N}=\lambda(N)$. Left: $\lambda(N)_{i}=N-i$. Right $\lambda(N)_i= N\frac{p}{1-p}\left(1-\sqrt{\frac{i}{Np}}\right)^2$. In both cases we visualize the paths using the identification with the five-vertex model from Remark \ref{5-vertexrem}, with non-empty vertices of weights $p, 1-p, 1$ having blue, red and green colors respectively. In the left picture one can see a frozen region in the bottom-left corner. We expect this frozen region to grow linearly with $N$, so $\tau_1$ is going to grow linearly in this case and $p^{\downarrow}_{N,1}(\lambda(N), (i))\to0$ for any fixed $i$. The right picture corresponds to the limit shape of TASEP with geometric jumps, we expect this picture to converge to the system from Question \ref{TASEPq}.\label{simulationfig}}
\end{figure}

\begin{quest}\label{TASEPq} Let $\Phi(z)=\frac{1-p}{1-pz}$. Is the coherent system $\{M^{\Phi}_n\}_n$ extreme?
\end{quest}

Note that by \eqref{TASEPdegen} the coherent system in Question \ref{TASEPq} corresponds to TASEP with geometric jumps, which allows us to reformulate this question in terms of particle systems using \eqref{limitcondition}.

\begin{quest} Let $(Y(t))_{t\geq 0}$ denote TASEP with geometric jumps. Is it true that for fixed $n$ and $\lambda\in\G^p_n$ we have
$$
 \P(Y(n)=\lambda+\delta\mid Y(N))\to \P(Y(n)=\lambda+\delta)\qquad \text{almost\ surely\ as\ } N\to\infty?
$$
Here $\P(Y(n)=\lambda+\delta\mid Y(N))=\mathbb E[\1_{Y(n)=\lambda+\delta}\mid Y(N)]$ is treated as a random variable on the $\sigma$-algebra generated by $Y(N)$.
\end{quest}
We did not find such mixing questions for TASEP-like processes studied in the literature and we believe it could be an interesting problem. In the case of TASEP with geometric jumps we cautiously believe the answer to be positive, this guess is based on limited simulations of the process.

Our final open question is the description of the boundary of $\G^p$.

\begin{quest}\label{finalq}
Is it true that all extreme coherent systems on $\G^p$ have the form $\{M_n^\Phi\}_n$?
\end{quest}

From Proposition \eqref{extrprop} and \cite[Proposition 10.8]{Olsh01}, a closely related question is the description of paths $t=(t_N)_N\in\mathcal T$ such that the limit $\lim_{N\to\infty} p_{N,n}(t_N, \lambda)$ exists for every $\lambda\in\G^p_n$. This is also equivalent to existence of limits
$$
\lim_{N\to\infty} \frac{G_{t_N}^{(0,-p)}(z_1,\dots, z_n, 1^{N-n})}{G_{t_N}^{(0,-p)}(1^N)}
$$
for $|z_1|,\dots, |z_n|<1$. Unfortunately, we do not even have a good guess which paths $t$ lead to non-degenerate limits $\lim_{N\to\infty} p_{N,n}(t_N, \lambda)$, moreover, in a lot of cases these limits vanish like in Figure \ref{simulationfig}. To answer Question \ref{finalq} one likely either have to develop better asymptotic tools for studying Grothendieck polynomials or use a completely new approach to study the boundary of $\G^p$.

\bibliographystyle{alpha}
\bibliography{TASEPgraph.bib}

\end{document}